\documentclass[reqno, 11pt]{amsart}
\usepackage{amsmath,amssymb,amsthm, amscd, braket}

\usepackage{amsbsy}
\usepackage[initials]{amsrefs}

\BibSpec{article}{%
    +{}  {\PrintAuthors}                {author}
    +{,} { \textit}                     {title}
    +{.} { }                            {part}
    +{:} { \textit}                     {subtitle}
    +{,} { \PrintContributions}         {contribution}
    +{.} { \PrintPartials}              {partial}
    +{,} { }                            {journal}
    +{}  { \textbf}                     {volume}
    +{}  { \PrintDatePV}                {date}
    +{,} { \issuetext}                  {number}
    +{,} { \eprintpages}                {pages}
    +{,} { }                            {status}
    +{,} { }                            {eprint}
    +{}  { \PrintTranslation}           {translation}
    +{;} { \PrintReprint}               {reprint}
    +{.} { }                            {note}
    +{.} {}                             {transition}
}
\BibSpec{book}{%
    +{}  {\PrintPrimary}                {transition}
    +{,} { \textit}                     {title}
    +{.} { }                            {part}
    +{:} { \textit}                     {subtitle}
    +{,} { \PrintEdition}               {edition}
    +{}  { \PrintEditorsB}              {editor}
    +{,} { \PrintTranslatorsC}          {translator}
    +{,} { \PrintContributions}         {contribution}
    +{,} { }                            {series}
    +{,} { \voltext}                    {volume}
    +{,} { }                            {publisher}
    +{,} { }                            {organization}
    +{,} { }                            {address}
    +{,} { \PrintDateB}                 {date}
    +{,} { }                            {status}
    +{}  { \PrintTranslation}           {translation}
    +{;} { \PrintReprint}               {reprint}
    +{.} { }                            {note}
    +{.} {}                             {transition}
}
\BibSpec{collection.article}{%
    +{}  {\PrintAuthors}                {author}
    +{,} { \textit}                     {title}
    +{.} { }                            {part}
    +{:} { \textit}                     {subtitle}
    +{,} { \PrintContributions}         {contribution}
    +{,} { \PrintConference}            {conference}
    +{}  {\PrintBook}                   {book}
    +{,} { }                            {booktitle}
    +{,} { \PrintDateB}                 {date}
    +{,} { pp.~}                        {pages}
    +{,} { }                            {status}
    +{,} { }                            {eprint}
    +{}  { \PrintTranslation}           {translation}
    +{;} { \PrintReprint}               {reprint}
    +{.} { }                            {note}
    +{.} {}                             {transition}
}

\usepackage{footnote}
\usepackage{latexsym}
\usepackage[all]{xy}
\usepackage{mathrsfs}
\usepackage{dsfont}
\usepackage{bm}
\usepackage{stmaryrd}
\usepackage{ulem}
\usepackage{ifpdf}
\ifpdf\else
\PassOptionsToPackage{dvipdfmx}{hyperref}
\PassOptionsToPackage{dvipdfmx}{graphicx}
\PassOptionsToPackage{dvipdfmx}{xcolor}
\fi
\usepackage{hyperref}

\hypersetup{
    colorlinks=true,
    citecolor=blue,
    linkcolor=blue,
  urlcolor=blue,
}

\newcommand{\Tw}{\mathrm{Tw}}
\newcommand{\rH}{\mathrm{H}}
\newcommand{\rf}{\mathrm{f}}
\newcommand{\bT}{\mathbb{T}}

\newcommand{\scrS}{\mathscr{S}}
\newcommand{\rel}{\mathrm{rel}}
\newcommand{\str}{\mathrm{str}}
\newcommand{\coLie}{\mathrm{coLie}}
\usepackage{amscd,amsthm,amsfonts,amssymb,amsmath}
\usepackage{amsmath,tikz-cd}
\usepackage{fullpage}
\usepackage{mathtools}
\usepackage{nccmath}
 
\newcommand{\F}{\mathbb{F}}
\newcommand{\pst}{\mathrm{pst}}

\newcommand{\per}{\mathrm{per}}
\newcommand{\bfH}{\mathbf{H}}

     \newcommand{\CL}{{\mathcal {L}}}
     
    \newcommand{\CO}{{\mathcal {O}}}

    \newcommand{\fa}{{\mathfrak{a}}} \newcommand{\fb}{{\mathfrak{b}}}
    \newcommand{\fc}{{\mathfrak{c}}} 
     \newcommand{\ff}{{\mathfrak{f}}}
    \newcommand{\fg}{{\mathfrak{g}}}

    \newcommand{\fm}{{\mathfrak{m}}} 
     \newcommand{\fp}{{\mathfrak{p}}}
    \newcommand{\fq}{{\mathfrak{q}}}

     \newcommand{\fz}{{\mathfrak{z}}}

    \newcommand{\Aut}{{\mathrm{Aut}}}

    \newcommand{\Ch}{{\mathrm{Ch}}}
    
    \newcommand{\cris}{{\mathrm{cris}}}
    \newcommand{\corank}{{\mathrm{corank}}}

    \newcommand{\cyc}{{\mathrm{cyc}}}
    \newcommand{\dR}{{\mathrm{dR}}}

     \renewcommand{\div}{{\mathrm{div}}}
    \newcommand{\End}{{\mathrm{End}}} 
    
    \newcommand{\Fr}{{\mathrm{Fr}}}

    \newcommand{\Gal}{{\mathrm{Gal}}} \newcommand{\GL}{{\mathrm{GL}}}
    
    \newcommand{\Hom}{{\mathrm{Hom}}}

    \newcommand{\Ind}{{\mathrm{Ind}}}

    \newcommand{\loc}{{\mathrm{loc}}}

    \newcommand{\ord}{{\mathrm{ord}}} \newcommand{\rank}{{\mathrm{rank}}}

    \newcommand{\Res}{{\mathrm{Res}}}
    \newcommand{\Sel}{{\mathrm{Sel}}}

    \newcommand{\Spec}{{\mathrm{Spec}}}

    \newcommand{\sgn}{{\mathrm{sgn}}}

    \newcommand{\tor}{{\mathrm{tor}}}
    
    \newcommand{\ur}{{\mathrm{ur}}}

    \newcommand{\Q}{\mathbb{Q}}
    \newcommand{\Z}{\mathbb{Z}}

    \newcommand{\ac}{\mathrm{ac}}

\newcommand{\C}{\mathbb{C}}

\renewcommand{\det}{\mathrm{det}}

    \newcommand{\ov}{\overline}

    \theoremstyle{plain}
    \newtheorem{thm}{Theorem}[section] \newtheorem{cor}[thm]{Corollary}
    \newtheorem{lem}[thm]{Lemma}  \newtheorem{prop}[thm]{Proposition}
     
        \newtheorem{assumption}[thm]{Assumption}

\theoremstyle{remark} \newtheorem{remark}[thm]{Remark}
\theoremstyle{remark} \newtheorem{defn}[thm]{Definition}
\theoremstyle{remark} 
\theoremstyle{remark}

    \newcommand{\et}{\'{e}t}
    \renewcommand{\et}{{\text{\'{e}t}}}

    \numberwithin{equation}{section}

\makeatletter
\@namedef{subjclassname@2020}{
 \textup{2020} Mathematics Subject Classification}
\makeatother 

\begin{document}

\title{Anticyclotomic Iwasawa theory of CM elliptic curves at ramified primes}
\author{Ashay A. Burungale, Shinichi Kobayashi, Kentaro Nakamura, Kazuto Ota}
\address{Ashay A. Burungale:  The University of Texas at Austin, Austin, TX 78712, USA.
} 
\email{ashayburungale@gmail.com}

\address{Shinichi Kobayashi: Faculty of Mathematics,
Kyushu University, 744, Motooka, Nishi-ku, Fukuoka, 819-0395, Japan.}
\email{kobayashi@math.kyushu-u.ac.jp}

\address{Kentaro Nakamura: 
Faculty of Mathematics,
Kyushu University, 744, Motooka, Nishi-ku, Fukuoka, 819-0395, Japan.}
\email{nakamura.kentaro.858@m.kyushu-u.ac.jp}

\address{Kazuto Ota: Department of Mathematics, Graduate School of Science, Osaka University Toyonaka, Osaka 560-0043, Japan} 
\email{kazutoota@math.sci.osaka-u.ac.jp
}
\begin{abstract}
We propose an integral framework for the anticyclotomic Iwasawa theory of CM elliptic
curves $E$ at primes $p$ ramified in the CM field. The $\varepsilon$-constants of the
geometric specialisations of the associated $p$-adic conjugate symplectic self-dual
deformation equidistribute between $\pm 1$ within every layer of the anticyclotomic
tower, and none of the geometric specialisations are trianguline at $p$. We define signed Selmer groups via the Lagrangian local
conditions arising from the local sign decomposition established in the prequel
\cite{BKNO}, and our central result is the formulation and proof of an integral
Iwasawa main conjecture relating one of them to the $p$-adic $L$-function
$\mathscr{L}_p(E)$ constructed there. We further show that $\mathscr{L}_p(E)$
interpolates the central Hecke $L$-values of the twists with $\varepsilon$-constant
$+1$, including twists of arbitrary infinity type, and relate its values at twists
with $\varepsilon$-constant $-1$ to the $p$-adic logarithm of certain Selmer
elements. This provides the first Iwasawa main conjecture in terms of a $p$-adic
$L$-function and Selmer groups for a $p$-adic deformation admitting no trianguline
geometric specialisation.

The proofs rest on our resolution of a Rubin-type conjecture for the underlying local
deformation, together with a theory of plus/minus local points along the
anticyclotomic tower, based on the Gaussian plus/minus cyclotomic polynomials rooted
in Gauss' \textit{Disquisitiones Arithmeticae}.
\end{abstract}

\maketitle

\tableofcontents

\section{Introduction}
In this paper, we propose an integral framework for anticyclotomic CM Iwasawa theory
at ramified primes and establish some foundational results. It is a sequel to
\cite{BKNO}, in which we obtained, as a consequence of the local sign decomposition, a
ramified Rubin-type decomposition for the local anticyclotomic CM deformation: a
decomposition into signed Lagrangian submodules that encode Bloch--Kato subgroups via
completed local $\varepsilon$-constants. Here we globalise this local structure. While~Theorem~\ref{intro-local-result} is our new local result, 
the main global results are Theorems~\ref{thm, pL},~\ref{main-result-Intro-0}, and~\ref{thm, pL-oi}. 
\subsection{Context}
The Iwasawa theory of CM elliptic curves is a classical subject: already in the
mid-1970s, the seminal work of Coates and Wiles led to the first theoretical results
towards the Birch and Swinnerton-Dyer conjecture \cite{CW}. Since then, the subject
has continued to progress, revealing new phenomena, giving rise to systematic
theories explaining them, and yielding arithmetic applications. In turn, these have
contributed tools and perspectives to the Iwasawa theory of motives over number
fields.

A primary goal of Iwasawa theory is to formulate and study an Iwasawa main conjecture
for a $p$-adic deformation of a motive over a number field. A main conjecture may be
stated in terms of zeta elements and determinants of Galois cohomology
(cf.~\cite{K,PRbook}), with neither local conditions at $p$ nor a $p$-adic
$L$-function involved; for the CM deformations of this paper, a main conjecture of
this type --- independent of the reduction type --- is supplied by Rubin's
fundamental work on elliptic units \cite{Ru91} (see \S\ref{section-Selmer-group}).
This paper concerns a main conjecture in terms of a $p$-adic $L$-function: a
statement relating a $p$-adic $L$-function, interpolating the $L$-values of the
geometric specialisations normalised by periods, to a Selmer group over the Iwasawa
algebra, defined via a choice of local condition at $p$ --- a Selmer structure at $p$
--- along the deformation. At each geometric specialisation there is a canonical
local condition, the Bloch--Kato subgroup; over the deformation, however, there is no a priori local condition interpolating these subgroups. 
Consequently, the existence of a Selmer structure at $p$ related to the Bloch--Kato subgroups at
the geometric points is far from clear. It is this form of the main conjecture that
ties the deformation to the arithmetic of the individual specialisations, and that
underlies the strategy of proving main conjectures via congruences of automorphic
forms going back to Ribet and Mazur--Wiles. Beyond the cyclotomic setting, it is
largely a mystery: the existence of the $p$-adic $L$-function, its interpolation
formula, and that of such a Selmer structure are unknown in general, and their nature
depends crucially on the prime $p$. (In the CM setting, elliptic units suffice for
the proof, as Rubin's work shows; the $p$-adic $L$-function is needed here not for
the proof but for the theory, and the present case serves as a test case where none
of the existing frameworks applies.)

Our approach is guided by a principle which emerged from our study of the local sign
decomposition \cite{BKNO}. For a (conjugate) symplectic self-dual deformation, the
local Iwasawa cohomology carries a canonical perfect symmetric pairing arising
from local Tate duality, and one may consider Lagrangian submodules of the
cohomology in family. A Lagrangian family alone does not determine an arithmetic local condition; what matters is its relation to the Bloch–Kato subgroups at the geometric specialisations, and—this is the upshot of {\it loc. cit.}—that relation is governed by the local epsilon constants. When the epsilon constants are biased along the deformation, a Lagrangian complementary to the Bloch–Kato subgroups at almost all specialisations may serve as a Selmer structure at $p$. When they equidistribute, no single choice suffices, and several Lagrangians must be considered together.

The existing frameworks are instances of the biased case. For cyclotomic deformations
at ordinary and Panchishkin primes, the Selmer structure at $p$ arises from a
Galois-stable subrepresentation, following Greenberg \cite{Gr91}. At non-ordinary
semistable primes, Perrin-Riou's theory \cite{PRbook,PR-semistable} rests on her big
exponential map, which interpolates the Bloch--Kato exponential maps along the
deformation; as the image of the Bloch--Kato exponential is the Bloch--Kato subgroup,
her theory may be regarded as an interpolation of the Bloch--Kato subgroups
themselves, in a form allowing appropriate denominators. A Selmer structure at $p$ in
this setting arises from a trianguline sub-$(\varphi,\Gamma)$-module family over the
Robba ring, following Pottharst \cite{Pt}. In these settings the epsilon constants are essentially fixed along the deformation, and 
and the available local structures naturally single out local conditions compatible with the Bloch–Kato subgroups.
The anticyclotomic
deformations of CM elliptic curves at inert primes leave this paradigm: the root
numbers vary with the parity of the anticyclotomic layer \cite{Gr83,Gr01}, and the
theory envisioned by Rubin \cite{Ru} in the late 1980s and developed after the
resolution of his conjecture \cite{BKO21} in 2021 rests on a pair of Selmer
structures at $p$
\cite{BHKO,BKO24,BKOd,BKOe,BKOY} (see also \cite{AH2,Bu}). The present viewpoint also helps explain the difficulty of the inert case: the specialisations at non-trivial finite-order twists---those at which the Mordell--Weil growth occurs---are non-trianguline, and hence fall outside the scope of the theories built around crystalline specialisations at powers of Lubin--Tate characters (cf.~\cite{ScT}).

Our subject is the ramified case, where the phenomena are the most drastic. Let
$E/\Q$ be an elliptic curve with CM by the ring of integers $\CO_K$ of an imaginary
quadratic field $K$, let $p$ be an odd prime ramified in $K$, and let
$K_\infty^{\rm ac}$ be the anticyclotomic $\Z_p$-extension of $K$ with $n$-th layer
$K_n^{\rm ac}$. The global $\varepsilon$-constants satisfy
\begin{equation}\label{lem, ram eps, bis}
\varepsilon(\phi\chi^{a}) = \left(\frac{a}{p}\right) \varepsilon(\phi\chi),
\end{equation}
where $\phi$ is the Hecke character over $K$ associated to $E$, $\chi$ an
anticyclotomic character of order $p^n >1$ and $a$ an integer prime to $p$: the root
numbers equidistribute within every layer of the anticyclotomic tower. Accordingly
the Mordell--Weil ranks grow systematically --- for $n\gg 0$,
\begin{equation}\label{eq, MW-r}
\rank_{\CO_K} E(K_n^{\rm ac}) = \frac{p^n-1}{2}+c
\end{equation}
for a constant $c$ (cf.~Theorem~\ref{vGZK}); we return to this growth in
\S\ref{ss, lr}, in connection with the local points. Moreover --- in contrast with
the inert case --- not a single geometric specialisation of the deformation is
trianguline at $p$; in particular, none is semistable or crystabelline, and no
trianguline locus remains to build on. A
$p$-adic $L$-function has recently been constructed by analytic methods
(cf.~\cite{AI}); an Iwasawa-theoretic framework --- a Selmer structure at $p$, Selmer
groups over the Iwasawa algebra, a main conjecture --- has remained open. To our
knowledge, no Iwasawa main conjecture in terms of a $p$-adic $L$-function and Selmer
groups has previously been formulated for a $p$-adic deformation admitting no
trianguline geometric specialisation.

We propose such a framework and establish it. In our rank-two setting the principle
takes its minimal form: by the local sign decomposition --- our
resolution of a Rubin-type conjecture \cite{BKNO} --- the local Iwasawa cohomology
decomposes into two Lagrangian summands; the Bloch--Kato subgroups oscillate between
the two, and at each de Rham specialisation the completed local
$\varepsilon$-constant determines the summand specialising to the Bloch--Kato
subgroup. The plus/minus structure of the theory is thus forced by the rank and the
equidistribution of the root numbers. We show that the integral $p$-adic $L$-function
$\mathscr{L}_{p}(E)$ of \cite{BKNO} interpolates the central Hecke $L$-values of the
twists $\phi\chi$ with $\varepsilon(\phi\chi)=+1$, for geometric
characters $\chi$ of arbitrary infinity type
(Theorem~\ref{thm, pL}); we formulate and prove the signed Iwasawa main conjecture,
the central result of the paper (Theorem~\ref{main-result-Intro-0}); and we relate
the values of $\mathscr{L}_p(E)$ outside the interpolation range, at twists with
$\varepsilon(\phi\chi)=-1$, to Selmer elements (Theorem~\ref{thm, pL-oi}). The
finite-level tool is given by the \textit{Gaussian plus/minus cyclotomic polynomials}: unlike the
classical plus/minus polynomials \cite{Ko0,Po}, they partition the anticyclotomic
characters according to the signs of the $\varepsilon$-constants, and since these
mirror the quadratic residue symbol \eqref{lem, ram eps, bis}, our construction
recovers the quadratic-residue factors of the cyclotomic polynomial in Gauss'
\textit{Disquisitiones Arithmeticae} --- the constants in the trace relations encode
the class numbers of $\Q(\sqrt{\pm p})$ and the fundamental unit of $\Q(\sqrt{p})$
(Lemma~\ref{value-at-one}). The Gaussian polynomials isolate, at finite level, the
plus/minus Bloch--Kato subgroups, and lead to the plus/minus local points
(Theorem~\ref{intro-local-result}), the local counterparts of the growth
\eqref{eq, MW-r}.

\subsection{Main results}

\subsubsection{Set-up}
Let $E$ be an elliptic curve defined over $\Q$ with CM by an order of an imaginary
quadratic field ${K}$. Let $\phi$ be the associated Hecke character over $K$ of
infinity type $(1,0)$ such that
$$L({\phi},s)=L(E_{/\Q},s),$$
where we normalise the $L$-functions to have center at $s=1$. Let $p\geq 5$ be a
prime ramified in $K$, $\mathfrak{p}$ the prime of $K$ above $p$ and
$\Psi=K_{\mathfrak{p}}$.

The main text considers a conjugate symplectic self-dual Hecke character of infinity
type $(1,0)$ over a general imaginary quadratic field $K$, an associated CM abelian
variety over $K$ and any odd prime $p$. Accordingly, some notation of the
introduction differs from that of the main text.

Fix embeddings $\iota_{\infty}:\overline{\Q} \hookrightarrow  \C$ and
$\iota_{p}:\overline{\Q}\hookrightarrow  \overline{\Q}_p$.
Let ${\phi}$ also denote the associated $p$-adic Galois character over $K$.
Put $\Gamma_{\rm ac}=\mathrm{Gal}(K_\infty^{\rm ac}/K)$ and
$\Lambda=\mathcal{O}_\Psi[\![\Gamma_{\rm ac}]\!]$. 

For the variation \eqref{lem, ram eps, bis} and its extension to the twists of
$\phi$ by the infinite-order de Rham characters of $\Gamma_{\rm ac}$, see
Lemma~\ref{density-of-gxi}.

\begin{defn}
 For $n\in \Z_{\geq 1}$, define a partition of the set $\Xi_{n}$ of anticyclotomic
 characters over $K$ of order $p^n$ by
 \[
 \Xi_{n}=\Xi_{\phi, n}^+ \cup \Xi_{\phi, n}^-=\{\chi \,|\, \varepsilon(\phi\chi)=+1\}\cup
 \{\chi \,|\, \varepsilon(\phi\chi)=-1\}.
 \]
 \end{defn}
Note that $|\Xi_{\phi, n}^+| = |\Xi_{\phi, n}^-|$ by \eqref{lem, ram eps, bis}.
For $\circ \in\{\emptyset, +,-\}$, put  $\Xi^\circ_{\phi}=\cup_n \Xi_{\phi, n}^\circ$.

Put
\begin{equation}\label{eq, st}
\varepsilon=\varepsilon_\phi
:=\sgn({\varepsilon}({\phi})/\varepsilon_p({\rm Ind}_{\Psi/\Q_p}\phi_p)),
\qquad
\bT_{\phi}=T_{{\phi}}^{\otimes -1}(1)\otimes_{\CO_K} \Lambda^{\iota},
\end{equation}
where $\iota$ denotes the involution of $\Lambda$ arising from
$\gamma \mapsto \gamma^{-1}$ for $\gamma \in \Gamma_{\rm ac}$. 

\subsubsection{Local results}\label{ss, lr}
To develop an integral theory of Perrin-Riou maps for a given $p$-adic deformation,
as in the cyclotomic supersingular case \cite{Ko0}, and to define global Selmer
groups, two local ingredients are essential: a systematic
characterisation of how Bloch--Kato subgroups vary at de Rham points, and the
construction of a primitive system of local points within these subgroups satisfying
certain trace relations.

Let $\Psi/\mathbb{Q}_{p}$ be a ramified quadratic extension and $\fp$ the maximal
ideal of $\CO_{\Psi}$. Let $\CO$ be the integer ring of a $p$-adic local field
containing $\sqrt{(-1)^{(p-1)/2}p}$, and let
$\psi: \Psi^{\times} \to \CO^{\times}$ be a continuous character that is conjugate
symplectic self-dual, i.e.\ its restriction to $\Q_p^{\times}$ satisfies
\begin{equation}\label{local-self-dual, intro}
\psi(a)=\omega_{\Psi/\Q_p}(a)|a|_{\Q_p^{\times}}\quad (a \in \Q_p^{\times}),
\end{equation}
where $\omega_{\Psi/\Q_p}$ denotes the quadratic character associated to
$\Psi/\Q_p$. We regard $\psi$ as a Galois character via local class field theory.
Let $T_{\psi}$ denote the associated free rank one $\mathcal{O}$-module, on which
$G_{\Psi}$ acts by $\psi$. Suppose that $T_{\psi} \otimes_{\Z_p} \Q_p$ is de Rham.

Let $\Psi_{\infty}/\Psi$ be the anticyclotomic $\mathbb{Z}_{p}$-extension with
$n$-th layer $\Psi_{n}$, and
$\Lambda = \mathcal{O}[\![\mathrm{Gal}(\Psi_{\infty}/\Psi)]\!]$ the associated
Iwasawa algebra. For the $p$-adic deformation
$\bT_{\psi}:=T_{\psi}^{\otimes-1}(1) \otimes_{\mathcal{O}} \Lambda$, the Iwasawa
cohomology $H^1(\Psi, \bT_{\psi})$ is a free $\Lambda$-module of rank two. By the
proof of the Rubin-type conjecture in \cite[Thm.~1.18]{BKNO}, this cohomology
decomposes into free, rank-one Lagrangian $\Lambda$-submodules:
\begin{equation}\label{eq, RuC}
H^{1}(\Psi, \bT_{\psi}) = H_{+}^{1}(\Psi, \bT_{\psi}) \oplus H_{-}^{1}(\Psi, \bT_{\psi}).
\end{equation}
The Lagrangian submodules $H_{\pm}^{1}(\Psi, \bT_{\psi})$ interpolate the Bloch--Kato
subgroups at de Rham specialisations in terms of the associated completed
$\varepsilon$-constants $\in \{ \pm 1 \}$ (cf.~\cite[\S1.3.3]{BKNO}).

Accordingly, we define the plus/minus Bloch--Kato subgroups at finite layers by
$$H_{\rm f,\pm}^{1}(\Psi_{n}, T_{\psi}^{\otimes-1}(1)) :=
H_{\rm f}^{1}(\Psi_{n}, T_{\psi}^{\otimes-1}(1)) \cap
H_{\pm}^{1}(\Psi_{n}, T_{\psi}^{\otimes-1}(1)),$$
where $H_{\pm}^{1}(\Psi_{n}, T_{\psi}^{\otimes-1}(1))$ denotes the image of
$H_{\pm}^{1}(\Psi, \bT_{\psi})$ under the natural projection.

We partition the set $\Xi_{k}$ of characters of $\mathrm{Gal}(\Psi_{k}/\Psi)$ of
order $p^{k}$ by\footnote{Note that $\mathrm{Ind}_{\Psi/\mathbb{Q}_{p}}(\psi\chi)$ is
symplectic self-dual and so the $\varepsilon$-constant is well defined, independent
of various choices.}
\[
\Xi_{\psi,k}^{\pm} = \{ \chi \in \Xi_{k} \mid
\varepsilon(\mathrm{Ind}_{\Psi/\mathbb{Q}_{p}}(\psi\chi)) = \pm 1 \}.
\]
\begin{defn}
Fix a topological generator $\gamma \in \mathrm{Gal}(\Psi_{\infty}/\Psi)$. For an
integer $k \geq 1$, define the \textit{Gaussian plus/minus cyclotomic polynomials} by
\[
\Phi_{k,\psi}^{\pm}(\gamma) = \prod_{\chi \in \Xi_{\psi,k}^{\pm}} (\gamma - \chi(\gamma))
\in \CO[\Gal(\Psi_{k}/\Psi)].
\]
\end{defn}

A key local result of this paper is the following (cf.~Theorem~\ref{c-generates-fpm}).

\begin{thm}\label{intro-local-result}
Let $p\geq 5$ be a prime, $\Psi$ a ramified quadratic extension of $\Q_p$ and $\psi$
a conjugate symplectic self-dual Galois character over $\Psi$ as above. There exists
a system of plus/minus local points
$c_{n}^{\pm} \in H_{\rm f, \pm}^{1}(\Psi_{n}, T_{\psi}^{\otimes-1}(1))$, $n\geq 0$,
with the following properties.
\begin{enumerate}
    \item  For the corestriction map $\mathrm{Cor}_{n+1/n}: H^{1}(\Psi_{n+1},
    T_{\psi}^{\otimes-1}(1)) \rightarrow H^{1}(\Psi_{n}, T_{\psi}^{\otimes-1}(1))$,
    we have
    \[
    \mathrm{Cor}_{n+1/n}(c_{n+1}^{\pm}) = \Phi_{1,\psi}^{\pm\epsilon(\Psi)}(1) c_{n}^{\pm},
    \]
    where $\epsilon(\Psi)$ is as in Definition~\ref{def, Psi}.
    \item   As an $\mathcal{O}[\mathrm{Gal}(\Psi_{n}/\Psi)]$-module,
    the elements $c_{n}^{\pm}$ generate
    $H_{\rm f,\pm}^{1}(\Psi_{n}, T_{\psi}^{\otimes-1}(1))$.
    \item  As $\mathcal{O}[\mathrm{Gal}(\Psi_{n}/\Psi)]$-modules, we have
    \[
    H_{\rm f}^{1}(\Psi_{n}, T_{\psi}^{\otimes-1}(1)) = H_{\rm f,+}^{1}(\Psi_{n},
    T_{\psi}^{\otimes-1}(1)) \oplus H_{\rm f,-}^{1}(\Psi_{n}, T_{\psi}^{\otimes-1}(1)).
    \]
\end{enumerate}
\end{thm}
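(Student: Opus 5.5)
We will derive everything from the Rubin-type decomposition \eqref{eq, RuC} of \cite[Thm.~1.18]{BKNO} together with its interpolation property: at each finite-order character $\chi$ of $\mathrm{Gal}(\Psi_\infty/\Psi)$, the specialisation of $H^1_{\pm}(\Psi,\bT_\psi)$ at $\chi$ is the Bloch--Kato subgroup of $T_\psi^{\otimes -1}(1)\otimes\chi$ exactly when the completed $\varepsilon$-constant has the matching sign, i.e.\ when $\chi\in\Xi^{\pm\epsilon(\Psi)}_{\psi,k}$ (with a normalisation of $\epsilon(\Psi)$ absorbing the trivial character). Over $\CO[\Gal(\Psi_n/\Psi)]\otimes\Q_p\cong\prod_{\chi}$, the Bloch--Kato subgroup $H^1_{\rm f}(\Psi_n,-)$ is one-dimensional in each $\chi$-component. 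Hence rationally $H^1_{\rm f,\pm}(\Psi_n,-)$ is supported precisely on the characters of sign $\pm\epsilon(\Psi)$. This already gives (3) after inverting $p$, and it identifies the annihilator of $H^1_{\rm f,\pm}$ with the product $\omega_n^{\mp}:=\prod_{k\le n}\Phi^{\mp\epsilon(\Psi)}_{k,\psi}$ (times the appropriate trivial-character factor).

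\textbf{Construction of the points.} Choose $\Lambda$-generators $z_\pm$ of the free rank-one modules $H^1_\pm(\Psi,\bT_\psi)$, and let $z_{\pm,n}$ be their images at layer $n$. We set
\[
c_n^{\pm}:=\omega_n^{\mp}\cdot\widetilde{z}_{\pm,n},
\]
where $\widetilde{z}_{\pm,n}$ is a suitable lift. The factor $\omega_n^{\mp}$ kills exactly the components on which $H^1_\pm$ is not crystalline, so $c_n^\pm$ lies in $H^1_{\rm f}\cap H^1_\pm$. Property (1) then follows from the factorisation $\omega_{n+1}^{\mp}=\Phi^{\mp\epsilon(\Psi)}_{n+1,\psi}\,\omega_n^{\mp}$, from the compatibility $\mathrm{Cor}(z_{\pm,n+1})=z_{\pm,n}$, and from the fact that the norm $\sum_{\sigma\in\Gal(\Psi_{n+1}/\Psi_n)}\sigma$ acting on $\Phi^{\mp}_{n+1,\psi}(\gamma)$ reduces to the evaluation of the Gaussian polynomial at $\gamma^{p^n}\mapsto 1$. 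Concretely, $\Phi^{\mp}_{n+1,\psi}$ is a polynomial in $\gamma^{p^n}$ of the form $\Phi^{\mp'}_{1,\psi}(\gamma^{p^n})$. Here we use the $\varepsilon$-relation \eqref{lem, ram eps, bis}: the sign of $\chi^a$ equals $\left(\frac{a}{p}\right)$ times the sign of $\chi$, so the partition is Galois-stable, and the elements of $\Xi_{n+1}$ restrict to $\Xi_1$ compatibly with quadratic residues. The constant $\Phi^{\pm\epsilon(\Psi)}_{1,\psi}(1)$ appears after this evaluation. Tracking which sign appears is bookkeeping with Definition~\ref{def, Psi}.

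\textbf{Integral statements (2) and (3).} Here lies the main obstacle. We need that $H^1_{\rm f,\pm}(\Psi_n,-)$ is exactly $\omega_n^{\mp}H^1_\pm(\Psi_n,-)$ integrally, and that $H^1_{\rm f}$ is the direct sum, not merely of finite index. Our approach is as follows. First, $H^1_\pm(\Psi_n,-)$ is free of rank one over $\CO[\Gal(\Psi_n/\Psi)]$, since it is the coinvariant quotient of a free $\Lambda$-module and is a direct summand by \eqref{eq, RuC} (using $H^0$-vanishing, valid since $p\ge5$ and $\psi$ is ramified, so the control map is injective). In a free rank-one module over $R=\CO[X]/(\omega_n)$ with $\omega_n=\omega_n^+\omega_n^-$ (these being coprime up to the trivial-character factor), the elements supported on the $\omega_n^{\pm}$-part are exactly $\omega_n^{\mp}R$, because $R$ is reduced and $\omega_n^{\pm}$ is monic. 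This gives (2). For (3), we show $H^1_{\rm f}\subset H^1_{+}\oplus H^1_{-}$ splits: given $x\in H^1_{\rm f}$, write $x=x_++x_-$ by \eqref{eq, RuC}. It suffices to show $x_\pm$ are crystalline, i.e.\ that $x_+$ vanishes in $H^1/H^1_{\rm f}$ at the $(-)$-characters. For this we would compare with the local Tate pairing: $H^1_{\rm f}$ and $H^1_\pm$ are all Lagrangian, and $H^1/H^1_{\rm f}$ is torsion-free (again by $H^0$-vanishing of the dual). A saturation argument then shows that the image of $H^1_{\rm f}$ in $H^1_+$ is killed by $\omega_n^{-}$, hence lies in $\omega_n^{-}H^1_+ = H^1_{\rm f,+}$. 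The delicate point is the trivial character and the non-coprimality of $\omega_n^+$ and $\omega_n^-$ modulo $\fp$: since both reduce to powers of $X$, the decomposition of $R$ fails integrally. We would handle this through an index count, comparing the discriminant of $H^1_{\rm f}$ under the Tate pairing with the product of the resultants $\mathrm{Res}(\omega^+_n,\omega^-_n)$, computed via Lemma~\ref{value-at-one}.
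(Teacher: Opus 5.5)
Your construction of the points and your argument for (2) are essentially the paper's. However, you have a genuine gap in (1), and you leave (3) unfinished.

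\textbf{The trace relation (1).} You have misplaced $\epsilon(\Psi)$. It plays no role in deciding which characters are crystalline for $\rH^1_\pm$: in the paper's normalisation $\rH^1_\pm$ is crystalline exactly at $\Xi^{\mp}$, at every layer.

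It enters only through the compatible system $\chi_{n+1}^{-\delta^2}=\chi_n$ together with $p=-\delta^2 d$. These give $\chi_{n+1}^{p^n}=\chi_1^{d^n}$, so $\chi\mapsto\chi^{p^n}$ carries $\Xi_{n+1}^{\pm}$ onto $\Xi_1^{\pm\epsilon(\Psi)^n}$, and hence $\Phi^{\pm}_{n+1}(\gamma)=\Phi_1^{\pm\epsilon(\Psi)^n}(\gamma^{p^n})$.

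Landing in $\rH^1_{\rf}$, and generating $\rH^1_{\rf,\pm}$, forces the factor at every layer to be the Gaussian polynomial of one and the same sign. So you cannot choose the signs to make the constant come out uniformly. The naive system therefore satisfies $\mathrm{Cor}_{n+1/n}(c^\pm_{n+1})=\Phi_1^{\pm\epsilon(\Psi)^n}(1)\,c_n^\pm$ (Proposition~\ref{norm-relation-of-c}). When $\epsilon(\Psi)=-1$ this constant alternates with $n$, and $\Phi_1^+(1)\neq\Phi_1^-(1)$ by Lemma~\ref{value-at-one}. This is not bookkeeping.

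The missing step is the renormalisation of Remark~\ref{norm-c-frak} by the factors $\Phi_1^{\pm\epsilon(\Psi)}(1)/\Phi_1^{\pm\epsilon(\Psi)^k}(1)$. It works because these ratios are units of $\CO$: both $\Phi_1^{\pm}(1)$ have valuation $\tfrac12 v(p)$, and their product is $p$. Only after this does a single system satisfy (1) and (2) simultaneously.

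\textbf{The decomposition (3).} Your reduction ``it suffices to show $x_\pm$ are crystalline'' is correct. You then stop at an obstacle that does not exist. The proposed index count is also not an argument: the Tate pairing vanishes identically on the isotropic submodule $\rH^1_{\rf}$, so there is no discriminant to compare.

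The splitting you need is that of the module $\rH^1(\Psi_n,-)=\rH^1_+\oplus\rH^1_-$, which is integral by the Rubin-type decomposition. That $\CO[\Gal(\Psi_n/\Psi)]$ itself does not split integrally is irrelevant. The argument is as follows.
\begin{enumerate}
\item Take $x=x_++x_-\in\rH^1_{\rf}$ and any $\chi\in\Xi_{\le n}$.
\item The line $\rH^1_{\rf}(\Q_p,\tilde V_\chi)$ equals one of the two complementary lines $\rH^1_\pm(\Q_p,\tilde V_\chi)$.
\item Hence one of $\chi(x_\pm)$ vanishes and the other equals $\chi(x)$, so $\chi(x_\pm)\in\rH^1_{\rf}(\Q_p,\tilde V_\chi)$ for every $\chi$.
\item Integral $\rH^1_{\rf}$ is saturated, cf.~\eqref{saturated}, so $x_\pm\in\rH^1_{\rf}$.
\end{enumerate}
The paper phrases this equivalently: the equality holds after inverting $p$, and both sides have $p$-torsion-free quotient in $\rH^1(\Psi_n,-)$. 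For the right side that quotient is $\bigoplus_\pm \CO[\Gal(\Psi_n/\Psi)]/(\omega_n^\pm)$, which is $\CO$-free because $\omega_n^\pm$ is monic.
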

The elements $c_{n}^{\pm}$ are local counterparts of the global points of infinite
order contributing to the growth of Mordell--Weil ranks as in \eqref{eq, MW-r}; as
explained in \S\ref{ss, oi}, the relation is more than an analogy.

\begin{remark}The constant $\Phi_{1,\psi}^{\pm\epsilon(\Psi)}(1)$ appearing in the
above trace relation encodes the class numbers of the quadratic fields
$\Q(\sqrt{\pm p})$ and the fundamental unit of $\Q(\sqrt{p})$ (see
Lemma~\ref{value-at-one}).
\end{remark}

\subsubsection{$p$-adic $L$-function: interpolation of central Hecke $L$-values}

Based on the Rubin-type conjecture \eqref{eq, RuC}, the following integral
anticyclotomic $p$-adic $L$-function was introduced in \cite{BKNO}.

For $\varepsilon \in \{\pm 1\}$ as in \eqref{eq, st}, fix a basis $v_{\varepsilon}$
of $H^1_{\varepsilon}(\Psi, \bT_{\phi})$ and an $\mathcal{O}$-basis $t$ of
$T_{\phi}^{\otimes -1}$. The $p$-adic $L$-function
$\mathscr{L}_{p, v_{\varepsilon}}(E) \in \Lambda_{}$ is defined by
\[
\loc_{p}(z_{p^\infty\mathfrak{f}}^{t, {\rm ac}}) =
\mathscr{L}_{p, v_{\varepsilon}}(E) \cdot v_{\varepsilon},
\]
where $z_{p^\infty\mathfrak{f}}^{t, {\rm ac}}$ is the anticyclotomic elliptic
unit class associated to $t$ and the ideal $\mathfrak{f}$
(see \S\ref{ss, emc-t}). Note that the ideal
$(\mathscr{L}_{p, v_{\varepsilon}}(E)) \subseteq \Lambda$ is independent of the
choices of $v_{\varepsilon}$ and $t$.

Let $\eta$ denote the finite order Hecke character over $K$ such that
$\phi\eta^{-1}$ factors through $\Gamma:=\Gal(K_{\infty}/K)$, and put
$$\phi_{\rm ac}=\phi/\phi^{c}, \quad \eta_{\rm ac}=\eta/\eta^{c},$$
where $\phi^{c}:=\phi\circ c$ for $c$ the non-trivial element of $\Gal(K/\Q)$. The de
Rham characters of $\Gamma_{\rm ac}$ are of the form
$$\chi=\phi_{\rm ac}^{k}\,\eta_{\rm ac}^{-k}\,\chi'$$
for an integer $k\geq 0$ and a finite order character $\chi'$ of $\Gamma_{\rm ac}$,
up to the involution $\iota$; the twist $\phi\chi$ is then a conjugate symplectic
self-dual Hecke character of infinity type $(k+1,-k)$.

The $p$-adic $L$-function interpolates the following central Hecke $L$-values
(cf.~Theorem~\ref{interpolation-rubin}).

\begin{thm}\label{thm, pL}
Let $E$ be a CM elliptic curve defined over $\Q$ with CM by an order of an imaginary
quadratic field $K$. Let $\phi$ be the associated Hecke character and $p\geq 5$ a
prime ramified in $K$. Let $v_{\varepsilon}$ be a basis of the $\Lambda$-module
$H^{1}_{\varepsilon}(\Psi, \bT_{\phi})$, and $\mathscr{L}_{p, v_{\varepsilon}}(E)$
the associated $p$-adic $L$-function. For any de Rham character
$\chi=\phi_{\rm ac}^{k}\eta_{\rm ac}^{-k}\chi'$ of $\Gamma_{\rm ac}$ such that
$$\varepsilon(\phi\chi)=+1,$$
we have
\[
\chi(\mathscr{L}_{p, v_{\varepsilon}}(E))
=\left(-\frac{2\pi}{\sqrt{|d_K|}}\right)^{\!k}
\frac{L_{p\mathfrak{f}}(\phi^{2k+1}\eta_{\rm ac}^{-k}\chi',\, k+1)}
{\delta_{v_{\varepsilon}}^{\omega_{E}}(\chi)\cdot \Omega_{\infty}^{2k+1}},
\]
where $d_K$ denotes the fundamental discriminant of $K$, $\omega_{E}$ the N\'eron
differential of a minimal Weierstrass model of $E$,
$\Omega_{\infty}\in\mathbb{C}^{\times}$ the CM period determined by $\omega_{E}$ and
$t$, and $\delta_{v_{\varepsilon}}^{\omega_{E}}(\chi)$ the $p$-adic period given
by the image of $v_{\varepsilon}$ under the dual exponential map associated to $\chi$
and $\omega_{E}^{\otimes(2k+1)}$ (see \S\ref{interpolation-section}). Moreover,
$\mathscr{L}_{p, v_{\varepsilon}}(E)$ is non-zero.
\end{thm}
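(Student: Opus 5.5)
The plan is to specialise the defining identity $\loc_p(z^{t,\rm ac}_{p^\infty\mathfrak f})=\mathscr{L}_{p,v_\varepsilon}(E)\cdot v_\varepsilon$ at $\chi$ and then apply the dual exponential attached to $\chi$ and $\omega_E^{\otimes(2k+1)}$. On the left we get the dual exponential of the $\chi$-specialisation of the localised elliptic unit class. The explicit reciprocity law for elliptic units (Kato's reciprocity law, in the form used by Rubin / de Shalit for twists of infinity type $(k+1,-k)$) identifies this value with the algebraic part of $L_{p\mathfrak f}(\phi^{2k+1}\eta_{\rm ac}^{-k}\chi',k+1)$. Concretely, it equals $\left(-2\pi/\sqrt{|d_K|}\right)^k L_{p\mathfrak f}(\cdots)/\Omega_\infty^{2k+1}$, with the Euler factor at $\mathfrak p$ already removed because the unit is taken at level $p^\infty\mathfrak f$. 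Since the ramified prime $\mathfrak p$ makes the local Euler factor of the twist trivial, no extra $p$-adic multiplier should appear. On the right we get $\chi(\mathscr{L}_{p,v_\varepsilon}(E))\cdot\delta^{\omega_E}_{v_\varepsilon}(\chi)$, by the very definition of the $p$-adic period. Dividing yields the stated formula, \emph{provided} $\delta^{\omega_E}_{v_\varepsilon}(\chi)\neq 0$.

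The key step, and the main obstacle, is this non-vanishing. For it I would invoke the local sign decomposition of \cite{BKNO} (the Rubin-type decomposition \eqref{eq, RuC} together with its description via completed $\varepsilon$-constants). At a de Rham point $\chi$, exactly one of $H^1_\pm$ specialises into the Bloch--Kato subgroup $H^1_{\rm f}$, namely the one whose sign matches the completed local $\varepsilon$-constant of $\mathrm{Ind}(\psi\chi)$. Since $\dim H^1_{\rm f}=1$ in the 2-dimensional local cohomology, the other summand specialises to a line complementary to $H^1_{\rm f}$. The dual exponential kills exactly $H^1_{\rm f}$, so it is nonzero on that complementary line. It then remains to check compatibility of signs: the global/local factorisation $\varepsilon(\phi\chi)=\varepsilon_p(\mathrm{Ind}\,\phi_p\chi_p)\prod_{v\ne p}\varepsilon_v$ shows that the away-from-$p$ contribution is the constant sign $\varepsilon=\sgn(\varepsilon(\phi)/\varepsilon_p)$ of \eqref{eq, st}. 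This uses that the twist by anticyclotomic $\chi$ does not change the local constants away from $p$, up to the normalisations handled in Lemma~\ref{density-of-gxi}. Hence $\varepsilon(\phi\chi)=+1$ iff the completed local constant equals $\varepsilon$, iff $H^1_\varepsilon$ is the \emph{non}-Bloch--Kato summand at $\chi$. The delicate part is tracking the normalisation of ``completed'' $\varepsilon$-constants and the involution $\iota$, so that the signs line up precisely; I would carefully match conventions with \cite[\S1.3.3]{BKNO}.

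For the non-vanishing of $\mathscr{L}_{p,v_\varepsilon}(E)$, it suffices by the formula to find one $\chi$ with $\varepsilon(\phi\chi)=+1$ and $L(\phi^{2k+1}\eta_{\rm ac}^{-k}\chi',k+1)\neq0$. Taking $k$ large places $s=k+1$ relative to the weight $2k+1$ at the centre, so it is not in the region of absolute convergence. Instead I would use finite-order $\chi'$ with $k=0$ and the nonvanishing theorems of Rohrlich/Greenberg for anticyclotomic twists of root number $+1$. By \eqref{lem, ram eps, bis}, half the characters in each layer have sign $+1$, and Rohrlich-type results give nonvanishing for all but finitely many such $\chi'$. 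Alternatively, Rubin's main conjecture shows $\loc_p(z)$ is non-torsion, since the elliptic unit class is nonzero and the localisation is injective on the rank-one Iwasawa cohomology. Thus $\mathscr{L}_{p,v_\varepsilon}(E)\neq0$ unless $\loc_p(z)\in H^1_{-\varepsilon}$. That possibility is excluded by the interpolation at any single $+1$ point with nonzero $L$-value, so either route suffices.
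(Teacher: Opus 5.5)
Your proposal is correct and follows the paper's proof of Theorem~\ref{interpolation-rubin}: specialise the defining identity of $\mathscr{L}_{p,v_\varepsilon}(E)$, apply the explicit reciprocity law of Proposition~\ref{higher-explicit-reciprocity-law}, and obtain $\delta^{\omega_E}_{v_\varepsilon}(\chi)\neq 0$ from the local sign decomposition. The paper simply cites \cite[Cor.~3.2]{BKNO} for that last step, which is the argument you sketch, with the signs matched via Lemma~\ref{density-of-gxi}; non-vanishing of $\mathscr{L}_{p,v_\varepsilon}(E)$ then follows by taking $k=0$ and Rohrlich, as you say. Two points to fix: in the paper's normalisation the summand specialising to $H^1_{\rm f}$ is $H^1_{-\hat{\varepsilon}_p(\Ind_{K/\Q}(\varphi\chi))}$, i.e.\ the one with sign \emph{opposite} to the completed local constant (Lemma~\ref{lie-in-f}), which is what your final chain of equivalences actually requires; and your ``alternative'' route to $\mathscr{L}_{p,v_\varepsilon}(E)\neq 0$ is not independent, since the non-torsionness of $\loc_p(z^{t,{\rm ac}}_{p^\infty\mathfrak{f}})$ is itself proved from Rohrlich and the reciprocity law (Proposition~\ref{half-IMC}(1)).
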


\begin{remark}
The non-vanishing of the $p$-adic period $\delta_{v_{\varepsilon}}^{\omega_{E}}(\chi)$ in the above
interpolation formula is a consequence of the Rubin-type decomposition \eqref{eq, RuC}
(cf.~\cite[Cor.~3.2]{BKNO}).
\end{remark}

For finite order characters $\chi$ (i.e.\ $k=0$), the above interpolation formula
goes back to \cite{BKNO}. In the main text, the hypothesis appears in the equivalent
form $\hat{\varepsilon}_p(\Ind_{K/\Q}(\phi\chi))=\varepsilon$
(see Lemma~\ref{density-of-gxi} and Theorem~\ref{interpolation-rubin}).

\subsubsection{Iwasawa main conjecture}

We now globalise the local theory. 

Following the principles of plus/minus Iwasawa
theory \cite{Ko0}, we define the plus/minus anticyclotomic Selmer groups
$\mathrm{Sel}_{\pm}(K_{\infty}^{\rm ac}, W(E))$ via the signed local conditions at
$\mathfrak{p}$ arising from the Rubin-type decomposition \eqref{eq, RuC}
(see~\S\ref{ss, lr}), where $W(E)$ denotes the associated divisible module
(see \S\ref{section-Selmer-group}). Let $X_{\pm}(E)$ denote their Pontryagin duals.
The signed main conjecture relates the $p$-adic $L$-function
$\mathscr{L}_{p, v_{\varepsilon}}(E)$, constructed via the $\varepsilon$-Lagrangian
of the local Iwasawa cohomology, to the Selmer group $X_{-\varepsilon}(E)$ governed
by the opposite local condition.

The central result of this paper is the following (cf.~Theorems~\ref{rank} and
~\ref{main-result}).

\begin{thm}\label{main-result-Intro-0}
Let $E$ be a CM elliptic curve defined over $\Q$ with CM by an order of an imaginary
quadratic field $K$. Let $\phi$ be the associated Hecke character and $p\geq 5$ a
prime ramified in $K$. Let $v_{\varepsilon}$ be a basis of the $\Lambda$-module
$H^{1}_{\varepsilon}(\Psi, \bT_{\phi})$, and $\mathscr{L}_{p, v_{\varepsilon}}(E)$
the associated $p$-adic $L$-function.
\begin{enumerate}
    \item The signed anticyclotomic Selmer group $X_{-\varepsilon}(E)$ is a torsion
    $\Lambda$-module.
    \item As ideals in $\Lambda$, we have
    $$ \Ch_{\Lambda}(X_{-\varepsilon}(E)) =
    (\mathscr{L}_{p, v_{\varepsilon}}(E)). $$
\end{enumerate}
\end{thm}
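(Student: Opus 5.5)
The plan is to deduce both parts from Rubin's main conjecture for elliptic units (\cite{Ru91}) by a Poitou--Tate comparison, using the Rubin-type decomposition \eqref{eq, RuC} to pass between local conditions. Let $z=z_{p^\infty\mathfrak{f}}^{t,{\rm ac}}$ be the anticyclotomic elliptic unit class in $H^1(K,\bT_\phi)$. Rubin's theorem, descended to $K_\infty^{\rm ac}$, gives two facts about the relaxed/strict Selmer groups:
\begin{itemize}
\item $H^1(K,\bT_\phi)$ is torsion-free of rank one;
\item the strict Selmer dual $X_{\rm str}$, with no condition at $\fp$ on the dual side, is torsion, and
\[
\Ch_\Lambda(X_{\rm str})=\Ch_\Lambda\bigl(H^1(K,\bT_\phi)/\Lambda z\bigr).
\]
\end{itemize}
The descent needs the usual control/specialisation argument for the Euler system of elliptic units, together with the anticyclotomic restriction of the two-variable main conjecture. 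This is where $p\ge5$ and the absence of exceptional zeros enter.

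For part (1), I would consider the Poitou--Tate exact sequence
\[
0\to H^1_{-\varepsilon}(K,\bT_\phi)\to H^1(K,\bT_\phi)\xrightarrow{\ \loc_{\varepsilon}\ } H^1_{\varepsilon}(\Psi,\bT_\phi)\to X_{-\varepsilon}(E)\to X_{\rm str}\to 0.
\]
Here $H^1_{-\varepsilon}(K,\bT_\phi)$ denotes global classes whose localisation lies in $H^1_{-\varepsilon}(\Psi,\bT_\phi)$. The map $\loc_\varepsilon$ is localisation followed by projection to the $\varepsilon$-summand in \eqref{eq, RuC}. Because $H^1_\pm$ are Lagrangian, the orthogonal complement of $H^1_{-\varepsilon}$ is itself, so the dual condition defining $X_{-\varepsilon}$ matches. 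This identifies the cokernel term with $X_{-\varepsilon}(E)$ modulo $X_{\rm str}$. The middle map sends $z$ to $\mathscr{L}_{p,v_\varepsilon}(E)\,v_\varepsilon$ by definition, since the $-\varepsilon$ component is forced to be zero, or rather the definition already projects. Theorem~\ref{thm, pL} says $\mathscr{L}_{p,v_\varepsilon}(E)\neq0$, and its proof comes via nonvanishing of $L$-values at a Zariski-dense set of $\chi$ with $\varepsilon(\phi\chi)=+1$ (Rohrlich-type nonvanishing). Hence $\loc_\varepsilon$ is nonzero on a rank-one module, so it is injective and $H^1_{-\varepsilon}(K,\bT_\phi)=0$. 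Its cokernel is then torsion, and together with $X_{\rm str}$ torsion this gives that $X_{-\varepsilon}(E)$ is torsion.

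For part (2), I would take characteristic ideals in the sequence:
\[
\Ch(X_{-\varepsilon})=\Ch\bigl(\Lambda v_\varepsilon/\loc_\varepsilon H^1(K,\bT_\phi)\bigr)\cdot\Ch(X_{\rm str}).
\]
Next I would use the multiplicativity
\[
\Ch\bigl(\Lambda v_\varepsilon/\loc_\varepsilon H^1\bigr)\cdot\Ch\bigl(H^1/\Lambda z\bigr)=\Ch\bigl(\Lambda v_\varepsilon/\Lambda\,\loc_\varepsilon z\bigr)=(\mathscr{L}_{p,v_\varepsilon}(E)),
\]
valid since $\loc_\varepsilon$ is injective. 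Substituting Rubin's equality cancels the $H^1/\Lambda z$ factor, giving the claim.

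The expected main obstacle is the integral input, not the algebra. One must know Rubin's main conjecture integrally over the anticyclotomic line at a ramified $p$, including $\mu$-part and the prime of $\Lambda$ above $p$. One must also know there are no pseudo-null discrepancies or finite submodules spoiling the sequence, for instance that $H^2$ terms vanish or are pseudo-null, using $p\ge5$ and the freeness of $H^1(\Psi,\bT_\phi)$. I would first try to derive the anticyclotomic statement from the two-variable integral result of Rubin, plus a Greenberg-type argument that no specialisation loses torsion-ness. The anticyclotomic specialisation of the characteristic ideal could a priori fail to be compatible. I would control this with the nonvanishing of $\mathscr{L}_p$ and Johnson-Leung--Kings-type results on the $\mu$-invariant of elliptic units.
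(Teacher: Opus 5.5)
Your proposal is correct and follows the paper's proof of Theorems~\ref{rank} and~\ref{main-result}: the Poitou--Tate sequence $0\to\scrS_{\rel}^{\ac}\to \rH^1_{\varepsilon}\to X_{-\varepsilon}\to X_{\str}^{\ac}\to 0$, the anticyclotomic elliptic unit main conjecture (Theorem~\ref{anticyc-zetaIMC}), and multiplicativity of characteristic ideals. The one variation is that you get $\scrS_{-\varepsilon}=0$ directly from $\loc_{\varepsilon}(z)=\mathscr{L}_{p,v_{\varepsilon}}(E)\,v_{\varepsilon}\neq 0$ and torsion-freeness, while the paper argues by contradiction via $(\scrS_+\cap\scrS_-)\otimes\Q_p=\scrS_{\str}^{\ac}\otimes\Q_p=0$; and for the integral descent you flag as the obstacle, no $\mu$-invariant input is needed, since the descents of $\scrS_{\rel}/\fz_{\fa}$ and of $X_{\str}$ both acquire the same factor $\Ch_{\Lambda_{\ac}}(X_{\str}[I])$ (Propositions~\ref{reduction-rel} and~\ref{restriction-str}), which cancels against Rubin's integral two-variable result, with Johnson-Leung--Kings used only after inverting $p$.
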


\subsubsection{$p$-adic $L$-function: outside the range of interpolation}\label{ss, oi}

We now turn to the arithmetic of the $p$-adic $L$-function outside its defining
range of interpolation.

While Theorem~\ref{thm, pL} governs the arithmetic of
$\mathscr{L}_{p, v_{\varepsilon}}(E)$ for the twists with
$\varepsilon(\phi\chi)=+1$, the central Hecke $L$-values vanish whenever
$\varepsilon(\phi\chi)=-1$ by the functional equation. In light of the $p$-adic
Beilinson and BDP-type formulas (cf.~\cite{BDP, BKO24, AI}), this region is expected
to be related to non-torsion elements in the associated Selmer groups.

Along these lines, our main result is the following (cf.~Theorem~\ref{p-adic-L-with-global-class}).

\begin{thm}\label{thm, pL-oi}
Let $E$ be a CM elliptic curve defined over $\Q$ with CM by an order of an imaginary
quadratic field $K$. Let $\phi$ be the associated Hecke character and $p\geq 5$ a
prime ramified in $K$. Let $v_{\varepsilon}$ be a basis of the $\Lambda$-module
$H^{1}_{\varepsilon}(\Psi, \bT_{\phi})$, $\mathscr{L}_{p, v_{\varepsilon}}(E)$ the
associated $p$-adic $L$-function, and $v_{-\varepsilon}$ the basis of the
$\Lambda$-module $H^{1}_{-\varepsilon}(\Psi, \bT_{\phi})$ normalised by
$(v_{\varepsilon}, v_{-\varepsilon})=1$ with respect to the natural $\Lambda$-adic
pairing (see \S\ref{s, pLo}). For any de Rham character
$\chi=\phi_{\rm ac}^{k}\eta_{\rm ac}^{-k}\chi'$ of $\Gamma_{\rm ac}$ such that
$$\varepsilon(\phi\chi)=-1,$$
we have $\chi(z_{p^{\infty}\mathfrak{f}}^{t, {\rm ac}}) \in
H^{1}_{\rm f}(K, T_{\phi\chi}^{\otimes -1}(1))$, and for any non-zero
$\omega \in D_{\rm dR}^{0}\bigl(\Ind_{K/\Q}(V_{\phi\chi}^{\otimes -1}(1))\bigr)$
\[
\chi(\mathscr{L}_{p, v_{\varepsilon}}(E)) =
\exp_{\omega}^{*}(\chi(v_{-\varepsilon})) \cdot
\log_{\omega}(\chi(z_{p^{\infty}\mathfrak{f}}^{t, {\rm ac}})),
\]
where $T_{\phi\chi}^{\otimes -1}(1)$ denotes the specialisation of $\bT_{\phi}$ at
$\chi$ and $V_{\phi\chi}^{\otimes -1}(1)$ its rational counterpart, and
$\log_{\omega}$ and $\exp_{\omega}^{*}$ denote the Bloch--Kato logarithm and dual
exponential maps normalised by $\omega$ (see \S\ref{s, pLo}).
\end{thm}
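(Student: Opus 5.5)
The plan is to reduce everything to the local decomposition \eqref{eq, RuC} and the way the Lagrangians $H^1_{\pm}(\Psi,\bT_\phi)$ specialise at de Rham points. Write
\[
\loc_p(z^{t,{\rm ac}}_{p^\infty\mathfrak f})=\mathscr L_{p,v_\varepsilon}(E)\, v_\varepsilon ,
\]
which is the definition. Then specialise at $\chi$ and pair with $\chi(v_{-\varepsilon})$ using the $\Lambda$-adic Tate pairing, normalised so that $(v_\varepsilon,v_{-\varepsilon})=1$. Since both summands are Lagrangian, $(v_{-\varepsilon},v_{-\varepsilon})=0$. Specialisation of the $\Lambda$-adic pairing gives the local Tate pairing at $\chi$, so
\[
\chi(\mathscr L_{p,v_\varepsilon}(E))=\bigl(\chi(\loc_p z),\chi(v_{-\varepsilon})\bigr)_\chi .
\]
This reduces the theorem to computing a local Tate pairing of two classes in $H^1(\Psi,T_{\phi\chi}^{\otimes-1}(1))$, a two-dimensional space.

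The first key input is the interpolation property of the Rubin-type decomposition from \cite[\S1.3.3]{BKNO}. At a de Rham point $\chi$, the summand whose sign matches the completed local $\varepsilon$-constant specialises onto the Bloch--Kato subgroup $H^1_{\rm f}$; the other specialises to a complement. By Lemma~\ref{density-of-gxi}, $\varepsilon(\phi\chi)=-1$ is equivalent to $\hat\varepsilon_p(\Ind_{K/\Q}(\phi\chi))=-\varepsilon$. Hence $\chi(H^1_{-\varepsilon})\otimes\Q_p=H^1_{\rm f}$, while $\chi(H^1_{\varepsilon})$ is transversal to it, mirroring the case $+1$ of Theorem~\ref{thm, pL}.

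To prove $\chi(z)\in H^1_{\rm f}(K,T^{\otimes-1}_{\phi\chi}(1))$, the natural route is to use that in this range the complex $L$-value vanishes (root number $-1$). The interpolation formula for elliptic units (the explicit reciprocity law used in Theorem~\ref{interpolation-rubin}) computes the dual exponential of $\chi(\loc_p z)$ along the non-$\rm f$ direction as a nonzero multiple of $L(\phi\chi)$, hence as $0$. Since $\ker\exp^*=H^1_{\rm f}$ for a de Rham representation, this gives $\chi(\loc_p z)\in H^1_{\rm f}$ locally at $\fp$. Away from $p$ the classes are unramified or finite automatically; at primes dividing $\mathfrak f$, use the Euler system norm relations and the fact that $H^1_{\rm f}=H^1$ up to torsion there, since $V$ has no invariants. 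One should double-check that Rubin's reciprocity law indeed applies to the $\chi$ with $k>0$, possibly after an Euler factor at $\fp$; this factor is trivial here since $\phi$ is ramified at $\fp$.

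It then remains to evaluate the pairing. Write $\chi(\loc_p z)=\lambda\,\chi(v_\varepsilon)$, which lies in $H^1_{\rm f}$. Since $H^1_{\rm f}$ is one-dimensional and equals $\chi(H^1_{-\varepsilon})$, this is consistent only if $\chi(v_\varepsilon)$ itself lies in $H^1_{\rm f}$ or $\lambda=0$. This forces a reconciliation of signs: presumably at $\varepsilon(\phi\chi)=-1$ it is $\chi(H^1_\varepsilon)$ that equals $H^1_{\rm f}$, and the transversal one is $\chi(H^1_{-\varepsilon})$. I would settle this carefully via Lemma~\ref{density-of-gxi}, and expect it is the consistent reading. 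Granting it, $\chi(v_{-\varepsilon})$ spans a complement of $H^1_{\rm f}$. The standard duality identity for $x\in H^1_{\rm f}$ and arbitrary $y$ is
\[
(x,y)=\log_\omega(x)\cdot\exp^*_{\omega}(y),
\]
which holds because $H^1_{\rm f}$ is isotropic and $\log,\exp^*$ are dual under the de Rham pairing, with $\omega$ and its dual normalised. Applying it with $x=\chi(z)$ and $y=\chi(v_{-\varepsilon})$ yields the formula. The main obstacle is this sign bookkeeping together with the membership $\chi(z)\in H^1_{\rm f}$ for infinite-order $\chi$, i.e. the precise form of the explicit reciprocity law at non-trianguline, wildly ramified points.
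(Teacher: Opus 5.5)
Your proof is correct. Its frame is exactly the paper's: pair $\loc_p(z^{t,\ac}_{p^\infty\ff})=\mathscr{L}_{p,v_\varepsilon}(E)\,v_\varepsilon$ against $v_{-\varepsilon}$, specialise at $\chi$, and use $(x,y)_\chi=\log_\omega(x)\exp^*_\omega(y)$ for $x\in\rH^1_\rf$.

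You differ from the paper in how you show that $\chi(z^{t,\ac}_{p^\infty\ff})$ is a Selmer class.
\begin{itemize}
\item The paper (Lemma~\ref{lie-in-f}) argues structurally. It uses $\loc_p(z^{t,\ac}_{p^\infty\ff})\in\rH^1_\varepsilon$ in the family (Proposition~\ref{prop, zeta-in-epsilon}). At a de Rham point with $\hat{\varepsilon}_p(\Ind_{K/\Q}(\phi\chi))=-\varepsilon$, the Lagrangian $\rH^1_\varepsilon$ specialises onto $\rH^1_\rf$. Away from $p$ it only needs (\ref{lie-in-rel}) and (\ref{local-vanishing}); no Euler-system norm relations are involved.
\item You instead apply Proposition~\ref{higher-explicit-reciprocity-law} directly at $\chi$, and use that the central value vanishes because the root number is $-1$. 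That proposition is proved for every $k\ge 0$, so your worry about its availability at infinite-order $\chi$ is settled.
\end{itemize}
Your route works, with one correction. In general $\ker(\exp^*)=\rH^1_{\rm g}$, not $\rH^1_\rf$. Here the two agree because no specialisation is crystalline at $p$: the local character restricted to $\Z_p^\times$ is the ramified quadratic character times an algebraic one. Hence $D_{\cris}$ vanishes and $\rH^1_{\rm g}=\rH^1_\rf$.

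Comparing the two routes:
\begin{itemize}
\item Yours is the pointwise form of the argument behind Proposition~\ref{prop, zeta-in-epsilon}, which the paper runs only at finite-order characters. It avoids the de Rham-point interpolation of the signed Lagrangians.
\item The paper's needs no $L$-value input at $\chi$. As a by-product it shows that $\chi(v_{-\varepsilon})$ spans a complement of $\rH^1_\rf$, so $\exp^*_\omega(\chi(v_{-\varepsilon}))\neq 0$ and the formula is non-degenerate.
\end{itemize}

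Your second paragraph has the sign backwards. At $\varepsilon(\phi\chi)=-1$ it is $\chi(\rH^1_{\varepsilon})$, not $\chi(\rH^1_{-\varepsilon})$, that equals $\rH^1_\rf$. The situation flips relative to Theorem~\ref{interpolation-rubin}, where $\exp^*(\chi(v_\varepsilon))\neq 0$; it does not mirror it. This is the reading you eventually ``grant''.

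However, this sign bookkeeping is not needed at all, and it is not a real obstacle. Once $x=\chi(\loc_p z^{t,\ac}_{p^\infty\ff})\in\rH^1_\rf$, the identity $(x,y)_\chi=\log_\omega(x)\exp^*_\omega(y)$ holds for every $y$. So the final step never uses which Lagrangian is transversal to $\rH^1_\rf$.
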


For $k = 0$, the above Selmer classes are --- granting the finiteness of the
Tate--Shafarevich groups --- the very classes producing the Mordell--Weil rank
growth \eqref{eq, MW-r}; moreover, for $n \gg 0$, their localisations at $p$
coincide with the specialisations of the local points of
Theorem~\ref{intro-local-result}, up to scalars. Theorem~\ref{thm, pL-oi} thus links
the $p$-adic $L$-function to the systematic appearance of points of infinite order
along the anticyclotomic tower, and the local points capture these global classes at
$p$.

\subsection{About the proofs}\label{ss, outline}
The following gives a brief outline of the proofs of the main results.

\subsubsection{Local points}
We first outline the construction of the plus/minus local points.

Fix a $\Lambda$-basis $v_{\pm}$ of the Lagrangian submodule
$H^{1}_{\pm}(\Psi, \bT_{\psi})$, and let $v_{n,\pm}$ denote its projection to
$H^{1}(\Psi_{n}, T_{\psi}^{\otimes-1}(1))$. The local points are defined by
$$ c_{n}^{\pm} = \Phi_{n,\psi}^{\pm}(\gamma)\, v_{n,\pm}. $$
The Gaussian polynomial annihilates the specialisations at characters of the
opposite sign, and so
$c_{n}^{\pm} \in H_{\rm f,\pm}^{1}(\Psi_{n}, T_{\psi}^{\otimes-1}(1))$.
An elementary renormalisation of the system $(c_{n}^{\pm})_{n}$ then leads to a
system satisfying the trace relation as in Theorem~\ref{intro-local-result}\,(1)
(cf.~Proposition~\ref{norm-relation-of-c} and the subsequent remark). That the local
points generate the plus/minus Bloch--Kato subgroups as in
Theorem~\ref{intro-local-result}\,(2) rests on the Rubin-type decomposition
\eqref{eq, RuC} (see Theorem~\ref{c-generates-fpm}).

\subsubsection{Global results}
The proof of Theorem~\ref{main-result-Intro-0} rests on the Euler system of elliptic
units and the Rubin-type decomposition \eqref{eq, RuC}. The key inputs are an
anticyclotomic elliptic unit main conjecture, deduced from Rubin's fundamental work
\cite{Ru91} via a descent (see Theorem~\ref{anticyc-zetaIMC}), and the sign of the
elliptic unit
$$\loc_{p}(z_{p^{\infty}\mathfrak{f}}^{t, {\rm ac}})
\in H^{1}_{\varepsilon}(\Psi, \bT_{\phi}),$$
whereby the local sign decomposition enters (see
Proposition~\ref{prop, zeta-in-epsilon}). Since
$\mathscr{L}_{p, v_{\varepsilon}}(E)$ is the coordinate of
$\loc_{p}(z_{p^{\infty}\mathfrak{f}}^{t, {\rm ac}})$ with
respect to $v_{\varepsilon}$, the Poitou--Tate duality then transforms the elliptic
unit main conjecture into the equality
$\Ch_{\Lambda}(X_{-\varepsilon}(E))
=(\mathscr{L}_{p, v_{\varepsilon}}(E))$. Along the way, we determine the structure
of the signed Selmer groups: $X_{\varepsilon}(E)$ is of $\Lambda$-rank one and
$X_{-\varepsilon}(E)$ is torsion, the essential input being the non-vanishing of
$\mathscr{L}_{p, v_{\varepsilon}}(E)$ and the freeness of the signed Lagrangian
submodules.

The interpolation formula of Theorem~\ref{thm, pL} is deduced from an explicit
reciprocity law for elliptic units at the de Rham specialisations of arbitrary
infinity type (see Proposition~\ref{higher-explicit-reciprocity-law}), which extends
Kato's reciprocity law \cite{K}, together with the non-vanishing of the $p$-adic
periods arising from the Rubin-type decomposition. As for Theorem~\ref{thm, pL-oi}, the sign of the elliptic unit shows that the
relevant specialisations are Selmer classes; the formula then follows since the
local Tate pairing at such a specialisation is computed by the Bloch--Kato
logarithm and dual exponential maps.

\subsection{Vistas}

In the inert case, the resolution of Rubin's conjecture catalysed the development of
anticyclotomic CM Iwasawa theory, including the determination of the $p$-adic
valuation of the periods appearing in the interpolation formula of Rubin's $p$-adic
$L$-function \cite{BKOd}, a BDP-type formula \cite{BKO24}, an asymptotic formula for
the size of Tate--Shafarevich groups along the anticyclotomic tower \cite{BKOe}, and
the determination of the $\mu$-invariant of Rubin's $p$-adic $L$-function
\cite{BHKO}. It also led to applications to the Birch and Swinnerton-Dyer
conjecture, such as a $p$-converse theorem \cite{BKO24}. Moreover, the corresponding
local points play a key role in Sangiovanni Vincentelli's construction of an Euler
system for an elliptic curve over an imaginary quadratic field with $p$ inert \cite{SV}.

The integral framework developed in this paper lays the foundations for the ramified
counterparts of these problems, on which we will report elsewhere.

\subsection{Plan} In Section \ref{section-local-pt}, we introduce the signed
Bloch--Kato subgroups and construct the signed local points. Section
\ref{section-Selmer-group} presents the elliptic unit main conjectures, which
underlie our main theorems. In Section \ref{general-interpolation}, we study the
interpolation property of the Rubin-type $p$-adic $L$-function at the de Rham
specialisations. Section \ref{s, rMC} introduces the signed Selmer groups, and
formulates and proves the signed Iwasawa main conjecture. In Section \ref{s, SMW},
we present a control theorem for the signed Selmer groups and its consequences for
asymptotic Selmer and Mordell--Weil ranks, including the formula \eqref{eq, MW-r}
(cf.~Theorem \ref{vGZK}). Finally, Section \ref{s, pLo} studies the values of the
$p$-adic $L$-function outside the defining range of interpolation, relating them to
Selmer classes.

\subsubsection*{Acknowledgments}
We thank K\^az\i m B\"uy\"ukboduk, Antonio Lei, Marco Sangiovanni Vincentelli,
Christopher Skinner and especially Ye Tian for discussions about related topics.
This work was partially supported by the NSF grant DMS 2302064,
and the JSPS KAKENHI grants JP22H00096, JP22K03231, JP25K06935, JP21K13774,
JP25K06953.

\subsection*{Conventions}
Let $G$ be a profinite group and $H$ a closed normal subgroup of $G$. Let $R$ be a
commutative Noetherian complete local $\Z_p$-algebra with finite residue field.

For an $R$-representation $M$ of $H$ (i.e.\ a free $R$-module of finite rank with a
continuous $R$-linear $H$-action), put
\[
\Ind_{H}^G(M)=\Set{f: G\to M \ | \ f \text{ continuous},\ f(hg)=hf(g)\ (h \in H,\ g\in G) },
\]
on which $G$ acts by $(gf)(x)=f(xg)$ for $g, x \in G$.

Suppose moreover that $M$ is an $R$-representation of $G$ and that $G/H$ is finite
and abelian. Then we have an isomorphism of $R[G]$-modules
\begin{equation}\label{ind-group-ring}
\Ind_{H}^G(M) \xrightarrow{\ \cong\ } M\otimes_{R}R[G/H],\quad
f \mapsto \sum_{x \in G/H} \tilde{x}^{-1} f(\tilde{x}) \otimes [x],
\end{equation}
where $\tilde{x}\in G$ denotes a lift of $x$ (the sum being independent of the
choice of lifts), and $G$ acts on $M\otimes_{R}R[G/H]$ by
$g\bigl(m\otimes \sum_{x\in G/H}a_x[x]\bigr)=gm \otimes \sum_{x\in G/H}a_x[xg^{-1}]$.

In turn, we have $R$-module isomorphisms
\[
\rH^i(H, M) \xleftarrow{\ \cong\ } \rH^i(G, \Ind_H^G(M))
\xrightarrow{\ \cong\ } \rH^i(G, M\otimes_R R[G/H]),
\]
where the first arrow is the Shapiro isomorphism, induced by the evaluation map
$\Ind_{H}^G(M) \to M$, $f\mapsto f(1)$, and the second is induced by
\eqref{ind-group-ring}. The composite yields an isomorphism
\[
\rH^i(H, M) \xrightarrow{\ \cong\ } \rH^i(G, M\otimes_R R[G/H])
\]
of $R[G/H]$-modules, where $G/H$ acts on $\rH^i(H, M)$ through the natural action
induced by the $G$-action on $M$ and conjugation on $H$, and on the right-hand side
through multiplication by the group-like elements.

\section{System of local points}\label{section-local-pt}

This section is purely local. We construct a system of local points for the
anticyclotomic $\Z_p$-deformation of the $p$-adic Tate module of a CM elliptic curve
over a quadratic extension of $\Q_p$, in a somewhat more general setting. The main
result is
 the construction of the plus/minus local points generating the signed
Bloch--Kato subgroups in Theorem~\ref{c-generates-fpm}.

\subsection{The local sign decomposition}\label{subsection-local-decomposition}
\subsubsection{Set-up} 
Let $\Psi$ be a quadratic field extension of $\Q_p$.
Let $\omega_{\Psi/\Q_p}$ denote the quadratic character associated to $\Psi/\Q_p$. 

Let $\psi: \Psi^{\times} \to \CO^{\times}$ be a continuous conjugate symplectic self-dual character,  
 i.e.
 \begin{equation}\label{local-self-dual}
\psi(a)=\omega_{\Psi/\Q_p}(a)|a|_{\Q_p^{\times}}\quad (a \in \Q_p^{\times}),
\end{equation}
 where  
$\CO$ denotes the integer ring of a sufficiently large finite extension $\CL$ of $\Psi$ such that $\CO[1/p]$ contains $\sqrt{(-1)^{(p-1)/2}p}$. 
Denote by $T_{\psi}$ the free $\CO$-module of rank one on which $G_{\Psi}:=\Gal(\ov{\Q}_{p}/\Psi)$ acts by $\psi$, which is regarded as a character of $G_{\Psi}$ via the arithmetically normalised local Artin map.
Suppose that $T_{\psi}[1/p]$ is de Rham and that $\Ind_{\Psi/\Q_p}(T_{\psi}[1/p])$ has Hodge--Tate weights $\{1,0\}$ (by convention, the Hodge--Tate weight of the $p$-adic cyclotomic character is one).

Throughout this section, unless stated otherwise, we assume the following.
\begin{assumption}\label{generic}
If $\Psi $ is isomorphic to $\Q_3(\sqrt{-3})$,
then $\rH^0(\Psi, T_{\psi}^{\otimes -1}(1) \otimes_{\CO}\CO/\fm)=0$,
where $\fm$ denotes the maximal ideal of $\CO$. 
\end{assumption}
\begin{remark}\leavevmode 
\begin{enumerate}
\item[i)] If $\Psi \not\cong \Q_3(\sqrt{-3})$,
then the above vanishing is automatic (cf.~\cite[Lem.\ 7.22]{BKNO}).
\item[ii)] Without Assumption \ref{generic},
the arguments in this section work after taking $\otimes_{\Z_p}\Q_p$.
\end{enumerate}
\end{remark}

Put $\bT_{\psi}=T_{\psi}^{\otimes -1}(1)\otimes \Lambda_{\ac}$ and 
\[
\tilde{\bT}_{\psi}=\mathrm{Ind}_{\Psi/\Q_p}(\bT_{\psi}).
\]
The $\Lambda_{\ac}$-representation $\tilde{\bT}_{\psi}$  of $G_{\Q_p}$ is symplectic self-dual of rank two (cf.~\cite[\S7]{BKNO}). 
For a continuous character $\chi:\Gal(\Psi_{\infty}^{}/\Psi) \to \CO^{\times}$,
the $\CO$-representation $\tilde{\bT}_{\psi}\otimes_{\Lambda_{\ac}, \chi}\CO=\Ind_{\Psi/\Q_p}(T_{\psi\chi}^{\otimes -1}(1))$
is also symplectic self-dual of rank two. 
If $\chi$ is a de Rham character, 
then 
$$\varepsilon(\Ind_{\Psi/\Q_p}(T_{\psi\chi}^{\otimes -1}(1))):= \varepsilon(\Ind_{\Psi/\Q_p}(D_{\mathrm{pst}}(T_{\psi\chi}^{\otimes -1}(1)[1/p])))\in \{\pm 1\}$$
and likewise for $\varepsilon(\Ind_{\Psi/\Q_p}(T_{\psi\chi}))$,
where $\varepsilon(\Ind_{\Psi/\Q_p}(D_{\mathrm{pst}}(T_{\psi\chi}^{\otimes -1}(1)[1/p])))$ denotes the $\varepsilon$-constant of the associated Weil--Deligne representation (cf.~\cite[\S 2.5]{BKNO} or \S \ref{local-epsilon-section}). 
Noting that  $\Ind_{\Psi/\Q_p}(T_{\psi\chi}^{\otimes -1}(1))\cong \Ind_{\Psi/\Q_p}(T_{\psi\chi})$,
we put $\varepsilon(\Ind_{\Psi/\Q_p}(\psi\chi)):=\varepsilon(\Ind_{\Psi/\Q_p}(T_{\psi\chi}^{\otimes -1}(1)))
=\varepsilon(\Ind_{\Psi/\Q_p}(T_{\psi\chi}))$. 

Let $\Psi_{\infty}/\Psi$ be the anticyclotomic $\Z_p$-extension and $\Psi_n$ the $n$-th layer. 
Denote by $\Xi$ the set of finite order characters of $\Gal(\Psi_{\infty}/\Psi)$
and by $\Xi_n$ the subset of $\Xi$ consisting of characters of order $p^n$.
Put 
\[
\Xi_n^{\pm} = \Set{\chi \in \Xi_n | \varepsilon(\Ind_{\Psi/\Q_p}(\psi\chi))=\pm 1},\quad \Xi^{\pm}=\bigcup_{n\ge 0}\Xi_n^{\pm}. 
\]

\begin{lem}\label{density-of-xi}
Both $\Xi^{+}$ and $\Xi^{-}$ are infinite. 
\end{lem}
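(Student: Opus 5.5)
The plan is to prove the local analogue of the variation \eqref{lem, ram eps, bis}. For a finite order character $\chi\in\Xi_n$ with $n\ge 1$ and an integer $a$ prime to $p$, I would show
\[
\varepsilon(\Ind_{\Psi/\Q_p}(\psi\chi^{a}))=\Bigl(\frac{a}{p}\Bigr)\,\varepsilon(\Ind_{\Psi/\Q_p}(\psi\chi)).
\]
Since $\chi\mapsto\chi^a$ permutes $\Xi_n$ and $(\tfrac{a}{p})$ takes the value $-1$ for suitable $a$, both signs then occur in every $\Xi_n$ with $n\ge1$. As the $\Xi_n$ are pairwise disjoint and nonempty for all $n$, both $\Xi^+$ and $\Xi^-$ are infinite. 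In fact this gives $|\Xi_n^+|=|\Xi_n^-|$ for each $n\ge 1$.

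To compute the sign, I would use inductivity in degree zero of local $\varepsilon$-factors, which gives
\[
\varepsilon(\Ind_{\Psi/\Q_p}(\psi\chi))=\lambda(\Psi/\Q_p)\,\varepsilon(\psi\chi),
\]
where $\lambda(\Psi/\Q_p)$ is the Langlands constant and the right-hand factor is the $\varepsilon$-factor over $\Psi$, taken with respect to $\psi_{\Q_p}\circ\mathrm{Tr}$. Taking the quotient for $\chi^a$ and $\chi$, the constant $\lambda$ cancels and it remains to compare $\varepsilon(\psi\chi^a)$ with $\varepsilon(\psi\chi)$. Write both as Gauss sums. Here $\psi$ is tamely ramified up to a twist, and its restriction to $\Q_p^\times$ is fixed by \eqref{local-self-dual}, while $\chi$ is wildly ramified of large conductor. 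So for $n$ large, $\psi\chi$ has conductor exponent $m$ governed by $\chi$, and I would apply the Tate/Deligne formula for $\varepsilon$ of a character of conductor $\ge 2$:
\[
\varepsilon(\psi\chi)=(\psi\chi)(c)^{-1}\,\psi_\Psi(c)
\]
up to an absolute value factor, with $c\in\Psi^\times$ satisfying $(\psi\chi)(1+x)=\psi_\Psi(x/c)$ for $x$ of large valuation.

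Since $\psi$ is trivial on deep units modulo a tame part, we may take $c_{\chi^a}=c_\chi/a$ to first order, with $a\in\Z_p^\times\subset\Q_p^\times$. Then the ratio becomes
\[
\frac{\varepsilon(\psi\chi^{a})}{\varepsilon(\psi\chi)}=\frac{\psi(c/a)^{-1}\chi(c/a)^{-a}}{\psi(c)^{-1}\chi(c)^{-1}}\cdot(\text{additive terms}).
\]
Anticyclotomicity of $\chi$ means $\chi$ is trivial on $\Q_p^\times$, so $\chi(a)=1$. Also $\psi(a)=\omega_{\Psi/\Q_p}(a)|a|=(\tfrac{a}{p})$, since $\Psi/\Q_p$ is ramified and $|a|=1$. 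This is exactly where the quadratic residue symbol appears.

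The main obstacle will be the residual factors $\chi(c)^{1-a}$ and the additive character terms; these must cancel. For this I would choose $c$ carefully. By self-duality, the conductor datum $c$ of an anticyclotomic-type character can be chosen with trace zero, i.e.\ $c$ lies in $\sqrt{\varpi}\,\Q_p$-type "imaginary" directions. Then $\psi_\Psi(c')=1$, and $\chi(c)$ is handled by the ratio $c/\bar c=-1$. If this direct computation becomes messy, I would instead deduce the sign variation from the global formula \eqref{lem, ram eps, bis}. To do so I would realise $\psi$ (up to an unramified twist, which is harmless) as the $p$-component of a global conjugate symplectic self-dual Hecke character, as in \cite[\S 7]{BKNO}, and realise $\chi$ globally as an anticyclotomic character. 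The away-from-$p$ $\varepsilon$-factors are then unchanged under $\chi\mapsto\chi^a$, because $\chi$ is unramified there and anticyclotomic.

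For $\Psi$ unramified over $\Q_p$ the sign would be constant on $\Xi_n$. In that case I would instead compare different $n$, using the parity of the conductor exponent, as in the inert case \cite{Gr83}, and show that the sign alternates with $n$. This again yields infinitely many characters of each sign.
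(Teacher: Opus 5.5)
Your proposal is correct in substance. It rests on the same phenomenon as the paper: in the ramified case, $\chi\mapsto\chi^{a}$ multiplies the sign by $\bigl(\frac{a}{p}\bigr)$ once the conductor of $\chi$ is large; in the unramified case, the sign alternates with the parity of the conductor exponent. The difference is in how you obtain the ramified variation.

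The paper computes nothing. If $p\CO_{\Psi}\nmid\ff$ it cites \cite[Prop.~7.4, Cor.~7.8]{BKNO}. Otherwise it builds an auxiliary $\psi_0$ of conductor $\fp$ with $\psi_0(\delta)=\psi(\delta)$, notes that $\psi_0/\psi\in\Xi$, and transfers \cite[Prop.~7.7(ii)]{BKNO} (resp.\ Prop.~7.4) from $\psi_0$ to $\psi$ for $\chi$ whose conductor exceeds that of $\psi_0/\psi$. You instead prove the variation directly for arbitrary $\psi$ and $n\gg0$. You use inductivity in degree zero, so that $\lambda(\Psi/\Q_p)$ cancels, together with the stationary-phase formula and $\gamma_{\psi\chi^{a}}=a\,\gamma_{\psi\chi}$. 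This is self-contained and needs no $\psi_0$. The paper's reduction, in return, gives the exact variation for every $n\ge1$ in the minimal-conductor case, which is what Lemma~\ref{half-polynomial} later uses. It also covers the unramified case by the same citation, where your proposal only asserts the alternation.

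Three points need fixing before your computation is a proof.

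First, take $\theta(1+x)=\psi_{\Psi}(\gamma x)$ for $v(x)\ge m/2$, with $\psi_\Psi$ your additive character. The formula $\varepsilon(\theta)=\theta(\gamma)^{-1}\psi_{\Psi}(\gamma)\cdot(\text{positive constant})$ holds only for even conductor exponent $m$. For odd $m$ an extra quadratic Gauss sum over $\F_p$ appears. It also changes by $\bigl(\frac{a}{p}\bigr)$ under $\theta\mapsto\theta^{a}$, and would cancel the effect. So you must use that anticyclotomic characters over ramified $\Psi$ have conductor $\fp^{2k}$ (Lemma~\ref{conductor-of-anticyc}); then $a(\psi\chi)$ is even for $n\gg0$.

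Second, the trace-zero choice of $\gamma$ comes from anticyclotomicity, not self-duality. Since $\psi$ is trivial on $1+\fp^{m/2}$ and $\chi$ is trivial on $\Q_p^{\times}$, the character $x\mapsto\psi_{\Psi}(\gamma x)$ kills $\Q_p\cap\fp^{m/2}=\mathrm{Tr}_{\Psi/\Q_p}(\fp^{m/2})$. Hence $\gamma-\frac12\mathrm{Tr}_{\Psi/\Q_p}(\gamma)$ is also admissible. With this choice:
\begin{itemize}
\item $\psi_{\Psi}(\gamma)=\psi_{\Psi}(a\gamma)=1$;
\item $\gamma^{2}\in\Q_p^{\times}$ forces $\chi(\gamma)=1$, since $\chi$ has odd order;
\item $\varepsilon(\psi\chi^{a})/\varepsilon(\psi\chi)=\psi(a)^{-1}=\bigl(\frac{a}{p}\bigr)$, where $\psi$ must be read as the smooth character underlying $D_{\mathrm{pst}}$.
\end{itemize}

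Third, the variation holds only for $n$ large relative to the conductor of $\psi$. Your opening claim for all $n\ge1$, and hence $|\Xi_n^{+}|=|\Xi_n^{-}|$ for every $n\ge1$, is false in general. For example, let $\psi=\psi_0\chi_0$ with $\psi_0$ of conductor $\fp$ and $\chi_0\in\Xi_N$, $N\ge2$. For $1\le n<N$, every $\chi\in\Xi_n$ gives $\psi\chi=\psi_0\chi_0^{1+e}$ with $p\mid e$. Hence $\varepsilon(\Ind_{\Psi/\Q_p}(\psi\chi))=\varepsilon(\Ind_{\Psi/\Q_p}(\psi))$, so all of $\Xi_n$ lies in a single $\Xi^{\pm}$. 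Since only $n\gg0$ is needed, the lemma itself is unaffected.
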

\begin{proof}
Let $\ff$ denote the conductor of $\psi$.
If $p\CO_{\Psi}\nmid \ff$, then\footnote{This does not mean that $\ff$ is trivial in the ramified case.} the lemma follows from \cite[Prop.\ 7.4 \& Cor.\ 7.8]{BKNO}.

Hence, we assume that $p\CO_{\Psi}\mid \ff$.
Let $\delta \in \Psi^{\times}$ be a uniformiser such that $\delta^2\in \Q_p$.
Let $\psi_0: \Psi^{\times} \to \CO^{\times}$ be a continuous character such that $\psi_0|_{\Q_p^{\times}}=\omega_{\Psi/\Q_p}|\cdot |_{\Q_p}$,
 $\psi_0(a)=a \ (a \in 1+\fp )$ (\textit{i.e.} the conductor of $\psi_0$ coincides with the prime ideal $\fp$ of $\CO_{\Psi}$)
 and $\psi_0(\delta)=\psi(\delta)$,
whose existence follows from \cite[Lemmas 7.3 and 7.5]{BKNO}.
Then the character $\psi_0/\psi$ has conductor divisible by $p\CO_{\Psi}$ and is trivial on $\Q_p^{\times}$.
Hence,  $\psi_0/\psi$ is of $p$-power order (cf.\ \cite[Lem.\ 7.14]{BKNO}),
 and we may regard $\psi_0/\psi$ as an element in $\Xi.$
 
In the ramified case, for $\chi \in \Xi$ whose conductor is strictly contained in the conductor of $\psi_0/\psi$ and $b\in \Z_p^{\times}$,
\cite[Prop.\ 7.7 (ii)]{BKNO} implies that
\begin{equation}\label{epsilon-behaviour}
\begin{split}\varepsilon(\Ind_{K_{\fp}/\Q_p}(\psi\chi^b))&=\varepsilon(\Ind_{K_{\fp}/\Q_p}(\psi_0\chi^b\psi^{-1}_{0}\psi))=\left(\frac{b}{p}\right)\varepsilon(\Ind_{K_{\fp}/\Q_p}(\psi_0\chi\psi^{-1}_0\psi))\\
&=\left(\frac{b}{p}\right)\varepsilon_p(\Ind_{K_{\fp}/\Q_{p}}(\psi\chi)),
\end{split}
\end{equation}
where for a $p$-adic character $\phi$ of $K_{\fp}^{\times}$ we denote by $\Ind_{K_{\fp}/\Q_p}$ the induction of the $p$-adic representation of $G_{K_{\fp}}$ associated to $\phi$.  
In particular, both $\Xi^{+}$ and $\Xi^{-}$ are infinite.
In the unramified case, for $\chi \in \Xi$ whose conductor $\fp^k$ is strictly contained in the conductor of $\psi_{0}/\psi$,
\cite[Prop.\ 7.4]{BKNO} implies that 
\[
\varepsilon(\Ind_{K_{\fp}/\Q_p}(\psi\chi))=\varepsilon(\Ind_{K_{\fp}/\Q_p}(\psi_0\chi\psi\psi_0^{-1}))=(-1)^{k},
\]
which completes the proof. 
\end{proof}

\begin{defn}\label{def, pm}\leavevmode 
\begin{enumerate}
\item 
Define $\rH^1_{\pm}(\Psi, T_{\psi}^{\otimes -1}(1)\otimes_{\CO} \Lambda_{\ac})$ as the subgroup of $\rH^{1}(\Psi, T_{\psi}^{\otimes -1}(1)\otimes_{\CO} \Lambda_{\ac})$ consisting of classes whose image under the composite 
\begin{equation}
\begin{split}
\rH^{1}(\Psi, T_{\psi}^{\otimes -1}(1)\otimes_{\CO} \Lambda_{\ac})& \to 
\rH^{1}(\Psi, T_{\psi}^{\otimes -1}(1)\otimes \Lambda_{\ac})\otimes_{\Lambda_{\ac}, \chi} \CO[\mathrm{Im}(\chi)] \to \rH^1_{}(\Psi, T_{\psi\chi}^{\otimes -1}(1) )\\ &\xrightarrow{\exp^*}  D_{\dR}^0(T_{\psi\chi}^{\otimes -1}(1)[1/p])
\end{split}
\end{equation}
equals $0 \in D_{\dR}^0(T_{\psi\chi}^{\otimes -1}(1)[1/p])$ for all $\chi\in \Xi^{\mp},$
where the last arrow is given by the dual exponential map.
\item For $n\ge 0$,
we define 
$\rH^1_{\pm}(\Psi_{n}, V_{\psi}^{\otimes -1}(1)) \subseteq\rH^1_{}(\Psi_{n}, V_{\psi}^{\otimes -1}(1))$
as the image of $\rH^1_{\pm}(\Psi, T_{\psi}^{\otimes -1}(1)\otimes_{\CO} \Lambda_{\ac})[1/p]$
under the composite 
\[
\rH^1(\Psi, T_{\psi}^{\otimes -1}(1)\otimes_{\CO} \Lambda_{\ac})[1/p] \to \rH^1(\Psi, T_{\psi}^{\otimes -1}(1)\otimes_{\CO} \Lambda_{\ac})\otimes_{\Lambda_{\ac}}\CO[1/p][\Gal(\Psi_n/\Psi)] \cong \rH^1(\Psi_n, V_{\psi}^{\otimes -1}(1)),
\]
where $V_{\psi}^{\otimes -1}:=T_{\psi}^{\otimes -1}[1/p]$, the last isomorphism arises from Shapiro's lemma and \cite[Prop.\ 2.12]{BKNO}. 
We define $\rH^1_{\pm}(\Psi_{n}, T_{\psi}^{\otimes -1}(1)) \subseteq\rH^1_{}(\Psi_{n}, T_{\psi}^{\otimes -1}(1))$
as the inverse image of $\rH^1_{\pm}(\Psi_{n}, V_{\psi}^{\otimes -1}(1))$
under the natural map $\rH^1_{}(\Psi_{n}, T_{\psi}^{\otimes -1}(1))\to  \rH^1_{}(\Psi_{n}, V_{\psi}^{\otimes -1}(1))$. 
\end{enumerate}
\end{defn}
\subsubsection{Rubin's conjecture}

\begin{thm}\label{higher-weight-decomposition}\leavevmode 
Suppose that Assumption \ref{generic} holds.
\begin{enumerate}
\item  The $\Lambda_{\ac}$-module $\rH^1_{\pm}(\Psi, T_{\psi}^{\otimes -1}(1)\otimes_{\CO} \Lambda_{\ac})$ is free of rank one, 
and we have a decomposition of $\Lambda_{\ac}$-modules
\[
\rH^1_{}(\Psi, T_{\psi}^{\otimes -1}(1)\otimes_{\CO} \Lambda_{\ac})=\rH^1_{+}(\Psi, T_{\psi}^{\otimes -1}(1)\otimes_{\CO} \Lambda_{\ac}) \oplus \rH^1_{-}(\Psi, T_{\psi}^{\otimes -1}(1)\otimes_{\CO} \Lambda_{\ac}).
\]
\item For $n\ge 1$, we have $\rH^1_{\pm}(\Psi, T_{\psi}^{\otimes -1}(1)\otimes_{\CO} \Lambda_{\ac})\otimes_{\Lambda_{\ac}}\CO[\Gal(\Psi_n/\Psi)]=\rH^1_{\pm}(\Psi_{n}^{}, T_{\psi}^{\otimes -1}(1)),$
and  
\[
\rH^1(\Psi_{n}^{}, T_{\psi}^{\otimes -1}(1))=\rH^1_+(\Psi_{n}^{}, T_{\psi}^{\otimes -1}(1))\oplus \rH^1_{-}(\Psi_{n}^{}, T_{\psi}^{\otimes -1}(1)).
\]
\end{enumerate}
\end{thm}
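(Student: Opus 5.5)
The proof will be a translation of the Rubin-type conjecture proved in the prequel \cite[Thm.~1.18]{BKNO}, together with a careful identification of the submodules defined there with those of Definition~\ref{def, pm}. The statement has an Iwasawa-level part (1) and a finite-layer part (2); I will deduce (2) from (1) by specialisation.

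\emph{Part (1).} The starting point is that $\rH^1(\Psi,\bT_\psi)$ is $\Lambda_{\ac}$-free of rank two. This follows from the vanishing of $\rH^0$ and $\rH^2$ of the residual representation, which is where Assumption~\ref{generic} (and \cite[Lem.~7.22]{BKNO} in general) is used, combined with the local Euler characteristic formula. \cite[Thm.~1.18]{BKNO} then supplies a decomposition $\rH^1=\mathcal{H}_+\oplus\mathcal{H}_-$ into free rank-one Lagrangians. These are characterised there by the following property: for each de Rham $\chi$, the $\chi$-specialisation of $\mathcal{H}_{\hat\varepsilon}$ lands in the Bloch--Kato subgroup, where $\hat\varepsilon$ is the completed $\varepsilon$-constant. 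For finite-order $\chi\in\Xi^{\mp}$, the Bloch--Kato subgroup is the kernel of $\exp^*$, so $\mathcal{H}_\pm\subseteq \rH^1_\pm$, with $\rH^1_\pm$ as in Definition~\ref{def, pm}.

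For the reverse inclusion, let $x\in \rH^1_+$ and write $x=a+b$ with $a\in\mathcal{H}_+$ and $b\in \mathcal{H}_-=\Lambda_{\ac} v_-$. Then $b$ also has $\exp^*(\chi(b))=0$ for all $\chi\in\Xi^-$. On the other hand, \cite[Cor.~3.2]{BKNO} gives the non-vanishing $\exp^*(\chi(v_-))\neq 0$ for such $\chi$, since $\mathcal{H}_-$ is complementary to the Bloch--Kato line there. Hence if $b=f v_-$, the element $f\in\Lambda_{\ac}$ vanishes at every $\chi\in\Xi^-$. By Lemma~\ref{density-of-xi} the set $\Xi^-$ is infinite, and the Weierstrass preparation theorem then forces $f=0$. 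Thus $\rH^1_+=\mathcal{H}_+$, and symmetrically for the minus part, which proves (1).

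\emph{Part (2).} Tensoring the decomposition in (1) with $\CO[\Gal(\Psi_n/\Psi)]$ and using the isomorphism $\rH^1(\Psi,\bT_\psi)\otimes\CO[\Gal(\Psi_n/\Psi)]\cong \rH^1(\Psi_n,T_\psi^{\otimes-1}(1))$ gives a direct-sum decomposition of the finite-level cohomology into the images $M_\pm$ of $\rH^1_\pm$. This isomorphism is Shapiro's lemma together with the vanishing of $\rH^2$ of the residual representation, which gives control (\cite[Prop.~2.12]{BKNO}). By Definition~\ref{def, pm}, $\rH^1_\pm(\Psi_n,T)$ is the saturation of $M_\pm$ inside $\rH^1(\Psi_n,T)$: it is the inverse image of $M_\pm[1/p]$, and $\rH^1(\Psi_n,T)$ is torsion-free by the same $\rH^0$-vanishing. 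Since $M_\pm$ is a direct summand, it is already saturated: if $p^m y\in M_+$, write $y=y_++y_-$; then $p^m y_-\in M_+\cap M_-=0$, so $y_-=0$ by torsion-freeness. Therefore $\rH^1_\pm(\Psi_n,T)=M_\pm$, which gives both claims of (2).

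The main obstacle is the identification step in part (1): matching the sign conventions of \cite{BKNO} (completed $\varepsilon$-constants, the twist $T^{\otimes-1}(1)$ versus $T$, and the involution $\iota$) with the uncompleted $\varepsilon(\Ind(\psi\chi))$ used to define $\Xi^\pm$. I would handle this with \cite[Prop.~7.4, 7.7]{BKNO}, checking that for finite-order $\chi$ the completed constant differs from $\varepsilon(\Ind(\psi\chi))$ only by a factor independent of $\chi$ that is absorbed into the labelling.
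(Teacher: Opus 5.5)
Your proposal is correct and follows the paper's route. The paper's proof simply cites \cite[Thm.~7.25 and Prop.~2.12]{BKNO} together with Lemma~\ref{density-of-xi}; your Weierstrass-preparation argument over the infinite sets $\Xi^{\pm}$ (identifying the Lagrangians of \cite{BKNO} with the modules of Definition~\ref{def, pm}) and your saturation argument at finite level are how those three ingredients combine. The one slip is a labelling one, which you already flag as the main obstacle: by Theorem~\ref{lsd-ind}(3), the summand labelled $\pm$ is the one specialising to $\rH^1_{\rf}$ at $\chi\in\Xi^{\mp}$. So your phrase ``$\mathcal{H}_{\hat\varepsilon}$ lands in the Bloch--Kato subgroup'' is sign-reversed relative to Definition~\ref{def, pm}, which is harmless because your argument never depends on the labels.
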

\begin{remark}\label{signed-condition-p-invert}
As is proved in \cite{BKNO},
without Assumption \ref{generic},
Theorem \ref{higher-weight-decomposition} still holds after inverting $p$.
\end{remark}
\begin{proof}
This is a consequence of \cite[Thm.\ 7.25 and Prop.\ 2.12]{BKNO} and 
Lemma \ref{density-of-xi}. 
\end{proof}
\subsection{Construction of local points}
In the rest of this section,
we assume that $\Psi/\Q_p$ is a \textit{ramified} quadratic field extension and that the prime ideal  $\fp$ of $\CO_{\Psi}$ exactly divides\footnote{Note that there is no  $\psi$ satisfying (\ref{local-self-dual}) whose conductor is $\CO_{\Psi}$.} the conductor of $\psi$. We also indicate how to relax the latter assumption. 

\subsubsection{Gaussian plus/minus cyclotomic polynomials}
Let $\delta \in \Psi$ be a uniformiser such that $\delta^2\in \Q_p$,
so that $-\delta^2=N_{\Psi/\Q_p}(\delta).$ (Note that $\delta$ is unique up to $\Z_p^{\times}$.)

In view of \cite[Cor.\ 7.8 ii)]{BKNO},
 there exists a system of finite order characters $(\chi_n)_{n\ge 1}$ of $\Gal(\Psi_{\infty}/\Psi)$ 
such that $\chi_n\in \Xi_n^+$  
and $\chi_{n+1}^{-\delta^2}=\chi_{n}$, which we fix. 
Enlarge $\CO$ so that $\CO[1/p]$ contains the quadratic extension of $\Q$ inside the $p$-th cyclotomic field $\Q(\mathrm{Im}(\chi_1))$. 
Fix a topological generator $\gamma$ of $\Gal(\Psi_{\infty}/\Psi)$. 
\begin{defn}
For $k \ge 1$, define the \textit{Gaussian plus/minus cyclotomic polynomials} 
\[
\Phi_{k}^{\pm}(\gamma)=\prod_{\chi \in \Xi_k^{\pm}}(\gamma-\chi(\gamma)) \in \Q(\mathrm{Im}(\chi_k))[\![\gamma]\!] \simeq \Q({\rm Im}(\chi_{k}))[\![\Gal(\Psi_{\infty}/\Psi)]\!].
\]
\end{defn}
\begin{lem}\label{half-polynomial}
For $k\ge 1$, we have $\Phi_{k}^{\pm}(\gamma) \in \CO[\gamma]$.
\end{lem}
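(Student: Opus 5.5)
The plan is to show that each $\Xi_k^{\pm}$ is a single orbit under the index-two subgroup of $\Gal(\Q(\mu_{p^k})/\Q)$ corresponding to $\Q(\sqrt{(-1)^{(p-1)/2}p})$. Then the coefficients of $\Phi_k^{\pm}(\gamma)$ are algebraic integers in that quadratic field, and that field lies in $\CL=\CO[1/p]$ by our choice of $\CO$.

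\emph{Step 1: describe $\Xi_k^{\pm}$.} Every $\chi\in\Xi_k$ has $\chi(\gamma)$ a primitive $p^k$-th root of unity, so $\Xi_k=\{\chi_k^b \mid b\in(\Z/p^k)^{\times}\}$ with $\chi_k$ the fixed character. I would use the sign rule
\[
\varepsilon(\Ind_{\Psi/\Q_p}(\psi\chi^b))=\Bigl(\frac{b}{p}\Bigr)\varepsilon(\Ind_{\Psi/\Q_p}(\psi\chi)),\qquad \chi\in\Xi_k,\ b\in\Z_p^{\times}.
\]
When $p\CO_\Psi$ divides the conductor this is \eqref{epsilon-behaviour}, for all $\chi$ of large enough conductor. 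Under the standing hypothesis that $\fp$ exactly divides the conductor of $\psi$, it is the corresponding statement \cite[Prop.~7.7, Cor.~7.8]{BKNO}, valid for all $k\ge1$. Since $\chi_k\in\Xi_k^+$, this gives
\[
\Xi_k^{+}=\Bigl\{\chi_k^b \;\Big|\; \Bigl(\frac{b}{p}\Bigr)=1\Bigr\},\qquad \Xi_k^{-}=\Bigl\{\chi_k^b \;\Big|\; \Bigl(\frac{b}{p}\Bigr)=-1\Bigr\}.
\]

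\emph{Step 2: rationality.} Let $\zeta=\chi_k(\gamma)$. Then
\[
\Phi_k^{\pm}(\gamma)=\prod_{(b/p)=\pm1}(\gamma-\zeta^b),
\]
with $b$ running over $(\Z/p^k)^\times$. Let $\sigma_a$ ($a\in(\Z/p^k)^\times$) act on $\Q(\zeta)$ by $\zeta\mapsto\zeta^a$. If $a$ is a square mod $p$, then $b\mapsto ab$ permutes each of the two residue classes, so $\sigma_a$ fixes every coefficient. The subgroup of such $\sigma_a$ has index two, and its fixed field is the unique quadratic subfield of $\Q(\mu_p)\subset\Q(\zeta)$, namely $\Q(\sqrt{(-1)^{(p-1)/2}p})$. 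By the assumption on $\CO$ this field lies in $\CO[1/p]$, so the coefficients of $\Phi_k^\pm$ lie in $\CO[1/p]$.

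\emph{Step 3: integrality.} The coefficients are elementary symmetric functions in roots of unity, hence algebraic integers. An element of $\CO[1/p]$ that is integral over $\Z_p$ lies in $\CO$, because $\CO$ is integrally closed. Since $\Phi_k^\pm$ is a polynomial in $\gamma$, we get $\Phi_k^{\pm}(\gamma)\in\CO[\gamma]$.

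The main obstacle is Step 1. One must check that the quadratic-residue sign rule holds for all $\chi\in\Xi_k$ with $k\ge1$, not just for $\chi$ of large conductor as in the proof of Lemma~\ref{density-of-xi}. I would get this from the conductor-$\fp$ hypothesis together with \cite[Cor.~7.8]{BKNO}, the same input used to choose $(\chi_n)$. Without that hypothesis, I would expect the statement to hold only for $k$ large.
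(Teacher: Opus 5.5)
Your proposal is correct and follows essentially the same route as the paper. Both identify $\Xi_k^{\pm}$ with $\{\chi_k^b : (b/p)=\pm 1\}$ via \cite[Cor.~7.8 i)]{BKNO}, deduce that the coefficients of $\Phi_k^{\pm}$ lie in the quadratic subfield $M$ of $\Q(\mu_p)$, and conclude by integrality because $\CO_M\subseteq\CO$; your Step~2 just spells out the Galois-invariance argument that the paper compresses into ``it follows that''.
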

\begin{proof}
 For a character $\chi=\chi_k^b \in \Xi_k$ with $b\in \Z_p^{\times}$,
note that $\chi \in \Xi_k^{\pm}$ if and only if $\displaystyle
\left(\frac{b}{p}\right)=\pm 1$ (cf.~\cite[Cor.\ 7.8 i)]{BKNO}).
Hence, 
we have
\begin{equation}\label{another-description-psi}
\Phi_{k}^{\pm}(\gamma)=\prod_{ b \in (\Z/p^k\Z)^{\times},  \left(\frac{b}{p}\right)=\pm 1}(\gamma-\chi_k^b(\gamma)). 
\end{equation}
It follows that $\Phi_{k}^{\pm}(\gamma) \in M[\gamma]$, where $M$ denotes the quadratic extension of $\Q$ inside $\Q(\mathrm{Im}(\chi_1))$.
Since the coefficients of $\Phi_{k}^{\pm}(\gamma)$ are algebraic integers, they lie in $\CO_M$, and the lemma follows from the assumption that $\CO_M \subseteq \CO.$
\end{proof}
For $n\ge 1$,
put
\begin{equation}\label{def-omega}
\omega_n^+=\prod_{0\le k\le n}\Phi_{k}^+(\gamma),\quad \omega_n^- =\prod_{0\le k\le n}\Phi_{k}^-(\gamma) \quad \in \CO[\gamma],
\end{equation}
where $\Phi_0^{\pm}$ is given by $$\Phi_0^{\varepsilon(\mathrm{Ind}_{\Psi/\Q_p}\psi )}=\gamma-1, \quad\Phi_0^{-\varepsilon(\mathrm{Ind}_{\Psi/\Q_p}\psi )}=1.$$
For $\chi \in \Xi_{\le n}$, note that\footnote{This notably differs from \cite[\S 2.3]{BKO24}, where the sign on $\omega_n^{\pm}(\gamma)$ is determined by the parity of the exponent of order of characters.}
$\chi\in \Xi^{\pm}$ if and only if $\chi(\omega_{n}^{\pm}(\gamma))=0.$ 

 \begin{defn}\label{def, Psi}
Put $d=p/(-\delta^2) \in \Z_p^{\times},$ and 
$\epsilon_{}(\Psi)=\left(\frac{d}{p}\right) \in \{\pm 1\}$,
which is independent of the choice of $\delta$.
\end{defn}
\begin{lem}\label{signed-cyc-polynomial-mod}
For $n\ge 0$, we have 
$\Phi_{n+1}^{\pm}(\gamma)\equiv \Phi_1^{\pm\epsilon(\Psi)^n}(1) \bmod (\gamma^{p^n}-1),$
where we note that $\epsilon(\Psi)^n=\left(\frac{d^n}{p}\right)\in \{\pm 1 \}.$
\end{lem}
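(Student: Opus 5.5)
The plan is to compute $\Phi_{n+1}^{\pm}(\gamma)$ modulo $\gamma^{p^n}-1$ directly from the explicit description \eqref{another-description-psi}. I will group the factors according to the residue of $b$ modulo $p$, so that each group collapses to a binomial in $\gamma^{p^n}$. Put $\zeta=\chi_{n+1}(\gamma)$, a primitive $p^{n+1}$-th root of unity. By \eqref{another-description-psi},
\[
\Phi_{n+1}^{\pm}(\gamma)=\prod_{\substack{b\in(\Z/p^{n+1}\Z)^{\times}\\ \left(\frac{b}{p}\right)=\pm1}}(\gamma-\zeta^{b}).
\]
The condition $\left(\frac{b}{p}\right)=\pm 1$ depends only on $r:=b \bmod p$. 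So the index set is the disjoint union, over $r\in(\Z/p\Z)^{\times}$ with $\left(\frac{r}{p}\right)=\pm1$, of the fibres $\{b\equiv r \bmod p\}$. Each fibre consists of the $p^n$ elements $b=\tilde r+pm$ with $m\in\Z/p^n\Z$, for a fixed lift $\tilde r$.

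On such a fibre, $\zeta^{b}=\zeta^{\tilde r}\,(\zeta^{p})^{m}$. Since $\zeta^{p}$ is a primitive $p^n$-th root of unity, $(\zeta^p)^m$ runs over all $p^n$-th roots of unity $\xi$. The identity $\prod_{\xi^{p^n}=1}(X-a\xi)=X^{p^n}-a^{p^n}$ then gives
\[
\prod_{b\equiv r \bmod p}(\gamma-\zeta^{b})=\gamma^{p^n}-\zeta^{\tilde r p^n}\equiv 1-\zeta^{\tilde r p^n}\pmod{\gamma^{p^n}-1}.
\]
Hence $\Phi_{n+1}^{\pm}(\gamma)\equiv\prod_{(r/p)=\pm1}\bigl(1-\chi_{n+1}^{p^n}(\gamma)^{r}\bigr)$. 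Everything here takes place in $\CO[\gamma]$, so the congruence is an honest congruence of polynomials.

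The remaining step, which I expect to be the only delicate one, is to identify the order-$p$ character $\chi_{n+1}^{p^n}$ in terms of $\chi_1$. It has to be done carefully, because the fixed system only satisfies $\chi_{k+1}^{-\delta^2}=\chi_k$ rather than $\chi_{k+1}^{p}=\chi_k$. Powers by elements of $\Z_p$ make sense since these characters have $p$-power order. Iterating the relation gives $\chi_{n+1}^{(-\delta^2)^n}=\chi_1$. Writing $p=(-\delta^2)d$ with $d\in\Z_p^{\times}$ as in Definition~\ref{def, Psi}, we get
\[
\chi_{n+1}^{p^n}=\bigl(\chi_{n+1}^{(-\delta^2)^n}\bigr)^{d^n}=\chi_1^{d^n}.
\]
Substituting, the product becomes $\prod_{(r/p)=\pm1}\bigl(1-\chi_1^{d^n r}(\gamma)\bigr)$. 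As $r$ runs over residues with $\left(\frac{r}{p}\right)=\pm1$, the element $r'=d^nr$ runs over residues with $\left(\frac{r'}{p}\right)=\pm\left(\frac{d^n}{p}\right)=\pm\epsilon(\Psi)^n$. Applying \eqref{another-description-psi} with $k=1$ and evaluating at $\gamma=1$, the product is exactly $\Phi_1^{\pm\epsilon(\Psi)^n}(1)$.

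For $n=0$ the statement is the tautology that $\Phi_1^{\pm}(\gamma)\equiv\Phi_1^{\pm}(1)$ modulo $\gamma-1$, and the argument above covers it uniformly.
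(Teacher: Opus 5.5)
Your proof is correct and is essentially the paper's argument: both group the roots $\chi_{n+1}(\gamma)^{b}$ according to their $p^n$-th powers, and both use $\chi_{n+1}^{p^n}=\chi_1^{d^n}$ (from $\chi_{n+1}^{-\delta^2}=\chi_n$) to obtain the exact identity $\Phi_{n+1}^{\pm}(\gamma)=\Phi_1^{\pm\epsilon(\Psi)^n}(\gamma^{p^n})$, which the paper states before reducing modulo $\gamma^{p^n}-1$, whereas you reduce first. Phrasing it via that identity also tightens your remark that ``everything takes place in $\CO[\gamma]$'' — the individual factors $\gamma-\zeta^{b}$ only live over $\CO[\zeta]$ — because $\Phi_1^{\pm}(X)-\Phi_1^{\pm}(1)\in (X-1)\CO[X]$ by Lemma~\ref{half-polynomial}.
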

\begin{proof}
Note that 
\[
\Phi_{n+1}^+(\gamma)\Phi_{n+1}^{-}(\gamma)=\frac{\gamma^{p^{n+1}}-1}{\gamma^{p^n}-1}=\Phi^+_1(\gamma^{p^n})\Phi^-_1(\gamma^{p^n}), 
\]
and for $a \in \Z_p^{\times}$, putting $X=\chi_{n+1}(\gamma)^a$, we have
\[
X^{p^n}=\chi_{n+1}(\gamma)^{p^na}=\chi_{n+1}(\gamma)^{a(-\delta^2)^{n}d^{n}}=\chi_1(\gamma)^{ad^{n}}. 
\]
Hence,  (\ref{another-description-psi}) implies that  $\Phi_{n+1}^{\pm}(\gamma)=\Phi_1^{\pm\epsilon(\Psi)^n}(\gamma^{p^n})$, 
concluding the proof.
 \end{proof}

The invariant $\Phi_1^{\pm}(1)$ has the following arithmetic interpretation, where
we regard the subfield $\Q(\chi_1(\gamma)) \subset \CO[1/p](\chi_1(\gamma))$
as the subfield $\Q(e^{2\pi i/p}) \subset \C$ by identifying $\chi_1(\gamma) $ with  $e^{2\pi i/p}$.
\begin{lem}\label{value-at-one}\leavevmode
\begin{enumerate}
\item If $p>3$ and $p  \equiv 3  \bmod 4 $,
then $$\Phi_1^{+}(1)=(-1)^{\frac{h(\Q(\sqrt{-p}))+1}{2}}\sqrt{-p},$$
where $\sqrt{-p}\in\CO$ denotes the square root identified with $i\sqrt{p}\in\C$ under $\chi_1(\gamma)\leftrightarrow e^{2\pi i/p}$, equivalently $\sqrt{-p}=\sum_{a\in \mathbb{F}_p^{\times}}\left(a/p\right)\chi_1(\gamma)^a$.
\item If $p  \equiv 1 \bmod 4 $,
then $$\Phi_1^{+}(1)=\sqrt{p} \cdot u_p^{-h(\Q(\sqrt{p}))/2},$$
where $\sqrt{p}=\sum_{a\in \mathbb{F}_p^{\times}}\left(a/p\right)\chi_1(\gamma)^a$ and $u_p=(a+b\sqrt{p})/2 \in \Z[(1+\sqrt{p})/2]^{\times}$, with $a,b$ the least positive integers satisfying $a^2-pb^2=4$.
\item We have $\Phi_1^{+}(1)\Phi_1^{-}(1)=p$; consequently $\Phi_1^{-}(1)=-\Phi_1^{+}(1)$ if $p\equiv 3\bmod 4$, and $\Phi_1^{-}(1)=\sqrt{p}\cdot u_p^{h(\Q(\sqrt{p}))/2}$ if $p\equiv 1\bmod 4$.
\end{enumerate}
\end{lem}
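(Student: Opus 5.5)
The plan is to compute directly in $\Q(\zeta)\subset\C$, where $\zeta=\chi_1(\gamma)\leftrightarrow e^{2\pi i/p}$. By \eqref{another-description-psi} with $k=1$ we have
\[
\Phi_1^{\pm}(1)=\prod_{b\in\F_p^\times,\ (b/p)=\pm1}(1-\zeta^{b}).
\]
Part (3) comes first and is formal: $\Phi_1^+(1)\Phi_1^-(1)=\prod_{b\in\F_p^\times}(1-\zeta^b)$, which is the $(p-1)$-st cyclotomic-type polynomial $1+X+\cdots+X^{p-1}$ evaluated at $X=1$, namely $p$. Moreover, $\Phi_1^{\pm}(1)$ is fixed by $\Gal(\Q(\zeta)/M)$, the squares in $(\Z/p)^\times$, so $\Phi_1^{\pm}(1)\in\CO_M$. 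The element $\sigma_{-1}$ (complex conjugation) fixes each residue class set when $p\equiv1\bmod 4$ and swaps them when $p\equiv 3\bmod 4$.

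For $p\equiv3\bmod4$ the argument is as follows. Complex conjugation gives $\Phi_1^-(1)=\overline{\Phi_1^+(1)}$, so $|\Phi_1^+(1)|^2=p$. Since $\Phi_1^+(1)\in\CO_M$ with $M=\Q(\sqrt{-p})$ and has norm $p$, the only options are $\Phi_1^+(1)=\pm\sqrt{-p}$, because the prime above $p$ is $(\sqrt{-p})$ and the units are $\pm1$ for $p>3$. This already gives $\Phi_1^-(1)=-\Phi_1^+(1)$. To fix the sign, write
\[
1-\zeta^b=-2i\sin(\pi b/p)\,e^{\pi i b/p}
\]
with $0<b<p$, so every sine is positive. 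Then
\[
\arg\Phi_1^+(1)\equiv \tfrac{p-1}{2}\cdot(-\tfrac{\pi}{2})+\tfrac{\pi}{p}R \pmod{2\pi},
\]
where $R=\sum_{0<b<p,\,(b/p)=1}b$. Next I would insert the classical class number formula $N-R=p\,h(\Q(\sqrt{-p}))$, where $N$ is the sum of the non-residues, together with $N+R=p(p-1)/2$. These give $R=p\bigl((p-1)/2-h\bigr)/2$. Comparing the resulting argument with $\arg(i\sqrt p)=\pi/2$ and using that $h$ is odd yields the sign $(-1)^{(h+1)/2}$. The identification $\sqrt{-p}=\sum_a(a/p)\zeta^a$ is Gauss's sign of the quadratic Gauss sum.

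For $p\equiv1\bmod4$ the argument is as follows. Both residue sets are stable under $b\mapsto-b$, so $\Phi_1^{\pm}(1)=\prod|1-\zeta^b|$ over the respective set, which is positive real. Hence
\[
\log\bigl(\Phi_1^+(1)/\Phi_1^-(1)\bigr)=\sum_{b}\left(\tfrac{b}{p}\right)\log|1-\zeta^b|.
\]
Dirichlet's class number formula for the real quadratic character (with Gauss sum $\sqrt p$), namely $L(1,(\cdot/p))=-\frac{1}{\sqrt p}\sum_b(b/p)\log|1-\zeta^b|=\frac{2h\log u_p}{\sqrt p}$ with the normalisations of $h$ and $u_p$ as in the statement, expresses this sum in terms of $h(\Q(\sqrt p))\log u_p$. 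Combining this with the product relation $\Phi_1^+(1)\Phi_1^-(1)=p$ from (3) and positivity, I would solve for the two values to get $\Phi_1^{\pm}(1)=\sqrt p\,u_p^{\mp h/2}$. Here $\sqrt p$ is the positive root, which again matches the Gauss sum.

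The main obstacle is bookkeeping the constants correctly. I need the exact normalisation in Dirichlet's formula, where the factor of $2$ from summing over all $b$ versus $0<b<p/2$ determines whether the exponent is $h/2$ or $h$. In the $p\equiv3\bmod4$ case, I need the residue-sum form of the class number formula (valid for $p>3$, where $w=2$) and careful tracking of arguments modulo $2\pi$. I would double-check both against small cases, $p=5$ and $p=7$.
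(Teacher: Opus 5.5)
\textbf{Parts (1) and (3).} These are correct. They are also more self-contained than the paper, which simply cites Lemmermeyer for the class-number identities. Complex conjugation gives $|\Phi_1^+(1)|^2=p$. The residue-sum formula $h(\Q(\sqrt{-p}))=(N-R)/p$ gives $\arg\Phi_1^+(1)\equiv -\pi h/2 \pmod{2\pi}$. Together with $\Phi_1^+(1)\Phi_1^-(1)=\Phi_p(1)=p$, this yields exactly $(-1)^{(h+1)/2}\sqrt{-p}$. (Note that $\Phi_p$ is the $p$-th cyclotomic polynomial, not the $(p-1)$-st.)

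\textbf{Part (2) has a factor-of-two error.} Your displayed identity $-\frac{1}{\sqrt p}\sum_b (b/p)\log|1-\zeta^b| = \frac{2h\log u}{\sqrt p}$ is Dirichlet's formula with $u=\epsilon_p>1$ the \emph{fundamental} unit. The $u_p$ of the statement is different: it is defined by $a^2-pb^2=4$, so it is the least unit $>1$ of norm $+1$.

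For a prime $p\equiv 1\bmod 4$ the fundamental unit has norm $-1$; this is a classical theorem, equivalent to the solvability of $x^2-py^2=-1$. Hence $u_p=\epsilon_p^2$. For instance, for $p=5$ one has $\epsilon_5=(1+\sqrt5)/2$ and $u_5=(3+\sqrt5)/2=\epsilon_5^2$.

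If you insert the statement's $u_p$ into your formula as written, you get $\Phi_1^+(1)/\Phi_1^-(1)=u_p^{-2h}$. Combined with the product relation, this gives $\Phi_1^{\pm}(1)=\sqrt p\,u_p^{\mp h}$, not the claimed $u_p^{\mp h/2}$. For $p=5$ this would give $(3\sqrt5-5)/2\approx 0.854$, whereas the true value is $2-2\cos(2\pi/5)=(5-\sqrt5)/2$. So your conclusion does not follow from the formula you wrote.

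\textbf{The fix.} The missing input is precisely $N(\epsilon_p)=-1$. It converts $2h\log\epsilon_p$ into $h\log u_p$. With it you get $\Phi_1^+(1)/\Phi_1^-(1)=u_p^{-h}$, and then $\Phi_1^{\pm}(1)=\sqrt p\,u_p^{\mp h/2}$ follows from $\Phi_1^+(1)\Phi_1^-(1)=p$ and positivity, as you intended.

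You also misidentified the source of the factor $2$. It is not the ``all $b$ versus $0<b<p/2$'' normalisation: your all-$b$ version with Gauss sum $\sqrt p$ is already correct. The factor comes from the norm of the fundamental unit. Your planned sanity check at $p=5$ would have caught this.
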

\begin{proof}
Part 1) follows from \cite[pp.\ 221--222]{L06}.

By \cite[p.\ 232]{L06}, 
we have\footnote{Our $\Phi_1^+(1)$ (resp.\ $\Phi_1^-(1)$) corresponds to $A(1)$ (resp.\ $B(1)$) and $u_p$ corresponds to $E(p)$ of \cite{L06} (cf.\ \cite[p. 340]{L05}).}
\[
h(\Q(\sqrt{p}))\log u_p = 2\log \frac{|\Phi_1^+(1)|^{-1}p}{\sqrt{p}}.
\]
and so $|\Phi_1^+(1)|=\sqrt{p} \cdot u_p^{-h(\Q(\sqrt{p}))/2}$.
Since $-1$ is a square in $\F_p^{\times}$,
we have 
\[
\Phi_1^+(1)=\prod_{1\le a \le  (p-1)/2,\ 
(a/p)=1}(1-e^{2a\pi i/p})(1-e^{-2a\pi i/p})=\prod_{1\le a \le  (p-1)/2,\ 
(a/p)=1} \left(2-2\cos(2a\pi /p)\right)>0,
\]
yielding part 2). 
Part 3) follows from
$$\Phi_1^{+}(1)\Phi_1^{-}(1)=\prod_{b=1}^{p-1}(1-\chi_1(\gamma)^b)=\Phi_p(1)=p,$$
where the first equality is (\ref{another-description-psi}) evaluated at $\gamma=1$ and $\Phi_p$ denotes the $p$-th cyclotomic polynomial; combined with parts 1) and 2) (and, in the case $p\equiv 3\bmod 4$, the fact that $h(\Q(\sqrt{-p}))$ is odd), this gives the stated values of $\Phi_1^{-}(1)$.
\end{proof}
\subsubsection{Plus/minus local points}
Fix a $\Lambda_{\ac}$-basis $v^{\pm}$ of $\rH^1_{\pm}(\Psi, T_{\psi}^{\otimes -1}(1)\otimes_{\CO} \Lambda_{\ac})$.
\begin{defn}
We define plus/minus local points by 
\begin{equation}\label{local-point-def}
c_n^{\pm}=\omega_n^{\pm}v^{\pm}_{ n} \in \rH^1_{\pm}(\Psi_n, T_{\psi}^{\otimes -1}(1)),
\end{equation}
where $v^{\pm}_{ n}$ denotes the image of $v^{\pm}_{}$ under 
$
\rH^1_{\pm}(\Psi, T_{\psi}^{\otimes -1}(1)\otimes \Lambda_{\ac}) \to
 \rH^1_{\pm}(\Psi_n, T_{\psi}^{\otimes -1}(1)).
$
\end{defn}
\begin{prop}\label{finiteness-of-c}
For $n\ge 0$,
we have 
\[
c_n^{\pm} \in \rH^1_{\rf}(\Psi_n, T_{\psi}^{\otimes -1}(1))
:= \mathrm{Ker}\left(\rH^1_{}(\Psi_n, T_{\psi}^{\otimes -1}(1))\to \frac{\rH^1_{}(\Psi_n, V_{\psi}^{\otimes -1}(1))}{\rH^1_{\rf}(\Psi_n, V_{\psi}^{\otimes -1}(1))}\right),
\]
where $V_{\psi}^{\otimes -1}:=T_{\psi}^{\otimes -1}[1/p]$, and
$\rH^1_{\rf}(\Psi_n, V_{\psi}^{\otimes -1}(1))$ denotes the Bloch--Kato subgroup.
\end{prop}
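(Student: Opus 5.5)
The plan is to reduce the integral statement to a rational one and then decompose over characters. Since $\rH^1_{\rf}(\Psi_n, T_{\psi}^{\otimes -1}(1))$ is defined as an inverse image, it suffices to show that the image of $c_n^{\pm}$ in $\rH^1(\Psi_n, V_{\psi}^{\otimes -1}(1))$ lies in $\rH^1_{\rf}(\Psi_n, V_{\psi}^{\otimes -1}(1))$. After enlarging $\CO$ to contain all values of characters of $\Gal(\Psi_n/\Psi)$, Shapiro's lemma gives
\[
\rH^1(\Psi_n, V_{\psi}^{\otimes -1}(1))\otimes \CL \cong \bigoplus_{\chi} \rH^1(\Psi, V_{\psi\chi}^{\otimes -1}(1)),
\]
with $\chi$ running over the characters of $\Gal(\Psi_n/\Psi)$, that is, over $\Xi_{\le n}$. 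This decomposition is induced by the specialisations $\Lambda_{\ac}\to \CO$, $\gamma\mapsto \chi(\gamma)$. Under it, the Bloch--Kato subgroup decomposes compatibly as $\bigoplus_\chi \rH^1_{\rf}(\Psi, V_{\psi\chi}^{\otimes -1}(1))$, since the Bloch--Kato condition is compatible with restriction and corestriction (Shapiro) and with finite extensions of scalars. So the task reduces to showing that $\chi(c_n^{\pm}) \in \rH^1_{\rf}(\Psi, V_{\psi\chi}^{\otimes -1}(1))$ for each $\chi\in\Xi_{\le n}$.

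Write $\chi(c_n^{\pm})=\chi(\omega_n^{\pm})\cdot \chi(v^{\pm})$, where $\chi(v^\pm)$ is the specialisation of $v^{\pm}$ at $\chi$; this holds by $\Lambda_{\ac}$-linearity of specialisation. There are two cases.

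\textbf{Case $\chi\in\Xi^{\pm}$.} By the remark after \eqref{def-omega}, $\chi(\omega_n^{\pm})=0$. Hence $\chi(c_n^{\pm})=0$, which certainly lies in $\rH^1_{\rf}$.

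\textbf{Case $\chi\in\Xi^{\mp}$.} Since $v^{\pm}\in \rH^1_{\pm}(\Psi, \bT_\psi)$, Definition~\ref{def, pm}(1) gives $\exp^*(\chi(v^{\pm}))=0$. Now $V_{\psi\chi}^{\otimes -1}(1)$ is a de Rham representation of $G_\Psi$. Bloch--Kato duality says that $\rH^1_{\rf}$ is the exact annihilator of $\rH^1_{\rf}$ of the Tate dual under the local pairing, and that $\exp^*$ is adjoint to $\exp$, whose image is that annihilator. Together these give $\ker(\exp^*)=\rH^1_{g}$. Finally, $\rH^1_g=\rH^1_{\rf}$ here: the difference has dimension $\dim D_{\rm cris}(V^*(1))^{\varphi=1}$, and this vanishes because the dual is a character $\psi\chi$ of weight $(1,0)$-type with no crystalline $\varphi=1$ part. (Alternatively, one can quote \cite{BKNO} for $\ker\exp^*=\rH^1_{\rf}$ at these specialisations.) Hence $\chi(c_n^{\pm})\in\rH^1_{\rf}$. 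This also covers $\chi$ trivial, by the convention on $\Phi_0^{\pm}$.

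The main obstacle is the bookkeeping in the first reduction: checking that the Shapiro isomorphism of \cite[Prop.~2.12]{BKNO} matches the Bloch--Kato subgroup at level $n$ with the direct sum of the Bloch--Kato subgroups of the twists, after extending scalars. I would handle this by noting that $\rH^1_{\rf}$ is defined by the kernel to $\rH^1(-,V\otimes B_{\rm cris})$, which commutes with induction. The check that $\rH^1_g=\rH^1_{\rf}$, that is, the absence of the eigenvalue $1$, is a routine weight argument.
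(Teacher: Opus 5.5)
Your proof is correct and follows the paper's argument: you reduce to the specialisations at $\chi\in\Xi_{\le n}$, use $\chi(\omega_n^{\pm})=0$ for $\chi\in\Xi^{\pm}$, and use $c_n^{\pm}\in\rH^1_{\pm}$ for $\chi\in\Xi^{\mp}$. The only difference is that you spell out a step the paper leaves implicit, namely that at $\chi\in\Xi^{\mp}$ the $\pm$-condition is the Bloch--Kato condition (cf.\ Theorem~\ref{lsd-ind}(3)); you do this via $\ker\exp^*=\rH^1_g=\rH^1_{\rf}$, using $D_{\rm cris}(V_{\psi\chi})^{\varphi=1}=0$ by weights, though your intermediate sentence on annihilators is slightly misphrased, since the image of $\exp$ is $\rH^1_e$ and its annihilator is $\rH^1_g$; the conclusion is unaffected.
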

\begin{proof}
It suffices to show that: for a character $\chi$ of $ \Gal(\Psi_n/\Psi)$, 
the image of $c_n^{\pm}$ under the composite 
\[
\rH^1_{}(\Psi_n, T_{\psi}^{\otimes -1}(1)) \to \rH^1_{}(\Psi_n, V_{\psi}^{\otimes -1}(1))\otimes_{\CO[\Gal(\Psi_n/\Psi)], \chi}\CO[\mathrm{Im}(\chi)] =\rH^1_{}(\Psi, V_{\psi\chi}^{\otimes -1}(1))
\]
lies in $\rH^1_{\rf}(\Psi, V_{\psi\chi}^{\otimes -1}(1))$.
If $\chi \in \Xi^{\mp}$, then this follows from the inclusion $c_n^{\pm}\in \rH^1_{\pm}(\Psi_n, T_{\psi}^{\otimes -1}(1))$,
and if $\chi \in \Xi^{\pm}$, it
follows from the vanishing $\chi(\omega^{\pm}_n)=0$.
\end{proof}
\begin{prop}\label{norm-relation-of-c}
For $n\ge 0$,
we have $$\mathrm{Cor}_{n+1/n}c^{\pm}_{n+1}=\Phi_1^{\pm\epsilon(\Psi)^n}(1)c_n^{\pm},$$
where $\mathrm{Cor}_{n+1/n}: \rH^1_{}(\Psi_{n+1}, T_{\psi}^{\otimes -1}(1))\to \rH^1_{}(\Psi_{n}, T_{\psi}^{\otimes -1}(1))$ denotes the corestriction map.
\end{prop}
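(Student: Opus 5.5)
We will compute the corestriction of $c_{n+1}^{\pm}=\omega_{n+1}^{\pm}v^{\pm}_{n+1}$ directly in the finite-level Iwasawa cohomology. Under the Shapiro identification $\rH^1(\Psi_m, T_{\psi}^{\otimes -1}(1))\cong \rH^1(\Psi, T_{\psi}^{\otimes -1}(1)\otimes_{\CO}\CO[\Gal(\Psi_m/\Psi)])$ recalled in the Conventions, together with \cite[Prop.~2.12]{BKNO}, we have $\rH^1(\Psi_m, T_{\psi}^{\otimes -1}(1)) \cong \rH^1(\Psi, \bT_{\psi})\otimes_{\Lambda_{\ac}}\CO[\Gal(\Psi_m/\Psi)]$. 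Under this identification, $\mathrm{Cor}_{n+1/n}$ is the map induced by the natural projection $\CO[\Gal(\Psi_{n+1}/\Psi)]\to \CO[\Gal(\Psi_n/\Psi)]$. Consequently $\mathrm{Cor}_{n+1/n}(v^{\pm}_{n+1})=v^{\pm}_n$, since both are images of the same $v^{\pm}$. Corestriction is also $\Lambda_{\ac}$-linear, i.e.\ it commutes with multiplication by elements of $\CO[\gamma]$. We will check this compatibility first; it is the only point requiring care, and it is standard because both sides are induced from the tower of quotients of $\Lambda_{\ac}$.

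With this in hand,
\[
\mathrm{Cor}_{n+1/n}(c^{\pm}_{n+1})=\omega^{\pm}_{n+1}\,v^{\pm}_n=\Phi^{\pm}_{n+1}(\gamma)\,\omega^{\pm}_n\, v^{\pm}_n ,
\]
where the second equality uses the product definition \eqref{def-omega}, $\omega_{n+1}^{\pm}=\Phi_{n+1}^{\pm}(\gamma)\omega_n^{\pm}$. Because $v_n^{\pm}$ lives in a module over $\CO[\Gal(\Psi_n/\Psi)]=\CO[\gamma]/(\gamma^{p^n}-1)$, we may reduce $\Phi^{\pm}_{n+1}(\gamma)$ modulo $\gamma^{p^n}-1$. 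Lemma~\ref{signed-cyc-polynomial-mod} gives $\Phi_{n+1}^{\pm}(\gamma)\equiv\Phi_1^{\pm\epsilon(\Psi)^n}(1)\bmod (\gamma^{p^n}-1)$. Hence
\[
\mathrm{Cor}_{n+1/n}(c^{\pm}_{n+1})=\Phi_1^{\pm\epsilon(\Psi)^n}(1)\,\omega_n^{\pm}v_n^{\pm}=\Phi_1^{\pm\epsilon(\Psi)^n}(1)\,c_n^{\pm}.
\]

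The case $n=0$ is handled in the same way. Here $\gamma^{p^0}-1=\gamma-1$, so $\Phi_1^{\pm}(\gamma)\equiv \Phi_1^{\pm}(1)$ in $\CO[\Gal(\Psi_0/\Psi)]=\CO$. This agrees with the formula since $\epsilon(\Psi)^0=1$. The factor $\Phi_0^{\pm}\in\{\gamma-1,1\}$ inside $\omega_0^{\pm}$ is carried along unchanged on both sides. The main (mild) obstacle is thus only the verification that corestriction corresponds to the projection of group rings under the Shapiro isomorphism; the remainder is the polynomial congruence already proved.
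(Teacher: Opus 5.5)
Your proposal is correct and follows essentially the same route as the paper. The paper also identifies $\mathrm{Cor}_{n+1/n}$ with the map induced by the projection $\CO[\Gal(\Psi_{n+1}/\Psi)]\to\CO[\Gal(\Psi_n/\Psi)]$ under Shapiro's lemma, deduces $\mathrm{Cor}_{n+1/n}c_{n+1}^{\pm}=\Phi_{n+1}^{\pm}(\gamma)c_n^{\pm}$, and concludes by Lemma~\ref{signed-cyc-polynomial-mod}. Your separate check of the case $n=0$ is a harmless extra.
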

\begin{proof}
Under Shapiro's lemma, note that 
 $\mathrm{Cor}_{n+1/n}$ is identified with the map
 \[
 \rH^1_{}(\Psi, T_{\psi}^{\otimes -1}(1)\otimes_{\CO}\CO[\Gal(\Psi_{n+1}/\Psi)])\to \rH^1_{}(\Psi, T_{\psi}^{\otimes -1}(1)\otimes_{\CO}\CO[\Gal(\Psi_n/\Psi)])
 \]
induced by the natural projection $\CO[\Gal(\Psi_{n+1}/\Psi)] \to \CO[\Gal(\Psi_{n}/\Psi)]$. 
So we have 
\begin{equation}\label{naive-norm}
\mathrm{Cor}_{n+1/n}c^{\pm}_{n+1}=\Phi_{n+1}^{\pm}(\gamma)c_n^{\pm} 
\end{equation}
and Lemma \ref{signed-cyc-polynomial-mod} concludes the proof. 
\end{proof}
\begin{remark}\label{norm-c-frak}If we define \[
\mathfrak{c}_n^{\pm}
=\begin{cases}
\prod_{1\le k\le n-1}(\Phi_1^{\pm\epsilon(\Psi)}(1)/\Phi_1^{\pm\epsilon(\Psi)^k}(1))c_n^{\pm} & (n\ge 2),\\
c_n^{\pm} & (n\le 1),
\end{cases}
\]
then $\mathrm{Cor}_{n+1/n}\fc^{\pm}_{n+1}=\Phi_1^{\pm\epsilon(\Psi)}(1)\fc_n^{\pm}.$
For each $k$, the ratio $\Phi_1^{\pm\epsilon(\Psi)}(1)/\Phi_1^{\pm\epsilon(\Psi)^k}(1)$ is a unit in $\CO$: it equals $1$ if $\epsilon(\Psi)^k=\epsilon(\Psi)$, and otherwise equals $p/\Phi_1^{\pm\epsilon(\Psi)}(1)^2$, which is a unit since $\Phi_1^{+}(1)^2=-p$ if $p\equiv 3\bmod 4$ and $\Phi_1^{+}(1)^2=p\cdot u_p^{-h(\Q(\sqrt{p}))}$ (a unit multiple of $p$) if $p\equiv 1\bmod 4$, by Lemma~\ref{value-at-one}. Hence $\mathfrak{c}_n^{\pm}$ generates the same $\CO[\Gal(\Psi_n/\Psi)]$-module as $c_n^{\pm}$, and Theorem~\ref{c-generates-fpm} shows that this common system generates $\rH^1_{\rf,\pm}(\Psi_n, T_{\psi}^{\otimes-1}(1))$ as well. Thus the single system $(\mathfrak{c}_n^{\pm})_n$ satisfies both the trace relation above and the generation property of Theorem~\ref{c-generates-fpm}, as asserted in Theorem~\ref{intro-local-result}.
\end{remark}

\subsection{$c_n^{\pm}$ as generators}
\subsubsection{Local sign decomposition for induced representations}\label{ss, lsd-ind} In preparation for the main result of this section (cf.~Theorem \ref{c-generates-fpm}), we recall some results of \cite{BKNO} which underlie Theorem~\ref{higher-weight-decomposition}.

For $n \ge 0$,
put 
\[
T_n=\mathrm{Ind}_{\Psi_n/\Psi}(T_{\psi}^{\otimes -1}(1))=T_{\psi}^{\otimes -1}(1)\otimes_{\CO}\CO[\Gal(\Psi_n/\Psi)],\quad \tilde{T}_n=\mathrm{Ind}_{\Psi/\Q_p}(T_n). 
\]
Then for $\chi \in \Xi_{\le n}:=\bigcup_{0\le k \le n}\Xi_{k}$,
the $\CO_{\chi}[G_{\Q_p}]$-representation 
\[
\tilde{T}_{\chi}:=\tilde{\bT}_{\psi}\otimes_{\Lambda_{\ac},\chi}\CO_{\chi}=\tilde{T}_n\otimes_{\CO[\Gal(\Psi_n/\Psi)],\chi^{}}\CO_{\chi}=\Ind_{\Psi/\Q_p}(T_{\psi\chi}^{\otimes -1}(1))
\]
is symplectic self-dual,
where $\CO_{\chi}$ denotes the integer ring of a  finite extension of $\CO[1/p]$
containing the image of $\chi$. 

\begin{thm}\label{lsd-ind} There exist canonical $\CO_{\chi}$-submodules $\rH^1_{\pm}(\Q_p, \tilde{T}_{\chi})$ of $\rH^1(\Q_p, \tilde{T}_{\chi})$ with the following properties. 
\begin{enumerate}
\item The $\CO_{\chi}$-submodules $\rH^1_{\pm}(\Q_p, \tilde{T}_{\chi}) \subset \rH^1(\Q_p, \tilde{T}_{\chi})$ are maximal isotropic submodules with respect to the symmetric Tate pairing on $\rH^1(\Q_p, \tilde{T}_{\chi})$.
\item As $\CO_\chi$-modules, we have 
$$\rH^1(\Q_p, \tilde{T}_{\chi})=\rH^1_{+}(\Q_p, \tilde{T}_{\chi}) \oplus \rH^1_{-}(\Q_p, \tilde{T}_{\chi}).$$
\item If $\chi \in \Xi^{\pm}$, then $\rH^1_{\mp}(\Q_p, \tilde{T}_{\chi})$ coincides with  $\rH^1_{\rf}(\Q_p, \tilde{T}_{\chi})$.
\item Under $\rH^1_{}(\Psi, T_{n})\otimes_{\CO[\Gal(\Psi_n/\Psi)],\chi}\CO_{\chi}\cong \rH^1_{}(\Q_p, \tilde{T}_{\chi})$ (cf.~\cite[Prop.\ 2.22]{BKNO}), we have 
 $$\rH^1_{\pm}(\Psi, T_{n})\otimes_{\CO[\Gal(\Psi_n/\Psi)],\chi}\CO_{\chi}=\rH^1_{\pm}(\Q_p, \tilde{T}_{\chi}).$$

\end{enumerate}
\end{thm}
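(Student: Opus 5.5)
Theorem~\ref{lsd-ind} is a restatement, in terms of the specialisations $\tilde{T}_{\chi}$, of results established in \cite{BKNO}; the plan is to define $\rH^1_{\pm}(\Q_p,\tilde{T}_{\chi})$ directly as the specialisations of the $\Lambda_{\ac}$-adic signed submodules, and then read off (1)--(4) from Theorem~\ref{higher-weight-decomposition} together with the local duality statements of \cite[\S7]{BKNO}. Concretely, transport $\rH^1_{\pm}(\Psi,\bT_{\psi})$ through Shapiro's lemma to $\rH^1(\Q_p,\tilde{\bT}_{\psi})$ and set
\[
\rH^1_{\pm}(\Q_p,\tilde{T}_{\chi}) := \mathrm{Im}\bigl(\rH^1_{\pm}(\Q_p,\tilde{\bT}_{\psi})\otimes_{\Lambda_{\ac},\chi}\CO_{\chi}\to \rH^1(\Q_p,\tilde{T}_{\chi})\bigr).
\]
The first step is the control isomorphism $\rH^1(\Q_p,\tilde{\bT}_{\psi})\otimes_{\Lambda_{\ac},\chi}\CO_{\chi}\cong \rH^1(\Q_p,\tilde{T}_{\chi})$ (and its level-$n$ analogue \cite[Prop.\ 2.22]{BKNO}); this is where Assumption~\ref{generic} enters, since the obstruction is $\rH^2(\Q_p,\tilde{\bT}_\psi)[\ker\chi]$, controlled by the vanishing of $\rH^0$ of the residual dual representation. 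Given this, (2) follows by tensoring the free decomposition of Theorem~\ref{higher-weight-decomposition}(1) with $\CO_{\chi}$, and (4) follows by factoring $\chi$ through $\CO[\Gal(\Psi_n/\Psi)]$ and using Theorem~\ref{higher-weight-decomposition}(2).

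For (1), use that $\rH^1_{\pm}(\Psi,\bT_{\psi})$ are Lagrangian for the $\Lambda_{\ac}$-adic symmetric pairing induced by local Tate duality and the symplectic self-duality of $\tilde{\bT}_{\psi}$ (this is the content of \cite[Thm.\ 7.25]{BKNO}); the pairing specialises under $\chi$ to the Tate pairing on $\rH^1(\Q_p,\tilde{T}_{\chi})$ (up to the twist by $\iota$, which is compatible with the self-duality), so the rank-one images are isotropic, and since the two summands span a module whose rank is twice theirs, with perfect pairing between them, they are maximal isotropic.

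For (3), the key input is the characterisation of the signed submodules by dual exponentials: for $\chi\in\Xi^{\pm}$, Definition~\ref{def, pm} gives $\exp^*(\chi(\rH^1_{\mp}))=0$, i.e.\ $\rH^1_{\mp}(\Q_p,\tilde{T}_{\chi})\subseteq \rH^1_{\rf}$ after inverting $p$ (the Bloch--Kato subgroup being the kernel of $\exp^*$ for Hodge--Tate weights $\{0,1\}$). Both sides are rank one $\CO_{\chi}$-saturated submodules: $\rH^1_{\mp}$ is a direct summand by (2), and $\rH^1_{\rf}$ is saturated by its definition as an inverse image. Hence the rational inclusion upgrades to equality. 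The main obstacle I anticipate is verifying compatibility of the pairing and the sign conventions (which of $\pm$ matches $\rH^1_{\rf}$ at $\chi\in\Xi^\pm$), which I would check against the $\varepsilon$-constant description in \cite[\S1.3.3, Cor.\ 7.8]{BKNO}.
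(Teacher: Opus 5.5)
Your argument is correct, but it runs in the opposite direction from the paper.

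\textbf{The two routes.} The paper gives no argument of its own: it cites \cite[Thm.\ 1.3]{BKNO}. There the Lagrangians $\rH^1_{\pm}(\Q_p,\tilde{T}_{\chi})$ are constructed intrinsically for an arbitrary symplectic self-dual representation, and (4) is a genuine compatibility statement. In \cite{BKNO} it is these pointwise Lagrangians that underlie the family decomposition of Theorem~\ref{higher-weight-decomposition}. You instead take Theorem~\ref{higher-weight-decomposition} as input, define the pointwise modules as its specialisations, and recover (1)--(4) from the control isomorphism and a saturation argument. Within this paper that is not circular, since Theorem~\ref{higher-weight-decomposition} is itself imported.

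\textbf{What your route buys.} It clarifies the structure. Statement (4) becomes tautological, and in fact it pins the submodules down uniquely, so ``canonical'' costs nothing. Statement (3) is forced by Definition~\ref{def, pm} with the same sets $\Xi^{\pm}$. So the sign check against $\varepsilon$-constants that you flagged as the main obstacle is unnecessary; that comparison is hidden inside Theorem~\ref{higher-weight-decomposition}.

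\textbf{Two refinements.}
\begin{enumerate}
\item In general $\ker(\exp^*)=\rH^1_{\rm g}$, not $\rH^1_{\rf}$. The two agree here because $D_{\rm cris}(\tilde{V}_{\chi})=0$ in the ramified setting, and you should say so.
\item For (1) you do not need the $\Lambda_{\ac}$-adic Lagrangian property. The element $(v^{\pm},v^{\pm})\in\Lambda_{\ac}$ vanishes at the infinitely many $\chi\in\Xi^{\mp}$, where $\chi(v^{\pm})$ spans the self-orthogonal $\rH^1_{\rf}$ by (3). Hence it is zero by Weierstrass preparation.
\end{enumerate}

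\textbf{What the paper's route buys.} It gives a family-free definition of the signed submodules. That is what is used later, when $\rH^1_{\pm}$ is applied to $\Ind_{K_p/\Q_p}(V_{\varphi\chi})$ and transported along isomorphisms of symplectic self-dual representations (Lemmas~\ref{universal-norm} and~\ref{local-pm-to-f}).
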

\begin{proof}This is a special case of \cite[Thm.\ 1.3]{BKNO}.  
\end{proof}
Put
$\tilde{V}_{\chi}=\tilde{T}_{\chi}[1/p]$. 
Then we also have $\rH^1_{\pm}(\Q_p, \tilde{V}_{\chi})\subseteq \rH^1_{}(\Q_p, \tilde{V}_{\chi})$ with properties as in Theorem~\ref{lsd-ind},
and for  $\bullet \in \Set{+, -, \rf}$,
\begin{equation}\label{saturated}
\rH^1_{\bullet}(\Psi_n, T_{\psi}^{\otimes -1}(1))=\Set{x \in \rH^1_{}(\Psi_n, T_{\psi}^{\otimes -1}(1)) | \chi(x) \in \rH^1_{\bullet} (\Q_p, \tilde{V}_{\chi}) \ (\chi \in \Xi_{\le n}^{}) },
\end{equation}
where  $\chi(x)$ denotes the image of $x\in \rH^1_{}(\Psi_n, T_{\psi}^{\otimes -1}(1))$ under  the map
\[
 \rH^1(\Psi_n, T_{\psi}^{\otimes -1}(1))\to \rH^1(\Psi_n, T_{\psi}^{\otimes -1}(1))\otimes_{\CO[\Gal(\Psi_n/\Psi )],\chi}\CO_{\chi}[1/p]=\rH^1(\Q_p, \tilde{V}_{\chi}).
\]
For $\bullet=\rf$, (\ref{saturated}) is immediate from the definition of the Bloch--Kato subgroup; for $\bullet=\pm$, it follows from Theorem~\ref{lsd-ind} and Theorem~\ref{higher-weight-decomposition}, identifying $\rH^1_{\pm}(\Psi_n, T_{\psi}^{\otimes -1}(1))\otimes\Q_p$ with $\rH^1_{\pm}(\Q_p, \tilde{T}_n)\otimes \Q_p$ via Shapiro's lemma and noting that the quotient of $\rH^1(\Psi_n, T_{\psi}^{\otimes -1}(1))$ by each side of (\ref{saturated}) is $p$-torsion-free.
\begin{defn}
 The plus/minus subgroups of the Bloch--Kato subgroups $\rH^1_{\rf}(\Psi_{n}, T_{\psi}^{\otimes -1}(1))$ are defined by 
\[
\begin{split}
\rH^1_{\rf, \pm}(\Psi_{n}, T_{\psi}^{\otimes -1}(1))=&\Set{x \in \rH^1_{\rf}(\Psi_{n}, T_{\psi}^{\otimes -1}(1)) | \chi(x)=0 \ \   (\chi \in \Xi_{\le n}^{\pm} )}
\\
=&\rH^1_{\rf}(\Psi_{n}, T_{\psi}^{\otimes -1}(1)) \cap \rH^1_{\pm}(\Psi_{n}, T_{\psi}^{\otimes -1}(1)),
\end{split}
\]
where $\Xi_{\le n}^{\pm}:=\Xi_{\le n} \cap \Xi^{\pm}$, and the second equality follows from the above properties of $\rH^1_{\pm}$  and (\ref{saturated}).
\end{defn}
\subsubsection{}
\begin{thm}\label{c-generates-fpm}
\leavevmode
\begin{enumerate}
\item The element $c_n^{\pm}$ lies in $\rH^1_{\rf, \pm}(\Psi_n, T_{\psi}^{\otimes -1}(1))$ and generates it over $\CO[\Gal(\Psi_n/\Psi)].$
\item As $\CO[\Gal(\Psi_n/\Psi)]$-modules, 
we have 
\begin{equation}\label{direct-sum-finite-part}
 \rH^1_{\rf}(\Psi_n, T_{\psi}^{\otimes -1}(1)) =\rH^1_{\rf, +}(\Psi_n, T_{\psi}^{\otimes -1}(1)) \oplus\rH^1_{\rf, -}(\Psi_n, T_{\psi}^{\otimes -1}(1))
\end{equation}
\end{enumerate}
\end{thm}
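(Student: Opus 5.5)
The plan is to reduce everything to a divisibility statement in the group ring $R_n:=\CO[\Gal(\Psi_n/\Psi)]=\CO[\gamma]/(\gamma^{p^n}-1)$, using the rank-one freeness from Theorem~\ref{higher-weight-decomposition}.

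\textbf{Step 1: coordinates.} By Theorem~\ref{higher-weight-decomposition}(2), $\rH^1_{\pm}(\Psi_n,T_{\psi}^{\otimes-1}(1))=R_n v_n^{\pm}$ is free of rank one. Moreover,
\[
\rH^1(\Psi_n,T_{\psi}^{\otimes-1}(1))=R_nv_n^+\oplus R_nv_n^-.
\]
For $\chi\in\Xi_{\le n}$, Theorem~\ref{lsd-ind}(2),(4) shows that $\chi(v_n^{\pm})$ generates the free rank-one $\CO_\chi$-module $\rH^1_{\pm}(\Q_p,\tilde T_\chi)$. Here freeness of rank one follows by base change from the freeness over $R_n$. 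In particular $\chi(v_n^{\pm})\neq 0$ in $\rH^1_\pm(\Q_p,\tilde V_\chi)$, and $\rH^1_+(\Q_p,\tilde V_\chi)\cap\rH^1_-(\Q_p,\tilde V_\chi)=0$.

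\textbf{Step 2: description of $\rH^1_{\rf,\pm}$.} Let $x=a v_n^{\pm}$ with $a\in R_n$. By the definition and (\ref{saturated}), $x\in \rH^1_{\rf,\pm}$ if and only if $\chi(a)\chi(v_n^\pm)=0$ for all $\chi\in\Xi^{\pm}_{\le n}$. By Step 1 this is equivalent to $\chi(a)=0$ for all $\chi\in \Xi_{\le n}^{\pm}$. The condition for $\chi\in\Xi^{\mp}$ is automatic, because for such $\chi$ we have $\rH^1_\pm(\Q_p,\tilde V_\chi)=\rH^1_\rf(\Q_p,\tilde V_\chi)$ by Theorem~\ref{lsd-ind}(3).

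Next I claim that the set of such $a$ is exactly the ideal $\omega_n^{\pm}R_n$. Lift $a$ to $\tilde a\in\CO[\gamma]$. The polynomial $\omega_n^{\pm}$ is monic with coefficients in $\CO$ (Lemma~\ref{half-polynomial}), and its roots are exactly the distinct values $\chi(\gamma)$ for $\chi\in\Xi_{\le n}^\pm$; this uses the convention for $\Phi_0^{\pm}$ at the trivial character. Euclidean division by a monic polynomial stays in $\CO[\gamma]$, so we can write $\tilde a=q\,\omega_n^\pm+r$ with $q,r\in\CO[\gamma]$ and $\deg r<\deg\omega_n^\pm$. Then $r$ vanishes at $\deg\omega_n^\pm$ distinct points, hence $r=0$. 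Conversely, $\omega_n^\pm$ kills every $\chi\in\Xi^\pm_{\le n}$.

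It follows that $\rH^1_{\rf,\pm}(\Psi_n,T_{\psi}^{\otimes-1}(1))=R_n\,\omega_n^\pm v_n^\pm=R_nc_n^\pm$. This proves (1); membership of $c_n^\pm$ in the group also follows from Proposition~\ref{finiteness-of-c}. Since $\omega_n^+\omega_n^-=\gamma^{p^n}-1$, one could alternatively phrase the ideal as an annihilator, but the division argument is more direct.

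\textbf{Step 3: the direct sum (2).} The sum is direct because $R_nv_n^+\cap R_nv_n^-=0$. For the reverse inclusion, take $x=av_n^++bv_n^-\in\rH^1_\rf$.
\begin{itemize}
\item[(a)] For $\chi\in\Xi^+_{\le n}$, Theorem~\ref{lsd-ind}(3) gives $\chi(x)\in\rH^1_\rf(\Q_p,\tilde V_\chi)=\rH^1_-(\Q_p,\tilde V_\chi)$. Projecting to the $+$-component of the decomposition forces $\chi(a)\chi(v_n^+)=0$, hence $\chi(a)=0$.
\item[(b)] Symmetrically, $\chi(b)=0$ for $\chi\in\Xi^-_{\le n}$.
\item[(c)] By the criterion of Step 2, $av_n^+\in\rH^1_{\rf,+}$ and $bv_n^-\in\rH^1_{\rf,-}$.
\end{itemize}
Step (c) again uses (\ref{saturated}) to pass from the specialisations back to the integral group.

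The main obstacle is the integrality in Step 1. One needs $\chi(v_n^\pm)$ to be nonzero after inverting $p$, which requires that the base change of the free module $R_nv_n^\pm$ to $\CO_\chi$ is precisely $\rH^1_\pm(\Q_p,\tilde T_\chi)$ via Theorem~\ref{lsd-ind}(4). One also needs (\ref{saturated}) so that conditions checked on all specialisations $\chi\in\Xi_{\le n}$ detect membership integrally. Once these are secured, the remainder is the monic-division argument above.
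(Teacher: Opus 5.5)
Your proposal is correct, and for (1) it follows the paper's proof essentially verbatim. Both reduce via (\ref{saturated}) and Theorem~\ref{lsd-ind}(3) to the set $\{x\in \rH^1_{\pm}(\Psi_n,T_{\psi}^{\otimes-1}(1)) \mid \chi(x)=0\ (\chi\in\Xi^{\pm}_{\le n})\}$; your monic-division argument spells out the final identification with $\CO[\Gal(\Psi_n/\Psi)]\,\omega_n^{\pm}v_n^{\pm}$, which the paper simply attributes to the freeness of $\rH^1_{\pm}$ and of $\rH^1/\rH^1_{\pm}$.

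For (2) the paper instead proves the equality after inverting $p$ and concludes because both quotients of $\rH^1(\Psi_n,T_{\psi}^{\otimes-1}(1))$ are $p$-torsion-free. You write $x=a v_n^{+}+b v_n^{-}$ integrally and check each coordinate. This uses the same inputs, just organised more directly.
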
 
\begin{proof}
In view of (\ref{saturated}), we have 
 \begin{equation}\label{description-f-pm}
\begin{split}
&\rH^1_{\rf, \pm}(\Psi_{n}, T_{\psi}^{\otimes -1}(1))\\
=&\Set{x \in \rH^1_{}(\Psi_{n}, T_{\psi}^{\otimes -1}(1)) | \chi(x)=0 \ (\chi \in \Xi_{\le n}^{\pm}),  \ \chi(x) \in \rH^1_{\rf}(\Q_p, \tilde{V}_{\chi})  \   (\chi \in \Xi_{\le n}^{\mp} )}\\
=&\Set{x \in \rH^1_{}(\Psi_{n}, T_{\psi}^{\otimes -1}(1)) | \chi(x)=0 \ (\chi \in \Xi_{\le n}^{\pm}), \ \chi(x) \in \rH^1_{\pm}(\Q_p, \tilde{V}_{\chi})  \   (\chi \in \Xi_{\le n}^{\mp} )}\\
=&\Set{x \in \rH^1_{}(\Psi_{n}, T_{\psi}^{\otimes -1}(1)) | \chi(x)=0 \ (\chi \in \Xi_{\le n}^{\pm}), \ \chi(x) \in \rH^1_{\pm}(\Q_p, \tilde{V}_{\chi})  \   (\chi \in \Xi_{\le n}^{} )}\\
=&\Set{x \in \rH^1_{\pm}(\Psi_{n}, T_{\psi}^{\otimes -1}(1)) | \chi(x)=0 \ (\chi \in \Xi_{\le n}^{\pm})}\\
=&\CO[\Gal(\Psi_n/\Psi)] \cdot \omega_{n}^{\pm}v_n^{\pm}. 
\end{split} 
\end{equation}
Here the second equality follows from the properties of $\rH^1_{\pm}(\Q_p, \tilde{V}_{\chi})$ in \S\ref{ss, lsd-ind},
and  the last equality from the fact that $\rH^1_{\pm}(\Psi_{n}, T_{\psi}^{\otimes -1}(1))$ is free of rank one over $\CO[\Gal(\Psi_n/\Psi)]$ and generated by $v_n^{\pm}$, and $\rH^1_{}(\Psi_{n}, T_{\psi}^{\otimes -1}(1))/\rH^1_{\pm}(\Psi_{n}, T_{\psi}^{\otimes -1}(1))$ is also $\CO[\Gal(\Psi_n/\Psi)]$-free (cf.~Theorem~\ref{higher-weight-decomposition}).  Since $c_{n}^{\pm}=\omega_{n}^{\pm}v_n^{\pm}$, part 1) follows. 

As for part 2), we first note that the equality holds after inverting $p$. 
In view of (\ref{saturated}) and (\ref{description-f-pm}),
the quotient of $\rH^1(\Psi_n, T_{\psi}^{\otimes -1}(1))$ by the left or right hand side of 
\eqref{direct-sum-finite-part}
is $p$-torsion-free. 
Hence, part 2) follows.
\end{proof}

\section{Elliptic unit main conjectures}\label{section-Selmer-group} 
In this section we develop some preliminaries on elliptic units and $p$-adic $L$-functions needed for the formulation and proof of the plus/minus Iwasawa main conjectures. The main result is the anticyclotomic elliptic unit main conjecture of Theorem~\ref{anticyc-zetaIMC}, which is deduced from Rubin's work and underlies subsequent sections. 
\subsection{Notation}\label{ss, nt3}
\subsubsection{} 
Let $p\ge 3$ be a prime.
Let $K$ be an imaginary quadratic field such that
$p$ does not divide its class number $h_K$ and
\begin{equation*}\label{n-spl}\tag{n-spl}
\text{ $p$ does \textit{not} split in $K$.} 
 \end{equation*}
 Let $\fp$ denote the unique prime of $K$ above $p$. 
Fix an embedding $\iota: K\hookrightarrow \C$ and denote by $\overline{\Q}$ the algebraic closure of $K$ inside $\C$ via $\iota$.
We regard abelian extensions of $K$ as subfields of $\overline{\Q} \subset \C$.

Let $K_{\infty}/K$ be the $\Z_p^{2}$-extension and $K_{\infty}^{\ac}$ the anticyclotomic $\Z_p$-extension. 
Let $K_n^{\rm ac}$ denote the $n$-th layer of $K_{\infty}^{\ac}/K$ and $K_{n,p}^{\ac}:=K_{n}^{\ac}\otimes_K K_{\fp}$. 
For a non-zero ideal $\fg$ of $\CO_K$, denote by $K(\fg)$ the ray class field of $K$ of conductor $\fg$.
For a finitely generated $\Z_p$-module $T$  with continuous action of $G_{K}:=\Gal(\overline{\Q}/K)$,
put
\[
\bfH^1_{\fg}(T)=\varprojlim_{n}\rH^1(K(p^n\fg), T),\quad \bfH^1_{\fg}(T\otimes \Q_p)=\bfH^1_{\fg}(T) \otimes \Q_{p}.
\]

\subsubsection{} For an algebraic  extension $F$ of $K$ and a prime $v$ of $F$,
we put $F_{v}=\bigcup_{K\subseteq M \subseteq F}M_v$,
where $M$ ranges over all finite extensions of $K$ inside $F$, and $M_v$ denotes the completion at the prime of $M$ below $v$. 

Put 
\[
\begin{split}
&\rH^1_{\rf}(F_{v}, T\otimes \Q_p)=
\begin{cases}
\ker\left(\rH^1(F_v, T\otimes \Q_p) \to \rH^1(F_v^{\ur}, T\otimes \Q_p)\right) & (v\nmid p),\\
\ker\left(\rH^1(F_v, T\otimes \Q_p) \to \rH^1(F_v^{}, T\otimes_{\Q_p} B_{\cris})\right) & (v\mid p),
\end{cases}\\
&\rH^1_{\rel}(F_{v}, T\otimes \Q_p)=\rH^1_{}(F_{v}, T\otimes \Q_p),\\
&\rH^1_{\str}(F_{v}, T\otimes \Q_p)=\{0\},
\end{split}
\] 
and denote by $\rH^1_{\bullet}(F_{v}, T) \subseteq \rH^1_{}(F_{v}, T) $ the inverse image of $\rH^1_{\bullet}(F_{v}, T\otimes \Q_p)$
under the natural map $\rH^1_{}(F_{v}, T)\to \rH^1_{}(F_{v}, T\otimes \Q_p)$.
Moreover,  denote by $\rH^1_{\bullet}(F_{v}, T\otimes \Q_p/\Z_p)$ the image of
$\rH^1_{\bullet}(F_{v}, T\otimes \Q_p)$ under the natural map
$\rH^1_{}(F_{v}, T\otimes \Q_p)\to \rH^1_{}(F_{v}, T\otimes \Q_p/\Z_p)$.

For $M \in \Set{T\otimes \Q_p/\Z_p, T}$  and $\bullet \in \Set{\rel, \str, \rf}$,
we define Selmer groups 
\[
\begin{split}
\Sel_{\bullet}(F,M)&=\ker\left(\rH^1(F, M) \to \frac{\rH^1(F_{p}, M)}{\rH^1_{\bullet}(F_{p}, M)} \times \prod_{v\nmid p}\frac{\rH^1(F_v, M)}{\rH^1_{\rf}(F_v, M)}\right),
\end{split}
\]
where $v$ ranges over all places of $F$ not dividing $p$,
and $ \rH^1_{\bullet}(F_{p}, -):=\prod_{v\mid p}\rH^1_{\bullet}(F_{v}, -)$.
We also put
$\Sel_{}(F,M)=\Sel_{\rf}(F,M).$

\subsubsection{Main conjecture}
Let  $K_{\infty}^{\ac}$ be the anticyclotomic $\Z_p$-extension of $K$. 
Let $\varphi$ be an algebraic Hecke character of $K$
of conductor $\ff$
 whose infinity  type is given by a fixed $\iota: K\hookrightarrow \C$
such that 
\begin{equation}\label{self-dual-character}
\varphi|_{\mathbb{A}_{\Q}^{\times}}=\omega_{K/\Q}|\cdot |_{\mathbb{A}_{\Q}^{\times}},
\end{equation}
where $\mathbb{A}_{\Q}^{\times}$ denotes the idele group of $\Q$, 
 $|\cdot |_{\mathbb{A}_{\Q}^{\times}}$ denotes the norm map,
 and $\omega_{K/\Q}$ denotes the quadratic character associated to $K/\Q$. 
Let $\CO$ be the integer ring of a sufficiently large finite extension of $\Q_p$,
and $V=V_{\varphi}$ the $\CO[1/p]$-representation of $G_{K}$ associated to $\varphi$
and $T$   an $\CO$-lattice of $V$ (see \S \ref{subsection-setup} for details). 

We list the hypotheses that will be assumed in parts of the paper. 

\begin{assumption}\label{for-integrality}\leavevmode
\begin{enumerate}
\item If  $K_p:=K\otimes_{\Q} \Q_{p}\cong \Q_3(\sqrt{-3})$ (in particular, $p=3$), then $\rH^0(K_p, T^{\otimes -1}(1)\otimes \CO/\fm )=\{0\}$,
where $T^{\otimes -1}=\Hom_{\CO}(T, \CO)$.
\item There exists a finite abelian extension $H$ of $K$ containing the Hilbert class field of $K$, unramified outside $\ff$ 
such that $p\nmid [H:K]$ and
$\varphi \circ N_{H/K}$ is the Hecke character associated to $A$ over $H$, taking a model of $A$ over $H$.  
\end{enumerate}
\end{assumption}
\begin{remark}
A typical example satisfying (2) is given by a canonical Hecke character, and in this case $H$ may be taken to be the Hilbert class field.     
\end{remark}

\subsection{Selmer groups}\label{subsection-setup}
\subsubsection{Galois representations} 

 Let $L=L_{\varphi}$ be the subfield of $\C$ generated by $\varphi(\widehat{K}^{\times})$ over $K$, where we regard $\varphi$ as a $\C$-valued character of the idele class group of $K$.
 In the following, we fix a prime $\lambda$ of $\overline{\Q}$ dividing $p\CO_K$, 
and let $\lambda$ also denote the  place of $L$ below $\lambda$.
Let $\fg$ be a non-zero integral ideal of $K$  contained in $\ff$,
 and $(A, \alpha)$ be the canonical CM pair of modulus $\fg$ over $K(\fg)$ in the sense\footnote{To use arguments in \cite[\S 15]{K}, note that the natural map $\CO_K^{\times} \to (\CO_K/\ff)^{\times}$ is always injective:  
for $a \in \CO_K$ with $a\equiv 1 \bmod \ff$ we have $\varphi(a \CO_K)=a$ when $\varphi$ is regarded as a function on the fractional ideals.} of \cite[\S 15]{K}. 
 Then $(A, \alpha)$ is canonically isomorphic to the CM pair $(\C/\fg, 1\bmod \fg)$ over $\C$. 
 
Put \[
V(\varphi)=V_{L_{\lambda}}(\varphi)=\rH^1_{\et}(A\otimes_{K(\fg)}\overline{\Q}, \Q_p)\otimes_{K\otimes \Q_p}L_{\lambda}. 
\]
Then, as in \cite[15.8]{K},
 $V_{L_{\lambda}}(\varphi)$ is a one-dimensional $L_{\lambda}$-space naturally equipped with $L_{\lambda}$-linear $\Gal(K(p^{\infty}\ff)/K)$-action such that 
for a non-zero ideal $\fa$ of $\CO_K$ relatively prime to $p\ff$,
the Artin symbol $(\fa, K(p^{\infty}\ff)/K )$ acts by the multiplication by $\varphi(\fa)^{-1}$. 
Here $\varphi$ is regarded as a character of the group of fractional ideals,
and the reciprocity map $(-, K(p^{\infty}\ff)/K)$ is normalised so that it sends a prime ideal to the arithmetic Frobenius. 
Note that $V_{L_{\lambda}}(\varphi)$ is independent of the choice of $\fg$ and $(A,\alpha)$ (cf.~\cite[pp.\ 257--258]{K}). 

Put
\begin{equation}\label{our-V}
V =V_{\varphi}=V_{\varphi, \lambda}
=V_{L_{\lambda}}(\varphi)^{\otimes -1}:=\Hom_{L_{\lambda}}(V_{L_{\lambda}}(\varphi), L_{\lambda}).
\end{equation}
Denote by $\CO$ the integer ring of $L_{\lambda}$ and by $\fm$ the maximal ideal. 
Fix an $\CO$-lattice $T$ of $V$. 

Under Assumption \ref{for-integrality} (2), note that  $T|_{G_H}$ is isomorphic to $T_p(A) \otimes_{\CO_K\otimes \Z_p} \CO$ as an $\CO[G_H]$-module.  

\begin{lem}\label{non-anomalous}
Suppose that Assumption \ref{for-integrality} (1) holds.
For a finite $p$-extension $F$ of $K_{\fp}$,
we have $H^0(F, T^{\otimes -1}(1)/\fm_{})=0.$
\end{lem}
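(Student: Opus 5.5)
Since $F/K_{\fp}$ is a finite $p$-extension, the first step is to pass to the residual representation $\bar M:=T^{\otimes -1}(1)/\fm$. This is a one-dimensional $\CO/\fm$-vector space on which $G_{K_{\fp}}$ acts through a character $\bar\theta$ of order prime to $p$. The basic fact to use is that a pro-$p$ group acting on a nonzero finite $p$-group has a nonzero fixed vector. Applying it to the finite-index subgroup $G_F\subseteq G_{K_{\fp}}$ shows that $\bar M^{G_F}\ne 0$ cannot occur while $\bar M^{G_{K_{\fp}}}=0$: the quotient $G_{K_{\fp}}/G_F$ is not necessarily a group, since $F/K_{\fp}$ need not be Galois, but we may replace $F$ by its Galois closure, which is still a finite $p$-extension. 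We argue directly as follows. If $\bar\theta|_{G_F}$ is trivial, then $\bar\theta$ factors through $\Gal(F'/K_{\fp})$, where $F'$ is the Galois closure of $F$. This group is a $p$-group, and $\bar\theta$ has order prime to $p$, so $\bar\theta$ is trivial. Thus $H^0(F,\bar M)\ne0$ iff $\bar\theta=1$ iff $H^0(K_{\fp},\bar M)\ne 0$. The lemma is therefore reduced to $F=K_{\fp}=K_p$.

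It remains to show that $H^0(K_p, T^{\otimes-1}(1)/\fm)=0$. If $K_p\cong\Q_3(\sqrt{-3})$, this is exactly Assumption \ref{for-integrality} (1). Otherwise $K_p$ is a quadratic extension of $\Q_p$, ramified or unramified by (n-spl), and not isomorphic to $\Q_3(\sqrt{-3})$. Here the restriction $T|_{G_{K_p}}$ is the lattice attached to a local character $\psi$ of $K_p^\times$ which is conjugate symplectic self-dual in the sense of \eqref{local-self-dual}, by \eqref{self-dual-character}. We then invoke \cite[Lem.\ 7.22]{BKNO}, which is the statement recalled in the first item of the remark following Assumption \ref{generic}, to conclude that the vanishing is automatic.

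If one wants to avoid citing that result, one can see the mechanism directly. Restricted to $\Q_p^\times$, the character $\bar\theta$ of $T^{\otimes-1}(1)$ is $\omega_{\Psi/\Q_p}^{-1}|\cdot|^{-1}$ times the reduction of the cyclotomic character, computed via the transfer, so its reduction on $\Z_p^\times$ is $\omega_{\Psi/\Q_p}$ times a power of the mod $p$ cyclotomic character. Triviality of $\bar\theta$ would force this restriction to be trivial, and hence force an identity between $\omega_{\Psi/\Q_p}|_{\Z_p^\times}$ and a power of the Teichm\"uller character that can only hold when $p=3$ and $\Psi=\Q_3(\sqrt{-3})$.

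The main obstacle is this last verification, namely making the transfer/restriction-to-$\Q_p^\times$ computation of $\bar\theta$ match up with the self-duality relation, with the correct normalisations of Artin maps and of $|\cdot|$. I would rely on \cite[Lem.\ 7.22]{BKNO} for it rather than redo it.
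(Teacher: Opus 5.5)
Your proof is correct and is essentially the paper's: reduce to $F=K_{\fp}$ because the residual character $\bar\theta$ has prime-to-$p$ order, then use Assumption~\ref{for-integrality}~(1) when $K_p\cong\Q_3(\sqrt{-3})$ and \cite[Lem.\ 7.22]{BKNO} otherwise.

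Two small slips occur, neither affecting the proof you actually rely on:
\begin{itemize}
\item If $F/K_{\fp}$ is not Galois, its Galois closure need not have $p$-power degree (e.g.\ $\Q_p(p^{1/p})$). This does not matter, because $G_F\subseteq\ker\bar\theta$ already forces $|\mathrm{Im}\,\bar\theta|$ to divide $[F:K_{\fp}]$.
\item In your optional direct sketch, the restriction to $\Z_p^\times$ cannot single out $\Q_3(\sqrt{-3})$, since $\Q_3(\sqrt{3})$ has the same quadratic character on $\Z_3^\times$. You would also need the value at $p$, where $\omega_{\Q_3(\sqrt{3})/\Q_3}(3)=-1$ but $\omega_{\Q_3(\sqrt{-3})/\Q_3}(3)=1$.
\end{itemize}
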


\begin{proof}
We may assume that $K\otimes\Q_p$ is not isomorphic to $\Q_3(\sqrt{-3})$.
It suffices to show that $H^0(K_{\fp}, T^{\otimes -1}(1)/\fm_{})=\{0\},$
which follows from  \cite[Lem.\ 7.22]{BKNO}.
\end{proof}
\begin{lem}\label{rel-gal}For a finite extension $F$ of $K$ unramified outside $p\ff$, we have 
$$\Sel_{\rel}(F, T^{\otimes -1}(1))=\rH^1(K_S/F, T^{\otimes -1}(1)),$$
where $K_S$ denotes the maximal extension of $K$ unramified outside $p\ff$.
\end{lem}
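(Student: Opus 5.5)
Since $F/K$ is unramified outside $p\ff$, the Galois group $G_F$ acts on $T^{\otimes -1}(1)$ through $\Gal(K_S/F)$ at every prime $v\nmid p\ff$: the character $\varphi$ has conductor $\ff$, and the cyclotomic twist is unramified away from $p$. Our plan is to compare both sides via the inflation map $\rH^1(K_S/F, T^{\otimes -1}(1))\to \rH^1(F, T^{\otimes -1}(1))$. This map is injective, because inflation on $\rH^1$ is always injective. By definition, $\Sel_{\rel}(F,T^{\otimes -1}(1))$ consists of classes with no condition at $p$ that lie in $\rH^1_{\rf}(F_v,-)$ for every $v\nmid p$. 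So it suffices to show two things. First, a class unramified outside $p\ff$ satisfies the finite condition at every $v\nmid p$. Second, a Selmer class is unramified at every $v\nmid p\ff$ and places no constraint at $v\mid \ff$, $v\nmid p$.

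For $v\nmid p$ with $v\nmid\ff$, the module $M:=T^{\otimes -1}(1)$ is unramified at $v$. By definition, $\rH^1_{\rf}(F_v,M)$ is the inverse image of $\ker(\rH^1(F_v,M\otimes\Q_p)\to \rH^1(F_v^{\ur},M\otimes\Q_p))$. First I would show that this contains the unramified classes $\rH^1(F_v^{\ur}/F_v,M)$, which is clear. Conversely, I would show that a class in $\rH^1_{\rf}(F_v,M)$ restricts on inertia $I_v$ to a torsion element of $\Hom(I_v, M)$. Since $M$ is $\Z_p$-torsion-free, $\Hom(I_v,M)$ is torsion-free, so the restriction is $0$. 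Hence $\rH^1_{\rf}(F_v,M)$ is exactly the group of unramified classes. Therefore, at $v\nmid p\ff$, the Selmer condition is precisely "unramified at $v$".

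For $v\mid\ff$, $v\nmid p$, the key step is to show $\rH^1(F_v, M\otimes\Q_p)=0$, so that $\rH^1_{\rf}(F_v,M)=\rH^1(F_v,M)$ and the local condition is vacuous. By the local Euler characteristic formula at $\ell\neq p$, $\dim \rH^1=\dim \rH^0+\dim \rH^2$. Moreover, $\rH^0(F_v,V^{\otimes -1}(1))=0$ since the character is ramified, or more robustly since it is pure of nonzero weight ($\varphi$ has infinity type $(1,0)$, so $V^{\otimes -1}(1)$ has weight $-1$). By duality, $\rH^2(F_v,V^{\otimes-1}(1))\cong \rH^0(F_v,V)^\vee=0$ for the same weight reason. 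This weight argument works for all $v\nmid p$ and can also replace the torsion argument above. Combining the two steps, the Selmer group equals the set of classes unramified outside $p\ff$, which by the inflation–restriction sequence for $G_{K_S}\subset G_F$ is exactly the image of $\rH^1(K_S/F,M)$.

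The main obstacle I expect is bookkeeping rather than depth. One must use the purity/weight argument correctly at primes dividing $\ff$ where the character may become unramified over $F_v$. This is fine, because weights are insensitive to finite extensions. One must also be careful that "unramified outside $p\ff$" for $M$ uses the fact that the conductor of $\varphi$ is $\ff$.
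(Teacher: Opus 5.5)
Your argument is correct and is essentially the paper's proof. The paper cites Rubin's \emph{Euler Systems}, Cor.~1.3.3, for the vanishing $\rH^1(F_v, T^{\otimes -1}(1)\otimes\Q_p)=0$ at all $v\nmid p$, which makes the conditions at $v\mid\ff$ vacuous, and then cites Rubin's Lemma~1.5.3 to identify the resulting Selmer group with $\rH^1(K_S/F, T^{\otimes -1}(1))$. You prove both of these ingredients by hand: the local Euler characteristic formula plus weights gives the vanishing, and torsion-freeness of $\Hom(I_v,M)$ at $v\nmid p\ff$ together with inflation--restriction gives the identification. The last step implicitly uses that $G_{K_S}$ is normally generated in $G_F$ by the inertia groups away from $p\ff$.
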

\begin{proof}
For $v\nmid p$,
 we have  
 \begin{equation}\label{local-vanishing}
\rH^1_{}(F_v, T^{\otimes -1}(1)\otimes \Q_{p})=0
 \end{equation} (cf.\ \cite[Cor.\ 1.3.3]{Ru00}),
and hence 
\begin{equation}\label{vanishing-ramified}
\rH^1_{\rf}(F_v, T^{\otimes -1}(1))=\rH^1_{}(F_v, T^{\otimes -1}(1)).
\end{equation}
For $v\mid p$,  we also have  $\rH^1_{\rel}(F_v, T^{\otimes -1}(1))=\rH^1_{}(F_v, T^{\otimes -1}(1))$ by definition. 
Hence, \cite[Lem.\ 1.5.3]{Ru00} concludes the proof.
\end{proof}

For simplicity of notation, 
also denote by $\varphi: \Gal(K(p^{\infty}\ff )/K) \to \CO^{\times}$ the character corresponding to the Galois representation $T$,
which may be regarded as the $p$-adic avatar of $\varphi$.

\subsubsection{Selmer groups}
For $* \in \Set{\ac, \emptyset}$ and $\bullet \in \Set{\emptyset, \str, \rel}$,
put $\Lambda_{*}=\CO[\![\Gal(K_{\infty}^{*}/K )]\!]$
 and 
\[
\scrS_{\bullet}^{*}=\varprojlim_{K\subseteq F \subseteq K_{\infty}^{*}}\Sel_{\bullet}(F, T^{\otimes -1}(1)), \quad X_{\bullet}^{*}=
\left(\varinjlim_{K\subseteq F \subseteq K_{\infty}^{*}}
 \Sel_{\bullet}(F, W)\right)^{\vee},
\]
which naturally have a structure of $\Lambda_{*}$-module.
Here $F$ ranges over all finite extensions of $K$ inside $K_{\infty}^{*},$
and $M^{\vee}:=\Hom_{\CO}(M, L_{\lambda}/\CO).$

\begin{lem}\label{finiteness-at-ramified}
For an algebraic extension $F$ of $K$ inside $K_{\infty}$ and a prime $v\mid \ff$ of $F$ with $v\nmid p$, 
$\rH^0(F_v, T^{\otimes -1}(1)\otimes \Q_p/\Z_p)$
is finite.
\end{lem}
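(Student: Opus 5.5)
The plan is to reduce the statement to the Frobenius eigenvalue on the rank-one module and then show that this eigenvalue is never a root of unity. Let $F$ be an algebraic extension of $K$ inside $K_\infty$, and let $v\mid\ff$, $v\nmid p$ be a prime of $F$. Put $W'=T^{\otimes -1}(1)\otimes\Q_p/\Z_p$. This is a cofree $\CO$-module of corank one, on which $G_{F_v}$ acts through the character $\varphi^{-1}\chi_{\cyc}$ (with the paper's convention for $T$).

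First, $\rH^0(F_v,W')$ is either finite or all of $W'$. It is the kernel of $\sigma-1$ on $W'\cong L_\lambda/\CO$ as $\sigma$ runs over $G_{F_v}$. If some $\sigma$ acts by a scalar $u\neq 1$, then $\ker(u-1)$ on $L_\lambda/\CO$ is finite, of order $|\CO/(u-1)|$. So it suffices to find a single $\sigma\in G_{F_v}$ that acts nontrivially on $W'$.

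Next, use the structure of $F_v$. Since $K_\infty/K$ is a $\Z_p^2$-extension unramified outside $p$, the extension $F_v/K_v$ is unramified. Its Galois group is the pro-$p$ quotient of the decomposition group, which is a quotient of $\Z_p$. In particular, $F_v$ is contained in $K_v^{\ur}$, and its absolute Galois group contains a Frobenius-type element $\sigma$ whose image in $\Gal(K_v^{\ur}/K_v)$ is $\Fr_v^{m}$ for some $m\ge 1$. (One may take $m=p^{s}f$ for suitable $s$ and $f$, or take $\sigma$ in $G_{K_v}$ projecting to $\Fr_v^{m}$ and fixing $F_v$.) Also, $\chi_{\cyc}$ is unramified at $v$, with $\chi_{\cyc}(\Fr_v)=N v$.

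Now compute the action of a lift of Frobenius. The restriction $\varphi|_{G_{K_v}}$ is ramified because $v\mid\ff$, but by CM theory it has the following shape. The abelian variety $A$ acquires good reduction over a finite extension $H_w/K_v$ with ramification index prime to $p$; this uses Assumption~\ref{for-integrality}(2), or a general potential good reduction argument. So $\varphi$ restricted to $G_{H_w}$ is unramified, and Frobenius acts by a Weil number $\pi$ of absolute value $(Nw)^{1/2}$ under every complex embedding, by Weil's theorem or directly from \eqref{self-dual-character}. Replace $\sigma$ by a power lying in $G_{H_wF_v}$; the power stays nontrivial as a Frobenius power. Then $\sigma$ acts on $W'$ by $\alpha=\pi^{-r}(Nw)^{r}$ for some $r\ge1$. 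This $\alpha$ has complex absolute value $(Nw)^{r/2}\neq1$, so $\alpha\neq1$. Hence $\rH^0(F_v,W')$ is killed by $\alpha-1\neq0$, and is therefore finite.

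The main obstacle is to make sure that one can really pass to $G_{H_wF_v}$ and still have an element acting through a nontrivial Frobenius power. Decomposition groups in $K_\infty$ could a priori be trivial, but this causes no problem: $F_v\subseteq K_v^{\ur}$ is an infinite or finite unramified extension whose absolute Galois group always maps onto an open subgroup of $\Gal(K_v^{\ur}/K_v)$ only if $F_v/K_v$ is finite. The case where $F_v$ contains the unramified $\Z_p$-extension of $K_v$ needs care, since then $G_{F_v}$ only surjects onto the prime-to-$p$ part of Frobenius. This is still enough, because $\hat{\Z}$ has infinitely many prime-to-$p$ powers of $\Fr$. So I would choose $\sigma$ mapping to $\Fr_v^{m}$ with $m$ prime to $p$ times the degree $[H_w:K_v]$. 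The weight argument then applies unchanged.
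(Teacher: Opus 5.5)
You correctly reduce the problem to finding one $\sigma\in G_{F_v}$ that acts on $W'\cong L_\lambda/\CO$ by a scalar $u\neq 1$. When $F_v/K_v$ is finite, your Frobenius-weight argument does produce such a $\sigma$. The gap is the infinite case, and that case is part of the statement.

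For $F=K_\infty$, every $v\nmid p$ is finitely decomposed (already in the cyclotomic $\Z_p$-extension), so $F_v$ is the unramified $\Z_p$-extension of $K_v$. The same happens for $F=K_\infty^{\ac}$ at finitely decomposed $v$, which is the infinite-level case invoked in the proof of Theorem~\ref{controlling-thm}. In this case the image of $G_{F_v}$ in $\Gal(K_v^{\ur}/K_v)\cong\widehat{\Z}$ is $\{a\in\widehat{\Z} : a_p=0\}$. That set contains no nonzero integer: an integer $m$ prime to $p$ has $p$-component a unit, not $0$. So no element of $G_{F_v}$ lifts $\Fr_v^m$ with $m\geq 1$, whatever divisibility you impose on $m$, and your proposed fix is not available.

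The elements you do have act, over $H_w$ where the character is unramified, by $\beta^a$ with $\beta=\pi^{-1}Nw$ and $a\in\widehat{\Z}$, $a_p=0$. Such a power depends only on $\beta \bmod \fm$; it is a root of unity of order prime to $p$. So weights give no information, and $\beta^a=1$ whenever $\beta\equiv 1\bmod \fm$. Put differently, at the finite layers of the unramified $\Z_p$-tower your method bounds $\rH^0$ by quantities of the type $\#\CO/(\beta^{p^n}-1)$, which are unbounded in that situation.

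A clear symptom is that your argument never actually uses $v\mid\ff$. Taken verbatim, it would give finiteness at $v\nmid p\ff$ over the unramified $\Z_p$-extension. That is false whenever the Frobenius eigenvalue of $\varphi^{-1}\chi_{\cyc}$ at $v$ is $\equiv 1 \bmod \fm$: then $G_{F_v}$ acts trivially and $\rH^0(F_v,W')=W'$.

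The missing idea is to use inertia instead of Frobenius, which is what the paper does. Since $F_v\subseteq K_v^{\ur}$, the inertia group $I_{K_v}$ is contained in $G_{F_v}$. As $v\nmid p$, $\chi_{\cyc}$ is trivial on $I_{K_v}$. On the other hand, $\varphi$ is nontrivial on $I_{K_v}$ precisely because $v$ divides its conductor $\ff$. Hence some $\sigma\in I_{K_v}\subseteq G_{F_v}$ acts on $W'$ by a scalar $u\neq 1$. Then $\rH^0(F_v,W')$ is a proper $\CO$-submodule of $L_\lambda/\CO$ killed by $u-1$, hence finite, with a bound independent of $F$. This uses the hypothesis $v\mid\ff$ essentially and makes potential good reduction and Weil numbers unnecessary.
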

\begin{proof}
Since the representation $T^{\otimes -1}(1)\otimes \Q_p/\Z_p$
of $G_{F_v}$ is ramified and $F_v/K_v$ is unramified,
we have $\rH^0(F_v, T^{\otimes -1}(1)\otimes \Q_p/\Z_p)\subsetneq T^{\otimes -1}(1)\otimes \Q_p/\Z_p$.
Noting that $T^{\otimes -1}(1)\otimes \Q_p/\Z_p \cong L_{\lambda}/\CO$
has no non-trivial $\CO$-submodule of infinite order, this concludes the proof. 
\end{proof}

\begin{prop}\label{rank-rel}
For $* \in \Set{\emptyset, \ac}$,
the $\Lambda_{*}$-module $\scrS_{\rel}^*$ is torsion-free, and 
$\rank_{\Lambda_*}(\scrS_{\rel}^*)=\rank_{\Lambda_*}(X_{\rel}^*).$
\end{prop}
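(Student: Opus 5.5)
To prove Proposition~\ref{rank-rel}, I will treat torsion-freeness and the rank equality separately. Both parts use Lemma~\ref{rel-gal}, which identifies the relaxed Selmer groups at finite layers with the cohomology $\rH^1(K_S/F, T^{\otimes -1}(1))$ of the maximal extension unramified outside $p\ff$. Passing to the limit gives
\[
\scrS_{\rel}^*=\varprojlim_F \rH^1(K_S/F, T^{\otimes -1}(1)) \cong \rH^1(K_S/K, T^{\otimes -1}(1)\otimes_{\CO}\Lambda_*^{\iota}),
\]
the Iwasawa cohomology. The dual side $X_{\rel}^*$ has the analogous description: the local conditions away from $p$ are imposed at the finitely many ramified primes, and there $\rH^1_{\rf}$ of the divisible module is zero, since its rational counterpart vanishes by \eqref{local-vanishing}. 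So $X_{\rel}^*$ is the Pontryagin dual of $\rH^1(K_S/K_\infty^*, W)$, up to the finite local factors at $v\mid\ff$, which are controlled by Lemma~\ref{finiteness-at-ramified}.

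\emph{Torsion-freeness.} For Iwasawa cohomology the standard exact sequence shows that the $\Lambda_*$-torsion submodule of $\rH^1(K_S/K, T^{\otimes-1}(1)\otimes\Lambda_*^\iota)$ is controlled by $\rH^0(K_S/K_\infty^*, W)$; more precisely, it injects into $\varprojlim_F \rH^0(F, W)$ modulo divisibility. By Lemma~\ref{non-anomalous}, which applies because every layer is a finite $p$-extension of $K_\fp$, we have $\rH^0(F_\fp, T^{\otimes-1}(1)/\fm)=0$ for every layer $F$. Hence the global $\rH^0$ of $W$ over $K_\infty^*$ vanishes. Since $W[\fm]$ has no invariants, the map $T^{\otimes-1}(1)\otimes\Lambda_* \xrightarrow{x} T^{\otimes-1}(1)\otimes\Lambda_*$ induces an injection on $\rH^1$ for every nonzero $x\in\Lambda_*$. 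The argument is to reduce to height-one primes or to use Nakayama, and this gives torsion-freeness. If Assumption~\ref{for-integrality}(1) is not available, I would instead argue that the torsion is killed by the finite $\rH^0$ and so is zero in the torsion-free limit.

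\emph{Rank equality.} For this part I would use the Euler characteristic, or equivalently Poitou--Tate, for Iwasawa cohomology over $K_S/K$, following Jannsen or Perrin-Riou. Write $\mathbf{H}^i$ for the Iwasawa cohomology groups of $T^{\otimes-1}(1)\otimes\Lambda_*^\iota$. The Euler characteristic formula gives
\[
\rank \mathbf{H}^1-\rank \mathbf{H}^2 = [K:\Q]\cdot\rank_\CO T - (\text{contribution of complex places}) = 1,
\]
since $K$ is imaginary quadratic and $T$ has rank one. Next, $\mathbf{H}^2$ compares to $X_{\rel}^*$: the Poitou--Tate sequence gives
\[
0\to \mathbf{H}^2 \to \rH^1(K_S/K_\infty^*, W)^\vee\text{-type terms}\to\cdots,
\]
and more precisely $\rank \mathbf{H}^2 = \rank X_{\str}^*$-type quantities. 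This is the step I expect to be the main obstacle. The cleanest route is probably the following. Apply Poitou--Tate to the relaxed condition at $p$ and the strict condition for the dual problem. The global duality formula of Greenberg--Wiles type, taken in the limit, gives
\[
\rank \scrS_{\rel}^* - \rank X_{\str}^* = \sum_{v\mid p}\rank \rH^1(K_v,\cdot) - \sum_{v}\rank\rH^0 - (\text{archimedean term}) = 1.
\]
Here $T^{\otimes-1}(1)$ is self-dual up to conjugation, by \eqref{self-dual-character}, so the dual of $W$ is again $T^{\otimes-1}(1)$ twisted by complex conjugation. I would then need $\rank X_{\rel}^*-\rank X_{\str}^*=1$. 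This follows from the exact sequence
\[
0\to \Sel_{\str}\to\Sel_{\rel}\to \rH^1(K_{\infty,p}^*,W)
\]
together with the fact that the local Iwasawa $\rH^1$ at $\fp$ has $\Lambda_*$-rank two. Combining the Greenberg--Wiles count for $(\rel,\str)$ with its dual count for $(\str,\rel)$ should then produce $\rank \scrS_{\rel}^*=\rank X_{\rel}^*$; the $\Lambda_*$-rank of the local Iwasawa cohomology at $p$ supplies the needed term.

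The delicate points are the behaviour at the finitely many primes $v\mid\ff$, where the $\rH^0$ terms are finite by Lemma~\ref{finiteness-at-ramified} and so contribute nothing to ranks, and keeping track of the involution $\iota$.
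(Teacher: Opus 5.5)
Your rank argument does not close. The Euler characteristic together with Poitou--Tate gives $\rank\scrS^*_{\rel}=1+\rank X^*_{\str}$, which is also what your Greenberg--Wiles count gives. The five-term sequence $0\to\scrS^*_{\str}\to\scrS^*_{\rel}\to\rH^1(K_p,T^{\otimes-1}(1)\otimes\Lambda_*)\to X^*_{\rel}\to X^*_{\str}\to 0$ gives $\rank X^*_{\rel}=1+\rank\scrS^*_{\str}$.

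The three-term sequence you invoke, $0\to\Sel_{\str}\to\Sel_{\rel}\to\rH^1(K^*_{\infty,p},W)$, only yields $\rank X^*_{\rel}-\rank X^*_{\str}\le 2$. Getting exactly $1$ requires $\rank\loc_p(\scrS^*_{\rel})=1$, i.e.\ $\rank\scrS^*_{\rel}-\rank\scrS^*_{\str}=1$. So combining your two counts only reduces the claim to $\rank\scrS^*_{\str}=\rank X^*_{\str}$, and no Euler-characteristic bookkeeping supplies that. In the paper, $\scrS^{\ac}_{\str}=0$ and the torsionness of $X^{\ac}_{\str}$ come later from elliptic units, and the former itself uses the present proposition.

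The missing idea is a direct compact-versus-discrete comparison for the relaxed condition. The paper does this in three steps:
\begin{itemize}
\item It notes that the local terms at $v\mid p\ff$ in $0\to\Sel_{\rel}(F,W)\to\rH^1(K_S/F,W)\to\cdots$ are duals of $\rH^0$'s, hence $\Lambda_*$-torsion in the limit. This gives $\Hom_{\Lambda_*}(X^*_{\rel},\Lambda_*)=\Hom_{\Lambda_*}(\rH^1(K_S/K^*_\infty,W)^\vee,\Lambda_*)$.
\item It writes the latter as an inverse limit over the layers of the Tate modules $\varprojlim_m\rH^1(K_S/F,W)[p^m]$.
\item It compares these with the Iwasawa cohomology via the Kummer sequence, the discrepancy being the finite groups $\rH^0(K_S/F,W)$.
\end{itemize}
The result is an exact sequence $0\to\varprojlim_F\rH^0(K_S/F,W)\to\scrS^*_{\rel}\to\Hom_{\Lambda_*}(X^*_{\rel},\Lambda_*)\to 0$ with torsion kernel, whence the ranks agree.

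Your torsion-freeness argument works under Assumption~\ref{for-integrality}(1), provided you use the right module. The $f$-torsion of $\rH^1(K_S/K,T^{\otimes-1}(1)\otimes\Lambda_*)$ is the image of $\rH^0(K,T^{\otimes-1}(1)\otimes\Lambda_*/f\Lambda_*)$. So what matters is residual non-triviality of $T^{\otimes-1}(1)/\fm$ on $G_{K^*_\infty}$, which Lemma~\ref{non-anomalous} provides, not invariants of $W$.

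However, the proposition is stated, and used in Proposition~\ref{half-IMC} and Theorem~\ref{anticyc-zetaIMC}, without that assumption. Your fallback (``the torsion is killed by the finite $\rH^0$ and so is zero in the torsion-free limit'') presupposes what is to be proved. What is needed is the unconditional vanishing of $\rH^0(K,T^{\otimes-1}(1)\otimes\Lambda_*/f\Lambda_*)$ for every $f\neq 0$. The paper reduces to two cases:
\begin{itemize}
\item For $f=p$, the Artin symbols of $a\CO_K$ with $a\equiv 1\bmod p\ff$ act trivially on $T^{\otimes-1}(1)/p$ but by non-trivial group-like elements on the domain $\Lambda_*/p$.
\item When $\Lambda_*/f\Lambda_*$ is $p$-torsion-free, it uses that the character of $T^{\otimes-1}(1)[1/p]$ does not factor through $\Gal(K^{\ac}_\infty/K)$ (for $*=\ac$). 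For $*=\emptyset$ it uses that the character on $T^{\otimes-1}(1)\otimes(\Lambda_*/f\Lambda_*)[1/p]$ does not take values in $\CO[1/p]^\times$.
\end{itemize}
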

\begin{proof}
By Shapiro's lemma and Lemma \ref{rel-gal}, we have $\scrS_{\rel}^{*}\cong \rH^1(K_S/K, T^{\otimes -1}(1)\otimes \Lambda_{*}).$
In the following, we shall prove the torsion-freeness of $\rH^1(K_S/K, T^{\otimes -1}(1)\otimes \Lambda_{*})$ by a similar argument to that in \cite[13.8]{K}.

It suffices to show that for $f\in \Lambda_{*}\setminus\{0\}$, we have 
\begin{equation}\label{for-torsionfree}
\rH^0(K, T^{\otimes -1}(1)\otimes \Lambda_*/f\Lambda_*)=0. 
\end{equation}
Since  there exist $g\in \Lambda_*$  and $n\ge 0$ such that $f=p^ng$ and $\Lambda_{*}/g\Lambda_*$ is $p$-torsion-free, we may assume that either $\Lambda_*/f\Lambda_*$ is $p$-torsion-free or $f=p$. 

Let us first consider the case $f=p$.
For each element $a \in \CO_K$  such that $ a \equiv 1 \bmod p\ff$,
$(a, K(\ff p^{\infty})/K) \in \Gal(K(p^{\infty}\ff)/K)$
acts on $T^{\otimes -1}(1)\otimes \Lambda_*/p$ by $ [(a, K_{\infty}^*/K)] \bmod p \in (\Lambda_*/p)^{\times},$
where $[ \ ]$ denotes the associated group-like element.
Hence, (\ref{for-torsionfree}) follows. 
Let us next assume that $\Lambda_*/f\Lambda_*$ is $p$-torsion-free. 
Since the Galois action on $T^{\otimes -1}(1)[1/p]$ cannot factor through $\Gal(K_{\infty}^{\ac}/K)$,
(\ref{for-torsionfree}) holds for $ * = \ac$. 
Moreover,  the image of the character $G_K \to ((\Lambda_*/f\Lambda_*)[1/p])^{\times}$ associated to the $G_K$-action on $T^{\otimes -1}(1)\otimes(\Lambda_*/f\Lambda_*)[1/p]$
is not contained in $\CO[1/p]^{\times}$, and so (\ref{for-torsionfree}) holds for $* = \emptyset $.

It remains to prove $\rank_{\Lambda_*}(\scrS_{\rel}^*)=\rank_{\Lambda_*}(X_{\rel}^*).$

By (\ref{local-vanishing}), for a place $v\nmid p$ of $K$ and a finite extension $F\subseteq K_{\infty}^*$ of $K$,
we have $\rH^1_{\rf}(F\otimes K_v, W)=0$.
For a place $v\mid p$ of $K$,
 $\rH_{\rel}^1(F\otimes K_v, W)=\rH^1(F\otimes K_v, W)_{\div},$
 where $\div$ denotes the maximal divisible part.
Then we have a short exact sequence
\begin{equation}\label{sel-coh}
0\to \Sel_{\rel}(F, W) \to \rH^1(K_S/F, W) \to \bigoplus_{v\mid \ff^{(p)}} \rH^1(F_v, W) \oplus \frac{\rH^1(F_{\fp}, W)}{\rH^1(F_{\fp}, W)_{\div}},
\end{equation}
where $\ff^{(p)}$ denotes the prime-to-$p$ part of $\ff$, 
and $v$ ranges over all primes of $K$ dividing $\ff^{(p)}$.  
By (\ref{local-vanishing}) and the local Tate duality, for $v\nmid p$ we have 
\[
\rH^1(F_v, W)=\rH^2(F_v, T)=\rH^0(F_v, T^{\otimes -1}(1)\otimes \Q_p/\Z_p)^{\vee},
\]
and 
\[
\frac{\rH^1(F_{\fp}, W)}{\rH^1(F_{\fp}, W)_{\div}}=\rH^2(F_{\fp}, T)_{\tor}=\left(\frac{\rH^0(F_{\fp}, T^{\otimes -1}(1)\otimes\Q_p/\Z_p)}{\rH^0(F_{\fp}, T^{\otimes -1}(1)\otimes\Q_p/\Z_p)_{\div}}\right)^{\vee}=\rH^0(F_{\fp}, T^{\otimes -1}(1)\otimes\Q_p/\Z_p)^{\vee},
\]
where the last equality follows from the finiteness of $\rH^0(F_{\fp}, T^{\otimes -1}(1)\otimes\Q_p/\Z_p)$ (cf.~Lemma~\ref{finiteness-at-ramified}).

 Hence, taking the dual of the limit of (\ref{sel-coh}), we obtain the following exact sequence of $\Lambda_{*}$-modules 
\[
 \bigoplus_{v\mid p\ff }\varprojlim_{K\subseteq F\subseteq K_{\infty}^{*}}\rH^0(F\otimes_{K} K_v,
  T^{\otimes -1}(1)\otimes \Q_p/\Z_p)  \to \rH^1(K_S/K_{\infty}^{*}, W)^{\vee} \to X_{\rel}^*\to 0.
\]
Since 
for $v\mid p\ff$,
the $\Lambda_{*}$-module $\varprojlim_{K\subseteq F\subseteq K_{\infty}^{*}}\rH^0(F\otimes_{K} K_v,
  T^{\otimes -1}(1)\otimes \Q_p/\Z_p)$ is torsion,
it thus follows that 
\begin{equation}\label{dual-lambda-rel-gal}
\begin{split}
\Hom_{\Lambda_*}(X_{\rel}^*, \Lambda_*)=\Hom_{\Lambda_*}(\rH^1(K_S/K_{\infty}^{*}, W)^{\vee}, \Lambda_*)=&\Hom_{\CO}(\rH^1(K_S/K_{\infty}^{*}, W)^{\vee}, \CO)\\
=&\varprojlim_{F\subseteq K_{\infty}^*}\Hom_{\CO}(\rH^1(K_S/F, W)^{\vee}, \CO),
\end{split}
\end{equation}
where the penultimate equality arises from composition with the augmentation $\Lambda_{*} \to \CO.$

Note that $\Hom_{\CO}(\rH^1(K_S/F, W)^{\vee}, \CO)=\varprojlim_{m}\rH^1(K_S/F, W)[p^m],$
and for $m\ge 1$ we have a short exact sequence
\[
0\to \rH^0(K_S/F, W)/p^m \to \rH^1(K_S/F, W[p^m]) \to \rH^1(K_S/F, W)[p^m] \to 0,
\]
leading to the short exact sequence
\[
0\to \rH^0(K_S/F, W) \to \rH^1(K_S/F, T) \to \varprojlim_{m}\rH^1(K_S/F, W)[p^m] \to 0.
\]
Noting that $\rH^0(K_S/F, W)$ is finite, (\ref{dual-lambda-rel-gal}) yields the $\Lambda_{*}$-module exact sequence
\[
0\to \varprojlim_{F\subseteq K_{\infty}^*}\rH^0(K_S/F, W) \to \scrS_{\rel}^{*} \to  \Hom_{\Lambda_*}(X_{\rel}^*, \Lambda_*) \to 0.
\]
Since $\varprojlim_{F\subseteq K_{\infty}^*}\rH^0(K_S/F, W)$ is $\Lambda_*$-torsion\footnote{Under Assumption \ref{for-integrality} (1), it is zero.}, this concludes the proof. 

\end{proof}

\subsection{Two-variable elliptic unit main conjecture}\label{ss, emc-t}
\subsubsection{Elliptic units} For an $\CO$-basis $t$ of $T^{\otimes -1}$, a non-zero ideal $\fa \subseteq \CO_K$ prime to $6p\ff$ and $* \in \{\emptyset, \ac\} $,
denote by $\!_{\fa}z_{p^{\infty}\ff}^{t, *}$ the image of the elliptic units  $(\!_{\fa}z_{p^{n}\ff})_n \in \bfH^1_{\ff}(\Z_p(1))$ of \cite[\S 15.5]{K}
under the composite 
\[
\bfH^1_{\ff}(\Z_p(1)) \xrightarrow{\otimes t}\bfH^1_{\ff}(\Z_p(1))\otimes_{\Z_p}T^{\otimes -1}\cong \bfH^1_{\ff}(T^{\otimes -1}(1))\to \varprojlim_{K\subseteq F \subseteq K_{\infty}^{*}}\rH^1(F, T^{\otimes -1}(1)),
\]
where the last map is induced by the corestriction maps.

By \cite[Cor.\ B.3.5]{Ru00}, we have $\!_{\fa}z_{p^{\infty}\ff}^{t, *}\in \scrS_{\rel}^{*}.$
Denote by $\fz$ the $\Lambda$-submodule of $\scrS_{\rel}^{}$ generated by $\Set{\!_{\fa}z_{p^{\infty}\ff}^{t} }_{\fa, t}.$
Fix a  $t$,
and put 
\[
z_{p^{\infty}\ff}^{\ac}=z_{p^{\infty}\ff}^{t, \ac}=(N_{K/\Q}(\fa)-\varphi^{-1}(\fa)(\fa, K(p^{\infty}\ff)/K) )^{-1}  \!_{\fa}z_{p^{\infty}\ff}^{t, \ac },
\]
which is independent of $\fa\neq \CO_K$ (cf.~\cite[(15.6.2)]{K}). 
Since there exists a positive integer $a$ prime to $6p\ff$ such that $\fa:=a\CO_{K}$ satisfies 
\begin{equation}\label{invertible-a}
(N_{K/\Q}(\fa)-\varphi^{-1}(\fa)(\fa, K(p^{\infty}\ff)/K) ) \in \Lambda_{\ac}^{\times},
\end{equation}
 it follows that \begin{equation}\label{lie-in-rel}
 z_{p^{\infty}\ff}^{\ac}\in \scrS_{\rel}^{\ac}
 \end{equation} (cf.\ \cite[Lem.\ 8.7]{BKNO}).

\begin{prop}\label{half-IMC}\leavevmode 
\begin{enumerate}
\item The localisation of $z_{p^{\infty}\ff}^{\ac}$ at $\fp$, i.e.\ its image in $\varprojlim_n \rH^1(K_{n,p}^{\ac}, T^{\otimes -1}(1))$, is $\Lambda_{\ac}$-non-torsion. 
\item The $\Lambda_{\ac}$-module  $X^{\ac}_{\str}$
is torsion.
\item The $\Lambda_{\ac}$-module $\scrS^{\ac}_{\rel}$ 
is  of rank one, and $\scrS_{\str}^{\ac}=\{0\}.$
\item We have 
$$\Ch_{\Lambda_{\ac}}
 (\scrS_{\rel}^{\ac}/ \Lambda_{\ac} z_{p^{\infty}\ff}^{\ac})\subseteq 
 \Ch_{\Lambda_{\ac}}(X^{\ac}_{\str}),$$
where $\Ch_{\Lambda_{*}}$ denotes the characteristic ideal over $\Lambda_{*}$.
\end{enumerate}
\end{prop}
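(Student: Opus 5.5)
The plan is to use the Euler system of elliptic units along the anticyclotomic line, taking Kato's explicit reciprocity law and Rubin's Euler system machinery as black boxes. Part (1) comes first, since everything else depends on it. Write $\loc_\fp(z^{\ac}_{p^\infty\ff})$ for the image in $\rH^1(\Psi,\bT)$, where $\bT=T^{\otimes-1}(1)\otimes\Lambda_{\ac}$. It suffices to find one character $\chi$ of $\Gal(K^{\ac}_\infty/K)$ such that $\chi(\loc_\fp z^{\ac}_{p^\infty\ff})\neq 0$. Specialisation commutes with localisation, so this would show the localisation is not killed by any nonzero element of $\Lambda_{\ac}$.

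The tool for part (1) is Kato's explicit reciprocity law [K, \S15]. It expresses $\exp^*$ of the $\chi$-specialisation of the elliptic unit class as a nonzero multiple of the partial Hecke $L$-value $L_{p\ff}(\varphi^{-1}\chi^{-1},0)$, in the normalisation of \eqref{self-dual-character}, that is, the central value of the twist. By Lemma~\ref{density-of-xi}, infinitely many finite-order $\chi$ have global root number $+1$. Among these, a nonvanishing theorem for central anticyclotomic Hecke $L$-values (Rohrlich/Greenberg type, for $\chi$ of sufficiently large $p$-power conductor) supplies some $\chi$ with $L(\varphi\chi,1)\neq0$. The Euler factors removed at primes dividing $p\ff$ are nonzero for such ramified $\chi$, so part (1) follows. \emph{This is the main obstacle.} The input needed is a nonvanishing result valid in the ramified/non-split setting. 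If a clean citation is unavailable, I would first try to prove nonvanishing mod $p$ of many twists through the explicit reciprocity combined with the two-variable main conjecture of Rubin (the two-variable elliptic-unit class is non-torsion, and its specialisation to the anticyclotomic line is controlled by \eqref{invertible-a}).

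For part (3), consider $z^{\ac}_{p^\infty\ff}\in\scrS^{\ac}_{\rel}$ from \eqref{lie-in-rel}. Rubin's Euler system argument [Ru00, Thm.~2.3.3 and \S B] over $\Lambda_{\ac}$ applies because the Euler system is nontrivial by (1). It gives that $X^{\ac}_{\str}$ is torsion, which is (2), and the divisibility $\Ch(\scrS_{\rel}/\Lambda z)\subseteq\Ch(X_{\str})$, which is (4). The auxiliary Kolyvagin primes are chosen with the usual Chebotarev argument. This uses that the residual representation of $T^{\otimes-1}(1)$ is an $\CO$-character with large image over $K(\mu_{p^\infty})$, which holds since $p\nmid h_K$ and $\varphi$ has infinity type $(1,0)$. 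Assumption~\ref{for-integrality} and Lemma~\ref{non-anomalous} remove the finite error terms, so the divisibility holds integrally rather than up to powers of $p$.

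For the rank statement, Poitou--Tate gives an exact sequence
\[0\to\scrS^{\ac}_{\str}\to\scrS^{\ac}_{\rel}\to\rH^1(\Psi,\bT)\to (X^{\ac}_{\rel})\to X^{\ac}_{\str}\to0 .\]
Here $\rH^1(\Psi,\bT)$ has rank $2$, local and global Euler characteristics give $\rank\scrS_{\rel}-\rank X_{\str}=1$, and Proposition~\ref{rank-rel} identifies $\rank\scrS_{\rel}=\rank X_{\rel}$. Since $X_{\str}$ is torsion, $\rank\scrS_{\rel}=1$. The module $\scrS_{\str}$ is torsion-free, being a submodule of $\scrS_{\rel}$, which is torsion-free by Proposition~\ref{rank-rel}. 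It is also torsion, because $\scrS_{\rel}$ has rank one and $\loc_\fp(z)\neq0$ by (1), so $\scrS_{\str}\cap\Lambda z=0$. Hence $\scrS_{\str}=0$.
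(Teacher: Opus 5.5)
Your proposal is correct and follows the paper's proof essentially step for step: (1) from the explicit reciprocity law and Rohrlich's nonvanishing theorem (which the paper cites directly, so no fallback is needed), (2) and (4) from the elliptic-unit Euler system, and (3) from (2) and the global Euler--Poincar\'e formula, with your argument $\scrS^{\ac}_{\str}\cap\Lambda_{\ac}z^{\ac}_{p^{\infty}\ff}=0$ an equivalent variant of the paper's injection $\scrS^{\ac}_{\rel}/\scrS^{\ac}_{\str}\hookrightarrow\rH^1(K_p,T^{\otimes -1}(1)\otimes\Lambda_{\ac})$. The one step to tighten is the reduction in (1) to a single character: a nonzero specialisation forces non-torsion only because $\rH^1(K_p,T^{\otimes -1}(1)\otimes\Lambda_{\ac})$ is $\Lambda_{\ac}$-torsion-free (as the paper notes), or alternatively because Rohrlich gives infinitely many such $\chi$ while a nonzero element of $\Lambda_{\ac}$ vanishes at only finitely many finite-order characters.
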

\begin{remark}
It is subsequently shown that (4) is an equality (cf.~Theorem \ref{anticyc-zetaIMC}). 
\end{remark}
\begin{proof}
 In view of the assumption on $\varphi$,
 for a finite character $\chi$ of $\Gamma_{\rm ac}$, 
we have $\varphi\chi(\overline{\fa})=\overline{\varphi\chi({\fa})}.$
By Rohrlich \cite{Ro} (see also \cite{J}),
there are infinitely many finite characters $\chi$ of $\Gamma_{\rm ac}$ such that $$L(\varphi\chi,1)\neq 0.$$
Hence,
by the explicit reciprocity law for $z_{p^{\infty}\ff}^{\ac}$ (cf.~Proposition \ref{higher-explicit-reciprocity-law} below), part 1) follows.

 Part 2) is a consequence of part 1) since $z_{p^{\infty}\ff}^{\ac}$ extends to an Euler system (cf.~\cite[\S 9.2]{Ru00}).

 As for part 3), the assertion on the rank of $\scrS_{\rel}^{\ac}$ follows from part (2) and the global Euler--Poincar\'e characteristic formula.
As for the vanishing of $\scrS_{\str}^{\ac}$, 
suppose that $\scrS_{\str}^{\ac}\neq \{0\}$. Since $\scrS_{\str}^{\ac}$ is $\Lambda_{\ac}$-torsion-free (cf.~Proposition \ref{rank-rel}), we then have $\rank_{\Lambda_{\ac}}(\scrS_{\str}^{\ac})=1.$ 
Note that the localisation map at $p$ induces an injection $$\scrS_{\rel}^{\ac}/\scrS_{\str}^{\ac} \hookrightarrow \varprojlim_n\rH^1(K_{n,p}^{\ac}, T^{\otimes -1}(1))$$
and that $\varprojlim_n\rH^1(K_{n,p}^{\ac}, T^{\otimes -1}(1))=\rH^1(K_p, T^{\otimes -1}(1)\otimes_{\CO} \Lambda_{\ac})$ is $\Lambda_{\ac}$-torsion-free.
Hence, it follows that  $\scrS_{\rel}^{\ac}=\scrS_{\str}^{\ac}$.
However, this implies that $z_{p^{\infty}\ff}^{\ac}\in \scrS_{\str}^{\ac}$,
which contradicts part 1).

Analogously to the proof of part 2), part 4) is a consequence of part 1) and the usual Euler system argument. 
\end{proof}
\subsubsection{The main conjecture} 
The following is a standard consequence of Rubin's foundational work \cite{Ru91}, for which we include the details, as we know of no reference.   
\begin{prop}\label{RubinIMC} \leavevmode
\begin{enumerate}
\item  As ideals of $\Lambda\otimes_{\Z_p} \Q_p$,  we have $$\Ch_{\Lambda}(\scrS_{\rel}/\fz)\otimes_{\Z_p} \Q_p \subseteq \Ch_{\Lambda}(X_{\str})\otimes_{\Z_p} \Q_p.$$

\item Suppose that Assumption \ref{for-integrality} (2) holds and  that $p$ does not divide the number of  roots of unity in the Hilbert class field of $K$.
Then we have 
$$\Ch_{\Lambda}(\scrS_{\rel}/\fz)= \Ch_{\Lambda}(X_{\str}).$$
\end{enumerate}
\end{prop}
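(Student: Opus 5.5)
Both parts will be deduced from Rubin's two-variable main conjecture \cite{Ru91}. That result is stated for the Tate module of a CM elliptic curve (or CM abelian variety) over an abelian extension $H$ of $K$, in terms of elliptic units and the Galois group of the maximal abelian pro-$p$ extension unramified outside $p$. So the proof is mostly a translation between Rubin's formulation and ours.

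\textbf{Step 1 (Rubin's modules in our language).} Let $\mathcal{U}_\infty$ be the local units of $K(p^\infty\ff)$ at primes above $p$, $\mathcal{C}_\infty$ the closure of the elliptic units, and $\mathcal{X}_\infty$ the Galois group of the maximal abelian pro-$p$ extension unramified outside $p$. Rubin's theorem states
\[
\Ch(\mathcal{X}_\infty^{\varphi})=\Ch\bigl((\mathcal{U}_\infty/\mathcal{C}_\infty)^{\varphi}\bigr)
\]
on $\varphi$-isotypic parts, for the $\Z_p[\![\Gal(K(p^\infty\ff)/K)]\!]$-modules. Equivalently, via global units $\mathcal{E}_\infty$, it gives $\Ch(\mathcal{E}_\infty/\mathcal{C}_\infty)=\Ch(\mathcal{A}_\infty)$ on $\varphi$-parts. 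I would identify the pieces as follows:
\begin{itemize}
\item Using Kummer theory and $\bfH^1_{\ff}(\Z_p(1))\otimes T^{\otimes -1}$, identify $\scrS_{\rel}$ with $(\varprojlim \CO_{F}[1/p]^\times\otimes\Z_p)\otimes T^{\otimes-1}$ taken over $\Gal(K(p^\infty\ff)/K_\infty)$-coinvariants.
\item Identify $\fz$ with the image of the elliptic units.
\item Identify $X_{\str}$ with $\mathcal{X}_\infty\otimes T^{\otimes-1}$, up to the $\varphi$-part and a twist.
\end{itemize}
Here one uses that $T^{\otimes -1}(1)\otimes \Lambda$ restricted to $G_{K(p^\infty\ff)}$ is $\Z_p(1)$-twisted-trivial. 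Lemma~\ref{rel-gal} and the vanishing \eqref{local-vanishing} let us ignore primes away from $p$. Shapiro's lemma passes between $K(p^\infty\ff)$ and $K_\infty$, since $\Gal(K(p^\infty\ff)/K_\infty)$ is finite.

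\textbf{Step 2 (part (1)).} After inverting $p$, the finite group $\Delta=\Gal(K(p^\infty\ff)/K_\infty)$ has order invertible, so taking $\varphi$-isotypic components is exact. Rubin's equality, or even just the Euler system divisibility as in Proposition~\ref{half-IMC}(4), then gives the inclusion. Any discrepancies from finite $\mathrm{H}^0$ terms (Lemma~\ref{finiteness-at-ramified}) or from $p\mid[H:K]$ only affect $\mu$-type factors, and these vanish after $\otimes\Q_p$. Rubin's equality is a statement about all non-$\mu$ parts, so this gives part (1).

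\textbf{Step 3 (part (2)).} Under Assumption~\ref{for-integrality}(2) we have $p\nmid[H:K]$, so the $\varphi$-projector is integral. Because $p$ does not divide the number of roots of unity of the Hilbert class field, Rubin's integral main conjecture (including $\mu$) applies on this isotypic part. One then checks that the finite error terms in Step 1 vanish integrally:
\begin{itemize}
\item $\mathrm{H}^0(K_S/F,W)$ and the local $\mathrm{H}^0$ at $\fp$ vanish by Lemma~\ref{non-anomalous}.
\item The contributions at $v\mid\ff^{(p)}$ have characteristic ideal $1$ over the two-variable $\Lambda$: they are finite, and pseudo-null since $\Lambda$ has dimension three.
\end{itemize}

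\textbf{Main obstacle.} The hardest step is Step 1 at the integral level. The identification of $X_{\str}$ (defined with the strict condition at $\fp$ and the $\rf$ condition elsewhere) with the dual of $\mathrm{H}^1$ of the unramified-outside-$p$ Iwasawa module requires Poitou--Tate at each layer and control of the ramified-prime local terms. I would first establish the exact sequence analogous to \eqref{sel-coh} for the strict condition, and show the $v\mid\ff$ terms are pseudo-null.
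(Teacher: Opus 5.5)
Your overall plan is the paper's: twist Rubin's two-variable main conjecture into the present normalisation, and obtain (1) from an Euler-system divisibility. However, Step~1 pairs the wrong modules, and the identification you single out as the main obstacle is false.

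$X_{\str}$ is cut out by the strict condition at $\fp$ and the $\rf$-condition elsewhere. After untwisting, it therefore classifies everywhere-unramified classes that vanish at $\fp$, so it is a class-group module. By contrast, $\mathcal{X}_\infty$ (unramified outside $p$) corresponds to the relaxed condition. The two differ by the image of the local units at $\fp$, which is not torsion: under (n-spl), $\mathcal{U}_\infty$ has $\Lambda$-rank $[K_\fp:\Q_p]=2$ and $\mathcal{X}_\infty$ has rank one.

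Consequences:
\begin{itemize}
\item The form $\Ch(\mathcal{X}_\infty^{\varphi})=\Ch((\mathcal{U}_\infty/\mathcal{C}_\infty)^{\varphi})$ that you quote reads $0=0$ here.
\item It is not equivalent to the class-group form, because $0\to\overline{\mathcal{E}}_\infty\to\mathcal{U}_\infty\to\mathcal{X}_\infty\to\mathcal{A}_\infty\to 0$ has non-torsion middle terms.
\item Since $\scrS_{\rel}/\fz$ corresponds to global units modulo elliptic units, its partner must be $\mathcal{A}_\infty$.
\end{itemize}

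What is missing is the argument placing $X_{\str}$ there. Let $\eta$ be the Teichm\"uller lift of $T/\fm$ and $K_0$ its field. Assumption~\ref{for-integrality}(2) gives $p\nmid[K_0:K]$ and shows that $\varphi\eta^{-1}$ factors through $\Gal(K_\infty/K)$.

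For finite $F\subseteq K_\infty$ one has $\rH^1_{\rf}(F_{\fp_F},L_\lambda(\eta))=\rH^1_{\mathrm{ur}}(F_{\fp_F},L_\lambda(\eta))=0$, because $\eta$ is non-trivial on the decomposition group at $\fp$. This is the real use of Lemma~\ref{non-anomalous}, rather than killing $\rH^0$ terms. Hence the strict and Bloch--Kato conditions at $\fp$ agree on this twist. Then \cite[Prop.~1.6.2]{Ru00} gives
$X_{\str}^{\vee}\cong\Hom\bigl((\varprojlim_F A(F))^{\eta},L_\lambda/\CO\bigr)(\varphi\eta^{-1})$,
with $F$ over finite subextensions of $K_\infty K_0$. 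Alongside this, $\scrS_{\rel}\cong\bigl(\varprojlim_F\rH^1(F,\Z_p(1))\bigr)^{\eta}(\varphi^{-1}\eta)$. The twisted \cite[Thm.~4.1]{Ru91} then yields (2).

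This must be done over $K_\infty K_0$, not $K(p^\infty\ff)$. The group $\Gal(K(p^\infty\ff)/K_\infty)$ can have order divisible by $p$ (e.g.\ via $(\CO_K/\mathfrak{q})^\times$ for $\mathfrak{q}\mid\ff$, $\mathfrak{q}\nmid p$), so your $\varphi$-projector is not integral there. Over $K_0$, by contrast, $p\nmid[K_0:K]$ makes taking $\Gal(K_0/K)$-invariants exact.

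For (1), your fallback is the right idea, but it must be the two-variable divisibility; the paper simply cites \cite[Thm.~15.2(1)]{K}. Proposition~\ref{half-IMC}(4) does not serve, since it lives over $\Lambda_{\ac}$.

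Deriving (1) from Rubin's equality is not available in general, and the obstruction is not a $\mu$-invariant issue. Without Assumption~\ref{for-integrality}(2), the restriction of $\varphi$ to the torsion of $\Gal(K(p^\infty\ff)/K)$ may have order divisible by $p$. Then $\varphi$ is not a prime-to-$p$ character times a character of $\Gal(K_\infty/K)$, and Rubin's theorem does not apply even after inverting $p$.
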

\begin{proof}
We first consider part 2).  

Denote by 
$\eta:G_K\to \CO^{\times}$
the finite character induced by the composition of the Teichm\"uller lift 
and the character $G_K \to (\CO/\fm)^{\times}$ corresponding to the Galois representation $T/\fm$.
Since $p\nmid [H:K]$ and $T|_{G_H} \cong T_p(A)\otimes_{\CO_K\otimes \Z_p}\CO$,
$p$ does not divide the order of the image of $\eta$,
and  $\mathrm{Im}(\varphi\eta^{-1}) \subseteq \CO^{\times}$ is a pro-$p$ group without $p$-torsion\footnote{
Here we denote by the same symbol the $p$-adic avatar (the character attached to the associated Galois representation) of an algebraic Hecke character.}. 
Hence, $\varphi^{-1}\eta$ factors through $\Gal(K_{\infty}/K)$.
 Denote by $K_0$ the finite abelian extension of $K$ corresponding to $\eta$. Then $p\nmid [K_0:K]$, and
 $\Gal(K_{\infty}K_0/K)=\Gal(K_{\infty}/K)\times \Gal(K_0/K)$ is a quotient of $\Gal(K(p^{\infty}\ff)/K)$ through which  $\varphi$ factors.
 
As $\Lambda$-modules, we have 
\begin{equation}\label{compact-twist}
\begin{split}
\scrS_{\rel}&=\varprojlim_{K\subseteq F \subseteq K_{\infty}}\rH^1(F, T^{\otimes -1}(1))
=\rH^0\left(K_0/K, \varprojlim_{K\subseteq F \subseteq K_{\infty}K_0}\rH^1(F, T^{\otimes -1}(1))\right)\\
&\cong \rH^0\left(K_0/K, \varprojlim_{K\subseteq F \subseteq K_{\infty}K_0}\rH^1(F, \Z_p(1))\otimes T^{\otimes -1}\right)\\
&\cong \left(\varprojlim_{K\subseteq F \subseteq K_{\infty}K_0}\rH^1(F, \Z_p(1))\right)^{\eta}(\varphi^{-1}\eta).
\end{split}
\end{equation}
 Here $F$ ranges over finite extensions of $K$ inside $K_{\infty}$ or $K_{\infty}K_0$, 
 the first equality follows from \cite[Lem.\ 1.3.5 and Cor.\ B.3.6]{Ru00},
  the third from  $T^{\otimes -1}\cong \CO(\varphi^{-1}\eta\eta^{-1})$,  
 the superscript ${\eta}$ denotes the submodule  on which $\Gal(K_0/K)$ acts by $\eta$,
   and $(\varphi^{-1}\eta)$ in the last term denotes the Galois-twist.
As $\Lambda$-modules, we also have 
\[
\begin{split}
\rH^1(K_{\infty},W)
&=\rH^1(K_{\infty}K_0, W)^{\Gal(K_0/K)}
\cong 
\rH^0\left(K_0/K, \rH^1(K_{\infty}K_0, L_{\lambda}/\CO)\otimes_{\CO}  T \right)\\
&=\rH^0\left(K_0/K, \rH^1(K_{\infty}K_0, L_{\lambda}/\CO)(\eta)\right)(\varphi\eta^{-1})\\
&=\rH^1(K_{\infty}, L_{\lambda}/\CO(\eta))(\varphi\eta^{-1}),
\end{split}
\]
and likewise
\begin{equation}\label{strong-twist}
\begin{split}
X_{\str}^{\vee}&=\varinjlim_{K \subseteq F \subseteq K_{\infty}}\Sel_{\str}(F, L_{\lambda}/\CO(\eta))(\varphi\eta^{-1})
=\varinjlim_{K \subseteq F \subseteq K_{\infty}}\Sel_{}(F, L_{\lambda}/\CO(\eta))(\varphi\eta^{-1})\\
&=\rH^0\left(K_0/K, \varinjlim_{K \subseteq F \subseteq K_{\infty}K_0}\Hom(A(F), L_{\lambda}/\CO(\eta))\right)(\varphi\eta^{-1})\\
&=\Hom\left(\left(\varprojlim_{K \subseteq F \subseteq K_{\infty}K_0}A(F)\right)^{\eta}, L_{\lambda}/\CO\right)(\varphi\eta^{-1}).
\end{split}
\end{equation}
Here 
$A(F)$ denotes the $p$-primary part of the ideal class group of $F$, 
the third equality follows from  \cite[Prop.\ 1.6.2]{Ru00}, 
and  the second from 
$\rH^1_{\rf}(F\otimes K_{\fp}, L_{\lambda}/\CO(\eta))=\{0\},$
which is a consequence of 
\[
\rH^1_{\rf}(F_{\fp_F}, L_{\lambda}(\eta))=\rH^1_{\ur}(F_{\fp_F}, L_{\lambda}(\eta))=
\rH^0(F_{\fp_F}^{\ur}, L_{\lambda}(\eta))/(\Fr_{\fp_{F}}-1)=0,
\]
where $\fp_F$ denotes the prime of $F \subseteq K_{\infty}$ above $\fp$. 
Moreover, the last equality holds as follows: 
suppose that $\rH^0(F_{\fp_F}^{\ur}, L_{\lambda}(\eta))/(\Fr_{\fp_{F}}-1) \neq 0,$ 
then $\eta|_{G_{F_{\fp_F}}}$ is trivial. Since $[F:K]$ and $[K_0:K] $ are relatively prime,
$\eta|_{G_{K_{\fp}}}$ is trivial, which implies that the image of $\varphi|_{G_{K_{p}}}$ is pro-$p$ without $p$-torsion,   contradicting Lemma \ref{non-anomalous}
(note that our assumption implies Assumption \ref{for-integrality} (1)).

Consequently, 
part 2) follows from (\ref{compact-twist}), (\ref{strong-twist}) and the twisting \cite[Thm.\ 4.1 (ii)]{Ru91}.
  (Note that 
   $\eta$ is non-trivial on the decomposition group at $\fp$ by Lemma~\ref{non-anomalous},  as required in \textit{loc.\ cit.})

Part 1) is the content of \cite[Thm.\ 15.2 (1)]{K}.
\end{proof}
\subsection{Anticyclotomic elliptic unit main conjecture} We consider the anticyclotomic variant of \S\ref{ss, emc-t}.
\subsubsection{Preliminaries}
\begin{lem}\label{inj-ram}\leavevmode
\begin{enumerate}
\item 
For a prime $v\mid \ff$  of $K_{\infty}^{\ac}$ not dividing $p$,
the kernel  of the restriction map
\begin{equation}\label{ac-to-full-at-f}
\rH^1(K_{\infty,v}^{\ac},W) \to \rH^1(K_{\infty}\otimes_{K_{\infty}^{\ac}}K_{\infty,v}^{\ac}, W )
\end{equation}
is annihilated by $p^k$ for some integer $k$. 
\item Under Assumption \ref{for-integrality} (2),
the map (\ref{ac-to-full-at-f}) is injective.
\end{enumerate}
\end{lem}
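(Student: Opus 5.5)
The plan is to use inflation--restriction for the extension $K_{\infty}\otimes_{K_{\infty}^{\ac}}K_{\infty,v}^{\ac}$ over $K_{\infty,v}^{\ac}$. Since $v\nmid p$, the extension $K_{\infty}/K$ is unramified at $v$. Hence the local extension is a quotient of the unramified $\widehat{\Z}$-extension of $K_{\infty,v}^{\ac}$, with Galois group $G_v$ pro-cyclic and pro-$p$ (a closed subgroup of $\Gal(K_\infty/K_\infty^{\ac})\cong\Z_p$). If the local extension is trivial, that is $v$ splits completely or $G_v=1$, the map is injective and there is nothing to prove. Otherwise $G_v\cong\Z_p$, and it is the corresponding unramified $\Z_p$-extension $L_w$. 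Inflation--restriction identifies the kernel of (\ref{ac-to-full-at-f}) with $\rH^1(G_v, \rH^0(L_w, W))$. Since $G_v\cong \Z_p$ is pro-cyclic, this equals the coinvariants $\rH^0(L_w,W)/(\sigma-1)$ for a topological generator $\sigma$.

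The key input for (1) is Lemma~\ref{finiteness-at-ramified}. I will extend its argument to $L_w$, an algebraic extension of $K_v$ inside $K_\infty$: since $W\cong L_\lambda/\CO$ as $\CO$-module and the $G_{K_v}$-action is ramified at $v$ (because $v\mid\ff$), the inertia invariants of $W$ form a proper $\CO$-submodule, hence are finite. Indeed, the proof of Lemma~\ref{finiteness-at-ramified} applies verbatim to any algebraic extension of $K$ inside $K_\infty$, because $L_w/K_v$ is unramified and inertia still acts nontrivially. Thus $\rH^0(L_w,W)$ is finite, killed by some $p^k$, so the kernel, being a quotient of it, is killed by $p^k$. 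This gives (1).

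For (2), I aim to show $\rH^0(L_w,W)=0$, or at least that its $G_v$-coinvariants vanish. Under Assumption~\ref{for-integrality}(2), $\varphi=\eta\cdot(\varphi\eta^{-1})$ with $\eta$ of prime-to-$p$ order and $\varphi\eta^{-1}$ factoring through $\Gal(K_\infty/K)$, as in the proof of Proposition~\ref{RubinIMC}. Hence the action of the inertia group $I_v$ on $W$ is through $\eta^{\pm1}$ alone, since $K_\infty/K$ is unramified at $v$. Ramification at $v$ of the representation means $\eta|_{I_v}$ is nontrivial of prime-to-$p$ order. Then $W^{I_v}$ is the set of $x$ with $(\zeta-1)x=0$ for some root of unity $\zeta\ne1$ of order prime to $p$. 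But $\zeta-1$ is then a unit in $\CO$ (its reduction is nonzero since the order is prime to $p$), so $W^{I_v}=0$. Therefore $\rH^0(L_w,W)\subseteq W^{I_v}=0$, and the kernel vanishes, giving injectivity.

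The main obstacle is the verification in (2) that the inertia character at $v$ is exactly the prime-to-$p$ part $\eta$, that is, that $\varphi\eta^{-1}$ is unramified at $v$. This needs the decomposition $\Gal(K_\infty K_0/K)=\Gal(K_\infty/K)\times\Gal(K_0/K)$ from the proof of Proposition~\ref{RubinIMC} and the fact that $K_\infty/K$ is unramified outside $p$. I will cite that decomposition and check that $T$ ramified at $v$ forces $\eta|_{I_v}\neq1$.
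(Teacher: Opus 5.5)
Your proposal is correct. Part (1) is the paper's argument: the kernel is $\rH^1$ of the pro-$p$ local Galois group with coefficients in $\rH^0(K_{\infty,w},W)\subseteq \rH^0(K_v^{\ur},W)=W^{I_v}$, and this group is finite because $W$ is ramified at $v$.

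Part (2) takes a genuinely different route. The paper never shows $W^{I_v}=0$. Instead it does the following:
\begin{enumerate}
\item It restricts to $H(A[\fp])$, whose degree over $K$ is prime to $p$.
\item It shows via Serre--Tate that $A$ has good reduction away from $p$ there. This is done by comparing with the $l$-adic inertia image for an auxiliary prime $l$ with $p\nmid|(\CO_K/l\CO_K)^\times|$.
\item It then uses Milne's vanishing of $\rH^1(F'/F,A(F'))$ for unramified $F'/F$.
\end{enumerate}
So over $H(A[\fp])$ the coefficients become unramified, and injectivity is a geometric fact.

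You stay over $K$. You use the factorization $\varphi=\eta\cdot(\varphi\eta^{-1})$ from the proof of Proposition~\ref{RubinIMC} to see that $I_v$ acts through a nontrivial character of prime-to-$p$ order. This gives $W^{I_v}=0$, so the kernel vanishes trivially. Your route is shorter and yields the stronger fact $\rH^0(K_{\infty,w},W)=0$.

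The cost is that it rests on the torsion-freeness of $\mathrm{Im}(\varphi\eta^{-1})$. This follows from $\varphi\eta^{-1}(G_H)\subseteq 1+\fp\CO_{K_{\fp}}$ and $p\nmid[H:K]$, but only when $\mu_p\not\subset K_{\fp}$. That condition is automatic for $p\ge 5$, and you should state it explicitly when you cite the factorization.

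When $p=3$ and $K_{\fp}\cong\Q_3(\sqrt{-3})$, inertia at $v$ can act through $\mu_3$, so $W^{I_v}\neq 0$. An example is $K=\Q(\sqrt{-3})$, $y^2=x^3+25$, $v=5\CO_K$. The paper's auxiliary prime $l$ is what lets its argument cover $K_{\fp}\cong\Q_3(\sqrt{-3})$ when $K\neq\Q(\sqrt{-3})$. For $K=\Q(\sqrt{-3})$ and $p=3$, neither argument applies.
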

\begin{proof}

 The kernel of (\ref{ac-to-full-at-f}) is annihilated by the order of $\rH^0(K_{v}^{\ur},W),$
 which is finite\footnote{Note that  $\rH^0(K_{v}^{\ur},W)$ is a proper $\CO$-submodule of the divisible group $W$ of corank one.}, and so part 1) follows.  

Since $p\nmid [H:K]$, we have 
 $p\nmid [H(A[\fp]):K]$ and $W|_{G_{H}} \cong A[p^{\infty}]\otimes_{\CO_K}\CO$. 
So it suffices to show that the restriction map
\begin{equation}\label{infinity-infinity}
\rH^1(K_{\infty,v}^{\ac}\otimes_KH(A[\fp]),A[p^{\infty}]) \to \rH^1(K_{\infty}\otimes_{K_{\infty}^{\ac}}K_{\infty,v}^{\ac}\otimes_KH(A[\fp]), A[p^{\infty}] )
\end{equation}
is injective.

Note that $A$ over $H(A[\fp])$ has good reduction outside $p$.
Indeed, for a prime $w\nmid p$ of $H(A[\fp])$,
the image of the inertia subgroup $I_w$ under the natural homomorphism $G_{H(A[\fp])}\to \Aut_{\CO_K\otimes \Z_p}(T_p(A))$
is pro-$p$.
Let $l$ be  a rational prime such that $p\nmid |(\CO_K/l\CO_K)^{\times}|$. 
 Since \cite[Thm.\ 2 (ii)]{ST} implies that
 the image of $I_w$ under the homomorphism $G_{H(A[\fp])}\to \Aut_{\CO_K\otimes \Z_l}T_l(A)$ is pro-$p$, 
the image is trivial. Hence, $A$ has good reduction at $w$ by the Serre--Tate theorem.

 In turn, it suffices to show the injectivity of  homomorphisms 
\begin{equation}\label{FF}
\rH^1(F, A[p^{\infty}] ) \to \rH^1(F^{\prime}, A[p^{\infty}]), 
\end{equation}
where $F$ is a finite extension of $K_{\infty,v}^{\ac}$ over which $A$ has good reduction 
and $F^{\prime}$ is an unramified extension of $F$.
Moreover,  since $A(F)\otimes \Q_p/\Z_p$ and $A(F^{\prime})\otimes \Q_p/\Z_p$ are trivial, 
(\ref{FF})
is identified with 
\[
\rH^1(F, A)[p^{\infty}]  \to \rH^1(F^{\prime}, A)[p^{\infty}],
\]
whose kernel is isomorphic to $\rH^1(F^{\prime}/F, A(F^{\prime}))[p^{\infty}]$, which vanishes by 
\cite[Prop.\ 4.3]{M} and so part 2) follows.
\end{proof}

\begin{lem}\label{finiteness-at-p}\leavevmode 
\begin{enumerate}
\item 
The kernel of the restriction map 
\begin{equation}\label{ac-to-full-at-p}
\rH^1(K_{\infty,\fp}^{\ac},W) \to \rH^1(K_{\infty, \fp}, W )
\end{equation}
is finite,
 where we also write $\fp$ for the unique prime of $K_{\infty}^{\ac}$ above $\fp$.
\item Under Assumption \ref{for-integrality} (1),
the map (\ref{ac-to-full-at-p}) is injective.
\end{enumerate}
\end{lem}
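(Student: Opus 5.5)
The approach is the inflation--restriction sequence. Since $p$ is non-split, $\fp$ has a unique prime above it in $K_\infty$. Put $G=\Gal(K_{\infty,\fp}/K^{\ac}_{\infty,\fp})$, a quotient of $\Gal(K_\infty/K_\infty^{\ac})\cong\Z_p$. Then the kernel of \eqref{ac-to-full-at-p} is
\[
\rH^1\bigl(G, \rH^0(K_{\infty,\fp}, W)\bigr).
\]
So the whole problem reduces to controlling $\rH^0(K_{\infty,\fp},W)$, where $W=T^{\otimes-1}(1)\otimes\Q_p/\Z_p$ (or the corresponding divisible module $T\otimes\Q_p/\Z_p$, depending on the convention; the argument is identical).

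For part (1), I will show that $\rH^0(K_{\infty,\fp},W)$ is finite. The group $W\cong L_\lambda/\CO$ has no proper infinite $\CO$-submodules, so this $\rH^0$ is either finite or all of $W$. In the latter case $G_{K_{\infty,\fp}}$ acts trivially on $W$, so the character $\varphi^{-1}\chi_{\cyc}|_{G_{K_\fp}}$ factors through the abelian $\Z_p^2$-extension $K_{\infty,\fp}/K_\fp$.

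\begin{itemize}
\item \textbf{Case A.} If this extension has finite ramification, the character restricted to inertia has finite image, so $W$ would be potentially unramified and of Hodge--Tate weight $0$. This contradicts the Hodge--Tate weights of $\varphi$: the character has weight $\pm1$ after the twist, since $\Ind V$ has weights $\{0,1\}$.
\item \textbf{Case B.} If the ramification is infinite, a character of $G_{K_\fp}$ factoring through a $\Z_p^2$-extension is, by local class field theory, trivial on the torsion of $\CO_{K_\fp}^\times$. The conjugate-symplectic condition \eqref{local-self-dual} forces $\varphi|_{\Z_p^\times}$ to equal $\omega_{K_\fp/\Q_p}$ times the norm. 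Combined with the cyclotomic twist, the restriction to $\mu_{p-1}\subset\Z_p^\times$ is the ramified quadratic character $\omega_{K_\fp/\Q_p}$ (ramified since $p$ is ramified in $K$), which is nontrivial on $\mu_{p-1}$ for $p$ odd.
\end{itemize}

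Case B will be the step needing care with the twisting normalisations. In the inert case the argument instead uses that $\omega_{K_\fp/\Q_p}$ is unramified but $\varphi$ is not of finite order on inertia, so one falls back to Case A's Hodge--Tate argument. The cleanest uniform route is simply the Hodge--Tate weight argument: a character trivial on $G_{K_{\infty,\fp}}$ is a $\Z_p^2$-character, hence of Hodge--Tate weight $0$ in the anticyclotomic direction combination. I would check this via $\varphi\eta^{-1}$ factoring through $\Gamma$ only up to the finite character $\eta$, with $\eta$ nontrivial on decomposition at $\fp$ by Lemma \ref{non-anomalous}.

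Once $\rH^0(K_{\infty,\fp},W)$ is finite, $\rH^1(G,\text{finite})$ is finite for $G$ procyclic, which gives part (1).

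For part (2), under Assumption \ref{for-integrality} (1), Lemma \ref{non-anomalous} gives $\rH^0(F,T^{\otimes-1}(1)/\fm)=0$ for every finite $p$-extension $F$ of $K_\fp$. Since $K_{\infty,\fp}$ is a union of such $F$, it follows that $\rH^0(K_{\infty,\fp},W[\fm])=0$, hence $\rH^0(K_{\infty,\fp},W)=0$, because any nonzero element of the $\fm$-torsion module $W$ has a nonzero multiple in $W[\fm]$. The inflation--restriction kernel then vanishes, so \eqref{ac-to-full-at-p} is injective.
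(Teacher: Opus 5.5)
Your inflation--restriction reduction and the dichotomy ``$\rH^0(K_{\infty,\fp},W)$ is finite or equal to $W$'' are exactly where the paper starts. The gap is that you then try to exclude the second alternative, and that is impossible: it genuinely occurs. Section~3 allows $p=3$, so take $K=\Q(\sqrt{-3})$ and $\varphi$ of $\fp$-power conductor (e.g.\ attached to $y^2+y=x^3$). Since $h_K=1$ and $\CO_{K_\fp}^\times/\mu_6\cong\Z_3^2$, the ray class field $K(\fp^\infty)$ is the $\Z_3^2$-extension $K_\infty$. Hence the $p$-adic avatar of $\varphi$ factors through $\Gal(K_\infty/K)$, and $\rH^0(K_{\infty,\fp},W)=W$.

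None of your cases rules this out. Case A never occurs, since $K_{\infty,\fp}/K_\fp$ is an infinitely ramified $\Z_p^2$-extension. The Hodge--Tate ``uniform route'' (and your inert-case fallback to it) gives no contradiction, because a character factoring through a $\Z_p^2$-quotient can have nonzero Hodge--Tate weights; the $\varphi$ above is one. In Case B the restriction to $\mu_{p-1}$ is not $\omega_{K_\fp/\Q_p}$ but $\omega_{K_\fp/\Q_p}\chi_{\cyc}$, as forced by $\det\Ind=\chi_{\cyc}$. That is the quadratic character times a Teichm\"uller character, which is nontrivial for $p\ge 5$ but trivial for $p=3$. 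Moreover, when $K_\fp\not\cong\Q_3(\sqrt{-3})$, Lemma~\ref{non-anomalous} already gives $\rH^0(K_{\infty,\fp},W)=0$. So part (1) says more than part (2) precisely in the case your argument cannot handle.

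The missing idea is that finiteness of $\rH^0$ is not needed. Suppose $\rH^0(K_{\infty,\fp},W)=W$. Then $G=\Gal(K_{\infty,\fp}/K^{\ac}_{\infty,\fp})\cong\Z_p$ acts on $W\cong L_\lambda/\CO$ through $\varphi$, and this action is nontrivial. Otherwise $\varphi|_{G_{K_\fp}}$ would factor through the anticyclotomic $\Z_p$-extension and so be trivial on $\Q_p^\times$: an anticyclotomic character $\chi$ satisfies $\chi^2=1$ there, and its image is torsion-free. That contradicts $\varphi|_{\Q_p^\times}=\omega_{K_\fp/\Q_p}\chi_{\cyc}$. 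Hence $\rH^1(G,W)=W/(\sigma-1)W=0$, since $W$ is divisible and $\sigma-1$ acts by a nonzero scalar. This is the paper's argument.

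For (2), note that the paper's $W$ is $T\otimes_{\CO}L_\lambda/\CO$, the Cartier dual of $T^{\otimes-1}(1)$. This is what makes $\rH^1(F_v,W)=\rH^2(F_v,T)$ and the Poitou--Tate sequences of \S3 hold. Lemma~\ref{non-anomalous}, however, concerns $T^{\otimes-1}(1)/\fm$, so ``the argument is identical'' hides a step. The paper bridges it with the conjugate self-duality $\varphi(\fa^c)=\overline{\varphi(\fa)}$. This identifies $\rH^0(K_{\infty,\fp},W)$ with $\rH^0(K_{\infty,\fp},T^{\otimes-1}(1)\otimes\Q_p/\Z_p)$, up to twisting the coefficients by $c$. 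After that, your $\fm$-torsion argument goes through unchanged.
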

\begin{proof}
As for part 1), 
it suffices to show that  $\rH^1(K_{\infty,\fp}/K_{\infty,\fp}^{\ac}, \rH^0(K_{\infty,\fp},W))$ is finite. 
If $\rH^0(K_{\infty,\fp},W)$ is finite, then 
the assertion is trivial.
If $\rH^0(K_{\infty,\fp},W)$ is infinite,
then $\rH^0(K_{\infty,\fp},W)=W$. 
Hence, the $p$-adic avatar of $\varphi$ factors through $\Gal(K_{\infty, \fp}/K_{\fp})$, and 
\[
\rH^1(K_{\infty,\fp}/K_{\infty,\fp}^{\ac}, \rH^0(K_{\infty,\fp},W))=\rH^1(K_{\infty,\fp}/K_{\infty,\fp}^{\ac}, W)
\]
is finite since the restriction of $\varphi$ to $G_{K_{\infty, \fp}^{\ac}}$ is non-trivial.

Recall that $\varphi(\fa^{c})=\overline{\varphi(\fa)}$, 
where $c$ denotes the complex conjugation. 
Since $c$ preserves $L$,
Lemma \ref{non-anomalous} implies that
\begin{equation}\label{vanishing-p}
\rH^0(K_{\infty, \fp}, W)=\rH^0(K_{\infty, \fp}, W^{\vee}(1))\otimes_{\CO_{L}, c}\CO_L=0.
\end{equation}
Hence, part 2) follows from the inflation-restriction exact sequence.
\end{proof}

\subsubsection{Anticyclotomic descent}
\begin{prop}\label{reduction-rel}\leavevmode
\begin{enumerate}
\item
The kernels of the following natural homomorphisms of $\Lambda_{\ac}$-modules
\begin{equation}\label{base-change-compact}
\scrS_{\rel}\otimes_{\Lambda}\Lambda_{\ac} \to \scrS_{\rel}^{\ac},\quad 
\scrS_{\rel}/\fz_{\fa}\otimes_{\Lambda}\Lambda_{\ac} \to \scrS_{\rel}^{\ac}/\Lambda_{\ac}z^{\ac}_{p^{\infty}\ff}
\end{equation}
are finite,
where $\fa$ is as in (\ref{invertible-a}),
and
$\fz_{\fa}$ denotes the $\Lambda$-submodule of $\scrS_{\rel}$ generated by $\!_{\fa}z_{p^{\infty}\ff}^{t,\emptyset}$.
\item
The characteristic ideals of the cokernels of the homomorphisms (\ref{base-change-compact})  equal $\Ch_{\Lambda_{\ac}}(X_{\str}[I])$ up to powers of $p\Lambda_{\ac}$. 
\item Under Assumption \ref{for-integrality},
the homomorphisms (\ref{base-change-compact}) are injective, and
 the characteristic ideals of their cokernels equal $\Ch_{\Lambda_{\ac}}(X_{\str}[I]),$
where  $I$ denotes the kernel of $\Lambda \to \Lambda_{\ac}.$
\end{enumerate}
\end{prop}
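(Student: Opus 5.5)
The plan is to use the standard descent along $I=\ker(\Lambda\to\Lambda_{\ac})$. Since $\Gal(K_\infty/K_\infty^{\ac})\cong\Z_p$, $I$ is principal, say $I=(\gamma'-1)$ for a topological generator $\gamma'$ of this group. Shapiro's lemma and Lemma~\ref{rel-gal} (as in the proof of Proposition~\ref{rank-rel}) identify
\[
\scrS_{\rel}\cong\rH^1(K_S/K,T^{\otimes -1}(1)\otimes\Lambda)\quad\text{and}\quad \scrS^{\ac}_{\rel}\cong\rH^1(K_S/K,T^{\otimes -1}(1)\otimes\Lambda_{\ac}).
\]
The short exact sequence $0\to\Lambda\xrightarrow{\gamma'-1}\Lambda\to\Lambda_{\ac}\to0$ then gives the long exact sequence
\[
\rH^1(K_S/K,\bT)\xrightarrow{\gamma'-1}\rH^1(K_S/K,\bT)\to\rH^1(K_S/K,\bT_{\ac})\to\rH^2(K_S/K,\bT)[I]\to 0,
\]
where $\bT=T^{\otimes -1}(1)\otimes\Lambda$ and $\bT_{\ac}=T^{\otimes -1}(1)\otimes\Lambda_{\ac}$. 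The kernel of the first map in (\ref{base-change-compact}) is the image of $\rH^0(K_S/K,\bT_{\ac})$, which vanishes by (\ref{for-torsionfree}) with $f=0$, i.e.\ by the argument for $*=\ac$. So the first map is actually injective, and its cokernel is $\rH^2(K_S/K,\bT)[I]$.

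Next I will relate $\rH^2(K_S/K,\bT)$ to $X_{\str}$ via Poitou--Tate, or equivalently via the dual of (\ref{sel-coh}) with the strict local condition at $\fp$. This gives an exact sequence
\[
0\to X_{\str}\to\rH^2(K_S/K,\bT)\to\bigoplus_{v\mid p\ff}\rH^2(K_v,\bT)\to \rH^0(K_S/K,\ldots)^\vee,
\]
up to the terms $\varprojlim\rH^0(F_v,W)^\vee$ controlled by Lemmas~\ref{finiteness-at-ramified}, \ref{inj-ram} and~\ref{finiteness-at-p}. Taking $I$-torsion, the local terms are finite, or vanish under Assumption~\ref{for-integrality}, by Lemma~\ref{non-anomalous} and (\ref{vanishing-p}). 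This yields $\Ch_{\Lambda_{\ac}}(\rH^2(K_S/K,\bT)[I])=\Ch_{\Lambda_{\ac}}(X_{\str}[I])$, exactly in part 3) and up to powers of $p$ in part 2), since finite modules and modules killed by $p^k$ only contribute $p$-powers. Equivalently, one can phrase this step as comparing $X_{\str}[I]$ with $X^{\ac}_{\str}$ through the restriction maps (\ref{ac-to-full-at-f}) and (\ref{ac-to-full-at-p}).

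For the second map in (\ref{base-change-compact}), I will use that $\!_{\fa}z^{t}_{p^\infty\ff}$ maps to $\!_{\fa}z^{t,\ac}_{p^\infty\ff}$, and that $\!_{\fa}z^{t,\ac}_{p^\infty\ff}$ is a unit multiple of $z^{\ac}_{p^\infty\ff}$ by (\ref{invertible-a}). Then I apply the snake lemma to the map of short exact sequences $0\to\fz_{\fa}\to\scrS_{\rel}\to\scrS_{\rel}/\fz_{\fa}\to0$ tensored with $\Lambda_{\ac}$. The key input is that $\fz_{\fa}\otimes\Lambda_{\ac}\to\Lambda_{\ac}z^{\ac}$ is an isomorphism. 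This holds because $\fz_{\fa}$ is free of rank one: $\scrS_{\rel}$ is torsion-free by Proposition~\ref{rank-rel}. Moreover $z^{\ac}$ is non-torsion by Proposition~\ref{half-IMC}~(1). The snake lemma then transfers injectivity, or finiteness of the kernel, and the cokernel from the first map to the second. The one extra term is $\mathrm{Tor}_1^{\Lambda}(\scrS_{\rel}/\fz_{\fa},\Lambda_{\ac})=(\scrS_{\rel}/\fz_{\fa})[I]$. I expect to show it is finite, or zero, using that $\scrS_{\rel}/\fz_{\fa}$ is $\Lambda$-torsion with characteristic ideal prime to $I$. This coprimality follows from the non-vanishing of the image of $z^{\ac}$ modulo torsion.

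The main obstacle I expect is the Poitou--Tate comparison: keeping track of the local $\rH^2$ and $\rH^0$ terms at $v\mid\ff$ and at $\fp$ precisely enough to obtain an exact equality of characteristic ideals under Assumption~\ref{for-integrality}, rather than one only up to $p$-powers.
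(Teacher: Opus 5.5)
Your plan is essentially the paper's proof: Shapiro's lemma and the $I$-descent sequence $0\to\scrS_{\rel}/I\scrS_{\rel}\to\scrS^{\ac}_{\rel}\to\rH^2(K_S/K,\bT)[I]\to0$; then Poitou--Tate, where the local terms $\rH^2(K_v,\bT)[I]$ have Pontryagin duals $\rH^0(K_\infty\otimes_K K_v,W)/I$, identified with the restriction kernels of Lemmas~\ref{inj-ram} and~\ref{finiteness-at-p} (killed by a power of $p$ at $v\mid\ff$, finite at $\fp$, zero under Assumption~\ref{for-integrality}); and finally the snake lemma for the second map.

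Two small corrections. First, injectivity of the first map follows directly from exactness of the sequence you wrote down; the image of $\rH^0(K_S/K,\bT_{\ac})$ under the connecting map is $\scrS_{\rel}[I]$, not the kernel of $\scrS_{\rel}/I\scrS_{\rel}\to\scrS^{\ac}_{\rel}$. Second, the $\mathrm{Tor}_1$ term plays no role. The snake lemma only needs the top row $\fz_{\fa}/I\to\scrS_{\rel}/I\to(\scrS_{\rel}/\fz_{\fa})/I\to0$ to be right exact. Surjectivity of $\fz_{\fa}/I\to\Lambda_{\ac}z^{\ac}_{p^\infty\ff}$ then makes the kernel of the second map a quotient of the kernel of the first, and identifies the two cokernels. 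This matters, because coprimality of $\Ch_\Lambda(\scrS_{\rel}/\fz_{\fa})$ with $I$ would not by itself give finiteness of $(\scrS_{\rel}/\fz_{\fa})[I]$; the pseudo-null module $\Lambda/(p,I)$ is a counterexample.
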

\begin{proof}
 In view of Lemma \ref{rel-gal},
 Shapiro's lemma implies that $\scrS_{\rel}^*=\rH^1(K_{S}/K, T^{\otimes -1}(1)\otimes \Lambda_{*}),$
where $K_{S}$ denotes the maximal extension of $K$  unramified outside $p\ff$. 
Since $I$ is principal, we thus have an exact sequence of $\Lambda_{\ac}$-modules: 
\begin{equation}\label{rel-h2}
0\to \scrS_{\rel}\otimes_{\Lambda}\Lambda_{\ac} \to \scrS_{\rel}^{\ac} \to \rH^2(K_{S}/K, T^{\otimes -1}(1)\otimes \Lambda)[I] \to 0.
\end{equation}
On the other hand, by Poitou--Tate duality,
we also have a $\Lambda_{\ac}$-exact sequence: 
\[
0\to X_{\str}[I]\to \rH^2(K_{S}/K, T^{\otimes -1}(1)\otimes \Lambda)[I] \to \bigoplus_{v\mid p\ff} \varprojlim_{K\subseteq F \subseteq K_{\infty}}\rH^2(F\otimes_K K_{v}, T^{\otimes -1}(1))[I],
\]
where $v$ ranges over all primes of $K$ dividing $p\ff$. 
Note that  the Pontryagin dual of the $v$-component of the rightmost term is isomorphic to
\[
\begin{split}
\rH^0(K_{\infty}\otimes K_v, W)/I&\cong \rH^1\left(\Gal(K_{\infty}/K_{\infty}^{\ac}), \rH^0(K_{\infty}\otimes K_v, W)\right)\\
&\cong \ker\left(
\rH^1(K_{\infty}^{\ac}\otimes K_v,  W)\to \rH^1(K_{\infty}\otimes K_v,  W)
\right).
\end{split}
\]
In view of Lemmas \ref{inj-ram} and \ref{finiteness-at-p},
the above kernel is finite and it vanishes under Assumption \ref{for-integrality}.
Consequently, (\ref{rel-h2}) implies the assertion for $\scrS_{\rel}\otimes_{\Lambda}\Lambda_{\ac} \to \scrS_{\rel}^{\ac}$.

The remaining assertion  follows by  applying the snake lemma to the commutative diagram of $\Lambda_{\ac}$-modules: 
\[
\xymatrix{
& \fz_{\fa} \otimes_{\Lambda}\Lambda_{\ac} \ar[r] \ar[d]^{(1)} & \scrS_{\rel}\otimes_{\Lambda}\Lambda_{\ac} \ar[r]\ar[d] &  \scrS_{\rel}/\fz_{\fa}\otimes_{\Lambda}\Lambda_{\ac} \ar[r]\ar[d] & 0\\
0 \ar[r] & \Lambda_{\ac}z_{p^{\infty}\ff}^{\ac}\ar[r]  & \scrS_{\rel}^{\ac} \ar[r] &  \scrS_{\rel}^{\ac}/\Lambda_{\ac}z_{p^{\infty}\ff}^{\ac} \ar[r] & 0,
}
\]
and noting that (1) is surjective.
\end{proof}

\begin{prop}\label{restriction-str}\leavevmode 
\begin{enumerate} 
\item The homomorphism  
$X_{\str}\otimes_{\Lambda}\Lambda_{\ac}\to X_{\str}^{\ac}$
  induced by the dual of the restriction map is an isomorphism after taking the tensor product with $\Q_p$ over $\Z_p$.
\item Under Assumption \ref{for-integrality}, the homomorphism in part (1) is an isomorphism of $\Lambda_\ac$-modules. 
\item The ideal $\Ch_{\Lambda_{\ac}}(X_{\str}[I])$ is non-zero 
and $\Ch_{\Lambda_{\ac}}(X_{\str}^{\ac}) = \Ch_{\Lambda}(X_{\str})\Ch_{\Lambda_{\ac}}(X_{\str}[I]).$
\end{enumerate}
\end{prop}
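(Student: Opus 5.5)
Since the statement concerns the descent of the strict Selmer group from the two-variable tower $K_\infty$ to the anticyclotomic tower $K_\infty^{\ac}$, the plan is to compare the defining sequences of the two Selmer groups and control the kernel and cokernel of restriction by the local computations already made in Lemmas~\ref{inj-ram} and~\ref{finiteness-at-p}. On the discrete side, consider the commutative diagram whose rows are
\[
0\to \Sel_{\str}(K_\infty^{\ac},W)\to \rH^1(K_S/K_\infty^{\ac},W)\to \bigoplus_{v\mid p\ff}\rH^1(K^{\ac}_{\infty,v},W)
\]
and the analogous row over $K_\infty$, taken as $\Gal(K_\infty/K_\infty^{\ac})$-invariants. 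At places $v\nmid p$ the local condition is the whole group, since $\rH^1_{\rf}=0$ there by (\ref{local-vanishing}). The vertical arrows are restriction maps, and $\Sel_{\str}(K_\infty,W)^{\Gal(K_\infty/K_\infty^{\ac})}=X_{\str}^\vee[I]$. Dualising, the map $X_{\str}\otimes_\Lambda\Lambda_{\ac}=X_{\str}/IX_{\str}\to X_{\str}^{\ac}$ is the Pontryagin dual of $\Sel_{\str}(K^{\ac}_\infty,W)\to \Sel_{\str}(K_\infty,W)[I]$.

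\textbf{Kernel and cokernel.} The global kernel is $\rH^1(\Gal(K_\infty/K_\infty^{\ac}),\rH^0(K_\infty,W))$. The group $\rH^0(K_\infty,W)$ is finite, and under Assumption~\ref{for-integrality}(1) it vanishes because of (\ref{vanishing-p}): a global invariant is invariant at $\fp$. So this kernel is finite in general and zero under Assumption~\ref{for-integrality}. The global $\rH^2$ of the procyclic group $\Gal(K_\infty/K_\infty^{\ac})$ vanishes, so the middle restriction map is surjective. The snake lemma then bounds the cokernel of the Selmer map by the kernels of the local restriction maps at $v\mid p\ff$. These are the maps (\ref{ac-to-full-at-f}) and (\ref{ac-to-full-at-p}), and they are exactly what Lemmas~\ref{inj-ram} and~\ref{finiteness-at-p} handle. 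The kernel at $v\nmid p$ is killed by a power of $p$, and the kernel at $\fp$ is finite. All of these vanish under Assumption~\ref{for-integrality}.

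\textbf{Finiteness of the local contributions.} There is one subtlety. Over $K_\infty^{\ac}$ there are finitely many places above each $v\mid\ff$, since $v$ is finitely decomposed in the $\Z_p$-extension, so the bounded exponent gives finiteness after tensoring with $\Q_p$. This is the step I expect to need the most care. The bound in Lemma~\ref{inj-ram}(1) is only an exponent bound, not finiteness. However, $\rH^0(K_v^{\ur},W)$ is finite, so each local kernel is a quotient of the finite group $\rH^0/(\gamma-1)$. Hence it is finite, and (1) and (2) follow.

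\textbf{Characteristic ideals.} For (3), take the structure theory for the $\Lambda$-torsion module $X_{\str}$, which is torsion by (the two-variable version of) Proposition~\ref{half-IMC}, deduced from Rubin. Write $I=(f)$ with $f=\gamma'-1$ for a generator $\gamma'$ of $\Gal(K_\infty/K_\infty^{\ac})$. The standard lemma for a finitely generated torsion $\Lambda$-module $X$ such that $X/fX$ is $\Lambda_{\ac}$-torsion gives
\[
\Ch_{\Lambda_{\ac}}(X/fX)=\Ch_\Lambda(X)\,\Lambda_{\ac}\cdot\Ch_{\Lambda_{\ac}}(X[f]).
\]
To prove it, reduce by pseudo-isomorphisms to elementary modules $\Lambda/(g)$ and check directly: if $f\nmid g$ then $X[f]=0$ and $X/fX=\Lambda_{\ac}/(\bar g)$; if $f\mid g$ both sides are zero. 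One must also check that pseudo-null error terms contribute trivially over $\Lambda_{\ac}$ to the Euler-characteristic type identity $\Ch(X/fX)\Ch(X[f])^{-1}$, which is standard via the multiplicativity of this quotient in short exact sequences. The non-vanishing of $\Ch_{\Lambda_{\ac}}(X_{\str}[I])$ holds because $X_{\str}^{\ac}$ is $\Lambda_{\ac}$-torsion by Proposition~\ref{half-IMC}(2), so by (1) $X_{\str}/IX_{\str}$ is torsion. Hence $\bar g\neq0$ for all elementary factors, so $f$ divides no factor, and $X_{\str}[I]$ is pseudo-null over $\Lambda$ and torsion over $\Lambda_{\ac}$. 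Combining with (1), the equality of characteristic ideals holds up to finite modules, which do not affect $\Ch_{\Lambda_{\ac}}$.
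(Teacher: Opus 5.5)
Your route is the paper's. You use the same restriction diagram for the strict Selmer groups, with inflation--restriction for the global term: the kernel is $\rH^1(\Gal(K_\infty/K_\infty^{\ac}),\rH^0(K_\infty,W))$, and the cokernel is zero by cohomological dimension one. Lemmas~\ref{inj-ram} and~\ref{finiteness-at-p} handle the local terms. For (3) you use the torsionness of $X_{\str}/IX_{\str}$ from Proposition~\ref{half-IMC}(2) and part (1), followed by Rubin's descent lemma; the paper cites this as \cite[Lem.~6.2]{Ru91}, and you sketch it via elementary modules.

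The problem is your ``subtlety'' paragraph. It is not true that a prime $v\mid\ff$ is finitely decomposed in $K_\infty^{\ac}$. A prime of $K$ that is inert or ramified over $\Q$ satisfies $v=\bar v$. Complex conjugation acts on $\Gal(K_\infty^{\ac}/K)\cong\Z_p$ by inversion, so the Frobenius of such a prime is its own inverse, hence trivial, and $v$ splits completely. This is the case $D_v=0$ that the proof of Theorem~\ref{controlling-thm} treats separately.

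So $\ker(b)$ is a direct sum over possibly infinitely many places. Each summand is finite and killed by $\#\rH^0(K_v^{\ur},W)$, so the sum has bounded exponent but need not be finite. This is exactly why Lemma~\ref{inj-ram}(1) and the paper's proof speak only of annihilation by a power of $p$.

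For (1) this costs nothing, and finiteness was never needed. The kernel and cokernel of $X_{\str}/IX_{\str}\to X_{\str}^{\ac}$ are killed by a power of $p$, so they vanish after $\otimes_{\Z_p}\Q_p$. For (2) everything is zero.

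However, your closing claim in (3), that $X_{\str}/IX_{\str}$ and $X_{\str}^{\ac}$ agree ``up to finite modules'', is unsupported. A finitely generated $\Lambda_{\ac}$-module of bounded $p$-exponent, such as $\Lambda_{\ac}/p$, can be infinite and has non-trivial characteristic ideal. The integral identity in (3) must be deduced as in the paper, from part (2), i.e.\ under Assumption~\ref{for-integrality}, where $X_{\str}/IX_{\str}\cong X_{\str}^{\ac}$. Without that assumption your argument gives the equality only up to powers of $p\Lambda_{\ac}$ (compare Proposition~\ref{reduction-rel}(2)). That weaker form is all that Theorem~\ref{anticyc-zetaIMC}(2) uses.
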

\begin{proof}
We have the following commutative diagram  of $\Lambda_\ac$-modules with exact rows: 
\[
\xymatrix{
0 \ar[r]& \Sel_{\str}(K_{\infty}^{\ac}, W) \ar[r]\ar[d]& \rH^1(K_S/K_{\infty}^{\ac}, W) \ar[r]\ar[d]^{(a)} &\bigoplus_{v\mid p\ff}\rH^1(K_{\infty,v}^{\ac}, W)\ar[d]^{(b)} \\
0 \ar[r]& \Sel_{\str}(K_{\infty}^{}, W)[I] \ar[r]& \rH^1(K_S/K_{\infty}^{}, W)[I] \ar[r] & \bigoplus_{v\mid p\ff}\rH^1(K_{\infty}\otimes_{K_{\infty}^{\ac}}K_{\infty,v}^{\ac}, W)[I], 
}
\]
where $v$ ranges over all primes of $K_{\infty}^{\ac}$ dividing $p\ff$.
Note that the kernel and cokernel of (a) are annihilated by some power of $p$, and vanish by (\ref{vanishing-p}) under Assumption \ref{for-integrality}. 
Moreover, by Lemmas \ref{inj-ram} and \ref{finiteness-at-p}, the kernel of the map (b) is annihilated by some power of $p$,
which vanishes under Assumption \ref{for-integrality}.
Hence, by the snake lemma, the restriction map gives rise to the isomorphism $\Sel_{\str}(K_{\infty}^{\ac}, W) \cong \Sel_{\str}(K_{\infty}^{}, W)[I]$
as in part 1), and also in part 2) under Assumption \ref{for-integrality}.

By Proposition \ref{half-IMC} (2) and parts 1) and 2),
the $\Lambda_{\ac}$-module $X_{\str}/I$ is torsion. Hence, part 3) follows from 
\cite[Lem.\ 6.2]{Ru91}.
\end{proof}

\begin{thm}\label{anticyc-zetaIMC}\leavevmode 
\begin{enumerate}
\item We have
$\Ch_{\Lambda}(\scrS_{\rel}/\fz_{\fa})\Lambda_{\ac}=\Ch_{\Lambda_{\ac}}((\scrS_{\rel}/\fz_{\fa})/I).$
\item We have $\Ch_{\Lambda_{\ac}}(\scrS_{\rel}^{\ac}/ \Lambda_{\ac}z^{\ac}_{p^{\infty}\ff})\otimes \Q_p=\Ch_{\Lambda_{\ac}}(X_{\str}^{\ac})\otimes \Q_p$ as ideals of $\Lambda_{\ac}[1/p].$
\item Suppose that Assumption \ref{for-integrality} holds, 
and that  $p$ does not divide the number of roots of unity in the Hilbert class field of $K$.
Then,  $\Ch_{\Lambda_{\ac}}(\scrS_{\rel}^{\ac}/ \Lambda_{\ac}z^{\ac}_{p^{\infty}\ff})=\Ch_{\Lambda_{\ac}}(X_{\str}^{\ac}).$
\end{enumerate}
\end{thm}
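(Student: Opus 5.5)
The plan is to descend Rubin's two-variable main conjecture (Proposition~\ref{RubinIMC}) to the anticyclotomic line. The inputs are the control results of Propositions~\ref{reduction-rel} and~\ref{restriction-str} and a lemma of the type \cite[Lem.\ 6.2]{Ru91}. That lemma says: for a finitely generated torsion $\Lambda$-module $M$ with $M/IM$ torsion over $\Lambda_{\ac}$, one has $\Ch_{\Lambda_{\ac}}(M/IM)=\Ch_\Lambda(M)\Lambda_{\ac}\cdot\Ch_{\Lambda_{\ac}}(M[I])$.

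\textbf{Part (1).} Put $M=\scrS_{\rel}/\fz_{\fa}$. First, $\fz_{\fa}=\fz$. Indeed, $\Lambda$ is local, and the image of $N_{K/\Q}(\fa)-\varphi^{-1}(\fa)(\fa,K(p^\infty\ff)/K)$ in $\Lambda_{\ac}$ is a unit by (\ref{invertible-a}), so this element is already a unit of $\Lambda$. Hence $\fz_{\fa}=\Lambda z$, where $z$ is the unnormalised two-variable elliptic unit. Every ${}_{\fb}z$ is a $\Lambda$-multiple of $z$ by \cite[(15.6.2)]{K}, so $\fz=\Lambda z$ as well.

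Next, tensor $0\to\fz_{\fa}\to\scrS_{\rel}\to M\to 0$ with $\Lambda/I$. Since $\scrS_{\rel}$ is torsion-free (Proposition~\ref{rank-rel}), we have $\scrS_{\rel}[I]=0$, which gives
\[
0\to M[I]\to \fz_{\fa}/I\fz_{\fa}\to \scrS_{\rel}/I\scrS_{\rel}\to M/IM\to 0 .
\]
The module $\fz_{\fa}/I\fz_{\fa}$ is free of rank one over $\Lambda_{\ac}$, because $\fz_{\fa}$ is free (the element $z$ is non-torsion). By Proposition~\ref{reduction-rel}(1), the image of $\fz_{\fa}/I\fz_{\fa}$ in $\scrS_{\rel}/I\scrS_{\rel}\hookrightarrow_{\text{finite ker}}\scrS^{\ac}_{\rel}$ is $\Lambda_{\ac}z^{\ac}_{p^\infty\ff}$ up to a finite module, and this is non-torsion by Proposition~\ref{half-IMC}(1). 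A map from a free rank-one module onto a non-torsion module has kernel that is torsion in $\Lambda_{\ac}$, hence zero. So $M[I]=0$.

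Finally, $\scrS_{\rel}/I\scrS_{\rel}$ has rank one by Proposition~\ref{half-IMC}(3) and Proposition~\ref{reduction-rel}, so $M/IM$ is torsion. The lemma then gives (1).

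\textbf{Parts (2) and (3).} Proposition~\ref{reduction-rel}(1) and (2) give
\[
\Ch_{\Lambda_{\ac}}(\scrS^{\ac}_{\rel}/\Lambda_{\ac}z^{\ac})\doteq\Ch_{\Lambda_{\ac}}(M/IM)\cdot\Ch_{\Lambda_{\ac}}(X_{\str}[I])
\]
up to powers of $p$. By part (1), the first factor on the right is $\Ch_\Lambda(\scrS_{\rel}/\fz)\Lambda_{\ac}$. Proposition~\ref{restriction-str}(3) gives $\Ch_{\Lambda_{\ac}}(X^{\ac}_{\str})=\Ch_\Lambda(X_{\str})\Lambda_{\ac}\cdot\Ch_{\Lambda_{\ac}}(X_{\str}[I])$, and here $\Ch_{\Lambda_{\ac}}(X_{\str}[I])\neq 0$. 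Both assertions therefore reduce to
\[
\Ch_\Lambda(\scrS_{\rel}/\fz)\Lambda_{\ac}=\Ch_\Lambda(X_{\str})\Lambda_{\ac},
\]
rationally for (2) and integrally for (3). For (3), Assumption~\ref{for-integrality} makes every map in Propositions~\ref{reduction-rel} and~\ref{restriction-str} exact integrally, with no $p$-power ambiguity. The extra hypothesis on roots of unity lets me invoke the integral equality of Proposition~\ref{RubinIMC}(2) directly.

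\textbf{Main obstacle.} The difficult step is (2). Proposition~\ref{RubinIMC}(1) and Proposition~\ref{half-IMC}(4) both give divisibilities in the same direction, $\Ch(\text{units quotient})\subseteq\Ch(X_{\str})$, so they cannot simply be played off against each other. My first approach is to use the rational form of Rubin's theorem: the twisting argument of \cite[Thm.\ 4.1]{Ru91}, run after inverting $p$ and without the $\eta$-hypotheses. Here the contribution of the finite order character $\eta$ and of the roots of unity only affects $\mu$-type factors, which become units in $\Lambda[1/p]$. This should upgrade the inclusion of Proposition~\ref{RubinIMC}(1) to an equality in $\Lambda\otimes\Q_p$. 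If that is not directly available, the fallback is to pass to the $\eta$-isotypic decomposition over $K_0$ as in (\ref{compact-twist}) and (\ref{strong-twist}). On each component, the analytic class number formula argument of \cite{Ru91} yields equality of characteristic ideals, which again holds after inverting $p$.
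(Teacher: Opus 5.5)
\textbf{Where your proposal agrees with the paper.} Your proof of (1) matches the paper's, and so does your reduction of (2) and (3) to a two-variable statement. The paper also shows $(\scrS_{\rel}/\fz_{\fa})[I]=0$ and applies \cite[Lem.~6.2]{Ru91}. Your direct argument, that $\fz_{\fa}/I\to\scrS_{\rel}/I$ is injective because $z\mapsto z^{\ac}_{p^{\infty}\ff}$ is non-torsion, is a fine substitute for its use of Lem.~6.2(i). For (3) the paper chains Proposition~\ref{reduction-rel}, Proposition~\ref{RubinIMC}(2) (with \cite[(15.6.4)]{K}) and Proposition~\ref{restriction-str}(2), exactly as you do.

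\textbf{The gap is in (2), at the step you flag yourself.} You need $\Ch_{\Lambda}(\scrS_{\rel}/\fz)\otimes\Q_p=\Ch_{\Lambda}(X_{\str})\otimes\Q_p$ under the standing hypotheses only, and neither of your routes establishes it.

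The paper reaches \cite[Thm.~4.1]{Ru91} only through Proposition~\ref{RubinIMC}(2), which needs both Assumption~\ref{for-integrality}(2) and the roots-of-unity hypothesis. Assumption~\ref{for-integrality}(2) is precisely what makes $\varphi^{-1}\eta$ factor through $\Gal(K_{\infty}/K)$ with $p\nmid[K_0:K]$. The identifications \eqref{compact-twist} and \eqref{strong-twist} rely on exactly this. Without it, the finite-order part of $\varphi$ need not have order prime to $p$; for example, it can be ramified of $p$-power order at a prime dividing $\ff$. Your module is then a component for a group $\Delta$ with $p\mid|\Delta|$, and your ``fallback'' via \eqref{compact-twist}, \eqref{strong-twist} is not available.

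Inverting $p$ makes $\Q_p[\Delta]$ semisimple, but it does not supply the missing inclusion. Proposition~\ref{RubinIMC}(1) (Kato) and Proposition~\ref{half-IMC}(4) give only $\Ch(\text{units quotient})\subseteq\Ch(X_{\str})$. The reverse inclusion needs a global comparison, which in Rubin's method is the analytic class number formula. That comparison would have to be carried out equivariantly over such a $\Delta$ and without the roots-of-unity condition. Your claim that these hypotheses ``only affect $\mu$-type factors'' is the content of that theorem, not a formal consequence of inverting $p$ in Rubin's statement.

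The paper closes this step by citing \cite[Thm.~5.2]{JL-K} (Johnson-Leung--Kings), which gives exactly the needed equality in $\Lambda\otimes\Q_p$. With that input in place of your sketch, your argument for (2) is complete.
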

\begin{proof}
Since $I$ is principal, by the snake lemma,
we have an exact sequence
\[
0 \to \fz_{\fa}[I ]\to \scrS_{\rel}[I] \to (\scrS_{\rel}/\fz_{\fa})[I] \to \fz_{\fa}/I \to \scrS_{\rel}/I \to (\scrS_{\rel}/\fz_{\fa})/I\to 0,
\]
where $\fa$ is as in (\ref{invertible-a}).
Hence,  noting that $\fz_{\fa}\cong \Lambda$,
  Proposition \ref{rank-rel}
implies that  $(\scrS_{\rel}/\fz_{\fa})[I]$ injects into $\fz_{\fa}/I$ and hence is $\Lambda_{\ac}$-torsion-free.
By Propositions \ref{half-IMC}(1) and \ref{reduction-rel}(1),
 $(\scrS_{\rel}/\fz_{\fa})/I$ is $\Lambda_{\ac}$-torsion,
and so \cite[Lem.\ 6.2 (i)]{Ru91} implies that
$(\scrS_{\rel}/\fz_{\fa})[I]$ is $\Lambda_{\ac}$-torsion.
Therefore, $(\scrS_{\rel}/\fz_{\fa})[I]=0,$
and by \cite[Lem.\ 6.2 (ii)]{Ru91}
we have
$$\Ch_{\Lambda}(\scrS_{\rel}/\fz_{\fa})\Lambda_{\ac}=\Ch_{\Lambda_{\ac}}((\scrS_{\rel}/\fz_{\fa})/I),$$
yielding part 1). 

As for part 3), note that
\[
\Ch_{\Lambda_{\ac}}(\scrS_{\rel}^{\ac}/\Lambda_{\ac}z_{p^{\infty}\ff }^{\ac})=\Ch_{\Lambda}(\scrS_{\rel}/\fz_{\fa})\Ch_{\Lambda_{\ac}}(X_{\str}[I])
=\Ch_{\Lambda}(X_{\str})\Ch_{\Lambda_{\ac}}(X_{\str}[I])=\Ch_{\Lambda_{\ac}}(X^{\ac}_{\str}),
\]
where the first equality follows from Proposition \ref{reduction-rel},
the second from Proposition \ref{RubinIMC} (2) and \cite[(15.6.4)]{K},
and the last one from Proposition \ref{restriction-str} (2).

Finally, for part 2), it suffices to show that 
 $\Ch_{\Lambda}(\scrS_{\rel}/\fz_{\fa})\otimes \Q_p=\Ch_{\Lambda}(X_{\str})\otimes \Q_p$,
which is the content of \cite[Thm.\ 5.2]{JL-K}. 
\end{proof}

\section{$p$-adic $L$-functions at ramified primes}\label{general-interpolation}

This section studies the interpolation property of the Rubin-type $p$-adic $L$-function at the de Rham specialisations of arbitrary infinity type. The main result is Theorem~\ref{interpolation-rubin}, the interpolation formula for the $p$-adic $L$-function.

\subsection{Local and global epsilon constants associated to Galois characters}
Let the setting be as in \S\ref{ss, nt3}. 
We assume that $p\CO_K$ does not divide 
the conductor $\ff$ of $\varphi$ (in the ramified case,
the prime ideal $\fp$ of $K$ above $p$ may exactly divide $\ff$). 
By twisting by a finite order character of $\Gal(K_{\infty}^{\ac}/K)$,
we may always reduce to this case.

 Fix a $\Z_p$-basis $\zeta=(\zeta_{n})$ of $\Z_p(1):=\varprojlim_n \Gamma(\Spec(\overline{\Q}_p ), \Z/p^n\Z(1))$,
which gives rise to a continuous homomorphism $\mathbf{e}_{\Q_p}:\Q_p \to \overline{\Q}_p^{\times}$ characterised by $\mathbf{e}_{\Q_p}(1/p^n)=\zeta_n$ for $n\ge 1$. Write $\mathbf{e}_{K_{\fp}}=\mathbf{e}_{\Q_p}\circ \mathrm{Tr}_{K_{\fp}/\Q_p}.$
\subsubsection{Local case}\label{local-epsilon-section}
Let $F_{p} \in \{\Q_p, K_{\fp}\},$ and $G_{F_p}:=\Gal(\ov{\Q}_{p}/F_{p})$. 

For a finite extension $M$ of $\Q_p$ and a potentially crystalline $M$-representation $V$ of $G_{F_p}$, 
note that $D_{\pst}(V)$ is an $M\otimes_{\Q_p}\Q_p^{\ur}$-module equipped with a continuous and linear action of the Weil group $W_{F_p}$ of $F_p$ (cf.\ \cite[\S 2.4]{BKNO}),
where the topology on $D_{\pst}(V)$ is discrete.
Then, 
\[
\varepsilon(V)=
\left(\varepsilon(D_{\pst}(V)\otimes_{M\otimes \Q_p^{\ur}, \tau}\overline{\Q}_p, \mathbf{e}_{F_p}, dx_{F_p})\right)_{\tau} \in \prod_{\tau: M \hookrightarrow \overline{\Q}_p}\overline{\Q}_p^{\times}=(M\otimes_{\Q_p} \overline{\Q}_p)^{\times},
\]
where $dx_{F_p}$ denotes the Haar measure on $F_p$ (valued in $\overline{\Q}_p$) which is self-dual with respect to $\mathbf{e}_{F_p},$
 $\tau$ ranges over all homomorphisms $M\to \overline{\Q}_p$ of $\Q_p$-algebras,
and $ \varepsilon(D_{\pst}(V)\otimes_{M\otimes \Q_p^{\ur}, \tau}\overline{\Q}_p, \mathbf{e}_{F_p}, dx_{F_p})$ denotes the local epsilon constant of the smooth $\overline{\Q}_p$-representation
$D_{\pst}(V)\otimes_{M\otimes \Q_p^{\ur}, \tau}\overline{\Q}_p$ of the Weil group. 

Let $F \in \{\Q, K\}$ and $G_{F}:=\Gal(\ov{\Q}/F)$.
If $V$ is an  $M$-representation of $G_F$ 
such that $V|_{G_{F_{p}}}$ is potentially crystalline,
put $$\varepsilon_{p}(V)=\varepsilon(V|_{G_{F_{p}}}),$$
where $F_p=F\otimes_{\Q}\Q_p$.
 Although such a local epsilon constant is defined for general de Rham representations, we consider only potentially crystalline ones in this paper.

For a finite order character $\chi: \Gal(\overline{\Q}/K)\to \overline{\Q}^{\times} \subseteq \C^{\times}$, put
$\Ind_{K/\Q}(\varphi\chi)=\Ind_{K/\Q}(V_{\varphi\chi})$, 
following the notation (\ref{our-V}). 

For an anticyclotomic character $\chi$ which is de Rham at $p$,
the two-dimensional $p$-adic  representation $\Ind_{K/\Q}(\varphi \chi)$ of $G_{\Q}$ is symplectic self-dual 
and we have
\[
\varepsilon_{p}(\Ind_{K/\Q}(\varphi \chi))
\in \{\pm 1\},
\]
which is independent of the choice of $\zeta$ (cf.~\cite[Lem.\ 4.3.4]{BKNO} and \cite[(3.4.4), (3.4.7)]{T}). 

\subsubsection{Global case}

Let $\mathbf{e}_K: \mathbb{A}_K/K\to \C^{\times}$ be a non-trivial character defined as $\mathbf{e}_{\Q}\circ\mathrm{Tr}_{K/\Q}$ for some $\mathbf{e}_{\Q}: \mathbb{A}_{\Q}/\Q \to \C^{\times}$ such that for every rational prime $l$, $n(\mathbf{e}_{\Q}|_{\Q_l})$ in \cite[(3.2.6)]{T} equals  $1$.
Let $dx$ be the Haar measure on $\mathbb{A}_K$ such that $\int_{\mathbb{A}_K/K}dx=1.$
Let $dx=\prod_{v}dx_v$ be a factorisation of $dx$, where $v$ ranges over all places of $K$, and  $dx_v$ is a Haar measure on $K_v$ such that for almost all $v$ we have $\int_{\CO_v}dx_v=1.$

For an algebraic Hecke character $\eta:\mathbb{A}_K^{\times}/K^{\times}\to \C^{\times}$ of $K$ and for a place $v$ of $K$,
denote by $\varepsilon(\eta|_{K_v^{\times}}, \mathbf{e}_K|_{K_v}, dx_v)$ the local epsilon constant associated to $\eta|_{K_v^{\times}}$.
 The associated global epsilon constant is defined by
  $$\varepsilon(\eta)=\prod_v\varepsilon(\eta|_{K_v^{\times}}, \mathbf{e}_K|_{K_v}, dx_v),$$
which is independent of the choices of $\mathbf{e}_K$ and $dx$. 
If $\eta$ satisfies (\ref{self-dual-character}), then $\varepsilon(\eta)\in \{\pm 1\}$, and 
its sign is the same as that of the functional equation of $L(\eta,s)$. 

\subsubsection{Anticyclotomic variation} 
We first recall a special case of results of \cite{BKNO},
and give some consequences which will be used later.

\begin{lem}\label{conductor-of-anticyc}
For a finite character $\chi $ of $\Gal(K_{\infty}^{\ac}/K)$,
the conductor is of the form $(p^{n}).$ 
\end{lem}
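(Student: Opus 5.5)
The plan is to determine the conductor of $\chi$ locally at each prime of $K$, using class field theory for $K$. A finite character $\chi$ of $\Gal(K_{\infty}^{\ac}/K)$, viewed via the Artin map, is a character of $\mathbb{A}_K^{\times}/K^{\times}$ of $p$-power order. Because $K_\infty^{\ac}/K$ is unramified outside $p$, the local components $\chi_v$ are trivial on $\CO_v^{\times}$ for every $v\nmid p$. Hence the conductor is a power of the unique prime $\fp$ above $p$; recall that $p$ is non-split by \eqref{n-spl}. The whole task is therefore to show that this power is even in the ramified case; in the inert case every power $\fp^m$ is already of the form $(p^m)$.

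The key input is anticyclotomy. We have $\chi\circ c=\chi^{-1}$, and $\chi$ is trivial on the idèles of $\Q$, since $\Gal(K_\infty^{\ac}/K)$ is the quotient on which complex conjugation acts by $-1$. In particular, the local component $\chi_{\fp}$ on $K_{\fp}^{\times}$ is trivial on $\Q_p^{\times}$. The norm group of a $\Z_p$-extension carries the same information, since $\chi$ restricted to $\mathbb{A}_{\Q}^{\times}$ corresponds to the transfer to $\Q$, which is trivial on the anticyclotomic part. Now suppose $\fp^2=p\CO_K$, write $\Psi=K_{\fp}$, and let $\fp^{m}$ be the conductor of $\chi_{\fp}$. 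We need to show that $m$ is even, in which case $\fp^m=(p^{m/2})$. Since $\chi_\fp$ has $p$-power order and $p$ is odd, its restriction to $\CO_\Psi^\times$ factors through the pro-$p$ group $1+\fp$. So we must examine $\chi$ on the filtration $U^{i}=1+\fp^{i}$.

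The main step is the filtration computation. With $\delta$ a uniformiser satisfying $\delta^2\in\Q_p$, the graded piece $U^{i}/U^{i+1}\cong \fp^i/\fp^{i+1}$ is represented by $1+\delta^i\Z_p$. For $i$ even, $\delta^i\in\Q_p$, so $U^{i}=(1+p^{i/2}\Z_p)\,U^{i+1}$, and the first factor lies in $\Q_p^{\times}$. Consequently, if $\chi_\fp$ is trivial on $U^{i+1}$ for even $i$, it is automatically trivial on $U^{i}$. Therefore the conductor exponent $m$, which is the least $j$ with $\chi_\fp(U^{j})=1$, can never be odd $\ge 1$: if $m=i+1$ with $i$ even, then $\chi_\fp$ would already be trivial on $U^{i}$. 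The one case needing separate care is $m=1$, meaning $\chi_\fp$ is nontrivial on $\CO_\Psi^\times$ but trivial on $U^1$. This is impossible because $\chi_\fp$ has $p$-power order while $\CO_\Psi^\times/U^1$ has order prime to $p$. Hence $m$ is even and the conductor is $(p^{m/2})$.

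I do not expect a genuine obstacle. The only points needing care are verifying that anticyclotomic characters are trivial on $\Q_p^{\times}\subset K_\fp^\times$ under the chosen Artin normalisation, and recording the trivial unramified-outside-$p$ input. Both follow from the definition of $K_\infty^{\ac}$ as the $\Z_p$-extension on which $c$ acts by inversion, combined with \eqref{n-spl}.
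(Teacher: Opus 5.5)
Your proposal is correct and is essentially the paper's proof: reduce to the ramified case, use $1+p^{n}\CO_{K_{\fp}}=(1+p^{n}\Z_p)(1+\fp^{2n+1}\CO_{K_{\fp}})$ together with the triviality of $\chi$ on $\Q_p^{\times}$ to rule out odd exponents $\ge 3$, and rule out exponent one because $\#(\CO_{K_{\fp}}^{\times}/(1+\fp))$ is prime to $p$. You also spell out two facts that the paper leaves implicit: $\chi$ is trivial on $\Q_p^{\times}$ by anticyclotomy, and $\chi$ is unramified away from $p$.
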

\begin{proof}
We may assume that $p$ is ramified in $K/\Q$.
It suffices to show that if  $\chi$ is trivial on $1+\fp^{2n+1}\CO_{K_{\fp}} \subseteq \CO_{K_{\fp}}^{\times}$ for some $n\ge 0$
then  $\chi$ is trivial on $1+\fp^{2n}\CO_{K_{\fp}}=1+p^n\CO_{K_{\fp}}$ if $n\ge 1$ or on $\CO_{K_{\fp}}^{\times}$ if $n=0$. 

Suppose first that $n\ge 1$. 
Let $a=1+p^{n}x$ with $x \in \CO_{K_{\fp}}$. 
We shall show that $\chi(a)=1$.
Since $\CO_{K_{}}/\fp \simeq \Z/p\Z$, we may write $x=y+\pi z$ with $y \in \Z_p$ and $z \in \CO_{K_{\fp}}$,
 where $\pi$ denotes a uniformiser at $\fp$. 
Then, 
\[
\chi(a)=\chi(1+p^n(y+\pi z))=\chi(1+p^ny+p^n\pi z )=\chi(1+p^ny)\chi\left(1+p^n\pi z(1+p^ny)^{-1}\right)=1.
\]
If $n=0$, then the assertion follows by noting that $\#(\CO_{K_{\fp}}^{\times}/(1+\fp\CO_{K_{\fp}}))$ is coprime to $p$.
\end{proof}

We use the notation of \S \ref{subsection-local-decomposition} with $\psi$ taken to be
$\varphi|_{G_{K_{\fp}}}$.
\begin{defn}
Denote by $\Xi$ the set of finite order characters of $\Gamma:=\Gal(K_{\infty}^{\ac}/K)$
and by $\Xi_n$ the subset of $\Xi$ consisting of characters of  order $p^n$.
Put  
\[
\Xi_n^{\pm} = \Set{\chi \in \Xi_n | \varepsilon_p(\Ind_{K/\Q}(\varphi\chi))=\pm 1}
\]
and $\Xi^{\pm}=\bigcup_{n\ge 0}\Xi_n^{\pm}$.
\end{defn}
\begin{remark}\label{sign-convention} 
The above notation differs from \cite{BKNO}. 
 If $p$ is ramified in $K$, then the above $\Xi^{\pm}$ 
 equals $\Xi^{\mp}_{\varphi|_{G_{K_{\fp}}}}$
in \cite[Def.\ 7.34]{BKNO}.
Our sign convention is based on (\ref{base-change-rep}) below.
\end{remark}

For a finite order character $\chi$ of $\Gamma$, note that 
\begin{equation}\label{base-change-rep}
V^{\otimes -1}(1)\otimes_{\CO}\Lambda_{\ac}\otimes_{\Lambda_{\ac}, \chi}\CO_{\chi}=V(\varphi)(1)\otimes_{\CO}\Lambda_{\ac}\otimes_{\Lambda_{\ac}, \chi}\CO_{\chi}=V(\varphi\chi)(1)=V_{\varphi\chi}^{\otimes -1}(1),
\end{equation}
where $\CO_{\chi}=\CO[\mathrm{Im}(\chi)].$
Recall that $V(\varphi\chi)$ is the one-dimensional representation of $G_K$ on which  the action of the arithmetic Frobenius at $\fq\nmid p\ff$ is given by $\varphi^{-1}\chi^{-1}(\fq)$, 
and that $G_K$ acts on the component $\Lambda_{\ac}$ of $V^{\otimes -1}(1)\otimes_{\CO}\Lambda_{\ac}$ via $g\mapsto [g^{-1}]$,
the inverse of the group-like element.
In particular, $V(\varphi\chi)$ is isomorphic to the twist of $V^{\otimes -1}$ by $\chi^{-1}$. 

Let $\eta$ denote the finite order Hecke character over $K$ such that
$\varphi\eta^{-1}$ factors through $\Gamma$ (see \S\ref{interpolation-section} for its
modulus), and put $\varphi_{\ac}=\varphi/\bar{\varphi}$,
$\eta_{\ac}=\eta/\bar{\eta}$. The de Rham characters of $\Gamma$ are of the form
$\xi=\varphi_{\ac}^{k}\eta_{\ac}^{-k}\chi$ for $k\geq 0$ and $\chi$ finite order, up
to the involution $\iota$. Following \cite[Def.~2.10]{BKNO}, put
$\hat{\varepsilon}_p(\Ind_{K/\Q}(\varphi\xi))
:=\Gamma(\Ind_{K/\Q}(\varphi\xi))\,\varepsilon_p(\Ind_{K/\Q}(\varphi\xi))
=(-1)^{k}\varepsilon_p(\Ind_{K/\Q}(\varphi\xi)),$
the second equality by \cite[Cor.~2.9]{BKNO}. Note that
$\hat{\varepsilon}_p(\Ind_{K/\Q}(\varphi\chi))=\varepsilon_p(\Ind_{K/\Q}(\varphi\chi))$
for finite order $\chi$.

\begin{lem}\label{density-of-gxi}\leavevmode
\begin{enumerate}
\item For a de Rham character $\xi=\varphi_{\ac}^{k}\eta_{\ac}^{-k}\chi$ of $\Gamma$,
we have
$$\frac{\varepsilon(\varphi\xi)}{\varepsilon(\varphi)}
=\frac{\hat{\varepsilon}_p(\Ind_{K/\Q}(\varphi\xi))}
{\varepsilon_{p}(\Ind_{K/\Q}(\varphi))}.$$
\item For $b\in \Z_p^\times$ and a nontrivial finite order character $\chi$ of
$\Gamma$, we have
$$\varepsilon(\varphi\chi^{b})=\left(\frac{b}{p}\right)\varepsilon(\varphi\chi).$$
\item For each $k\geq 0$ and each sign $\pm$, there exist infinitely many finite
order characters $\chi$ of $\Gamma$ with
$\varepsilon(\varphi\,\varphi_{\ac}^{k}\eta_{\ac}^{-k}\chi)=\pm 1$.
\end{enumerate}
\end{lem}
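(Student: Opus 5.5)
Part (1) comes from factoring the global $\varepsilon$-constant into local factors and checking that only the factor at $p$ moves. Parts (2) and (3) then follow from (1) together with the local results of \S\ref{section-local-pt}.

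\textbf{Part (1).} Write $\varepsilon(\varphi\xi)=\prod_v\varepsilon(\varphi\xi|_{K_v^\times},\mathbf{e}_K|_{K_v},dx_v)$.
\begin{enumerate}
\item[(a)] \emph{Places $v\nmid p\infty$.} Since $\xi$ factors through $\Gamma$, it is unramified at $v$, so $\xi|_{K_v^\times}$ is an unramified character. The formula $\varepsilon(\mu\nu,\mathbf{e},dx)=\nu(\varpi^{a(\mu)+n(\mathbf{e})})\,\varepsilon(\mu,\mathbf{e},dx)$ for unramified $\nu$ then gives a product of ratios $\xi(\varpi_v)^{a_v}$ over $v\mid\ff$ with $v\nmid p$. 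Because $\xi$ is anticyclotomic, $\xi(\bar\fa)=\xi(\fa)^{-1}$. The conductor of $\varphi$ is stable under $c$ by self-duality, so the factors at $v$ and $\bar v$ cancel. At inert or ramified $v=\bar v$, the element $\varpi_v$ may be taken in $\Q_v$, where $\xi$ is trivial. Hence the product of these ratios is $1$.
\item[(b)] \emph{The archimedean place.} The factor at $\infty$ of $\varphi\xi$ has infinity type $(k+1,-k)$, and its $\varepsilon$-factor differs from that of $\varphi$ by $i^{2k}=(-1)^k$.
\item[(c)] \emph{The place $p$.} Passing from $K_\fp$ to $\Q_p$ by inductivity in degree $0$, the local factor at $p$ is the Weil--Deligne $\varepsilon$-constant of $\Ind_{K/\Q}(\varphi\xi)$ at $p$. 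The constant $\lambda(K_\fp/\Q_p)$ and the normalisation $\mathbf{e}_K=\mathbf{e}_\Q\circ\mathrm{Tr}$ contribute the same factors to numerator and denominator.
\end{enumerate}
Combining (a)–(c), the ratio equals $(-1)^k\varepsilon_p(\Ind_{K/\Q}(\varphi\xi))/\varepsilon_p(\Ind_{K/\Q}(\varphi))$, which is $\hat{\varepsilon}_p(\Ind_{K/\Q}(\varphi\xi))/\varepsilon_p(\Ind_{K/\Q}(\varphi))$ by \cite[Cor.~2.9]{BKNO}.

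The main obstacle is matching the $p$-adic $\varepsilon$-constant of $D_{\pst}$ with the complex local factor. This requires comparing $\iota_p$ and $\iota_\infty$ and the normalisations of the additive character $\mathbf{e}$. I would handle it by working with the $\mathrm{WD}$-representation attached to $\varphi\xi$ at $p$, which is identical on both sides via $\iota_p^{-1}\iota_\infty$, and then citing \cite[\S2.5]{BKNO} for the signs.

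\textbf{Part (2).} Apply (1) with $k=0$. The ratio becomes $\varepsilon_p(\Ind(\varphi\chi^b))/\varepsilon_p(\Ind(\varphi\chi))$. The local computation \eqref{epsilon-behaviour} with $\psi=\varphi|_{G_{K_\fp}}$, i.e.\ \cite[Prop.~7.7(ii)]{BKNO}, gives the factor $\left(\frac{b}{p}\right)$. This holds provided the conductor of $\chi$ strictly contains that of $\psi_0/\psi$, which is automatic after our reduction $p\CO_K\nmid\ff$ (the only other case being $\fp\,\|\,\ff$). If $p$ is unramified in $K$, then $\chi^b$ and $\chi$ are Galois conjugate over $\Q$. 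Their $\varepsilon$-constants, which lie in $\{\pm1\}\subset\Q$, therefore coincide; I expect the statement is only meant in the ramified case in any event.

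\textbf{Part (3).} By (1), $\varepsilon(\varphi\varphi_{\ac}^k\eta_{\ac}^{-k}\chi)$ equals $\pm\,\varepsilon_p(\Ind(\varphi'\chi))$, where $\varphi'=\varphi\varphi_{\ac}^k\eta_{\ac}^{-k}$ is again conjugate symplectic self-dual and the sign $\pm$ is independent of $\chi$. Lemma~\ref{density-of-xi}, applied to $\psi=\varphi'|_{G_{K_\fp}}$, then shows that both signs occur infinitely often. If needed, one first twists by a finite character to satisfy the conductor hypotheses of that lemma.
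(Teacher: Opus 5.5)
Your proposal is correct in substance and follows the paper's proof. You factor $\varepsilon(\varphi\xi)/\varepsilon(\varphi)$ into local ratios, show those away from $p\infty$ are trivial, and obtain $(-1)^k$ at $\infty$, which you absorb into $\hat{\varepsilon}_p$ via \cite[Cor.~2.9]{BKNO}. You then deduce (2) from the case $k=0$ and the local $(b/p)$-rule of \cite[\S7]{BKNO}, and (3) by applying Lemma~\ref{density-of-xi} to $\varphi_p\xi|_{G_{K_\fp}}$ together with (1).

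Three slips should be fixed.

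First, at a place $v\nmid p$ that is ramified in $K/\Q$, a uniformiser $\varpi_v$ cannot be chosen in $\Q_l$. Argue instead that $\mathrm{Frob}_v\in\Gamma$ is fixed by complex conjugation, which acts on $\Gamma\cong\Z_p$ by inversion. Hence $\mathrm{Frob}_v=1$ and $\xi_v(\varpi_v)=1$. The same cancellation also handles the additive-character exponents $n(\mathbf{e}_K|_{K_v})$, which your formula omits.

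Second, in \eqref{epsilon-behaviour} the conductor of $\chi$ must be strictly \emph{contained in} that of $\psi_0/\psi$, not contain it. Under $\fp\parallel\ff$ the $(b/p)$-rule then holds for every nontrivial $\chi$; this is \cite[Cor.~7.8 i)]{BKNO}, exactly as used in the proof of Lemma~\ref{half-polynomial}.

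Third, your conclusion that (2) is only meant for ramified $p$ is right. For $p$ inert, the local sign depends only on the conductor of $\chi$ by \cite[Prop.~7.4]{BKNO} (cf.\ the end of the proof of Lemma~\ref{density-of-xi}), so (2) would be false there. However, your justification via Galois conjugacy is invalid. $\chi^b$ is equally a Galois conjugate of $\chi$ when $p$ is ramified, so the same reasoning would contradict (2) itself: global root numbers are not naively Galois-invariant.
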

\begin{proof}
For parts 1) and 2) with $k=0$, one may proceed as in the proof of
\cite[Prop.~8.2]{BKNO}. For part 1) in general, the finite places away from $p$
contribute trivially to the ratio by the same argument, while the archimedean
contribution equals $(-1)^{k}$, which coincides with the ratio of the
$\Gamma$-constants by \cite[Cor.~2.9]{BKNO} since $\Ind_{K/\Q}(\varphi\xi)$ has
Hodge--Tate weights $(k+1,-k)$. As for part 3), since $\xi|_{G_{K_{\fp}}}$ is
trivial on $\Q_p^{\times}$, the character $\varphi_p\,\xi|_{G_{K_{\fp}}}$ is
conjugate symplectic self-dual and de Rham; hence Lemma~\ref{density-of-xi} applies
with $\psi$ replaced by it, and part 1) concludes the proof.
\end{proof}

\begin{defn}\label{def, epsilon}
Put $$\varepsilon=\frac{\varepsilon(\varphi)}{\varepsilon_p(\Ind_{K/\Q}(\varphi))} \in \{\pm 1\}.$$
\end{defn}
\subsection{Rubin-type $p$-adic $L$-function: definition} 
Recall that we have fixed an $\CO$-basis $t$ of $T^{\otimes -1}$ to define the elliptic unit $z_{p^{\infty}\ff }^{t,\ac}$. 
We begin with the following preliminary. 
\begin{prop}\label{prop, zeta-in-epsilon}
We have 
$$
\loc_{p}(z_{p^{\infty}\ff }^{t,\ac}) \in \rH^1_{\varepsilon}(K_p, T^{\otimes -1}(1)\otimes \Lambda_{\ac}).
$$
\end{prop}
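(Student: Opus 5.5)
To show $\loc_p(z_{p^{\infty}\ff}^{t,\ac})$ lies in $\rH^1_{\varepsilon}(K_p, T^{\otimes -1}(1)\otimes\Lambda_{\ac})$, we check the defining condition of Definition~\ref{def, pm}(1) directly. That is, we show that for every finite order character $\chi\in\Xi^{-\varepsilon}$ the dual exponential
\[
\exp^*\bigl(\chi(\loc_p z_{p^{\infty}\ff}^{t,\ac})\bigr)\in D^0_{\dR}(V_{\varphi\chi}^{\otimes -1}(1))
\]
vanishes. Here $\Xi^{\pm}$ is taken with the sign convention of Remark~\ref{sign-convention}, and we are using $\psi=\varphi|_{G_{K_\fp}}$.

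The main input is the explicit reciprocity law for elliptic units at finite order characters. This is Proposition~\ref{higher-explicit-reciprocity-law} with $k=0$, or equivalently Kato's reciprocity law \cite[\S15]{K}. For a finite order $\chi$ of $\Gamma$, it identifies $\exp^*(\chi(\loc_p z^{\ac}))$, up to a nonzero explicit factor (periods, Euler-like factors, and the Gauss sum entering through $\varepsilon_p$), with the algebraic part of the central value $L_{p\ff}(\varphi\chi,1)$, or of its conjugate. We need this only in the weak form: $\exp^*(\chi(\loc_p z^{\ac}))=0$ whenever $L(\varphi\chi,1)=0$. For this we must check that the finite set of removed Euler factors at primes dividing $p\ff$ cannot create a pole compensating the vanishing; since the self-duality (\ref{self-dual-character}) puts us at the centre $s=1$, these factors are finite and in fact nonvanishing.

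Next we use the sign computation. By Lemma~\ref{density-of-gxi}(1) with $k=0$,
\[
\varepsilon(\varphi\chi)=\varepsilon(\varphi)\,\frac{\varepsilon_p(\Ind_{K/\Q}(\varphi\chi))}{\varepsilon_p(\Ind_{K/\Q}(\varphi))}=\varepsilon\cdot\varepsilon_p(\Ind_{K/\Q}(\varphi\chi)).
\]
So if $\chi\in\Xi^{-\varepsilon}$, then $\varepsilon_p(\Ind_{K/\Q}(\varphi\chi))=-\varepsilon$, hence $\varepsilon(\varphi\chi)=-1$. The functional equation then forces $L(\varphi\chi,1)=0$, and by the previous step the dual exponential vanishes, for all $\chi\in\Xi^{-\varepsilon}$. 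By Definition~\ref{def, pm}, whose $\pm$ is defined as vanishing at $\Xi^{\mp}$, this gives $\loc_p(z^{\ac})\in\rH^1_{\varepsilon}$. We must double-check that the identification (\ref{base-change-rep}) together with Remark~\ref{sign-convention} matches the $\Xi^\pm$ used in Definition~\ref{def, pm} for $T^{\otimes-1}(1)\otimes\Lambda_{\ac}$, including the involution $\iota$. Replacing $\chi$ by $\chi^{-1}$ does not change the sign, since $\varepsilon$ is invariant under complex conjugation of a self-dual character.

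The main obstacle is bookkeeping rather than substance. We need to apply the reciprocity law integrally along $\Lambda_{\ac}$: $\loc_p z^{\ac}$ is a genuine Iwasawa class, since $z^{\ac}\in\scrS^{\ac}_{\rel}$ by (\ref{lie-in-rel}), and its specialisations agree with the specialisations of Kato's two-variable class. We also have to track the sign conventions for $\chi$ versus $\chi^{-1}$ and $V$ versus $V^{\otimes-1}$. Without Assumption~\ref{generic} we simply work after inverting $p$, which is harmless because $\rH^1_{\pm}$ is saturated by definition.
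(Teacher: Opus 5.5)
Your argument is correct and is essentially the paper's. The paper's proof just defers to \cite[Cor.~3.3]{Ru} and \cite[Prop.~8.11]{BKNO}, which check the defining vanishing condition of $\rH^1_{\varepsilon}$ exactly as you do. They use the explicit reciprocity law at finite order $\chi$ together with $L(\varphi\chi,1)=0$ whenever $\varepsilon(\varphi\chi)=\varepsilon\cdot\varepsilon_p(\Ind_{K/\Q}(\varphi\chi))=-1$.

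One remark is false and should be removed: replacing $\chi$ by $\chi^{-1}$ does \emph{not} preserve the sign. Complex conjugation sends $\varphi\chi$ to $\bar{\varphi}\chi^{-1}=(\varphi\chi)\circ c$, not to $\varphi\chi^{-1}$. By Lemma~\ref{density-of-gxi}(2) with $b=-1$ one has $\varepsilon(\varphi\chi^{-1})=\bigl(\tfrac{-1}{p}\bigr)\varepsilon(\varphi\chi)$, which flips the sign when $p\equiv 3 \bmod 4$. So the convention check you flag is genuinely necessary, and fortunately it goes through without any inversion:
\begin{itemize}
\item by (\ref{base-change-rep}), the specialisation of $T^{\otimes -1}(1)\otimes\Lambda_{\ac}$ at $\chi$ is $V_{\varphi\chi}^{\otimes -1}(1)=V(\varphi\chi)(1)$;
\item this is exactly the target of Proposition~\ref{higher-explicit-reciprocity-law} with $k=0$, where the value is a multiple of $L_{p\ff}(\varphi\chi,1)$;
\item $\Xi^{\pm}$ is defined through $\varepsilon_p(\Ind_{K/\Q}(\varphi\chi))$ for that same $\chi$.
\end{itemize}
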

\begin{proof}
One may proceed just as in the proof of \cite[Cor.\ 3.3]{Ru} or \cite[Prop.\ 8.11]{BKNO}.
\end{proof}

\begin{defn}\label{def, RpL}\leavevmode
\begin{enumerate}
\item 
Suppose that Assumption \ref{for-integrality} (1) holds.
Fix a $\Lambda_{\ac}$-basis $v_\varepsilon$ of $\rH^1_{\varepsilon}(K_p, T^{\otimes -1}(1)\otimes \Lambda_{\ac})$ 
and define $\mathscr{L}_p(\varphi):=\mathscr{L}_{p,v_{\varepsilon}, t}(\varphi) \in \Lambda_{\ac}$ by
\[
\mathscr{L}_{p,v_{\varepsilon}, t}(\varphi) \cdot v_{\varepsilon} = \loc_{p}(z_{p^{\infty}\ff }^{t,\ac}).
\]
\item Without Assumption \ref{for-integrality} (1), 
fix a
$\Lambda_{\ac}[1/p]$-basis $v_\varepsilon$ of $\rH^1_{\varepsilon}(K_p, T^{\otimes -1}(1)\otimes \Lambda_{\ac})\otimes_{\Z_p} \Q_{p}$ (cf.\ Remark \ref{signed-condition-p-invert}), 
and define $\mathscr{L}_p(\varphi):=\mathscr{L}_{p,v_{\varepsilon}, t}(\varphi) \in \Lambda_{\ac}\otimes_{\Z_p} \Q_p$ by 
\[
\mathscr{L}_{p,v_{\varepsilon}, t}(\varphi) \cdot v_{\varepsilon} = \loc_{p}(z_{p^{\infty}\ff }^{t,\ac}). 
\]
\end{enumerate}
\end{defn}
We refer the reader to Theorem \ref{interpolation-rubin} for an interpolation property of $\mathscr{L}_p(\varphi)$ in terms of Hecke $L$-values.

\subsection{CM motives}
\subsubsection{} Let $\psi$ be a Hecke character over $K$ of infinity type  $(i, j)$, where the first component corresponds to the restriction of  the fixed embedding $K^{\mathrm{ab}} \hookrightarrow \C$. 
 For an $\CO_K$-algebra $M$ containing $\psi(\widehat{K}^{\times})$,
define an $M$-module
\[
V_M(\psi)=\rH^1_{}(A(\C), \Z)^{\otimes_{}i}  \otimes_{}  \overline{\rH^1_{}(A(\C), \Z)}^{\otimes_{}j} \otimes_{}M, 
\]
where the tensor products are taken over $\CO_K$,
$(A, \alpha)$  is the canonical CM pair over $K(\fg)$ of conductor $\fg$ in the sense of \cite[\S 15.3]{K}, $\fg\neq 0$ is an integral ideal of $K$ contained in the conductor of $\psi$, 
$\overline{\rH^1_{}(A(\C), \Z)}:=\rH^1_{}(A(\C), \Z)\otimes_{\CO_K, c}\CO_K$ for 
$c$ the non-trivial element of $\Gal(K/\Q)$. 

If $M$ contains the subextension $L_{\psi} \subseteq \C$ generated by $K$ and $\psi(\hat{K}^{\times})$, then $V_M(\psi)$ is independent of the choice of $\fg$ (cf.~\cite[p.~257]{K}).  
If $M$ is a finite extension of $L_{\psi, \lambda}$ for $\lambda$ the restriction of the fixed place of $\overline{\Q}$ above $p$,
then 
$V_M(\psi)$ is naturally  endowed with a $G_K$-action characterised as follows: the arithmetic Frobenius $\Fr_{\fq}$
acts by $\psi(\fq)^{-1}$ for $\fq$ relatively prime to $p$ and to the conductor of $\psi$. 

\subsubsection{de Rham realisation: the infinity type $(k,0)$ case}
We suppose that $\psi$ is of infinity type $(k,0)$ for $k\geq 1$. Following \cite[\S 15]{K},
we introduce some notation concerning its de Rham realisation.

Let $S(\psi)$  be as in \cite[\S 15]{K}: 
a one-dimensional $L_{\psi}$-subspace of $\coLie(A)^{\otimes_{K(\fg)} k}\otimes_K L_{\psi}$
such that $$S(\psi)\otimes_K K(\fg)=\coLie(A)^{\otimes_{K(\fg)} k}\otimes_K L_{\psi}.$$ 
More concretely,
$S(\psi)$ is defined as $\rH^0(K(\fg)/K, \coLie(A)^{\otimes_{K(\fg)} k}\otimes_K L_{\psi})$,
where $\sigma=(\fb, K(\fg)/K) \in\Gal(K(\fg)/K)$ acts on $\coLie(A)^{\otimes_{K(\fg)} k}\otimes_K L_{\psi}$ by
\[
\begin{split}
\coLie(A)^{\otimes_{K(\fg)} k}\otimes_K L_{\psi} \xrightarrow{\sigma^*\otimes 1}\coLie(A^{\sigma})^{\otimes_{K(\fg)} k}\otimes_K L_{\psi}\xrightarrow{(\eta_{\fb}^*)^{\otimes k}\otimes \psi(\fb)^{-1}} & \coLie(A/A[\fb])^{\otimes_{K(\fg)} k}\otimes_{K}L_{\psi}\\ 
=&\coLie(A)^{\otimes_{K(\fg)} k}\otimes_{K}L_{\psi}. 
\end{split}
\]
Here $\sigma: A^{\sigma}\cong A$, and $\eta_{\fb}: (A/A[\fb], \alpha+A[\fb]) \to (A^{\sigma}, \alpha^{\sigma})$ is  the unique isomorphism of the canonical CM pairs\footnote{The former is not a $K(\fg)$-morphism.}.  
For a finite extension $M$ of $L_{\psi}$, put $S_M(\psi)=S(\psi)\otimes_{L_{\psi}}M.$
Note that $\sigma(ax)=\sigma(a)\sigma(x)$ for $a\in K(\fg),\ x\in \coLie(A)^{\otimes_{K(\fg)} k}\otimes_K L_{\psi}$, and that 
 $S(\psi)$ is independent of the choice of $\fg$ up to canonical isomorphism.
 
In view of the above construction, we have the following.
\begin{lem}\label{properties-of-S}\leavevmode 
\begin{enumerate}
\item For a finite order character $\chi$ of $G_K$ of conductor dividing $\fg$ and a finite extension $M$ of $L_{\psi}L_{\chi}$ inside $\C$,
$S_M(\psi\chi)$ is identified with $\rH^0(G_K, (S(\psi)\otimes_{L_{\psi}}M)\otimes_KK(\fg)),$
where $G_K$ acts on $(S(\psi)\otimes_{L_{\psi}}M)\otimes_KK(\fg)$ by
\[
\sigma (a\otimes b) =\chi^{-1}(\sigma)a\otimes b^{\sigma} \quad (\sigma\in \Gal(K(\fg^{} )/K),\ a\in  S(\psi)\otimes_{L_{\psi}}M,\ b \in K(\fg^{})). 
\]
\item
If $\psi^{\prime}$ is of infinity type $(k^{\prime},0)$ with $k^{\prime}\ge 1$ and conductor dividing $\fg$,
then for a finite extension $M$ of $L_{\psi}L_{\psi^{\prime}}$, 
we have 
\begin{equation}\label{product-S}
S(\psi\psi^{\prime})\otimes_{L_{\psi}L_{\psi^{\prime}}} M=S_M(\psi)\otimes_{M} S_M(\psi^{\prime}).
\end{equation}
\end{enumerate}
\end{lem}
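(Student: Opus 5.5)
Both parts follow by unwinding the definition of $S(\psi)$ as the Galois invariants $\rH^0(K(\fg)/K, \coLie(A)^{\otimes k}\otimes_K L_{\psi})$ under the twisted semilinear action $(\eta_{\fb}^*)^{\otimes k}\circ\sigma^*\otimes\psi(\fb)^{-1}$. Throughout I fix one modulus $\fg$ divisible by the conductors of all characters involved. This is harmless because $S(\cdot)$ is independent of $\fg$ up to canonical isomorphism. With this choice, $\psi\chi$, $\psi$ and $\psi'$ are all described using the same canonical CM pair $(A,\alpha)$ over $K(\fg)$ and the same isomorphisms $\eta_{\fb}$.

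For part (1), the isomorphisms $\eta_{\fb}$ depend only on $(A,\alpha)$, not on the character. Hence the action defining $S(\psi\chi)$ differs from the one defining $S(\psi)$ only in the scalar factor: $(\psi\chi)(\fb)^{-1}$ in place of $\psi(\fb)^{-1}$. Extending scalars to $M$, the space $\coLie(A)^{\otimes k}\otimes_K M$ is free of rank one over $K(\fg)\otimes_K M$. By Galois descent (Hilbert 90 for $K(\fg)/K$ applied to the $\psi$-twisted action), it is identified with $(S(\psi)\otimes_{L_{\psi}}M)\otimes_K K(\fg)$. Under this identification the $\psi$-twisted action of $\sigma$ becomes $a\otimes b\mapsto a\otimes b^{\sigma}$. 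The additional factor $\chi(\fb)^{-1}=\chi^{-1}(\sigma)$, viewed via the Artin map as a character of $\Gal(K(\fg)/K)$, then yields exactly the stated action $\sigma(a\otimes b)=\chi^{-1}(\sigma)a\otimes b^{\sigma}$. Taking invariants gives $S_M(\psi\chi)$.

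For part (2), I use the tautological identification
$$\coLie(A)^{\otimes(k+k')}\otimes_K M=\bigl(\coLie(A)^{\otimes k}\otimes_K M\bigr)\otimes_{K(\fg)\otimes M}\bigl(\coLie(A)^{\otimes k'}\otimes_K M\bigr).$$
This identification is equivariant: $\sigma^*$ and $(\eta_{\fb}^*)^{\otimes(k+k')}$ factor as tensor products of the corresponding maps, and $(\psi\psi')(\fb)^{-1}=\psi(\fb)^{-1}\psi'(\fb)^{-1}$. Hence the tensor product of invariants maps into the invariants of the tensor product, giving a map $S_M(\psi)\otimes_M S_M(\psi')\to S(\psi\psi')\otimes M$. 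To see that it is an isomorphism, note that both sides are one-dimensional over $M$. The map is nonzero because, by descent, $S_M(\psi)\otimes_K K(\fg)$ is the whole space, and similarly for $\psi'$; so a product of two nonzero invariant generators is again a generator of the free rank-one $K(\fg)\otimes M$-module.

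The only real obstacle is bookkeeping. The main points to check are that the $\eta_{\fb}$ (and $\sigma^*$) are compatible with the tensor power structure and are independent of the character, and that the direction and normalisation of the Artin symbol match the convention that $\sigma$ acts by $\chi^{-1}(\sigma)$. Everything else is formal Galois descent along $K(\fg)/K$.
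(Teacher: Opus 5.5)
Your proposal is correct and takes the same route as the paper. The paper gives no separate argument; it asserts the lemma is immediate from the construction of $S(\psi)$ as the invariants of the $\psi$-twisted semilinear $\Gal(K(\fg)/K)$-action on $\coLie(A)^{\otimes k}\otimes_K L_{\psi}$. Your Galois-descent unwinding makes this explicit: you identify $\coLie(A)^{\otimes k}\otimes_K M$ with $S_M(\psi)\otimes_K K(\fg)$ and note that the $\psi\chi$-action differs only by the scalar $\chi(\fb)^{-1}$, and in part (2) you observe that a product of generators of free rank-one $K(\fg)\otimes_K M$-modules is again a generator.
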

Note that  $S(\psi)$  gives an $L_{\psi}$-structure to $D_{\dR}^j(K\otimes\Q_p, V_{L_{\psi,\lambda}}(\psi))$ for $1\le j \le k$:
the comparison isomorphism 
\[
D_{\dR}\left(K(\fg)\otimes\Q_p, \rH^1_{\et}(A\otimes_{K(\fg)} \overline{K(\fg)}_w, \Q_p )\right)\cong \rH^1_{\dR}(A/K(\fg))\otimes_{\Q}\Q_p,
\]
where $w$ is a place of the algebraic closure  $\overline{K(\fg)}\subseteq \C$ above $p$, 
induces
\[
D^j_{\dR}(K(\fg)\otimes\Q_p, V_{L_{\psi,\lambda}}(\psi)) \cong \coLie(A)^{\otimes_{K(\fg)}k }\otimes_KL_{\psi,\lambda} \]
and
\begin{equation}\label{p-adicHodge}
D_{\dR}^j(K\otimes\Q_p, V_{L_{\psi,\lambda}}(\psi)) \cong S(\psi)\otimes_{L_{\psi}} L_{\psi, \lambda}. 
\end{equation}

The period map
\[
\per_A: \coLie(A) \to \rH^1(A(\C), \C)=\mathrm{Hom}(\rH_1(A(\C), \Z), \C), \quad \omega \mapsto \left(a \mapsto \int_a\omega\right )
\]
induces $\coLie(A)^{\otimes_{K(\fg)}k} \to \rH^1(A(\C), \Q)^{\otimes_K k}\otimes_K \C$ and
\[
\per_{\psi}: S(\psi) \to V_{\C}(\psi).
\]

\subsubsection{de Rham realisation: the infinity type $(k+1,-k)$ case}
Suppose that $\psi$ is of infinity type $(k+1,-k)$ and of the form $\varphi^{k+1}\bar{\varphi}^{-k}\chi=\varphi^{2k+1}\chi/\chi_{\cyc}^{k}$ with $k\ge 0$,
where $\varphi$ is of infinity type $(1,0)$ introduced before,
 $\chi$ is a finite character, and $\chi_{\cyc}$ denotes the cyclotomic character.

For a finite extension $M$ of $L_{\psi}$, put
\[
S_M(\psi)=S_M(\varphi^{2k+1}\chi).
\]
We note  the canonical identification
\begin{equation}\label{identification-betti}
\begin{split}
V_{L_{\psi}}(\psi)&=\rH^1(A(\C), \Q)^{\otimes_K k+1}\otimes_{K}\overline{\rH^1(A(\C), \Q)}^{\otimes_K -k}\otimes_K L_{\psi}\\
&=\rH^1(A(\C), \Q)^{\otimes_K 2k+1}\otimes_{K}\left(\rH^1(A(\C), \Q)\otimes_K\overline{\rH^1(A(\C), \Q)}\right)^{\otimes_K -k} \otimes_KL_{\psi}\\
&\cong\rH^1(A(\C), \Q)^{\otimes_K 2k+1}\otimes_{K}L_{\psi}=V_{L_{\psi}}(\varphi^{2k+1}\chi),
\end{split}
\end{equation}
where the last isomorphism is due to the isomorphism 
\begin{equation}\label{K-linear}
\rH^1(A(\C), \Q)\otimes_K\overline{\rH^1(A(\C), \Q)}\cong K
\end{equation} induced by the base change 
\begin{equation}\label{base-change}
( \ , \ )_{\rH^1, K}:  (\rH^1(A(\C), \Q)\otimes_\Q K) \times  (\rH^1(A(\C), \Q)\otimes_{\Q}K) \to  K
\end{equation}
of the canonical one 
and the decomposition
\begin{equation}\label{KxKdecomposition}
\rH^1(A(\C), \Q)\otimes_\Q K=\rH^1(A(\C), \Q)\oplus  \overline{\rH^1(A(\C), \Q)}
\end{equation}which are compatible with $K$-action (the $K$-action on the left hand side is given by the one on the second component in the tensor product). 

Then via  the de Rham version of the identification (\ref{identification-betti}),
 the comparison (\ref{p-adicHodge})  implies that 
$S_{L_{\psi}}(\psi)$ is a one-dimensional $L_{\psi}$-space which canonically gives an $L_{\psi}$-structure
on $D^j_{\mathrm{dR}}(K\otimes \Q_p, V_{L_{\psi, \lambda} }(\psi))$  for $1-k\le j \le 1+k$. 
Moreover, the \'etale version of (\ref{identification-betti}) induces $$V_{L_{\psi, \lambda} }(\psi)\cong V_{L_{\psi, \lambda} }(\varphi^{2k+1}\chi)(k).$$ 

\subsection{Rubin-type $p$-adic $L$-function and Hecke $L$-values}\label{interpolation-section}
\subsubsection{Explicit reciprocity law}
Let $\fg=f\CO_K$ where $f$ is a positive integer.
Then, $(A,\alpha)$ is canonically isomorphic to $(\C/\CO_K, 1/f)$, and $\rH_1^{}(A(\C), \Z)=\Hom_{\Z}(\rH^1(A(\C),\Z),\Z )$ is 
an $\CO_K$-module 
free of rank one.

Let $\delta \in \rH_1^{}(A(\C), \Z)$ be the $\CO_K$-basis which corresponds to $1\in \CO_K$ under  the isomorphism $\rH_1^{}(A(\C), \Z)=\rH_1^{}(\C/\CO_K, \Z)\cong \CO_K$. 
 Let $\delta^{\otimes -1} \in \rH_1^{}(A(\C), \Z)^{\otimes_{\CO_K} -1}$ be its dual basis. 
Let $t \in \rH^1_{}(A(\C), \Q)^{}$ be the image of $2^{-1}\delta^{\otimes -1}$
under
\[
\rH_1^{}(A(\C), \Q)^{\otimes_{K} -1} \xrightarrow{\mathrm{Tr}_{K/\Q}} \Hom_{\Q}(\rH_1^{}(A(\C), \Q), \Q) = \rH^1(A(\C),\Q), 
\] 
where the arrow sends $a$ to $\mathrm{Tr}_{K/\Q}\circ a.$
Denote by $\bar{t} \in \overline{\rH^1_{}(A(\C), \Q)}^{}:=\rH^1_{}(A(\C), \Q)\otimes_{K, c}K$ the element $t \otimes 1.$
\begin{lem}\label{comparison-periods}
Under the map (\ref{K-linear}), 
the image of 
$t \otimes \overline{t}$  is given by $ -\sqrt{d_K},$
where $d_K<0$ denotes the fundamental discriminant of $K$.
\end{lem}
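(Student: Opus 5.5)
The plan is a direct computation in explicit coordinates; the lemma concerns a concrete rational Hodge structure, so nothing beyond unwinding definitions is needed. Since $(A,\alpha)\cong(\C/\CO_K,1/f)$ over $\C$, I would identify $\rH_1(A(\C),\Z)=\CO_K$ with $\delta=1$. This gives
\[
\rH^1(A(\C),\Q)=\Hom_{\Q}(K,\Q),\qquad \rH^1(A(\C),\Q)\otimes_{\Q}K=\Hom_{\Q}(K,K).
\]
Under these identifications the $K$-structure on $\rH^1$ comes from the $\CO_K$-action on $\rH_1$. The element $t$ is then the functional $x\mapsto \tfrac12\mathrm{Tr}_{K/\Q}(x)$.

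\textbf{Step 1: locate $t$ and $\bar t$ in the decomposition.} The decomposition \eqref{KxKdecomposition} is the splitting of $\Hom_{\Q}(K,K)$ into $K$-linear and $K$-antilinear maps. These two lines are spanned by $\lambda:x\mapsto x$ and $\bar\lambda:x\mapsto \bar x$, which in analytic terms are $dz$ and $d\bar z$. I would check that $t\otimes 1=\tfrac12(\lambda+\bar\lambda)$, and that under \eqref{KxKdecomposition} the element $t\in\rH^1(A(\C),\Q)$ corresponds to the component $\tfrac12\lambda$. Likewise $\bar t=t\otimes 1\in\overline{\rH^1(A(\C),\Q)}$ corresponds to $\tfrac12\bar\lambda$. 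Consequently the image of $t\otimes\bar t$ under \eqref{K-linear} is $\tfrac14(\lambda,\bar\lambda)_{\rH^1,K}$, the $K$-bilinear extension \eqref{base-change} of the canonical pairing evaluated on $\lambda$ and $\bar\lambda$.

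\textbf{Step 2: compute the pairing.} The canonical pairing on $\rH^1(A(\C),\Q)$ is the cup product followed by the fundamental class, equivalently the dual of the intersection pairing on $\rH_1=\CO_K$. Write $\CO_K=\Z+\Z\omega$ with $\omega=(d_K+\sqrt{d_K})/2$. I would evaluate $(\lambda,\bar\lambda)$ in two equivalent ways. The first is via the symplectic basis $\{1,\omega\}$ with dual basis $e_1^*,e_2^*$, where $(e_1^*,e_2^*)=\pm1$: expand $\lambda=e_1^*+\omega e_2^*$ and $\bar\lambda=e_1^*+\bar\omega e_2^*$. The second is $\int_{\C/\CO_K}dz\wedge d\bar z=-2i\,\mathrm{Area}(\C/\CO_K)$. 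Either way the answer is $\pm(\bar\omega-\omega)=\mp\sqrt{d_K}$. Combined with the factor $\tfrac14$ from Step 1, and the factor coming from the trace normalisation defining $t$ (which has to be reconciled with the normalisation of \eqref{base-change}), this should produce $-\sqrt{d_K}$.

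\textbf{Main obstacle.} The main difficulty is bookkeeping of conventions rather than any conceptual step. Four choices fix the constant and the sign:
\begin{itemize}
\item the orientation of $\C$ used for the fundamental class,
\item whether the canonical pairing is normalised as $e_1^*\cup e_2^*=1$ or $-1$,
\item how the factor $2^{-1}$ in $t$ interacts with the identification \eqref{KxKdecomposition} (projection versus inclusion, which introduces factors of $2$),
\item which square root $\sqrt{d_K}$ is meant under $\iota:K\hookrightarrow\C$.
\end{itemize}
I would fix all four first, compatibly with Kato's conventions in \cite[\S 15]{K}, and then carry out the two-line computation in Step 2 twice, once algebraically and once analytically, as a consistency check on the final sign.
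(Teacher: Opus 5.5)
Your Step 1 is correct and is the paper's computation in another basis. In coordinates, $t\otimes 1=\tfrac12(\lambda+\bar\lambda)$, and the summands of \eqref{KxKdecomposition} are the $K$-linear and $K$-antilinear lines. The paper's explicit formula $\bar t\leftrightarrow\frac{t\otimes\sqrt{d_K}-(\sqrt{d_K}^*t)\otimes 1}{2\sqrt{d_K}}$ is exactly your $\tfrac12\bar\lambda$. Since the form is alternating, pairing it with $t\otimes 1$ gives $\tfrac14(\lambda,\bar\lambda)_{\rH^1,K}$.

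The gap is in how you finish. The factor $\tfrac14$ already \emph{is} the trace normalisation of $t$: it comes from $t=\tfrac12\mathrm{Tr}_{K/\Q}$. So there is no further ``factor coming from the trace normalisation'' left to use. With the normalisation you actually propose, $e_1^*\cup e_2^*=\pm1$, your two lines give $\mp\tfrac14\sqrt{d_K}$. Your analytic check $\int_{\C/\CO_K}dz\wedge d\bar z=-\sqrt{d_K}$ gives $-\tfrac14\sqrt{d_K}$. Neither is $-\sqrt{d_K}$, so the proposal computes a value off by a factor of $4$ and then asserts the lemma anyway. None of your four listed conventions can repair this. The orientation, the sign of the cup product and the choice of $\sqrt{d_K}$ only change signs. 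The identification in \eqref{KxKdecomposition} is already pinned down by the paper's formula for $\bar t$, which agrees with your Step 1.

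What is missing is the specific normalisation of $(\ ,\ )_{\rH^1,K}$ to which the lemma's constant refers; the paper fixes it explicitly in two steps.
\begin{enumerate}
\item On $\rH_1(A(\C),\Q)=K$ it takes $\langle z,w\rangle_{\rH_1}=(z\bar w-\bar zw)/(\theta-\bar\theta)$. Here $\CO_K=\Z\oplus\Z\theta$, $\mathrm{Im}\,\theta>0$ and $\sqrt{d_K}:=\theta-\bar\theta$, so that $\langle\delta,\sqrt{d_K}\delta\rangle_{\rH_1}=-2$.
\item It declares the pairing on $\rH^1$ to be the inverse of $1\mapsto-\tfrac12(\sqrt{d_K}\delta)^\vee\wedge\delta^\vee$. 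Equivalently, $(\delta^\vee,(\sqrt{d_K}\delta)^\vee)=2$.
\end{enumerate}
In your basis (dual to $\{1,\omega\}$ with $\omega=(d_K+\sqrt{d_K})/2$) one has $\delta^\vee=e_1^*+\tfrac{d_K}{2}e_2^*$ and $(\sqrt{d_K}\delta)^\vee=\tfrac12e_2^*$. Hence this normalisation says $(e_1^*,e_2^*)=4$: four times the cup product with the standard orientation, not a unimodular form.

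With this input your computation closes at once: $\tfrac14\cdot 4\cdot(\bar\omega-\omega)=-\sqrt{d_K}$. The constant in the lemma is thus a statement about this particular normalisation. Your argument needs to state and use it, rather than expect the trace factor to absorb the discrepancy.
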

\begin{proof}
Let $\theta \in \CO_K \subseteq \C$ be an element such that $\CO_K=\Z \oplus \Z\theta$ and $\mathrm{Im}(\theta)>0$.  
The alternating pairing $( \ , \  )_{\rH^1, K}$ is induced by duality and (the base change of ) the alternating pairing
\[
\langle \ , \ \rangle_{\rH_1}:\wedge^2_{\Q}\rH_1(A(\C), \Q) \to \Q,\quad z \wedge w \mapsto \frac{z\bar{w}-\bar{z}w}{\theta-\bar{\theta}}=\frac{1}{\sqrt{d_K}}(z\bar{w}-\bar{z}w)
\]where $\rH_1(A(\C), \Q)$ is identified with $K$ as above. 
Since $\langle \delta , \theta\delta \rangle_{\rH_1}=-\langle \delta , \bar{\theta}\delta \rangle_{\rH_1}=-1,$
 we have $\langle \delta, \sqrt{d_K}\delta \rangle_{\rH_1}=-2.$
So $( \ , \  )_{\rH^1, K}$ is given by the base change of the inverse map of
\[
\Q \to \wedge_{\Q}^2\rH^1(A(\C),\Q),\quad 1 \mapsto \frac{-1}{2}(\sqrt{d_K}\delta)^{\vee}\wedge \delta^{\vee} 
\]
where $\delta^{\vee}, (\sqrt{d_K}\delta)^{\vee} \in \rH^1(A(\C),\Q)=\Hom_{\Q}(\rH_1(A(\C),\Q),\Q)$ is the dual basis of $\delta, \sqrt{d_K}\delta \in \rH_1(A(\C),\Q).$

Under (\ref{KxKdecomposition}),
the element $\bar{t}$ corresponds to the element 
\[
\frac{t\otimes\sqrt{d_K}-(\sqrt{d_K}^*t)\otimes 1}{2\sqrt{d_K}}\in \rH^1(A(\C),\Q)\otimes_{\Q}K,
\]where $\sqrt{d_K}^*$ denotes the endomorphism of $\rH^1(A(\C),\Q)$ induced by $\sqrt{d_K} \in \CO_K$.
Since 
\[t(\delta)=1,\quad t(\sqrt{d_K} \delta)=0,\quad
\sqrt{d_K}^*t(\delta)=0,\quad \sqrt{d_K}^*t(\sqrt{d_K} \delta)={d_K},
\]
we have
\[
\begin{split}
(t\otimes_{\Q} 1) \wedge_{\Q} \left(\frac{t\otimes\sqrt{d_K}-(\sqrt{d_K}^*t)\otimes 1}{2\sqrt{d_K}}\right)
=&-(t\wedge_{\Q} \sqrt{d_K}^*t ) \otimes (2\sqrt{d_K})^{-1}\\
=&-(\delta^{\vee}\wedge_{\Q} (\sqrt{d_K}\delta)^{\vee}) \otimes 2^{-1}\sqrt{d_K}.\\
\end{split}
\]
Hence,
\begin{equation}\label{image-of-cup}
(t, \overline{t})_{\rH^1, K}=-\sqrt{d_K}.
\end{equation}
\end{proof}

Let $\eta: \Gal(K(\fg)/K) \to L_{\lambda}^{\times}$ be the character such that $\varphi\eta^{-1}: G_K \to L_{\lambda}^{\times}$ factors through $\Gamma$. 
Put $$\varphi_{\ac}=\varphi/\bar{\varphi}, \quad \eta_{\ac}=\eta/\overline{\eta}.$$

For $k\ge 0$,
regarding $t$ as an element  in $V_{L_{\lambda}}(\varphi\eta^{-1}),$
consider the map
\[
\bfH^1_{f\CO_K}(V_{L_{\lambda}}(\varphi)(1)) \xrightarrow{\otimes t^{\otimes k} \bar{t}^{\otimes -k} } 
\bfH^1_{f\CO_K}(V_{L_{\lambda}}(\varphi^{k+1}\bar{\varphi}^{-k} \eta_{\ac}^{-k})(1))
\to \rH^1(K, V_{L_{\lambda}}(\varphi^{k+1}\bar{\varphi}^{-k}\eta_{\ac}^{-k} )(1)\otimes_{\CO} \Lambda_{\ac} ), 
\]
where the last map is induced by the restriction map and Shapiro's lemma. 
It induces  
\begin{equation}\label{infinite-twists}
\Tw_{t}^k: \rH^1(K, T^{\otimes -1}(1)\otimes_{\CO} \Lambda_{\ac} )\to \rH^1(K, V_{L_{\lambda}}(\varphi^{k+1}\bar{\varphi}^{-k}\eta_{\ac}^{-k} )(1)\otimes_{\CO} \Lambda_{\ac} ).
\end{equation}
For a finite character $\chi$ of $\Gamma$, note that 
 $$V^{\otimes -1}(1)\otimes \Lambda\otimes_{\Lambda, \varphi_{\ac}^k\eta_{\ac}^{-k}\chi} L_{\lambda}(\mathrm{Im}(\chi))\cong V_{L_{\lambda}(\mathrm{Im}(\chi))}(\varphi\varphi_{\ac}^{k}\eta_{\ac}^{-k}\chi )(1).$$
 
 We have the following explicit reciprocity law.
\begin{prop}\label{higher-explicit-reciprocity-law}
Let $f$ and $t$ be as above.
Let $\omega $ be a non-zero element in $S(\varphi)$ and $\Omega\in \C^{\times}$ such that $\per_{\varphi}(\omega)=\Omega t$.
 For any finite character $\chi$ of $\Gamma$,
the image of $z_{p^{\infty}f}^{t, \ac} \in \rH^1(K, T^{\otimes -1}(1)\otimes_{\CO} \Lambda_{\ac} )$
under the composite 
\begin{equation}\label{twist-and-exp}
\begin{split}
\rH^1(K_p, T^{\otimes -1}(1)\otimes_{\CO} \Lambda_{\ac} )\xrightarrow{\Tw_{t}^k} &\rH^1(K_p, V_{L_{\lambda}}(\varphi^{k+1}\bar{\varphi}^{-k}\eta_{\ac}^{-k} )(1)\otimes_{\CO} \Lambda_{\ac} )\\
 \to  & \rH^1(K_p, V_{L_{\lambda}(\mathrm{Im}(\chi))}(\varphi^{k+1}\bar{\varphi}^{-k}\eta_{\ac}^{-k}\chi)(1))\\
 \xrightarrow{\exp^*} 
 & D_{\dR}^0(V_{L_{\lambda}(\mathrm{Im}(\chi))} (\varphi^{k+1}\bar{\varphi}^{-k}\eta_{\ac}^{-k}\chi^{})(1))\\
=& D_{\dR}^1(V_{L_{\lambda}(\mathrm{Im}(\chi))} (\varphi^{k+1}\bar{\varphi}^{-k}\eta_{\ac}^{-k}\chi^{}))
\end{split}
\end{equation}
is given by 
\begin{equation}\label{reciprocity-law}
\left(-\frac{2\pi}{\sqrt{|d_K|}}\right)^{k}\frac{L_{pf}(\varphi^{2k+1}\eta_{\ac}^{-k}\chi, k+1 )}{\Omega^{2k+1}} \cdot \omega^{\otimes 2k+1},
\end{equation}
where
the second arrow in (\ref{twist-and-exp}) is given by the base change with respect to $\Lambda_{\ac}\xrightarrow{\chi}L_{\lambda}(\mathrm{Im}(\chi))$.
\end{prop}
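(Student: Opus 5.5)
The plan is to reduce the statement to Kato's explicit reciprocity law for elliptic units \cite[Thm.~15.5 and \S15.9]{K}, applied to Hecke characters of infinity type $(2k+1,0)$. The twist $\Tw_t^k$ then transports the identity from $\varphi^{2k+1}\eta_{\ac}^{-k}\chi$ to $\varphi^{k+1}\bar\varphi^{-k}\eta_{\ac}^{-k}\chi$; the only new input is the constant $(t,\bar t)_{\rH^1,K}=-\sqrt{d_K}$ of Lemma~\ref{comparison-periods}.

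\textbf{Step 1: Kato's law at infinity type $(2k+1,0)$.} First I will work over the two-variable tower. The element $t^{\otimes 2k}$ is a $\Lambda$-adic twist by a power of the Tate-module character, so it is compatible with corestriction. Twisting the elliptic unit system $({}_{\fa}z_{p^nf})_n$ by $t^{\otimes(2k+1)}$ therefore gives classes in $\rH^1(K(p^nf),V_{L_\lambda}(\varphi^{2k+1})(1))$. Kato's theorem computes their images under $\exp^*$, after specialising at a finite character $\chi'$ of $\Gal(K(p^\infty f)/K)$. The result is $L_{pf}(\varphi^{2k+1}\chi',k+1)\,\Omega^{-(2k+1)}\,\omega^{\otimes(2k+1)}$, up to Kato's standard normalising factors; the Euler factor at $p$ is removed and the $\fa$-factor is cancelled as in the definition of $z^{\ac}_{p^\infty f}$. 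The shape of the period is forced by $\per_{\varphi^{2k+1}}(\omega^{\otimes 2k+1})=\Omega^{2k+1}t^{\otimes 2k+1}$, which follows from \eqref{product-S}. I will need to check carefully that Kato's normalisations, such as his $(2\pi i)$ powers and his choice of $\omega$, match the present ones.

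\textbf{Step 2: passing to infinity type $(k+1,-k)$ and to the anticyclotomic tower.} Under \eqref{identification-betti}, $t^{\otimes k}\bar t^{\otimes -k}$ and $t^{\otimes 2k+1}\otimes t^{\otimes -(k+1)}$ are related by the isomorphism \eqref{K-linear}, and each pairing $t\otimes\bar t$ contributes $-\sqrt{d_K}$. On the de Rham side, this identification turns the class $\omega^{\otimes 2k+1}$ in $S(\varphi^{2k+1}\chi)$ into a class in $D^1_{\dR}$ of the $(k+1,-k)$ representation, shifted by the Tate twist $(k)$. Comparing the two normalisations produces a factor $(-\sqrt{d_K})^{-k}$ times the Betti-to-de Rham factor for $\chi_{\cyc}^k$. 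The latter is $(2\pi i)^{k}$ with the sign convention of $\Z(1)$. Using $\sqrt{d_K}=i\sqrt{|d_K|}$, these combine to $(-2\pi/\sqrt{|d_K|})^k$. I will then restrict along $\Gal(K(p^\infty f)/K)\to\Gamma$, using compatibility of $\exp^*$ with specialisation and the fact that $\chi$ corresponds to $\chi'=\eta_{\ac}^{-k}\chi$. The $L$-function then becomes $L_{pf}(\varphi^{2k+1}\eta_{\ac}^{-k}\chi,k+1)$.

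\textbf{Main obstacle.} The step I expect to be hardest is Step~2: pinning down the exact constant $(-2\pi/\sqrt{|d_K|})^k$. This requires tracking three things simultaneously: the Tate-twist comparison $D_{\dR}(\Q_p(k))$ versus Betti $\Z(k)=(2\pi i)^k\Z$, the sign of $(\ ,\ )_{\rH^1,K}$ from Lemma~\ref{comparison-periods}, and the fact that $\chi_{\cyc}=\varphi\bar\varphi$ is realised through this pairing. I would first check the case $k=1$ by hand, where the constant $-2\pi/\sqrt{|d_K|}$ must emerge from $-\sqrt{d_K}$ and $2\pi i$, and then argue multiplicatively in $k$ using \eqref{product-S}.
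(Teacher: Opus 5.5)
Your proposal is correct and is essentially the paper's proof: apply Kato's reciprocity law to the type-$(2k+1,0)$ character $\varphi^{2k+1}\eta_{\ac}^{-k}\chi$ at $s=k+1$, then transport it to $\varphi^{k+1}\bar\varphi^{-k}\eta_{\ac}^{-k}\chi$ via \eqref{identification-betti}, where the factor $(-\sqrt{d_K})^{-k}$ from Lemma~\ref{comparison-periods} combines with $(2\pi i)^k$ to give $(-2\pi/\sqrt{|d_K|})^k$. One fix is needed in Step~1: the value at $k+1$ and the factor $(2\pi i)^k$ only appear after twisting the cyclotomic class by $\zeta^{\otimes k}$ (the paper fixes $\zeta\in\Z_p(1)$ so that, in the \'etale realisation, $t\otimes\bar t\mapsto-\sqrt{d_K}\,\zeta^{\otimes -1}$), so this $(2\pi i)^k$ is the one already in Kato's formula and must not be counted a second time in Step~2.
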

\begin{remark}\label{omega-in-S}
The element  $\omega^{\otimes 2k+1} \in S_{L}(\varphi^{k+1}\bar{\varphi}^{-k})$ may not lie in $D_{\dR}^0(V_{L_{\lambda}(\mathrm{Im}(\chi))} (\varphi^{k+1}\bar{\varphi}^{-k}\eta_{\ac}^{-k}\chi^{})(1))$,
but the term \eqref{reciprocity-law} does, under the inclusion
\[
D_{\dR}^0(V_{L_{\lambda}(\mathrm{Im}(\chi))} (\varphi^{k+1}\bar{\varphi}^{-k}\eta_{\ac}^{-k}\chi^{})(1)) \subseteq 
S_{L}(\varphi^{k+1}\bar{\varphi}^{-k})\otimes_L \overline{\Q}_p
\]
(cf.\ Lemma \ref{properties-of-S}). 

\end{remark}
\begin{proof}
For $\fa$ relatively prime to $6pf$,
denote by $\!_{\fa}z^{t^{\otimes 2k+1}}_{\cyc} $ the image of $(\!_{\fa}z_{p^{n}\ff})_n \in \bfH^1_{\ff}(\Z_p(1))$ of \cite[\S 15.5]{K}
under the composite 
\[
\begin{split}
\bfH^1_{f\CO_K}(L_{\lambda}(1)) \xrightarrow{\otimes t^{\otimes 2k+1}}  \bfH^1_{f\CO_K}(V_{L_{\lambda}}(\varphi^{2k+1}\eta_{\ac}^{-k}\chi)(1))\to &\rH^1(K, V_{L_{\lambda}}(\varphi^{2k+1}\eta_{\ac}^{-k}\chi)(1)\otimes_{\CO}\Lambda^{\cyc})\\
=&\rH^1(\Q, \tilde{V}_{L_{\lambda}}(\varphi^{2k+1}\eta_{\ac}^{-k}\chi)(1)\otimes_{\CO}\Lambda^{\cyc}),
\end{split}
\]
where $\tilde{V}_{L_{\lambda}}(\varphi^{2k+1}\eta_{\ac}^{-k}\chi)=\mathrm{Ind}_{K/\Q}({V}_{L_{\lambda}}(\varphi^{2k+1}\eta_{\ac}^{-k}\chi))$,
and $\Lambda^{\cyc}=\varprojlim_{n}\CO[\Gal(\Q(\zeta_{p^n} )/\Q)]$. 
Let $\zeta  \in \Z_p(1)$ be the generator such that 
the image of $t \otimes \overline{t}$ under the \'etale version
\[
\rH^1_{\et}(A\otimes_{K(\fg)}\overline{K(\fg)}_w,\Z_p ) \otimes_{\CO_K\otimes \Z_p} \overline{\rH^1_{\et}(A\otimes_{K(\fg)}\overline{K(\fg)}_w,\Z_p )} \to \CO_{K}\otimes\Z_p(-1)
\]
of  (\ref{K-linear})
 is given\footnote{The existence is due to (\ref{image-of-cup}).} by $-\sqrt{d_K}\zeta^{\otimes -1}$ (this choice is for the compatibility with the Betti-version).

By
 \cite[Thm.\ 12.5, Lem.\ 15.11,  (15.6.1)]{K},
 the image of $\!_{\fa}z^{t^{\otimes 2k+1}}_{\cyc} $ under the composite 
  \[
  \begin{split}
  \rH^1(\Q, \tilde{V}_{L_{\lambda}}(\varphi^{2k+1}\eta_{\ac}^{-k}\chi)(1)\otimes_{\CO}\Lambda^{\cyc}) 
  \xrightarrow{\otimes \zeta_{}^{\otimes k}}  & \rH^1(\Q, \tilde{V}_{L_{\lambda}}(\varphi^{2k+1}\eta_{\ac}^{-k}\chi)(k+1)\otimes_{\CO}\Lambda^{\cyc})  \\
  \to &  \rH^1(\Q, \tilde{V}_{L_{\lambda}}(\varphi^{2k+1}\eta_{\ac}^{-k}\chi)(k+1)) \\
  \xrightarrow{\exp^*} &  D^0_{\dR}(\Q_p, \tilde{V}_{L_{\lambda}}(\varphi^{2k+1}\eta_{\ac}^{-k}\chi)(k+1)) \\
  = &  D^{k+1}_{\dR}(\Q_p, \tilde{V}_{L_{\lambda}}(\varphi^{2k+1}\eta_{\ac}^{-k}\chi))\\
  = &  D^{k+1}_{\dR}(K\otimes\Q_p, {V}_{L_{\lambda}}(\varphi^{2k+1}\eta_{\ac}^{-k}\chi))
  \end{split}
  \]
lies in $ S(\varphi^{2k+1}\eta_{\ac}^{-k}\chi)$, 
and
its image under $ \per_{\varphi^{2k+1}\eta_{\ac}^{-k}\chi}$
 is given by 
 \begin{equation}\label{eq, Kato-ERL}
 (N(\fa)-\varphi^{2k+1}\eta_{\ac}^{-k}\chi(\fa))(2\pi\sqrt{-1})^kL_{pf}(\overline{\varphi}^{2k+1}\bar{\eta}_{\ac}^{-k}\bar{\chi}, k+1) \cdot t^{\otimes 2k+1} \in V_{\C}(\varphi^{2k+1}\eta_{\ac}^{-k}\chi).
 \end{equation}
 
Denote by  $\mathbf{1}(\!_{\fa}z^{t^{\otimes 2k+1}}_{\cyc}\otimes \zeta^{\otimes k})$ and 
$ \chi(\Tw_{t}^k(z_{p^{\infty}f}^{t,\ac}))$ the images of the element in $
  \rH^1(\Q, \tilde{V}_{L_{\lambda}}(\varphi^{2k+1}\eta_{\ac}^{-k}\chi)(k+1)\otimes_{\CO}\Lambda^{\cyc}) $ given  by $\!_{\fa}z^{t^{\otimes 2k+1}}_{\cyc}\otimes \zeta^{\otimes k}$
  and $\Tw_{t}^k(z_{p^{\infty}f}^{t,\ac}) \in \rH^1(K, V_{L_{\lambda}}(\varphi^{k+1}\bar{\varphi}^{-k} \eta_{\ac}^{-k})(1)\otimes_{\CO}\Lambda_{\ac})$ in
  \[
 \rH^1(\Q, \tilde{V}_{L_{\lambda}}(\varphi^{2k+1}\eta_{\ac}^{-k}\chi)(k+1))=\rH^1(K, {V}_{L_{\lambda}}(\varphi^{k+1}\bar
{\varphi}^{-k}\eta_{\ac}^{-k}\chi))
\]
respectively, 
where the identification is due to Shapiro's lemma and (\ref{identification-betti}). 
By definition, we have
  \[
 (N(\fa)-\varphi^{2k+1}\eta_{\ac}^{-k}\chi(\fa))^{-1}(-\sqrt{d_K} )^{-k} \mathbf{1}(\!_{\fa}z^{t^{\otimes 2k+1}}_{\cyc}\otimes \zeta^{\otimes k})= \chi(\Tw_{t}^k(z_{p^{\infty}f}^{t,\ac})) \in \rH^1(K, {V}_{L_{\lambda}}(\varphi^{k+1}\bar
{\varphi}^{-k}\eta_{\ac}^{-k}\chi)).
\]
Hence,
the proposition follows from \eqref{eq, Kato-ERL} by noting that  
$$\mathrm{per}_{\varphi^{2k+1}}(\omega^{\otimes 2k+1})=\Omega^{2k+1}t^{\otimes 2k+1}, \quad 
L_{pf}(\overline{\varphi}^{2k+1}\bar{\eta}_{\ac}^{-k}\bar{\chi}, k+1)=L_{pf}({\varphi}^{2k+1}{\eta}_{\ac}^{-k}{\chi}, k+1).$$ 
\end{proof}
\subsubsection{Interpolation property of the $p$-adic $L$-function}
Define 
\[
\begin{split}
\exp^*_{\varphi_{\ac}^{k}\eta_{\ac}^{-k}\chi, \omega}: \rH^1(K_p, T^{\otimes -1}(1)\otimes \Lambda_{\ac}) \xrightarrow{(\ref{twist-and-exp})} & D_{\dR}^0(V_{L_{\lambda}(\mathrm{Im}(\chi))} (\varphi^{k+1}\bar{\varphi}^{-k}\eta_{\ac}^{-k}\chi^{})(1))\\
 \xrightarrow{\subseteq }& S_{L}(\varphi^{k+1}\bar{\varphi}^{-k})\otimes_L \overline{\Q}_p \cong  \overline{\Q}_p, 
\end{split}
\]
where the last isomorphism is defined  by $ \omega^{\otimes 2k+1} \otimes a\mapsto a$. 
(Note that $\exp^*_{\varphi_{\ac}^{k}\eta_{\ac}^{-k}\chi, \omega}$ depends on both $t$ and $\omega$.)
Adapting the notation of \cite[Def.\ 2.10]{BKNO},
we define 
\[
\hat{\varepsilon}_p(\Ind_{K/\Q}(\varphi^{k+1}\overline{\varphi}^{-k}\eta_{\ac}^{-k}\chi))=\hat{\varepsilon}_p(\Ind_{K/\Q}(V_{L_{\lambda}}(\varphi^{k+1}\overline{\varphi}^{-k}\eta_{\ac}^{-k}\chi))) \in \{\pm 1\}.
\]
\begin{thm}\label{interpolation-rubin}
Let $f, t, \omega, \Omega $ be as in Proposition \ref{higher-explicit-reciprocity-law},
and $\varepsilon$ as in Definition \ref{def, epsilon}.
Suppose that  $T \subseteq V$ is a lattice such that $T^{\otimes -1}$ is generated by $t$. 
Let $k\ge 0$ be an integer and $\chi$ a finite order character of $\Gamma$ such that  $$\hat{\varepsilon}_p(\Ind_{K/\Q}(\varphi^{k+1}\overline{\varphi}^{-k}\eta_{\ac}^{-k}\chi))=\frac{\varepsilon(\varphi)}{\varepsilon_p(\Ind_{K/\Q}(\varphi))}.$$  
Then,
$\exp^*_{\varphi_{\ac}^{k}\eta_{\ac}^{-k}\chi, \omega}(v_{\varepsilon})\neq 0$, 
and the evaluation of the $p$-adic $L$-function $\mathscr{L}_{p,v_{\varepsilon},t}(\varphi)$ at the character $\varphi^{k}\bar{\varphi}^{-k}\eta_{\ac}^{-k}\chi$ of $\Gamma$,
which is defined as its image under the homomorphism  $\Lambda_{\ac}\to \overline{\Q}_p$ induced by $\varphi^{k}\bar{\varphi}^{-k}\eta_{\ac}^{-k}\chi$ regarded as a character on $\Gamma$,
  is given by
\[
\mathscr{L}_{p,v_{\varepsilon},t}(\varphi)(\varphi^{k}\bar{\varphi}^{-k}\eta_{\ac}^{-k}\chi)= 
\left(-\frac{2\pi}{\sqrt{|d_K|}}\right)^{k}\frac{L_{pf}(\varphi^{2k+1}\eta_{\ac}^{-k}\chi, k+1 )}{\exp^*_{\varphi_{\ac}^{k}\eta_{\ac}^{-k}\chi, \omega}(v_{\varepsilon})\Omega^{2k+1}}. 
\]
Moreover, $\mathscr{L}_{p,v_{\varepsilon},t}(\varphi)$ is non-zero. 
\end{thm}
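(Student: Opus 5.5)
The plan is to combine the explicit reciprocity law (Proposition~\ref{higher-explicit-reciprocity-law}) with the defining relation $\mathscr{L}_{p,v_{\varepsilon},t}(\varphi)\cdot v_{\varepsilon}=\loc_p(z^{t,\ac}_{p^\infty\ff})$ of Definition~\ref{def, RpL}. The whole argument turns on the non-vanishing of the $p$-adic period $\exp^*_{\varphi_{\ac}^{k}\eta_{\ac}^{-k}\chi,\omega}(v_{\varepsilon})$.

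\textbf{Step 1 (specialising the definition).} Write $\xi=\varphi_{\ac}^k\eta_{\ac}^{-k}\chi$. The map $\exp^*_{\xi,\omega}$ is the composite \eqref{twist-and-exp} followed by the coordinate with respect to $\omega^{\otimes 2k+1}$. The maps $\Tw^k_t$ and base change along $\chi$ turn the $\Lambda_{\ac}$-action into scalar multiplication by the value at $\xi$. Concretely, $\Tw_t^k$ twists the $\Lambda_{\ac}$-coefficients by $\varphi_{\ac}^k\eta_{\ac}^{-k}$, so that evaluating at $\chi$ afterwards amounts to evaluating $\Lambda_{\ac}$ at $\xi$, up to the involution convention. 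So applying $\exp^*_{\xi,\omega}$ to the defining relation gives
\[
\mathscr{L}_{p,v_\varepsilon,t}(\varphi)(\xi)\cdot \exp^*_{\xi,\omega}(v_\varepsilon)=\exp^*_{\xi,\omega}(\loc_p z^{t,\ac}_{p^\infty f}).
\]
By Proposition~\ref{higher-explicit-reciprocity-law}, the right-hand side equals $(-2\pi/\sqrt{|d_K|})^k L_{pf}(\varphi^{2k+1}\eta_{\ac}^{-k}\chi,k+1)/\Omega^{2k+1}$. Hence the formula follows once $\exp^*_{\xi,\omega}(v_\varepsilon)\neq0$. Here I will check that the character at which $\mathscr{L}_p$ is evaluated matches the twist under \eqref{base-change-rep}, keeping track of the sign conventions of Remark~\ref{sign-convention}.

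\textbf{Step 2 (non-vanishing of the period; the main obstacle).} Specialising $\bT$ at $\xi$ gives the rank-two symplectic self-dual representation $\Ind_{K_\fp/\Q_p}$ of $\varphi_p\xi$. The character $\varphi_p\xi|_{G_{K_\fp}}$ is again conjugate symplectic self-dual and de Rham, as in the proof of Lemma~\ref{density-of-gxi}(3). I would therefore apply the local sign decomposition of \cite{BKNO} in its de Rham form (Theorem~\ref{lsd-ind} and \cite[\S1.3.3, Cor.~3.2]{BKNO}): the specialisations of $\rH^1_\pm(K_p,\bT)$ at $\xi$ are Lagrangians, and the one labelled by the sign opposite to $\hat\varepsilon_p$ equals $\rH^1_\rf$. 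The condition $\hat\varepsilon_p(\Ind(\varphi\xi))=\varepsilon$ should then place $\xi(v_{\varepsilon})$ in the summand complementary to $\rH^1_\rf$. Since the two summands are complementary Lagrangians and $\rH^1_\rf=\ker\exp^*$ on this rank-two (i.e.\ $[K_p:\Q_p]$-dimensional after induction) space, $\xi(\rH^1_\varepsilon)$ maps isomorphically onto $D^0_\dR$. As $v_\varepsilon$ is a $\Lambda$-basis, its image $\xi(v_\varepsilon)$ generates the specialisation, so $\exp^*(\xi(v_\varepsilon))\neq0$.

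The delicate points are twofold. First, one must match the sign conventions: $\hat\varepsilon_p$ versus $\varepsilon_p$, and the swap in Remark~\ref{sign-convention}. Second, one must check that the specialisation of $\rH^1_\varepsilon$ at an infinite-order de Rham character is the one in Theorem~\ref{lsd-ind}; that theorem is stated for finite-order $\chi$, so I would invoke the general de Rham statement of \cite[Thm.~1.3]{BKNO}.

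\textbf{Step 3 (non-vanishing of $\mathscr{L}_p$).} By Lemma~\ref{density-of-gxi}(3) with $k=0$, there are infinitely many finite-order $\chi$ with $\varepsilon(\varphi\chi)=\varepsilon(\varphi)$, equivalently $\varepsilon_p(\Ind(\varphi\chi))=\varepsilon$ by part (1). Among the corresponding $L$-values, Rohrlich's non-vanishing theorem \cite{Ro} (the finite-order twists with sign $+1$ in the appropriate family, as used in Proposition~\ref{half-IMC}(1)) supplies one with $L(\varphi\chi,1)\neq0$. The Euler factors at $pf$ are harmless, since $\varphi\chi$ is ramified at $p$ and the factors at $\ell\mid f$ are non-zero. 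The interpolation formula of Step 1 then gives $\mathscr{L}_p(\varphi)(\chi)\neq0$, so $\mathscr{L}_p(\varphi)\neq0$. Alternatively, this follows directly from Proposition~\ref{half-IMC}(1), since $\loc_p z\neq0$ and $v_\varepsilon$ is a basis of a free module.
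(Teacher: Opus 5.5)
Your proposal is correct and follows the paper's proof. You specialise $\mathscr{L}_{p,v_{\varepsilon},t}(\varphi)\cdot v_{\varepsilon}=\loc_p(z^{t,\ac}_{p^\infty f})$ via $\exp^*_{\varphi_{\ac}^{k}\eta_{\ac}^{-k}\chi,\omega}$ and apply Proposition~\ref{higher-explicit-reciprocity-law}; you obtain the non-vanishing of the $p$-adic period from the local sign decomposition, which is what the paper's citation of \cite[Cor.~3.2]{BKNO} amounts to and what your Step~2 unpacks; and you get $\mathscr{L}_{p,v_{\varepsilon},t}(\varphi)\neq 0$ from Rohrlich \cite{Ro} at $k=0$.

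One slip in Step~3 needs fixing: by Lemma~\ref{density-of-gxi}(1), the condition $\varepsilon_p(\Ind_{K/\Q}(\varphi\chi))=\varepsilon$ is equivalent to $\varepsilon(\varphi\chi)=+1$, not to $\varepsilon(\varphi\chi)=\varepsilon(\varphi)$. The sign $+1$ is exactly what Rohrlich's theorem requires.
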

\begin{proof} 
The non-vanishing of $\exp^*_{\varphi_{\ac}^{k}\eta_{\ac}^{-k}\chi, \omega}(v_{\varepsilon})$ follows from \cite[Cor.\ 3.2]{BKNO}. As for the interpolation property,
by definition
we have
\[
\mathscr{L}_{p,v_{\varepsilon},t}(\varphi)(\varphi^{k}\bar{\varphi}^{-k}\eta_{\ac}^{-k}\chi)\exp^*_{\varphi_{\ac}^{k}\eta_{\ac}^{-k}\chi, \omega}(v_{\varepsilon})=\exp^*_{\varphi_{\ac}^{k}\eta_{\ac}^{-k}\chi, \omega}(\Tw_{t}^k(z_{p^\infty f}^{t,\ac})),
\]
by which Proposition~\ref{higher-explicit-reciprocity-law} implies the desired interpolation property. 

In turn, the non-vanishing of $\mathscr{L}_{p,v_{\varepsilon},t}(\varphi)$ follows by taking $k=0$, and applying the generic  anticyclotomic non-vanishing of Hecke $L$-values in \cite{Ro}.
\end{proof}

\section{Signed main conjecture and the $p$-adic $L$-function}\label{s, rMC}
Based on the results of \S\ref{section-local-pt}, we introduce $\pm$-anticyclotomic Selmer groups and study their structures. 
The main result of this section establishes an Iwasawa main conjecture relating the characteristic ideal of one of the signed Selmer groups to the $p$-adic $L$-function introduced in \S\ref{general-interpolation} (see Theorems~\ref{rank} and~\ref{main-result}).

\subsection{$\pm$-Selmer conditions}\label{subsection-pm-condition}

\begin{defn}\label{pm-structure}
Applying Definition~\ref{def, pm} 
for $\psi=\varphi|_{K_p^{\times}}$ and identifying $\Gal(\Psi_{\infty}/\Psi)$ with $\Gal(K_{\infty}^{\ac}/K)$,
 we define $\rH^1_{\pm}(K_p, T^{\otimes -1}(1)\otimes_{\CO} \Lambda_{\ac})$ and
 $\rH^1_{\pm}(K_{n,p}^{\ac}, T^{\otimes -1}(1))$.
 Also define $\rH^1_{\pm}(K^{\ac}_{n,p}, W)$ as the orthogonal complement of $\rH^1_{\pm}(K^{\ac}_{n, p}, T^{\otimes -1}(1))$ with respect to the perfect pairing $$\rH^1_{}(K^{\ac}_{n,p}, W)\times \rH^1_{}(K^{\ac}_{n,p}, T^{\otimes -1}(1)) \to L_{\lambda}/\CO$$
induced by the local duality. 
\end{defn}

\begin{lem}\label{universal-norm}
We have $\varprojlim_n \rH^1_{\rf}(K_{n,p}^{\ac}, T^{\otimes -1}(1))=\{0\}.$
\end{lem}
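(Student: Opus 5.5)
We will prove the vanishing by combining Theorem~\ref{c-generates-fpm} with the trace relations of Proposition~\ref{norm-relation-of-c}. Put $\Psi=K_{\fp}$ and $\psi=\varphi|_{K_p^{\times}}$. By Theorem~\ref{c-generates-fpm}(2) we have, for each $n$,
\[
\rH^1_{\rf}(K_{n,p}^{\ac}, T^{\otimes -1}(1))=\rH^1_{\rf,+}(K_{n,p}^{\ac}, T^{\otimes -1}(1))\oplus \rH^1_{\rf,-}(K_{n,p}^{\ac}, T^{\otimes -1}(1)).
\]
Corestriction respects the $\pm$-conditions, since $\rH^1_{\pm}$ at level $n$ is the image of the $\Lambda_{\ac}$-module $\rH^1_{\pm}(K_p,T^{\otimes-1}(1)\otimes\Lambda_{\ac})$. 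It also preserves the Bloch--Kato subgroups. Hence the inverse limit splits as the direct sum of $\varprojlim_n \rH^1_{\rf,\pm}$, and it suffices to show that each signed limit vanishes.

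Fix a sign. By Theorem~\ref{c-generates-fpm}(1), $\rH^1_{\rf,\pm}(K_{n,p}^{\ac},T^{\otimes-1}(1))=\CO[\Gal(K_n^{\ac}/K)]\,c_n^{\pm}$. Moreover $c_n^{\pm}=\omega_n^{\pm}v_n^{\pm}$, and $v_n^{\pm}$ is a free generator of $\rH^1_{\pm}$ at level $n$ by Theorem~\ref{higher-weight-decomposition}(2). So this module is isomorphic to $\CO[\gamma]/(\gamma^{p^n}-1,\text{ann of }\omega_n^{\pm})$, namely a cyclic module generated by $c_n^{\pm}$.

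Take a compatible system $(x_n)_n$ with $x_n=\lambda_n c_n^{\pm}$. The trace relation $\mathrm{Cor}_{n+m/n}c_{n+m}^{\pm}=a_{n,m}c_n^{\pm}$ has constant $a_{n,m}=\prod_{j=n}^{n+m-1}\Phi_1^{\pm\epsilon(\Psi)^j}(1)$. By Lemma~\ref{value-at-one} each factor has valuation $v_p=1/2$, since $\Phi_1^{+}(1)\Phi_1^{-}(1)=p$ and the ratio of the two values is a unit. Therefore
\[
x_n=\mathrm{Cor}_{n+m/n}(x_{n+m})=\bar\lambda_{n+m}\,a_{n,m}\,c_n^{\pm}\in p^{\lfloor m/2\rfloor}\rH^1_{\rf,\pm}(K_{n,p}^{\ac}, T^{\otimes -1}(1))
\]
for every $m$, where $\bar\lambda_{n+m}$ is the image of $\lambda_{n+m}$ in the level-$n$ group ring.

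Each $\rH^1_{\rf,\pm}$ at finite level is a finitely generated $\CO$-module. We check that it is $p$-torsion-free. Under Assumption~\ref{for-integrality}(1), $\rH^0(K^{\ac}_{n,p},W)=0$ by (\ref{vanishing-p}), so $\rH^1(K_{n,p}^{\ac},T^{\otimes-1}(1))$ is torsion-free. The cleaner route is that $\rH^1_{\pm}$ is free over the group ring by Theorem~\ref{higher-weight-decomposition}, hence $\CO$-free, and so its submodule $\rH^1_{\rf,\pm}$ is $\CO$-free as well. A finite free $\CO$-module has $\bigcap_m p^m(\cdot)=0$, so $x_n=0$ for all $n$, and the signed limit vanishes.

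The step we expect to need the most care is the inclusion $x_n\in p^{\lfloor m/2\rfloor}\rH^1_{\rf,\pm}$. This requires that $\lambda_{n+m}$ project to an element of the level-$n$ group ring, which is clear, and that the corestriction of $\lambda c_{n+m}^{\pm}$ equal $\bar\lambda\,\mathrm{Cor}(c_{n+m}^{\pm})$, which holds by Galois equivariance. It also requires the valuation count for $\Phi_1^{\pm}(1)$ from Lemma~\ref{value-at-one}. Without Assumption~\ref{for-integrality}(1), the same argument works after passing to the $p$-torsion-free quotient, using Remark~\ref{signed-condition-p-invert}.
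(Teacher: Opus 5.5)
Your argument is correct where it applies, but it takes a genuinely different route from the paper's. The paper never descends to finite level. It views an element of $\varprojlim_n \rH^1_{\rf}(K_{n,p}^{\ac},T^{\otimes -1}(1))$ inside $\rH^1(K_p,T^{\otimes -1}(1)\otimes\Lambda_{\ac})=\rH^1_{+}\oplus\rH^1_{-}$ (Theorem~\ref{higher-weight-decomposition}). At every $\chi$ of a given sign, the Bloch--Kato subgroup coincides with one of the two Lagrangians (Theorem~\ref{lsd-ind}(3)). Hence the coordinate along the other Lagrangian vanishes at the infinitely many such $\chi$ supplied by Lemma~\ref{density-of-xi}. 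This kills the element after inverting $p$, and $\Lambda_{\ac}$-torsion-freeness finishes.

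You instead combine the finite-level generation result Theorem~\ref{c-generates-fpm} with the trace constants of Proposition~\ref{norm-relation-of-c}. Since $v_p(\Phi_1^{\pm}(1))=1/2$, your argument shows that $\mathrm{Cor}_{n+m/n}$ maps $\rH^1_{\rf,\pm}(K_{n+m,p}^{\ac},T^{\otimes-1}(1))$ onto $a_{n,m}\rH^1_{\rf,\pm}(K_{n,p}^{\ac},T^{\otimes-1}(1))$ with $v_p(a_{n,m})=m/2$. This is a precise decay rate for norms, which the paper's soft argument does not give. Your torsion-freeness paragraph is unnecessary, since $\bigcap_m p^mM=0$ for every finitely generated $\CO$-module $M$.

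The cost is scope. Everything you invoke from \S2.2--2.3 is proved only for ramified $\Psi$, and integrally only under Assumption~\ref{generic}. This includes the Gaussian polynomials, Lemma~\ref{value-at-one}, Proposition~\ref{norm-relation-of-c} and Theorem~\ref{c-generates-fpm}. The lemma, however, sits in \S5, where $p$ is only assumed non-split in $K$. The paper's proof is uniform: it needs only the infinitude of $\Xi^{\pm}$ and the Rubin-type decomposition, so it covers inert $p$ too. For inert $p$ you would have to rebuild the local points and their trace relations, which you do not do. There the sign on $\Xi_n$ is, for large $n$, governed by the parity of $n$ (cf.\ the proof of Lemma~\ref{density-of-xi}).

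Your closing sentence about dropping Assumption~\ref{for-integrality}(1) is likewise only a sketch. It needs two further steps:
\begin{itemize}
\item a bound, uniform in $n$, on the index of $\CO[\Gal(K_n^{\ac}/K)]\,c_n^{\pm}$ in $\rH^1_{\rf,\pm}$ modulo torsion. This can be extracted from the bounded $p$-power index of $\Lambda_{\ac}v^{+}\oplus\Lambda_{\ac}v^{-}$ in the Iwasawa cohomology;
\item the remark that a compatible system of torsion classes vanishes, because $\rH^1(K_p,T^{\otimes-1}(1)\otimes\Lambda_{\ac})$ is $\Lambda_{\ac}$-torsion-free.
\end{itemize}

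So your proof is complete for ramified $p$ under Assumption~\ref{generic}, which is automatic for $p\ge 5$. It is not complete in the generality in which the lemma is stated and used.
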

\begin{proof}
By Lemma \ref{density-of-xi},
both $\Xi^{+}$ and $\Xi^-$ contain infinitely many elements. 
Then, by Theorem \ref{higher-weight-decomposition} and (\ref{saturated}),
for infinitely many $\chi_{} \in \Xi^{\mp}$, we have 
\[
\begin{split}
(\rH^1_{\pm}(K_p, T^{\otimes -1}(1)\otimes \Lambda_{\ac})\otimes \Q_{p}) \otimes_{\Lambda_{\ac}, \chi} \CO_{\chi}\
=&\rH^1_{\pm}(\Q_p, \Ind_{K_p/\Q_p}(V_{\varphi\chi}^{\otimes -1}(1)))\\
\cong & \rH^1_{\pm}(\Q_p, \Ind_{K_p/\Q_p}(\varphi\chi^{}))= \rH^1_{\rf}(\Q_p, \Ind_{K_p/\Q_p}(\varphi\chi^{})),
\end{split}
\]
where 
the isomorphism holds since $\Ind_{K_p/\Q_p}(\varphi\chi^{})$ is symplectic self-dual.
Hence, Theorem \ref{higher-weight-decomposition} implies that  
$$\varprojlim_n \rH^1_{\rf}(K_{n,p}^{\ac}, T^{\otimes -1}(1))\otimes \Q_p=0.$$
Since $\varprojlim_n \rH^1_{\rf}(K_{n,p}^{\ac}, T^{\otimes -1}(1))$ is $\Lambda_{\ac}$-torsion-free, we deduce the lemma. 
\end{proof}

\begin{cor}\label{bc-to-ac-sel}\leavevmode 
\begin{enumerate}
\item 
The natural map $X\otimes_{\Lambda}\Lambda_{\ac}\otimes \Q_{p} \to X^{\ac}\otimes \Q_p$ is an isomorphism. 
\item Under Assumption \ref{for-integrality},
the natural $\Lambda_{\ac}$-homomorphism $X\otimes_{\Lambda}\Lambda_{\ac}\to X^{\ac}$ is an isomorphism.
\end{enumerate}
\end{cor}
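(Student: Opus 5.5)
This is a control statement for the Bloch--Kato Selmer group, with $X=X_{\rf}$ over the $\Z_p^2$-extension and $X^{\ac}$ its anticyclotomic analogue. I will mimic the proof of Proposition~\ref{restriction-str}, using Lemma~\ref{universal-norm} to replace the local condition at $\fp$. The rationale is that both Selmer conditions at $\fp$ become the full divisible cohomology after the limit: the relevant conditions are $\rH^1_{\rf}(K_{n,p}^{\ac},W)$ and $\rH^1_{\rf}(K_{\infty}\otimes K_{\fp}\text{-layers},W)$. By local Tate duality, $\varinjlim_n\rH^1_{\rf}(K^{\ac}_{n,p},W)$ is the orthogonal complement of $\varprojlim_n\rH^1_{\rf}(K^{\ac}_{n,p},T^{\otimes -1}(1))$, and the latter vanishes by Lemma~\ref{universal-norm}. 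Hence
\[
\varinjlim_n\rH^1_{\rf}(K^{\ac}_{n,p},W)=\rH^1(K^{\ac}_{\infty,\fp},W),
\]
at least up to finite (or, under Assumption~\ref{for-integrality}, zero) error coming from $\rH^0$-terms. The analogous statement over $K_\infty$ follows because $K_\infty$ contains $K_\infty^{\ac}$, or directly by the same universal-norm argument.

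With this in hand, $\Sel_{\rf}(K^{\ac}_\infty,W)=\Sel_{\rel}(K^{\ac}_\infty,W)$ and likewise over $K_\infty$, up to controlled error. I will then write the diagram with rows
\[
0\to\Sel(K_\infty^{\ac},W)\to\rH^1(K_S/K_\infty^{\ac},W)\to\bigoplus_{v\mid\ff^{(p)}}\rH^1(K^{\ac}_{\infty,v},W)
\]
and the corresponding $[I]$-row over $K_\infty$. Here the condition at primes away from $p$ is $\rH^1_{\rf}=0$ by (\ref{local-vanishing}), so the local terms are the full $\rH^1$. The middle vertical map has kernel and cokernel $\rH^i(\Gal(K_\infty/K_\infty^{\ac}),\rH^0(K_\infty,W))$ for $i=1,2$. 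These are killed by a power of $p$ in general, and vanish under Assumption~\ref{for-integrality} by (\ref{vanishing-p}). The right vertical map has kernel controlled by Lemma~\ref{inj-ram}. The snake lemma then gives $\Sel(K^{\ac}_\infty,W)\to\Sel(K_\infty,W)[I]$ with kernel and cokernel annihilated by a power of $p$, and an isomorphism under Assumption~\ref{for-integrality}. Dualizing yields $X/IX\to X^{\ac}$, with parts (1) and (2) respectively.

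The main obstacle is the first step: making the identification of the $\fp$-local conditions precise at the level of direct limits, rather than only the inverse limit statement of Lemma~\ref{universal-norm}. I would first try Tate duality at each finite layer, pairing $\rH^1_{\rf}(K^{\ac}_{n,p},W)$ with $\rH^1_{\rf}(K^{\ac}_{n,p},T^{\otimes-1}(1))$ and passing to limits. The difficulty is that $\rH^1_{\rf}$ for $W$ is defined as the image of the rational group, so it equals the divisible part of the exact annihilator. One therefore has to compare it with $\rH^1(K^{\ac}_{\infty,\fp},W)_{\div}$ and bound the quotient by $\rH^2(K_{\fp}\text{-layers},T)_{\tor}$, which is dual to $\rH^0(\cdot,W)$. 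This $\rH^0$ is finite and is zero under Assumption~\ref{for-integrality}(1), by Lemma~\ref{non-anomalous} and (\ref{vanishing-p}).
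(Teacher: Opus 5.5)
Your argument is essentially the paper's. You use the same two-row diagram over $K_\infty^{\ac}$ and over $K_\infty$ (taking the $[I]$-part), the snake lemma, \eqref{vanishing-p} for the middle map, Lemma~\ref{inj-ram} at $v\mid\ff^{(p)}$, and Lemma~\ref{universal-norm} to dispose of $\fp$. Two corrections are needed.

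\textbf{Drop the claim that $\Sel(K_\infty,W)=\Sel_{\rel}(K_\infty,W)$.} Neither of your justifications works.
\begin{itemize}
\item That $K_\infty\supseteq K_\infty^{\ac}$ only shows that classes restricted from $K^{\ac}_{\infty,\fp}$ land in the Bloch--Kato part. It does not show that all of $\rH^1(K_{\infty,\fp},W)$ does.
\item Lemma~\ref{universal-norm} does not transfer to the $\Z_p^2$-tower. It rests on the sign decomposition, which is available only for conjugate symplectic self-dual twists $\varphi\chi$. Twisting by non-anticyclotomic characters of $\Gal(K_\infty/K)$ destroys this self-duality.
\end{itemize}
The claim is also unnecessary. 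Keep the terms $\rH^1(K_{\infty,v},W)/\rH^1_{\rf}(K_{\infty,v},W)$ for all $v\mid p\ff$ in the bottom row, as the paper does. The snake lemma only needs control of the kernel of the right vertical map. Its $\fp$-component vanishes because the $\fp$-term of the top row is already zero.

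\textbf{Your ``main obstacle'' does not arise.} The subgroup $\rH^1_{\rf}(K^{\ac}_{n,p},T^{\otimes-1}(1))$ is the inverse image of a subspace. So the quotient of $\rH^1(K^{\ac}_{n,p},T^{\otimes-1}(1))$ by it is $\CO$-free. Consequently:
\begin{itemize}
\item its exact annihilator in $\rH^1(K^{\ac}_{n,p},W)$ is already divisible;
\item this annihilator contains the image of the rational Bloch--Kato subgroup and has the same $\CO$-corank;
\item hence the two coincide, and there is no need to pass to a divisible part or to bound an $\rH^2_{\tor}$ or $\rH^0$ defect.
\end{itemize}
Pontryagin duality exchanges restriction and corestriction. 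It therefore gives, exactly and without any hypothesis,
$\bigl(\varinjlim_n \rH^1(K^{\ac}_{n,p},W)/\rH^1_{\rf}(K^{\ac}_{n,p},W)\bigr)^{\vee}=\varprojlim_n \rH^1_{\rf}(K^{\ac}_{n,p},T^{\otimes-1}(1))=0$.
So the $\fp$-local condition over $K_\infty^{\ac}$ is the full $\rH^1(K^{\ac}_{\infty,\fp},W)$. The rest of your snake-lemma argument then yields parts (1) and (2) as stated.
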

\begin{proof}
Put $I=\ker(\Lambda\to \Lambda_{\ac})$.

We have the following commutative diagram of $\Lambda_\ac$-modules  with exact rows
\[
\xymatrix{
 & 0 \ar[d]& 0 \ar[d] &  \\
0 \ar[r]& \Sel_{}(K_{\infty}^{\ac}, W) \ar[r]\ar[d]& \rH^1(K_S/K_{\infty}^{\ac}, W) \ar[r]\ar[d]^{(a)} &\bigoplus_{v\mid p\ff}\frac{\rH^1(K_{\infty,v}^{\ac}, W)}{\rH^1_{\rf}(K_{\infty,v}^{\ac}, W)}\ar[d]^{(b)} \\
0 \ar[r]& \Sel_{}(K_{\infty}^{}, W)[I] \ar[r]& \rH^1(K_S/K_{\infty}^{}, W)[I] \ar[r] & \bigoplus_{v\mid p\ff}\frac{\rH^1(K_{\infty}\otimes_{K_{\infty}^{\ac}}K_{\infty,v}^{\ac}, W)}{\rH^1_{\rf}(K_{\infty}\otimes_{K_{\infty}^{\ac}}K_{\infty,v}^{\ac}, W)}[I]. 
}
\]
Note that the kernel and cokernel of (a) are killed by some power of $p$ and that, by (\ref{vanishing-ramified}) and Lemmas \ref{inj-ram}, \ref{finiteness-at-p}, and \ref{universal-norm},
 the kernel of the  map (b) is finite. 
 Hence,  the snake lemma implies part 1). 
 As for part 2), note in addition that 
 the map $(a)$ is an isomorphism by (\ref{vanishing-p}) and (b) is injective. 
\end{proof}

\subsection{$\pm$-Selmer groups}\label{subsection-pm-selmer}
\begin{defn}
 For $n\ge 1$ and $M\in \{T^{\otimes -1}(1), W\}$,
define
\[
\Sel_{\pm}(K_{n}^{\ac},M)=\ker\left(\rH^1(K_{n}^{\ac}, M) \to \frac{\rH^1(K_{n,p}^{\ac}, M)}{\rH^1_{\pm}(K_{n,p}^{\ac}, M)} \times \prod_{v\nmid p}\frac{\rH^1(K_{n,v}^{\ac}, M)}{\rH^1_{\rf}(K_{n,v}^{\ac}, M)}\right),
\]
and 
\[
\scrS_{\pm}=\varprojlim_{K\subseteq F \subseteq K_{\infty}^{\ac}}\Sel_{\pm}(F, T^{\otimes -1}(1)),\quad 
 X_{\pm}=
\left(\varinjlim_{K\subseteq F \subseteq K_{\infty}^{\ac}}
 \Sel_{\pm}(F, W)\right)^{\vee}.
\]
\end{defn}
We have the following result about the structure of these Selmer groups. 
\begin{thm}\label{rank}\leavevmode 
\begin{enumerate}
\item We have $\rank_{\Lambda_{\ac}}(\scrS_{\pm})=\rank_{\Lambda_{\ac}}(X_{\pm}^{}).$
\item Let $\varepsilon$ be as in Definition \ref{def, epsilon}. 
Then 
 $\rank_{\Lambda_\ac} (X_{\varepsilon})=1$, 
$X_{-\varepsilon}$ is $\Lambda_{\ac}$-torsion,
$\scrS_{-\varepsilon}=\{0\},$
 and $\scrS_{\rel}^{\ac}=\scrS_{\varepsilon}.$
\end{enumerate}
\end{thm}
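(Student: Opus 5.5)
Part (1) follows the argument of Proposition~\ref{rank-rel}, with the relaxed condition at $\fp$ replaced by the signed one. The prime-to-$p$ local conditions are unchanged; by (\ref{vanishing-ramified}) and (\ref{local-vanishing}) they impose nothing on the compact side and only $\Lambda_{\ac}$-torsion on the discrete side. At $\fp$ the conditions $\rH^1_{\pm}(K^{\ac}_{n,p},W)$ are by definition the exact orthogonal complements of $\rH^1_{\pm}(K^{\ac}_{n,p},T^{\otimes-1}(1))$. The latter are $\CO[\Gal(K_n^{\ac}/K)]$-direct summands of rank one, by Theorem~\ref{higher-weight-decomposition}(2), and they are Lagrangian.

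Poitou--Tate duality, in the form of the Greenberg--Wiles / Euler characteristic computation applied to the Selmer structures $\Sel_{\pm}$ and their duals at each finite layer, then shows that $\scrS_{\pm}$ and $X_\pm$ have the same $\Lambda_{\ac}$-rank. Concretely, one compares each with the relaxed objects via the exact sequence
\[
0\to \scrS_{\pm}\to \scrS^{\ac}_{\rel}\to \rH^1(K_p,T^{\otimes-1}(1)\otimes\Lambda_{\ac})/\rH^1_{\pm},
\]
together with its dual sequence for $X_{\rel}^{\ac}\to X_\pm$. Because the local condition is half of a rank-two free module, the contributions of the two sides match, and Proposition~\ref{rank-rel} gives $\rank \scrS^{\ac}_{\rel}=\rank X^{\ac}_{\rel}$.

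For part (2), first use Proposition~\ref{half-IMC}(3): $\scrS^{\ac}_{\rel}$ has rank one and $\scrS^{\ac}_{\str}=0$, so localisation at $p$ embeds $\scrS^{\ac}_{\rel}$ into the free rank-two module $\rH^1(K_p,T^{\otimes-1}(1)\otimes\Lambda_{\ac})$. By Proposition~\ref{prop, zeta-in-epsilon}, $\loc_p(z^{\ac}_{p^\infty\ff})$ lies in $\rH^1_\varepsilon$, and it is nonzero by Proposition~\ref{half-IMC}(1). The key step is to show that the whole image $\loc_p(\scrS^{\ac}_{\rel})$ lies in $\rH^1_\varepsilon$. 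The image has rank one and contains a non-torsion element of $\rH^1_\varepsilon$. For any $x\in\scrS^{\ac}_{\rel}$ there is therefore $0\ne\lambda\in\Lambda_{\ac}$ with $\lambda\,\loc_p(x)\in\Lambda_{\ac}\loc_p(z)\subseteq\rH^1_\varepsilon$. Since $\rH^1/\rH^1_\varepsilon\cong\rH^1_{-\varepsilon}$ is free, hence torsion-free, by Theorem~\ref{higher-weight-decomposition}(1), we get $\loc_p(x)\in\rH^1_\varepsilon$. Hence $\scrS^{\ac}_{\rel}=\scrS_\varepsilon$.

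Next, $\scrS_{-\varepsilon}$ consists of classes whose localisation lies in $\rH^1_\varepsilon\cap\rH^1_{-\varepsilon}=0$. So $\scrS_{-\varepsilon}=\scrS^{\ac}_{\str}=0$, and by part (1) $X_{-\varepsilon}$ is torsion. Similarly $\rank\scrS_\varepsilon=1$, so $\rank X_\varepsilon=1$.

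The main obstacle is part (1): passing from finite-level Poitou--Tate to the Iwasawa limit. One must check that the orthogonal-complement local conditions behave well in the limit, where the control given by Theorem~\ref{higher-weight-decomposition}(2) is essential, and that the error terms are torsion. I would handle this by working directly over $\Lambda_{\ac}$ with the Nekov\'a\v{r}-style Selmer complex, using the Lagrangian (self-orthogonal) nature of $\rH^1_\pm$ to obtain the duality. The torsion error terms come from $\rH^0$ terms, as in Proposition~\ref{rank-rel}, and from Lemmas~\ref{finiteness-at-ramified} and~\ref{non-anomalous}.
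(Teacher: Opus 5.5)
Your overall strategy is sound, but the argument for part (1) as written does not close. The two pieces you actually display are $0\to\scrS_\pm\to\scrS^{\ac}_{\rel}\to\rH^1/\rH^1_\pm$ and the dual end $\rH^1_\pm\to X^{\ac}_{\rel}\to X_\pm\to0$. They only give $\rank\scrS^{\ac}_{\rel}-1\le\rank\scrS_\pm\le\rank\scrS^{\ac}_{\rel}$ and the same bounds for $X_\pm$. Since both relaxed ranks equal $1$, this does not force equality. What makes ``the contributions match'' is exactness in the middle. The paper uses the five-term Poitou--Tate sequence
\[
0\to \scrS_{\pm}\to\scrS^{\ac}_{\rel}\to\rH^1(K_p,T^{\otimes-1}(1)\otimes_{\CO}\Lambda_{\ac})/\rH^1_{\pm}\to X_{\pm}\to X^{\ac}_{\str}\to0 .
\]
Part (1) is then an alternating rank count: $\rank\scrS^{\ac}_{\rel}=1$ and $X^{\ac}_{\str}$ is torsion (Proposition~\ref{half-IMC}), and the middle term is free of rank one (Theorem~\ref{higher-weight-decomposition}). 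Proposition~\ref{rank-rel} is not needed. Your comparison through $X^{\ac}_{\rel}$ also works, but only via the full sequence $0\to\scrS_\pm\to\rH^1_\pm\to X^{\ac}_{\rel}\to X_\pm\to0$ for the pair $\str\subset\pm$, using $\scrS^{\ac}_{\str}=0$.

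The passage to the Iwasawa limit is routine, not the main obstacle. These sequences are limits of the finite-layer Poitou--Tate sequences. The local conditions are compatible by Theorem~\ref{higher-weight-decomposition}(2) and by the definition of $\rH^1_\pm(K^{\ac}_{n,p},W)$ as exact annihilators. So no Selmer complexes are required. A Greenberg--Wiles count at finite layers, by contrast, only compares $\CO$-ranks layer by layer. It would need extra control input to say anything about $\Lambda_{\ac}$-ranks of the inverse limit.

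Part (2) is correct and differs from the paper only in the order of deductions. The paper first shows $\scrS_{-\varepsilon}=0$ by contradiction. Positive rank would make $\scrS_+\cap\scrS_-$ of rank one, while $(\scrS_+\cap\scrS_-)\otimes\Q_p=\scrS^{\ac}_{\str}\otimes\Q_p=0$. Only then does it get $\scrS^{\ac}_{\rel}=\scrS_\varepsilon$, because $\scrS^{\ac}_{\rel}/\scrS_\varepsilon$ is torsion and embeds in the torsion-free $\rH^1/\rH^1_\varepsilon$. You prove this saturation statement first, elementwise via $z$. You then obtain $\scrS_{-\varepsilon}\subseteq\ker(\loc_p)=\scrS^{\ac}_{\str}=0$ from $\rH^1_\varepsilon\cap\rH^1_{-\varepsilon}=0$, which is slightly more direct.

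Note that you invoke integral freeness of $\rH^1_{-\varepsilon}$, hence Assumption~\ref{generic}. Without it, argue as the paper does: $\rH^1/\rH^1_\varepsilon$ is $p$-torsion-free (immediate from Definition~\ref{def, pm}) and free after inverting $p$ (Remark~\ref{signed-condition-p-invert}).
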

\begin{proof}  
In view of the Poitou--Tate global duality, we have the following short exact sequence of $\Lambda_\ac$-modules: 
\begin{equation}\label{duality-pm-rel-str}
0\to \scrS_{\pm} \to \scrS_{\rel}^{\ac} \to \frac{\rH^1(K_{p}, T^{\otimes -1}(1)\otimes_{\CO} \Lambda_{\ac})}{\rH^1_{\pm}(K_{p}, T^{\otimes -1}(1)\otimes_{\CO} \Lambda_{\ac})} \to X_{\pm} \to X_{\str}^{\ac} \to 0.
\end{equation}
 Then Proposition \ref{half-IMC}, Theorem \ref{higher-weight-decomposition} and Remark \ref{signed-condition-p-invert} imply part 1).

By Proposition \ref{prop, zeta-in-epsilon}, 
we have 
\begin{equation}\label{zeta-in-epsilon}
\loc_{p}(z_{p^{\infty}\ff }^{t,\ac}) \in \rH^1_{\varepsilon}(K_p, T^{\otimes -1}(1)\otimes \Lambda_{\ac}),
\end{equation}
which implies that $z_{p^{\infty}\ff }^{t,\ac}$ lies in $\scrS_{\varepsilon}.$
Then, by  Proposition \ref{half-IMC}, we have $\rank_{\Lambda_{\ac}}(\scrS_{\varepsilon})=\rank_{\Lambda_{\ac}}(\scrS_{\rel}^{\ac})=1.$
Hence, part 1) implies that $\rank_{\Lambda_\ac} (X_{\varepsilon})=1.$
 
 We next show that $\scrS_{-\varepsilon}=0$, which also implies that $\rank_{\Lambda_{\ac}}(X_{-\varepsilon})=0.$
Since the $\Lambda_{\ac}$-module $\scrS_{\rel}^{\ac}$ is torsion-free,
it suffices to show that $\rank_{\Lambda_{\ac}}(\scrS_{-\varepsilon})=0.$
Assume that $\rank_{\Lambda_{\ac}}(\scrS_{-\varepsilon})>0$.
Then,
since $\rank_{\Lambda_{\ac}}(\scrS_{\rel}^{\ac})=1$,
we have $\rank_{\Lambda_{\ac}}(\scrS_{-\varepsilon})=\rank_{\Lambda_{\ac}}(\scrS_{\varepsilon})=1$
and $\rank_{\Lambda_{\ac}}(\scrS_{+}\cap \scrS_{-})=1$.
By Theorem \ref{higher-weight-decomposition} and Remark \ref{signed-condition-p-invert}, 
we have $(\scrS_{+}\cap \scrS_{-})\otimes_{\Z_p} \Q_p=\scrS_{\str}^{\ac}\otimes_{\Z_p}\Q_p.$
However, this implies that $\rank_{\Lambda_{\ac}}(\scrS_{\str}^{\ac})\ge 1,$
which contradicts Proposition \ref{half-IMC}.

It remains to show that $\scrS_{\rel}^{\ac}=\scrS_{\varepsilon},$
which follows from the exact sequence
\[
0\to \scrS_{\varepsilon}\to \scrS_{\rel}^{\ac} \to \frac{\rH^1(K_p, T^{\otimes -1}(1)\otimes_{\CO} \Lambda_{\ac} )}{\rH^1_{\varepsilon}(K_p, T^{\otimes -1}(1) \otimes_{\CO} \Lambda_{\ac})},
\]
and the facts: $\rH^1(K_p, T^{\otimes -1}(1) \otimes_{\CO} \Lambda_{\ac})\otimes \Q_p/\rH^1_{\varepsilon}(K_p, T^{\otimes -1}(1)\otimes_{\CO} \Lambda_{\ac})\otimes \Q_{p} = \rH^1_{-\varepsilon}(K_p, T^{\otimes -1}(1)\otimes_{\CO}\Lambda_{\ac})\otimes \Q_{p}$ is $\Lambda_{\ac}\otimes \Q_{p}$-free (cf.\ Theorem \ref{higher-weight-decomposition} and Remark \ref{signed-condition-p-invert}),
$\rH^1(K_p, T^{\otimes -1}(1) \otimes_{\CO} \Lambda_{\ac})/\rH^1_{\varepsilon}(K_p, T^{\otimes -1}(1)\otimes_{\CO} \Lambda_{\ac})$ is $p$-torsion-free,
and $\rank_{\Lambda_{\ac}}(\scrS_{\rel}^{\ac})=\rank_{\Lambda_{\ac}}(\scrS_{\varepsilon})=1$. (The latter implies that $\scrS_{\rel}^{\ac}/\scrS_{\varepsilon}$ is $\Lambda_{\ac}$-torsion.)
\end{proof}

\subsection{Signed anticyclotomic main conjecture}\label{subsection-IMC}
The main result of this section is the following.
\begin{thm}\label{main-result}\leavevmode 
\begin{enumerate}
\item We have 
$$\Ch_{\Lambda_{\ac}}(X_{-\varepsilon})\otimes\Q_p=(\mathscr{L}_{p,v_{\varepsilon},t}(\varphi)) \subset 
\Lambda_{\ac}\otimes \Q_{p}.$$
\item Suppose that Assumption \ref{for-integrality} holds
and $p$ does not divide the number of roots of unity in the Hilbert class field of $K$. Then 
$$\Ch_{\Lambda_{\ac}}(X_{-\varepsilon})=(\mathscr{L}_{p,v_{\varepsilon},t}(\varphi)) \subset 
\Lambda_{\ac}.$$

\end{enumerate}
\end{thm}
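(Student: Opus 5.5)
The plan is to use the Poitou--Tate sequence \eqref{duality-pm-rel-str} for the sign $-\varepsilon$, together with the anticyclotomic elliptic unit main conjecture (Theorem~\ref{anticyc-zetaIMC}). By Theorem~\ref{rank}(2) we have $\scrS_{-\varepsilon}=0$ and $\scrS_{\rel}^{\ac}=\scrS_{\varepsilon}$. The sequence \eqref{duality-pm-rel-str} with $\pm=-\varepsilon$ therefore becomes
\[
0\to \scrS_{\rel}^{\ac}\xrightarrow{\ \loc_p\ } \frac{\rH^1(K_p, T^{\otimes -1}(1)\otimes_{\CO}\Lambda_{\ac})}{\rH^1_{-\varepsilon}(K_p, T^{\otimes -1}(1)\otimes_{\CO}\Lambda_{\ac})} \to X_{-\varepsilon}\to X_{\str}^{\ac}\to 0 .
\]
By Theorem~\ref{higher-weight-decomposition} the middle term is identified with $\rH^1_{\varepsilon}(K_p,\ldots)=\Lambda_{\ac}v_\varepsilon$. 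Under Assumption~\ref{for-integrality}(1) this holds integrally; in general it holds only after inverting $p$, using Remark~\ref{signed-condition-p-invert}. The composite $\scrS_{\varepsilon}\to \rH^1_{\varepsilon}\hookrightarrow \rH^1 \to \rH^1/\rH^1_{-\varepsilon}$ is just $\loc_p$ followed by the projection, so we may view $\loc_p$ as a map $\scrS^{\ac}_{\rel}\to\Lambda_{\ac}v_\varepsilon$.

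Next I quotient by the elliptic unit. Proposition~\ref{prop, zeta-in-epsilon} and Definition~\ref{def, RpL} give $\loc_p(z^{\ac}_{p^\infty\ff})=\mathscr{L}_{p,v_\varepsilon,t}(\varphi)v_\varepsilon$. Dividing the first two terms of the sequence by $\Lambda_{\ac}z^{\ac}_{p^\infty\ff}$ and $\Lambda_{\ac}\mathscr{L}_p v_\varepsilon$ respectively yields an exact sequence
\[
0\to \scrS^{\ac}_{\rel}/\Lambda_{\ac}z^{\ac}_{p^\infty\ff}\to \Lambda_{\ac}/(\mathscr{L}_{p,v_\varepsilon,t}(\varphi))\to X_{-\varepsilon}\to X^{\ac}_{\str}\to 0 .
\]
Injectivity on the left follows from the injectivity of $\loc_p$ on $\scrS_{\rel}^{\ac}$. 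All four terms are torsion:
\begin{itemize}
\item[(i)] $\mathscr{L}_p\neq 0$ by Theorem~\ref{interpolation-rubin};
\item[(ii)] $X^{\ac}_{\str}$ is torsion by Proposition~\ref{half-IMC}(2);
\item[(iii)] $X_{-\varepsilon}$ is torsion by Theorem~\ref{rank}.
\end{itemize}
Multiplicativity of characteristic ideals then gives
\[
\Ch(\scrS^{\ac}_{\rel}/\Lambda_{\ac}z)\cdot\Ch(X_{-\varepsilon})=(\mathscr{L}_p)\cdot\Ch(X^{\ac}_{\str}).
\]
Theorem~\ref{anticyc-zetaIMC}(3) says $\Ch(\scrS^{\ac}_{\rel}/\Lambda_{\ac}z)=\Ch(X^{\ac}_{\str})$, and this ideal is nonzero by Proposition~\ref{half-IMC}(2). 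Cancelling it in the domain $\Lambda_{\ac}$ gives part (2). For part (1) I run the same argument after $\otimes\Q_p$, using Theorem~\ref{anticyc-zetaIMC}(2) instead; the tensored version of the sequence is exact because all the identifications hold after inverting $p$.

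The main obstacle is integrality in part (2): the identification of $\rH^1/\rH^1_{-\varepsilon}$ with the free module $\Lambda_{\ac}v_\varepsilon$ must hold on the nose, with no finite error term. This requires Assumption~\ref{for-integrality}(1), through the integral decomposition of Theorem~\ref{higher-weight-decomposition}. One also has to check that \eqref{duality-pm-rel-str} holds integrally, with the last map surjective and the dual local condition $\rH^1_{\pm}(K^{\ac}_{n,p},W)$ compatible in the limit. I would verify this by passing to the limit over $n$ of the finite-level Poitou--Tate sequences. That uses the orthogonality in Definition~\ref{pm-structure}, the compatibility $\rH^1_\pm\otimes\CO[\Gal(\Psi_n/\Psi)]=\rH^1_\pm(\Psi_n)$ from Theorem~\ref{higher-weight-decomposition}(2), and the vanishing \eqref{vanishing-p} of $\rH^0$ terms, so that no pseudo-null or $p$-power error appears.
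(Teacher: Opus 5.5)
Your proposal is correct and follows essentially the paper's proof. Both arguments take the sequence \eqref{duality-pm-rel-str} for the sign $-\varepsilon$, use $\scrS_{-\varepsilon}=0$ and $\rH^1/\rH^1_{-\varepsilon}\cong\rH^1_{\varepsilon}=\Lambda_{\ac}v_\varepsilon$, identify $\Ch(\rH^1_\varepsilon/\Lambda_{\ac}\loc_p(z))=(\mathscr{L}_{p,v_\varepsilon,t}(\varphi))$, and cancel via Theorem~\ref{anticyc-zetaIMC}, respectively its $\otimes\Q_p$ version for part (1). The only difference is bookkeeping: you quotient by the elliptic unit and apply multiplicativity once to a four-term sequence, whereas the paper factors $\Ch(X_{-\varepsilon})$ first and then uses an auxiliary three-term sequence.
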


\begin{proof}
We first consider part 2).

Since $\rH^1_{}(K_p,T^{\otimes-1}(1)\otimes_{\CO} \Lambda_{\ac})/\rH^1_{-\varepsilon}(K_p,T^{\otimes-1}(1)\otimes_{\CO} \Lambda_{\ac})=\rH^1_{\varepsilon}(K_p,T^{\otimes-1}(1)\otimes_{\CO} \Lambda_{\ac})$,
by Theorem \ref{rank} and (\ref{duality-pm-rel-str})
we have an exact sequence of $\Lambda_{\ac}$-modules: 
\begin{equation}\label{rel-epsilon}
0\to \scrS_{\rel}^{\ac} \to \rH^1_{\varepsilon}(K_p,T^{\otimes-1}(1)\otimes_{\CO} \Lambda_{\ac}) \to  X_{-\varepsilon} \to X_{\str}^{\ac}\to 0.
\end{equation}
Note that by Proposition \ref{half-IMC} (1) and (\ref{zeta-in-epsilon}),
 the quotient $\rH^1_{\varepsilon}(K_p,T^{\otimes-1}(1)\otimes_{\CO} \Lambda_{\ac})/\scrS_{\rel}^{\ac}$ --- via the localisation map at $p$ --- 
is $\Lambda_{\ac}$-torsion. Moreover, (\ref{rel-epsilon}) implies that 
\begin{equation}\label{char--epsilon-str}
\begin{split}\Ch_{\Lambda_{\ac}}(X_{-\varepsilon})&=\Ch_{\Lambda_{\ac}}(X^{\ac}_{\str})\Ch_{\Lambda_{\ac}}\left(\rH^1_{\varepsilon}(K_p,T^{\otimes-1}(1)\otimes_{\CO} \Lambda_{\ac})/\scrS_{\rel}^{\ac}\right)\\
&=\Ch_{\Lambda_{\ac}}\left(\scrS_{\varepsilon}^{}/ \Lambda_{\ac} z_{p^{\infty}\ff }^{t,\ac}\right)\Ch_{\Lambda_{\ac}}\left(\rH^1_{\varepsilon}(K_p,T^{\otimes-1}(1)\otimes \Lambda_{\ac})/\scrS_{\varepsilon}^{}\right),
\end{split}
\end{equation}
where the second equality follows from Theorem \ref{anticyc-zetaIMC} and Theorem \ref{rank}.

Hence, part (2) is a consequence of (\ref{char--epsilon-str}), the exact sequence\footnote{Note that  the three terms are all $\Lambda_{\ac}$-torsion.}
\[
0\to \scrS_{\varepsilon}^{}/ \Lambda_{\ac} z_{p^{\infty}\ff}^{t,\ac} \to \rH^1_{\varepsilon}(K_p,T^{\otimes-1}(1)\otimes_{\CO} \Lambda_{\ac}) / \Lambda_{\ac} \loc_p(z_{p^{\infty}\ff}^{t,\ac}) \to \rH^1_{\varepsilon}(K_p,T^{\otimes-1}(1)\otimes_{\CO} \Lambda_{\ac})/\scrS_{\varepsilon}^{} \to 0
\]
and the fact 
\[
\Ch_{\Lambda_{\ac}}\left(\rH^1_{\varepsilon}(K_p,T^{\otimes-1}(1)\otimes \Lambda_{\ac})/ \Lambda_{\ac} \loc_p(z_{p^{\infty}\ff}^{t,\ac})\right)=(\mathscr{L}_{p,v_{\varepsilon},t}(\varphi)), 
\] the latter being immediate from Definition~\ref{def, RpL}.

Part 1) follows from the same argument as above by inverting $p$ and using 
 $$\Ch_{\Lambda}(\scrS_{\rel}/\fz_{\fa})\otimes \Q_p=\Ch_{\Lambda}(X_{\str})\otimes \Q_p$$
  (cf.~\cite[Thm.\ 5.2]{JL-K}). 
\end{proof}

\section{Asymptotic  Selmer and Mordell--Weil ranks}\label{s, SMW}
This section presents a control theorem for the signed Selmer groups and its consequences for asymptotic Selmer and Mordell--Weil ranks (see Theorems~\ref{controlling-thm} and ~\ref{vGZK}).
\subsection{A control theorem}
\begin{thm}\label{controlling-thm}
Suppose that Assumption \ref{for-integrality} (1) holds.
Then for $n\ge 1$,
the natural map 
\begin{equation}\label{control-restriction-pm}
\Sel_{\pm}(K_n^{\ac},W) \to \Sel_{\pm}(K_{\infty}^{\ac},W)[\omega_n^{}]
\end{equation}
is injective, and its cokernel is finite and of bounded order as $n$ varies. 
Here $\omega_n$ is a generator of the kernel of $\Lambda_{\ac}\to \CO[\Gal(K_n^{\ac}/K)]$.
\end{thm}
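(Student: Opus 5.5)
We follow the standard Mazur-style control argument and compare the defining sequences at level $n$ and at level $\infty$. Let $K_S$ be the maximal extension of $K$ unramified outside $p\ff$. Consider the commutative diagram with exact rows
\[
\xymatrix{
0\ar[r] & \Sel_{\pm}(K_n^{\ac},W)\ar[r]\ar[d]^{s_n} & \rH^1(K_S/K_n^{\ac},W)\ar[r]\ar[d]^{h_n} & \bigoplus_{v\mid p\ff}\mathcal{H}_{n,v}\ar[d]^{g_n}\\
0\ar[r] & \Sel_{\pm}(K_\infty^{\ac},W)[\omega_n]\ar[r] & \rH^1(K_S/K_\infty^{\ac},W)[\omega_n]\ar[r] & \bigoplus_{v\mid p\ff}\mathcal{H}_{\infty,v}[\omega_n].
}
\]
Here $\mathcal{H}_{n,\fp}=\rH^1(K_{n,p}^{\ac},W)/\rH^1_{\pm}(K_{n,p}^{\ac},W)$. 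For $v\nmid p$ the local term is the full $\rH^1$, by (\ref{vanishing-ramified}) and (\ref{local-vanishing}), since $\rH^1_{\rf}(-,W)=0$ there. By the snake lemma, $\ker s_n\subseteq\ker h_n$ and $\#\mathrm{coker}\, s_n\le \#\mathrm{coker}\,h_n\cdot\#\ker g_n$. So it suffices to show three things: $h_n$ is injective with bounded cokernel, the $v\nmid p$ components of $\ker g_n$ are bounded, and the $\fp$-component of $\ker g_n$ is zero.

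For $h_n$, inflation--restriction identifies $\ker h_n$ with $\rH^1(\Gal(K_\infty^{\ac}/K_n^{\ac}),W^{G_{K_\infty^{\ac}}})$ and $\mathrm{coker}\,h_n$ with a subgroup of $\rH^2$ of the same module. By (\ref{vanishing-p}) under Assumption~\ref{for-integrality}(1), already $\rH^0(K_{\infty,\fp}^{\ac},W)=0$, hence $W^{G_{K_\infty^{\ac}}}=0$. Both groups therefore vanish. For $v\mid\ff$ with $v\nmid p$, $v$ is finitely decomposed in $K_\infty^{\ac}$. The kernel of $\rH^1(K_{n,v}^{\ac},W)\to\rH^1(K_{\infty,v}^{\ac},W)$ is $\rH^1$ of a pro-cyclic group with coefficients in the finite group $\rH^0(K_{\infty,v}^{\ac},W)$ (Lemma~\ref{finiteness-at-ramified}), so it is bounded by that order. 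The number of places above $v$ stabilises, which gives a uniform bound.

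The main point is the local condition at $\fp$. By definition and local duality, $\mathcal{H}_{n,\fp}$ is the Pontryagin dual of $\rH^1_{\pm}(K_{n,p}^{\ac},T^{\otimes-1}(1))$. Likewise, $\mathcal{H}_{\infty,\fp}$ is the dual of $\rH^1_{\pm}(K_p,T^{\otimes-1}(1)\otimes\Lambda_{\ac})$, using the isomorphism $\varprojlim_n\rH^1(K_{n,p}^{\ac},T^{\otimes-1}(1))\cong\rH^1(K_p,T^{\otimes-1}(1)\otimes\Lambda_{\ac})$. Hence $g_n$ at $\fp$ is dual to the projection
\[
\rH^1_{\pm}(K_p,T^{\otimes-1}(1)\otimes\Lambda_{\ac})\otimes_{\Lambda_{\ac}}\CO[\Gal(K_n^{\ac}/K)]\to\rH^1_{\pm}(K_{n,p}^{\ac},T^{\otimes-1}(1)).
\]
By Theorem~\ref{higher-weight-decomposition}(2) this map is an isomorphism; this is exactly where Assumption~\ref{for-integrality}(1) enters, through Assumption~\ref{generic}. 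Dualising, and using that $(M/\omega_n)^\vee=M^\vee[\omega_n]$, the $\fp$-component of $g_n$ is an isomorphism. In particular it is injective.

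Combining these gives the injectivity of (\ref{control-restriction-pm}) and a bound on its cokernel independent of $n$. The step I expect to be the main obstacle is the compatibility at $\fp$. One must check that the orthogonal complements $\rH^1_\pm(K_{n,p}^{\ac},W)$ form a direct system whose limit is the annihilator of the $\Lambda$-adic $\rH^1_\pm$. This follows because the corestriction maps on the $T$-side are compatible with the integral decompositions of Theorem~\ref{higher-weight-decomposition}(1),(2), and those decompositions are saturated with free quotients.
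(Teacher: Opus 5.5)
Your argument is essentially the paper's proof. It uses the same diagram; the middle map is bijective by \eqref{vanishing-p}; the $\fp$-component of the right-hand map is dual to the isomorphism $\rH^1_{\pm}(K_p,T^{\otimes -1}(1)\otimes\Lambda_{\ac})/\omega_n\cong\rH^1_{\pm}(K_{n,p}^{\ac},T^{\otimes -1}(1))$ from Theorem~\ref{higher-weight-decomposition}; and the components at $v\mid\ff^{(p)}$ are bounded via Lemma~\ref{finiteness-at-ramified}.

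There is one slip. A prime $v\mid\ff^{(p)}$ that is inert or ramified in $K/\Q$ is \emph{not} finitely decomposed in $K_\infty^{\ac}$. Since $v=\bar v$ and complex conjugation acts by inversion on $\Gal(K_\infty^{\ac}/K)\cong\Z_p$, its Frobenius equals its own inverse and is therefore trivial. So $v$ splits completely, and the number of places above it grows like $p^n$ rather than stabilising. The bound still holds, because each local extension is then trivial and the restriction map is injective. This is exactly the case $D_v=0$, which the paper treats separately.
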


\begin{proof}
 In view of (\ref{local-vanishing}),
we have the following commutative diagram  with exact rows: 
{\footnotesize
\[
\xymatrix{
0 \ar[r]& \Sel_{\pm}(K_{n}^{\ac}, W) \ar[r]\ar[d]& \rH^1(K_S/K_{n}^{\ac}, W) \ar[r]\ar[d]^{(a)} &\dfrac{\rH^1(K_{n,p}^{\ac}, W)}{\rH^1_{\pm}(K_{n,p}^{\ac}, W)}\oplus \bigoplus_{v\mid \ff^{(p)}}\rH^1(K_{n}^{\ac}\otimes_K K_v, W)\ar[d]^{(b)} \\
0 \ar[r]& \Sel_{\pm}(K_{\infty}^{\ac}, W)[\omega_{n}^{}] \ar[r]& \rH^1(K_S/K_{\infty}^{\ac}, W)[\omega_n^{}] \ar[r] & \dfrac{\rH^1(K_{\infty,p}^{\ac}, W)}{\rH^1_{\pm}(K_{\infty,p}^{\ac}, W)}[\omega_n^{}]\oplus \bigoplus_{v\mid \ff^{(p)}}\rH^1(K_{\infty}^{\ac}\otimes_K K_v, W)[\omega_n^{}]. 
}
\]}
Here $K_S$ denotes the maximal algebraic extension of $K$ unramified outside $p\ff$,
 $\ff^{(p)}$ is the prime-to-$p$ part of $\ff$,
the vertical arrows are induced by the restriction map with respect to the extension $K_{\infty}^{\ac}/K_n^{\ac}$
  and $v$ ranges over all primes of $K$ dividing $\ff^{(p)}.$
In view of (\ref{vanishing-p}),
the above map (a) is bijective,
and hence the snake lemma reduces the assertion to bounding the kernel of (b).

Note that the map
 $$\frac{\rH^1(K_{n,p}^{\ac}, W)}{\rH^1_{\pm}(K_{n,p}^{\ac}, W)} \to 
\frac{\rH^1(K_{\infty,p}^{\ac}, W)}{\rH^1_{\pm}(K_{\infty,p}^{\ac}, W)}[\omega_n^{}]
$$ may be identified with the dual of 
$\rH^1_{\pm}(K_p, T^{\otimes-1}(1)\otimes \Lambda_{\ac})/\omega_n \to 
\rH^1_{\pm}(K_{n,p}^{\ac}, T^{\otimes-1}(1))$,
which is an isomorphism (cf.\ Theorem \ref{higher-weight-decomposition}).

It  remains to bound the kernel of $\rH^1(K_{n}^{\ac}\otimes_K K_v, W)\to \rH^1(K_{\infty}^{\ac}\otimes_K K_v, W)$ for each $v\mid \ff^{(p)}.$
Let $D_{v}$ be  the decomposition group  of $\Gal(K_{\infty}^{\ac}/K)$ at $v$.
Since $D_v$ is a closed subgroup of $\Gal(K_{\infty}^{\ac}/K)\cong \Z_p$,
either $D_v=0$ or $[\Gal(K_{\infty}^{\ac}/K): D_{v}]=p^{n(v)}$  for some $n(v) \ge 0$. 
 
 If $D_v=0$, i.e.\ $K_{\infty}^{\ac}\otimes_K K_v$ is an infinite product of $K_v$,
then  the injectivity of the $v$-component of (b) is clear.
Hence, we may assume that $[\Gal(K_{\infty}^{\ac}/K): D_{v}]=p^{n(v)}$.
Then $v$ splits completely in $K_{n(v)}^{\ac}/K$, 
and for $n\ge n(v)$ a  prime of $K^{\ac}_{n(v)}$ above $v$ is inert in $K^{\ac}_n/K^{\ac}_{n(v)}$.
Since
for $n\ge n(v)$, we have 
\[
K_n^{\ac}\otimes_KK_v=K_n^{\ac}\otimes_{K^{\ac}_{n(v)}}K^{\ac}_{n(v)}\otimes_KK_v 
=K_n^{\ac}\otimes_{K^{\ac}_{n(v)} }\bigoplus_{w\mid v}K^{\ac}_{n(v),w}
=\bigoplus_{w\mid v} K_n^{\ac}\otimes_{K^{\ac}_{n(v)} }K^{\ac}_{n(v),w},
\]
the $v$-component of (b) may be identified with
\[
\bigoplus_{w\mid v}\rH^1(K_{n,w}^{\ac}, W)\to \bigoplus_{w\mid v}\rH^1(K_{\infty,w}^{\ac}, W),
\]
where $w $ ranges over all primes of  $K^{\ac}_{n(v)}$ above $v$,
$K^{\ac}_{m,w}:=K^{\ac}_m\otimes_{K^{\ac}_{n(v)}}K^{\ac}_{n(v),w}$ for $m\ge n(v)$ and $m=\infty$,
which is a field.
Hence,
 it suffices to show that for $n\ge n(v)$ and a prime $w$ of $K_{n(v)}^{\ac}$ above $v$,
the kernel of 
$\rH^1(K_{n,w}^{\ac}, W)\to \rH^1(K_{\infty,w}^{\ac}, W)$ is finite, and 
 its order  is bounded independently of $n$. 
Since this  kernel may be identified with a quotient of
$\rH^0(K_{\infty,w}^{\ac}, W)$,
Lemma \ref{finiteness-at-ramified} concludes the proof. 
\end{proof}
\begin{remark} If the hypothesis of Theorem~\ref{controlling-thm} is not satisfied, i.e.\ 
$K\otimes \Q_p\cong \Q_3(\sqrt{-3})$ and 
$\rH^0(K_{\fp}, T^{\otimes -1}(1)\otimes \CO/\fm)\neq 0$, then 
the kernel and cokernel of (\ref{control-restriction-pm}) are finite.
\end{remark}
\subsection{Anticyclotomic variation of Selmer ranks}
In this subsection we assume that $p$ is \textit{ramified} in $K$. 
Moreover, without loss of generality\footnote{This is a consequence of \cite[Lem.\ 7.5]{BKNO}: we may twist it by a finite character on $\Gamma$ to obtain  a Hecke character satisfying the underlying hypothesis (cf.~the proof of Lemma \ref{density-of-xi}). }, we suppose that
\begin{equation}\label{minimal-conductor}
\fp \parallel \ff,
\end{equation}
i.e.\ $\fp$ exactly divides $\ff$.
Until the end of this subsection, 
replacing  $L\subseteq \C$ by its quadratic extension (if necessary),
we assume that $L$ contains the quadratic extension of $\Q$ inside $\Q(e^{2\pi i/p})$.
\subsubsection{A variant of the control theorem}
Let $\delta \in K_p=K\otimes \Q_p$ be a uniformiser such that $\delta^2\in \Q_p$ 
so that $-\delta^2=N_{K_p/\Q_p}(\delta).$
By using \cite[Cor.\ 7.8 ii)]{BKNO},
 we fix a system of finite order characters $(\chi_n)_{n\ge 1}$ of\footnote{Recall that $p\nmid h_K$.} $\Gal(K_{\infty}^{\ac}/K)=\Gal(K_{\infty,p}^{\ac}/K_p)$ 
such that for $n\ge 1$ we have $\chi_n\in \Xi_n^+$  
and $$\chi_{n+1}^{-\delta^2}=\chi_{n}.$$

As in (\ref{def-omega}),
for $n\ge 1$,
we define elements $$\Phi_k^{\pm}(\gamma), \quad 
\omega_n^+=\prod_{0\le k\le n}\Phi_{k}^+(\gamma), \quad
\omega_n^- =\prod_{0\le k\le n}\Phi_{k}^-(\gamma) \in \CO[\Gal(K_{\infty}^{\ac}/K)],$$
where  $\Phi_0^{\varepsilon_p(\mathrm{Ind}_{K/\Q}(\varphi) )}=\gamma-1$ and $\Phi_0^{-\varepsilon_p(\mathrm{Ind}_{K/\Q}(\varphi) )}=1.$

\begin{cor}\label{variant-control}
For $n\ge 1$, the natural map 
$$\Sel_{\pm}(K_n^{\ac},W) [\iota\omega_n^{\mp}]\to \Sel_{\pm}(K_{\infty}^{\ac},W)[\omega_n^{\mp}]$$
is injective, and its cokernel is finite and of bounded order as $n$ varies. 
Here $\iota: \Lambda_{\ac}\to \Lambda_{\ac}$ denotes the involution induced by $g \mapsto g^{-1}$ for $g\in \Gamma$.  
\end{cor}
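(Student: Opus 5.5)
The plan is to deduce the statement from Theorem~\ref{controlling-thm} by passing to torsion submodules, since $\omega_n^{\mp}$ divides $\omega_n$.

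First I check the factorisation. By \eqref{another-description-psi}, $\Phi_k^{+}(\gamma)\Phi_k^{-}(\gamma)=\prod_{\chi\in\Xi_k}(\gamma-\chi(\gamma))$ for $k\ge1$, which is the $p^k$-th cyclotomic polynomial in $\gamma$. For $k=0$ the normalisation gives $\Phi_0^+\Phi_0^-=\gamma-1$. Taking the product over $0\le k\le n$ yields
\[
\omega_n^{+}\omega_n^{-}=\gamma^{p^n}-1,
\]
which generates $\ker(\Lambda_{\ac}\to\CO[\Gal(K_n^{\ac}/K)])$. So I may take $\omega_n=\omega_n^+\omega_n^-$. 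In particular $\omega_n^{\mp}\mid\omega_n$, and hence
\[
\Sel_{\pm}(K_\infty^{\ac},W)[\omega_n][\omega_n^{\mp}]=\Sel_{\pm}(K_\infty^{\ac},W)[\omega_n^{\mp}].
\]

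Second, I need to track the module structures along the restriction map of Theorem~\ref{controlling-thm}. The $\CO[\Gal(K_n^{\ac}/K)]$-action on $\Sel_{\pm}(K_n^{\ac},W)$ is the natural Galois action. The $\Lambda_{\ac}$-module structure on the limit is the one compatible with the identification via $T^{\otimes-1}(1)\otimes\Lambda_{\ac}$, where $G_K$ acts on $\Lambda_{\ac}$ through $g\mapsto[g^{-1}]$, and with Pontryagin duality. I expect that, with these conventions, the restriction map intertwines multiplication by $x$ on the source with multiplication by $\iota x$ on the target (or the reverse; which one holds must be fixed by unwinding Shapiro's lemma as in the Conventions). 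This is exactly why the source is cut out by $\iota\omega_n^{\mp}$. Since $\omega_n$ is $\iota$-stable up to a unit, Theorem~\ref{controlling-thm} is unaffected by this twist. Getting this convention right is the only delicate point, and I would settle it by evaluating both sides at a single character $\chi$: under the map the character $\chi$ on one side corresponds to $\chi^{-1}$ on the other. It follows that the restriction map carries $\Sel_{\pm}(K_n^{\ac},W)[\iota\omega_n^{\mp}]$ into $\Sel_{\pm}(K_\infty^{\ac},W)[\omega_n^{\mp}]$.

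Finally, the statement becomes formal. Write $A=\Sel_\pm(K_n^{\ac},W)$, $B=\Sel_\pm(K_\infty^{\ac},W)[\omega_n]$, and $C$ for the cokernel in Theorem~\ref{controlling-thm}. That theorem gives an exact sequence $0\to A\to B\to C\to 0$, with $C$ finite of order bounded independently of $n$. Taking $f$-torsion, with $f=\iota\omega_n^{\mp}$ on $A$ and $f=\omega_n^{\mp}$ on $B$ and $C$ (compatible by the previous step), left exactness gives
\[
0\to A[\iota\omega_n^{\mp}]\to B[\omega_n^{\mp}]\to C[\omega_n^{\mp}].
\]
This proves injectivity. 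It also shows that the cokernel embeds in $C[\omega_n^{\mp}]\subseteq C$, so it is finite with order bounded independently of $n$.
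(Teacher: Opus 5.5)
Your Steps 1 and 3 are correct, and they are exactly what the paper's one-line proof (``take the part killed by'' the signed polynomial in Theorem~\ref{controlling-thm}) amounts to. The element $\omega_n^{+}\omega_n^{-}=\gamma^{p^n}-1$ generates $\ker(\Lambda_{\ac}\to\CO[\Gal(K_n^{\ac}/K)])$, and left exactness of taking torsion in $0\to A\to B\to C\to 0$ gives injectivity and a cokernel inside $C$.

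The gap is Step~2, which carries the whole content of the $\iota$, and the justification you propose for it is false. The restriction map $\Sel_{\pm}(K_n^{\ac},W)\to\Sel_{\pm}(K_\infty^{\ac},W)$ is equivariant for the natural conjugation action of $\Gal(K_\infty^{\ac}/K)$ on both sides. Those natural actions are the $\Lambda_{\ac}$-structures the paper uses. By the Conventions, Shapiro's lemma identifies the natural action with multiplication by group-like elements, so no twist enters there either. Hence restriction commutes with every $x\in\Lambda_{\ac}$ and maps $\chi$-components to $\chi$-components. Your proposed test, that $\chi$ on one side corresponds to $\chi^{-1}$ on the other, would come out negative. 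The $\iota$ on the source has a different origin: in Lemma~\ref{local-pm-to-f} it comes from local Tate duality, which pairs the $\chi^{-1}$-part of the $W$-side with the $\chi$-part of the $T^{\otimes-1}(1)$-side.

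Whether this matters depends on $p$. Since $\gamma^{-1}-\zeta=-\zeta\gamma^{-1}(\gamma-\zeta^{-1})$, \eqref{another-description-psi} shows the following for $k\ge1$: $\iota\Phi_k^{\pm}$ is a unit multiple of $\Phi_k^{\pm}$ if $p\equiv1\bmod 4$, and of $\Phi_k^{\mp}$ if $p\equiv 3\bmod 4$. Meanwhile $\Phi_0^{\pm}$ is $\iota$-stable up to a unit.

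\begin{itemize}
\item For $p\equiv1\bmod4$, $\iota\omega_n^{\mp}$ and $\omega_n^{\mp}$ are associates, and there is nothing to check.
\item For $p\equiv3\bmod4$, with the natural actions the image of $\Sel_{\pm}(K_n^{\ac},W)[\iota\omega_n^{\mp}]$ lands in $\Sel_{\pm}(K_\infty^{\ac},W)[\iota\omega_n^{\mp}]$. After inverting $p$, the components of this module at non-trivial characters are supported on $\Xi^{\pm}_{\le n}$. Those of $\Sel_{\pm}(K_\infty^{\ac},W)[\omega_n^{\mp}]$ are supported on $\Xi^{\mp}_{\le n}$.
\end{itemize}

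So the $\iota$ in the statement cannot be produced by the restriction map. You have to do one of two things explicitly:
\begin{itemize}
\item fix a convention in which $\Lambda_{\ac}$ acts on $\Sel_{\pm}(K_\infty^{\ac},W)$ through $\iota$, and carry it consistently into the dual $X_{\pm}$; or
\item prove the version with $\iota\omega_n^{\mp}$ on both sides.
\end{itemize}
Once the source and target annihilators are matched by the map, your Step~3 applies verbatim, because $\iota\omega_n^{\mp}$ divides $\iota\omega_n$, which is a unit multiple of $\omega_n$. The paper's own one-line proof is silent on this point as well.
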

\begin{proof}
This immediately follows from Theorem \ref{controlling-thm}: 
just take the part killed by $\omega_n^{\pm}$.
\end{proof}

\subsubsection{Asymptotic ranks of Selmer groups}

\begin{lem}\label{local-pm-to-f}
We have $$(\iota \omega_{n}^{\pm})\rH^1_{\pm}(K_{n,p}^{\ac},W) \subseteq \rH^1_{\rf}(K_{n,p}^{\ac},W), \quad
(\iota \omega_{n}^{\pm})\rH^1_{\rf}(K_{n,p}^{\ac},W) \subseteq \rH^1_{\pm}(K_{n,p}^{\ac},W).$$
\end{lem}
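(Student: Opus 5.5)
Both inclusions are dual to statements on the compact side, and I would prove those by checking them character by character, as in (\ref{saturated}) and Theorem~\ref{c-generates-fpm}. Write $T'=T^{\otimes -1}(1)$. Under local Tate duality on $K_{n,p}^{\ac}$, the group $\rH^1_{\pm}(K_{n,p}^{\ac},W)$ is by Definition~\ref{pm-structure} the orthogonal complement of $\rH^1_{\pm}(K_{n,p}^{\ac},T')$. Likewise $\rH^1_{\rf}(K_{n,p}^{\ac},W)$ is the orthogonal complement of $\rH^1_{\rf}(K_{n,p}^{\ac},T')$; this is the standard self-orthogonality of Bloch--Kato conditions for the pairing $W\times T'$.

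The pairing is $\CO[\Gal(K_n^{\ac}/K)]$-equivariant up to the involution: $\langle gx,y\rangle=\langle x,g^{-1}y\rangle$. Hence for $\lambda\in\CO[G_n]$ and submodules $A,B\subseteq \rH^1(K_{n,p}^{\ac},T')$ with $\lambda B\subseteq A$, we get $(\iota\lambda)A^{\perp}\subseteq B^{\perp}$. So the two claimed inclusions reduce to the compact-side inclusions
\[
\omega_n^{\pm}\rH^1_{\rf}(K_{n,p}^{\ac},T')\subseteq \rH^1_{\pm}(K_{n,p}^{\ac},T'),\qquad \omega_n^{\pm}\rH^1_{\pm}(K_{n,p}^{\ac},T')\subseteq \rH^1_{\rf}(K_{n,p}^{\ac},T').
\]
I would double-check the placement of $\iota$ against the convention that $G_K$ acts on $\Lambda_{\ac}$ via inverse group-like elements.

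For the compact side, use the description (\ref{saturated}): membership in $\rH^1_{\bullet}$ for $\bullet\in\{\pm,\rf\}$ is tested by $\chi(x)\in\rH^1_{\bullet}(\Q_p,\tilde V_\chi)$ for all $\chi\in\Xi_{\le n}$. Note that $\chi(\omega_n^{\pm}x)=\chi(\omega_n^{\pm})\chi(x)$, and $\chi(\omega_n^{\pm})=0$ exactly when $\chi\in\Xi^{\pm}$.
\begin{itemize}
\item For $\chi\in\Xi^{\pm}_{\le n}$, the specialisation vanishes, so it lies in every subgroup.
\item For $\chi\in\Xi^{\mp}_{\le n}$, Theorem~\ref{lsd-ind}(3) gives $\rH^1_{\pm}(\Q_p,\tilde V_\chi)=\rH^1_{\rf}(\Q_p,\tilde V_\chi)$. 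Then $\chi(x)$ lying in one of these groups means it lies in the other, and scalar multiples preserve this.
\end{itemize}
Both compact inclusions follow. The sign conventions of Remark~\ref{sign-convention} must be respected here: $\Xi^{\pm}$ in this section is defined via $\varepsilon_p(\Ind_{K/\Q}(\varphi\chi))$, and $\Phi_0$ is chosen accordingly. I would verify that Theorem~\ref{lsd-ind}(3) reads as stated in this normalisation, and that the $\pm$ subscripts of $\rH^1_\pm$ match those of $\omega_n^\pm$.

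The main obstacle is integrality. Over $T'$, the subgroups are saturated, and (\ref{saturated}) identifies them as preimages of the rational conditions, so the test above is legitimate integrally. Passing to orthogonal complements in $W$ needs $\rH^1_{\rf}(K_{n,p}^{\ac},W)$ to coincide with the exact annihilator of $\rH^1_{\rf}(K_{n,p}^{\ac},T')$. This holds because $\rH^1_{\rf}(\cdot,W)$ is defined as the image of the rational $\rH^1_{\rf}$, which is the divisible part dual to the saturated lattice (Bloch--Kato). If that identification gave trouble, I would instead argue directly on $W$ by the same character-wise test after tensoring with $\Q_p$, using divisibility.
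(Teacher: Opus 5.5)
Your proposal is correct, and its core argument is the paper's: a character-by-character test using $\chi(\omega_n^{\pm})=0$ for $\chi\in\Xi^{\pm}_{\le n}$ and $\rH^1_{\pm}=\rH^1_{\rf}$ at $\chi\in\Xi^{\mp}_{\le n}$ (Theorem~\ref{lsd-ind}(3)). Your signs and your placement of $\iota$ are right; the latter follows from $\langle gx,gy\rangle=\langle x,y\rangle$.

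The difference is which side of local Tate duality carries the test.

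\textbf{Your route.} You prove the compact inclusions
$\omega_n^{\pm}\rH^1_{\rf}(K_{n,p}^{\ac},T^{\otimes-1}(1))\subseteq\rH^1_{\pm}(K_{n,p}^{\ac},T^{\otimes-1}(1))$ and
$\omega_n^{\pm}\rH^1_{\pm}(K_{n,p}^{\ac},T^{\otimes-1}(1))\subseteq\rH^1_{\rf}(K_{n,p}^{\ac},T^{\otimes-1}(1))$
via (\ref{saturated}). This is essentially the computation of Proposition~\ref{finiteness-of-c}. You then transport the inclusions formally using $\langle(\iota\lambda)y,b\rangle=\langle y,\lambda b\rangle$.

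\textbf{The paper's route.} It works in $\rH^1(K_{n,p}^{\ac},V)$. It describes $\rH^1_{\pm}(K_{n,p}^{\ac},V)=(\rH^1_{\pm}(K_{n,p}^{\ac},T^{\otimes-1}(1))\otimes\Q_p)^{\perp}$ character by character through the $\iota$-twisted bilinear pairing, by testing $\chi^{-1}(x)\in\rH^1_{\pm}(K_p,V_{\varphi\chi})$. This step tacitly uses that the specialised $\pm$-conditions are Lagrangian. It then runs the same two-case check there and takes images in $W$.

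\textbf{What each needs.} Your route avoids the character-wise description of the dual condition. Its price is the integral fact that $\rH^1_{\rf}(K_{n,p}^{\ac},W)$ is the full annihilator of $\rH^1_{\rf}(K_{n,p}^{\ac},T^{\otimes-1}(1))$. You need this only for the first inclusion; the second uses merely that the former annihilates the latter. The fact does follow, as you indicate, from rational Bloch--Kato orthogonality, saturation, and a corank count. The paper's route needs the parallel fact that $\rH^1_{\pm}(K_{n,p}^{\ac},W)$ is the image of $\rH^1_{\pm}(K_{n,p}^{\ac},V)$.
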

\begin{proof}
Note that  $\rH^1_{\pm}(K^{\ac}_{n,p}, W)$  coincides with the image of 
$\rH^1_{\pm}(K^{\ac}_{n,p}, V):=\left(\rH^1_{\pm}(K^{\ac}_{n,p}, T^{\otimes -1}(1))\otimes \Q_p\right)^{\perp}$ under the natural map $\rH^1_{}(K^{\ac}_{n,p}, V)\to \rH^1_{}(K^{\ac}_{n,p}, W)$,  where the orthogonal complement is with respect to the local Tate duality.
The pairing 
\[
\rH^1_{}(K^{\ac}_{n,p}, V)\otimes_{\CO[\Gal(K_n^{\ac} /K)], \iota}\CO[\Gal(K_n^{\ac} /K)] \ \times \ \rH^1_{}(K^{\ac}_{n,p}, V^{\otimes -1}(1)) \to \CO[1/p]
\] induced by the local Tate duality is $\CO[\Gal(K_n^{\ac} /K)]$-bilinear. 
So the subspace $\rH^1_{\pm}(K_{n,p}^{\ac},V)$ 
consists of the elements $x \in \rH^1_{}(K_{n,p}^{\ac},V)$ such that 
for each finite character $\chi $ on $\Gal(K_n^{\ac}/K)$,
the image $\chi^{-1}(x)$ of $x$ in $\rH^1_{}(K^{}_{p}, V_{\varphi\chi})$
under the map
\[
\rH^1_{}(K^{\ac}_{n,p}, V )\otimes_{\CO[\Gal(K_n^{\ac}/K )],\chi^{-1} }\CO_{\chi}=\rH^1_{}(K^{}_{p}, V_{\varphi\chi})
\]
lies in $\rH^1_{\pm}(K^{}_{p}, V_{\varphi\chi^{}}):=\rH^1_{\pm}(\Q_{p}, \Ind_{K_p/\Q_p}(V_{\varphi\chi^{} }))$.

For $\chi \in \Xi^{\mp}_{\le n}$ and $x \in \rH^1_{\pm}(K^{\ac}_{n,p}, V)$, note that 
\[
\chi^{-1}(x) \in  \rH^1_{\rf}(K^{\ac}_{n,p}, V_{})\otimes_{\CO[\Gal(K_n^{\ac}/K )],\chi^{-1} }\CO_{\chi} = \rH^1_{\rf}(K^{}_{p}, V_{\varphi\chi^{}}). 
\]
Recall that for $\chi \in \Xi^{\pm}$, $\chi^{-1}(\iota\omega_{n}^{\pm})=\chi(\omega_n^{\pm})=0$. 
Hence,
the first inclusion of the lemma follows.

Let $x \in \rH^1_{\rf}(K_{n,p}^{\ac},V )$.
For $\chi \in \Xi^{\mp}_{\le n}$,
by \cite[Thm.\ 1.3]{BKNO},
$$\chi^{-1}(x) \in \rH^1_{\rf}(K_{p}^{},V_{\varphi\chi^{}} )=\rH^1_{\pm}(K_{p}^{},V_{\varphi\chi^{}} ).$$
Hence, the second inclusion of the lemma follows.
\end{proof}

\begin{lem}\label{sel-to-pm}
We have
\[
\corank_{\CO}(\Sel_{+}(K_n^{\ac},W )[\iota \omega_n^{-}])+\corank_{\CO}(\Sel_{-}(K_n^{\ac},W )[\iota \omega_n^{+}])=\corank_{\CO}(\Sel_{}(K_n^{\ac},W)).
\]
\end{lem}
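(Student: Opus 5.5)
The plan is to compute both sides character by character. I will pass from the discrete $p$-primary Selmer groups to their rational versions, decompose these into $\chi$-isotypic pieces for $\chi\in\Xi_{\le n}$, and check that on each piece exactly one of the two signed groups on the left agrees with the Bloch--Kato Selmer group.

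\textbf{Step 1: reduction to rational Selmer groups.} For a cofinitely generated $\CO[\Gal(K_n^{\ac}/K)]$-module $M$, $\corank_{\CO}(M)$ is the $L_\lambda$-dimension of $\Hom_{\CO}(M^\vee,\CO)\otimes\Q_p$. By the usual comparison of $\Sel_\bullet(K_n^{\ac},W)$ with the image of its rational counterpart, the maximal divisible part of $\Sel_\bullet(K_n^{\ac},W)$ is the image of
\[
\Sel_\bullet(K_n^{\ac},V):=\ker\Bigl(\rH^1(K_n^{\ac},V)\to \prod_v \rH^1(K^{\ac}_{n,v},V)/\rH^1_\bullet(K^{\ac}_{n,v},V)\Bigr),
\]
with $\rH^1_\bullet(K^{\ac}_{n,p},V)$ as in the proof of Lemma~\ref{local-pm-to-f}. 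Likewise, $\Sel_\pm(K_n^{\ac},W)[\iota\omega_n^\mp]$ has the same corank as the $\iota\omega_n^\mp$-torsion of $\Sel_\pm(K_n^{\ac},V)$. After extending scalars to a field containing all values of characters of $\Gal(K_n^{\ac}/K)$, each group decomposes as $\bigoplus_{\chi\in\Xi_{\le n}}e_\chi(\cdot)$, and the $\iota\omega_n^\mp$-torsion is the sum of the components $e_{\chi^{-1}}$ with $\chi(\omega_n^\mp)=0$, that is, with $\chi\in\Xi^\mp_{\le n}$. The appearance of $\chi^{-1}$ is exactly the effect of $\iota$, as in the proof of Lemma~\ref{local-pm-to-f}.

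\textbf{Step 2: comparing local conditions componentwise.} By Shapiro's lemma, the $\chi^{-1}$-component of $\rH^1(K_n^{\ac},V)$ is $\rH^1(K,V_{\varphi\chi})$. Away from $p$ all local conditions coincide by (\ref{local-vanishing}). At $p$, the proof of Lemma~\ref{local-pm-to-f} shows that the $\chi^{-1}$-component of $\rH^1_\pm(K^{\ac}_{n,p},V)$ is $\rH^1_\pm(K_p,V_{\varphi\chi})=\rH^1_\pm(\Q_p,\Ind_{K_p/\Q_p}V_{\varphi\chi})$. Now take $\chi\in\Xi^\mp$. Theorem~\ref{lsd-ind}(3), transported to $V$ via orthogonal complements and the symplectic self-duality of $\Ind$, gives $\rH^1_\pm(K_p,V_{\varphi\chi})=\rH^1_\rf(K_p,V_{\varphi\chi})$. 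This is the content already used in the second half of Lemma~\ref{local-pm-to-f}. Hence for $\chi\in\Xi^\mp_{\le n}$ we have $e_{\chi^{-1}}\Sel_\pm(K_n^{\ac},V)=e_{\chi^{-1}}\Sel(K_n^{\ac},V)$, since both are $\rH^1(K,V_{\varphi\chi})$ cut out by the same local conditions.

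\textbf{Step 3: summing.} Step 1 and Step 2 together give
\[
\corank\,\Sel_+[\iota\omega_n^-]+\corank\,\Sel_-[\iota\omega_n^+]=\sum_{\chi\in\Xi^-_{\le n}}\dim e_{\chi^{-1}}\Sel+\sum_{\chi\in\Xi^+_{\le n}}\dim e_{\chi^{-1}}\Sel.
\]
The right-hand side is $\corank\,\Sel(K_n^{\ac},W)$, because $\Xi_{\le n}=\Xi^+_{\le n}\sqcup\Xi^-_{\le n}$. The trivial character is placed in exactly one class by the convention on $\Phi_0^\pm$, which matches the sign $\varepsilon_p(\Ind_{K/\Q}(\varphi))$ of Theorem~\ref{lsd-ind}(3) for $\chi=1$.

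\textbf{Main obstacle.} I expect the delicate point to be Step 1 together with the sign and $\iota$ bookkeeping at $p$. The condition $\rH^1_\pm(K^{\ac}_{n,p},W)$ is defined as an orthogonal complement of an integral lattice condition, so I must verify three things: its divisible part is the image of $\rH^1_\pm(K^{\ac}_{n,p},V)$ (this uses the saturatedness (\ref{saturated}) and Theorem~\ref{higher-weight-decomposition}); passing to $\omega$-torsion commutes with this up to finite groups; and the component that Tate duality pairs with the $\chi$-specialisation on the $T^{\otimes-1}(1)$ side is the $\chi^{-1}$-component. I would handle the last point exactly as in the first paragraph of the proof of Lemma~\ref{local-pm-to-f}. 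Finite discrepancies are harmless because only coranks are compared.
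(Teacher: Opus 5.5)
Your argument is correct, but it is organised differently from the paper's proof.

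\textbf{The paper's route.} The paper never splits the global Selmer groups into isotypic pieces and never passes to rational Selmer groups. It works throughout with the discrete modules over $K_n^{\ac}$ with coefficients $W$. It uses the two operator inclusions of Lemma~\ref{local-pm-to-f}, namely $(\iota\omega_n^{\pm})\rH^1_{\pm}(K^{\ac}_{n,p},W)\subseteq\rH^1_{\rf}(K^{\ac}_{n,p},W)$ and $(\iota\omega_n^{\pm})\rH^1_{\rf}(K^{\ac}_{n,p},W)\subseteq\rH^1_{\pm}(K^{\ac}_{n,p},W)$, to sandwich $\corank_{\CO}\Sel$.
\begin{itemize}
\item The first inclusion gives a map $(\iota\omega_n^{+})\Sel_{+}\oplus(\iota\omega_n^{-})\Sel_{-}\to\Sel$. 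Its kernel is finite because $(\iota\omega_n^{+})\rH^1\cap(\iota\omega_n^{-})\rH^1$ is finite, $\CO[\Gal(K_n^{\ac}/K)]/(\iota\omega_n^{+},\iota\omega_n^{-})$ being a finite ring. This yields one inequality.
\item The second inclusion gives $(\iota\omega_n^{\pm})\Sel\subseteq\Sel_{\pm}[\iota\omega_n^{\mp}]$. Writing $\corank_{\CO}\Sel$ as the sum of the coranks of $(\iota\omega_n^{\pm})\Sel$, this yields the reverse inequality.
\item The identity $\corank_{\CO}(\iota\omega_n^{\pm}M)=\corank_{\CO}(M[\iota\omega_n^{\mp}])$ then makes the two bounds meet.
\end{itemize}

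\textbf{Common input.} Both proofs rest on the same local fact. For $\chi\in\Xi^{\mp}$, the $\pm$-condition on the $\chi^{-1}$-component coincides with the Bloch--Kato condition. The paper uses this only in the packaged form of the two inclusions.

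\textbf{What each route costs and buys.} Because the paper stays with discrete modules, it needs no identification of $\Sel_{\bullet}(K_n^{\ac},W)_{\div}$ with the image of a rational Selmer group. Your Step~1 has to do that bookkeeping. As you note, it relies on $\rH^1_{\pm}(K^{\ac}_{n,p},W)$ being the image of its rational counterpart, a fact the paper also quotes in the proof of Lemma~\ref{local-pm-to-f}. In return you obtain a sharper statement for your rational Selmer groups: $\Sel_{\pm}(K_n^{\ac},V)[\iota\omega_n^{\mp}]=\Sel(K_n^{\ac},V)[\iota\omega_n^{\mp}]$. The lemma then follows at once, since $\Sel(K_n^{\ac},V)$ is the direct sum of its $\iota\omega_n^{+}$- and $\iota\omega_n^{-}$-torsion. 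This version also shows directly which characters feed into which signed Selmer group, which is what Theorem~\ref{selmer-rank-behaviour} uses.
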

\begin{proof}
The two inclusion maps 
$\iota\omega_n^{\pm}\Sel_{\pm}(K_n^{\ac}, W) \subseteq \Sel(K_n^{\ac},W)$ of Lemma \ref{local-pm-to-f} 
 give rise to the natural map
\begin{equation}\label{sum-of-inclusion}
(\iota\omega_n^{+})\Sel_{+}(K_n^{\ac}, W)\oplus (\iota\omega_n^{-})\Sel_{-}(K_n^{\ac}, W) \to \Sel_{}(K_n^{\ac},W).
\end{equation}
Note that the kernel is finite: 
$$(\iota\omega_n^{+})\Sel_{+}(K_n^{\ac}, W) \cap (\iota\omega_n^{-})\Sel_{-}(K_n^{\ac}, W) \subseteq  (\iota\omega_n^{+})\rH^1(K_n^{\ac}, W) \cap (\iota\omega_n^{-})\rH^1(K_n^{\ac}, W),$$
whose dual is isomorphic to the finite\footnote{as $\CO[\Gal(K_n^{\ac}/K) ]/(\iota\omega_n^+, \iota\omega_n^- )$ is a finite ring} group $\rH^1(K_n^{\ac},T^{\otimes -1}(1))/(\iota\omega_n^+, \iota\omega_n^- )$.
Hence, 
\begin{equation}\label{corank-pm-to-f}
\corank_{\CO} \left((\iota\omega_n^{+})\Sel_{+}(K_n^{\ac}, W)\right)+ \corank_{\CO}\left((\iota\omega_n^{-})\Sel_{-}(K_n^{\ac}, W)\right)\le \corank_{\CO}(\Sel_{}(K_n^{\ac},W)).
\end{equation}

By Lemma \ref{local-pm-to-f}, we have $(\iota\omega_n^{\pm})\Sel(K_n^{\ac},W) \subseteq \Sel_{\pm}(K_n^{\ac}, W)[\iota\omega_n^{\mp}]$. 
 Since $(\iota\omega_n^{+})\Sel(K_n^{\ac},W)\cap (\iota\omega_n^{-})\Sel(K_n^{\ac},W)$ is finite, it follows that 

\begin{equation}\label{corank-with-omega}
\begin{split}
\corank_{\CO}\Sel(K_n^{\ac},W)=&\corank_{\CO} \left((\iota\omega_n^{+})\Sel_{}(K_n^{\ac}, W)\right)+ \corank_{\CO}\left((\iota\omega_n^{-})\Sel_{}(K_n^{\ac}, W)\right)\\
\le & \corank_{\CO} \left(\Sel_{+}(K_n^{\ac}, W)[\iota\omega_n^{-}]\right)+ \corank_{\CO}\left(\Sel_{-}(K_n^{\ac}, W)[\iota\omega_n^{+}]\right).
\end{split}
\end{equation}
For a finitely generated  $\CO[\Gal(K_n^{\ac}/K)]$-module $M$, recall that 
$\corank_{\CO}(\iota\omega_{n}^{\pm}M)=\corank_{\CO}(M[\iota\omega_{n}^{\mp}])$. 
Hence, (\ref{corank-pm-to-f}) and (\ref{corank-with-omega}) imply the lemma.
 \end{proof}

\begin{thm}\label{selmer-rank-behaviour}
For $n \ge 1$, we have 
\[
\corank_{\CO}(\Sel(K_n^{\ac},W)) = \rank_{\CO}(\Lambda_{\ac}/\iota\omega_{n}^{-\varepsilon})+ \rank_{\CO}(X_{\varepsilon, \Lambda_{\ac}-\mathrm{tor}}/\iota \omega_n^{-\varepsilon}) +  \rank_{\CO}(X_{-\varepsilon}/\iota\omega_n^{\varepsilon}),
\]
where $X_{\varepsilon, \Lambda_{\ac}-\mathrm{tor}}$ denotes the $\Lambda_{\ac}$-torsion part of $X_{\varepsilon}$.
In particular, there exists an integer $c\ge 0$ such that for $n\gg 1$
we have
\[
\corank_{\CO}(\Sel(K_n^{\ac},W)) = \frac{p^n-1}{2}+c.
\]
\end{thm}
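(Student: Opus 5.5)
Start from Lemma~\ref{sel-to-pm}, which splits the corank into two signed pieces:
\[
\corank_{\CO}\Sel(K_n^{\ac},W)=\corank_{\CO}\Sel_{+}(K_n^{\ac},W)[\iota\omega_n^{-}]+\corank_{\CO}\Sel_{-}(K_n^{\ac},W)[\iota\omega_n^{+}].
\]
By Corollary~\ref{variant-control}, each summand has the same $\CO$-corank as $\Sel_{\pm}(K_\infty^{\ac},W)[\omega_n^{\mp}]$, since finite bounded cokernels do not affect coranks. Under Assumption~\ref{for-integrality}(1), which Corollary~\ref{variant-control} uses, the kernel is even trivial. Otherwise we only use finiteness, which is enough for coranks.

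Pontryagin duality identifies $\Sel_{\pm}(K_\infty^{\ac},W)[\omega_n^{\mp}]^\vee$ with $X_{\pm}/\omega_n^{\mp}X_{\pm}$. The involution $\iota$ enters here because of the convention on the $\Lambda_{\ac}$-action on $W$ versus $T^{\otimes-1}(1)$. This is why the statement has $\iota\omega_n^{\mp}$; I will track it carefully but it does not change ranks. So
\[
\corank_{\CO}\Sel(K_n^{\ac},W)=\rank_{\CO}\bigl(X_{\varepsilon}/\iota\omega_n^{-\varepsilon}\bigr)+\rank_{\CO}\bigl(X_{-\varepsilon}/\iota\omega_n^{\varepsilon}\bigr).
\]

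Next I use the structure result of Theorem~\ref{rank}: $X_{\varepsilon}$ has $\Lambda_{\ac}$-rank one and $X_{-\varepsilon}$ is torsion. Take a pseudo-isomorphism $X_{\varepsilon}\to\Lambda_{\ac}\oplus X_{\varepsilon,\Lambda_{\ac}\text{-tor}}$; more precisely, use the exact sequence $0\to X_{\varepsilon,\rm tor}\to X_\varepsilon\to X_\varepsilon/X_{\varepsilon,\rm tor}$, where the last term embeds in a free rank-one module with finite cokernel.

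Then tensor with $\Lambda_{\ac}/\iota\omega_n^{-\varepsilon}$ and compare $\CO$-ranks. The $\mathrm{Tor}_1$ terms and the finite kernels and cokernels of pseudo-isomorphisms have to be controlled. Modulo finite groups, $\mathrm{Tor}_1(\Lambda_{\ac},-)=0$. The remaining issue is that $\mathrm{Tor}_1^{\Lambda_{\ac}}(X_\varepsilon/X_{\varepsilon,\rm tor},\Lambda_{\ac}/f)=(X_\varepsilon/X_{\varepsilon,\rm tor})[f]$ vanishes by torsion-freeness. Hence the sequence stays exact on the left after tensoring, and ranks add. This gives the first displayed formula. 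I expect this bookkeeping to be the main technical point: $\omega_n^{\mp}$ is not a product of distinct height-one primes coprime to characteristic ideals, so one argues with $\CO$-ranks after inverting $p$, where all the modules involved become finite-dimensional over $\CO[1/p]$.

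For the asymptotic statement, $\rank_{\CO}\Lambda_{\ac}/\iota\omega_n^{-\varepsilon}=\deg\omega_n^{-\varepsilon}=\sum_{k\le n}|\Xi_k^{-\varepsilon}|$. Each $|\Xi_k^{\pm}|$ equals $\varphi(p^k)/2$ for $k\ge1$, by Lemma~\ref{density-of-gxi}(2) or the description in Lemma~\ref{half-polynomial}. The $k=0$ term contributes $0$ or $1$. So the total is $(p^n-1)/2$ plus a constant.

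For a finitely generated torsion $\Lambda_{\ac}$-module $Y$, the rank $\rank_{\CO}(Y/\iota\omega_n^{\pm})$ is bounded. Indeed it equals the number, with multiplicity, of characters $\chi\in\Xi_{\le n}^{\pm}$ (twisted by $\iota$) at which the characteristic power series of $Y$ vanishes, up to bounded error from non-semisimplicity. Since the characteristic ideal has finitely many zeros at finite-order characters, this count is eventually constant.

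Applying this to $X_{\varepsilon,\rm tor}$ and $X_{-\varepsilon}$ shows their contributions are eventually constant. Adding the three terms gives the integer $c$.
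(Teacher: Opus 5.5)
Your proposal is correct and follows the paper's own route: the paper's proof simply cites Lemma~\ref{sel-to-pm}, Corollary~\ref{variant-control} and Theorem~\ref{rank} and counts $\deg\omega_n^{\pm}$, and you supply the duality and structure-theory bookkeeping that it leaves implicit. Two side remarks need correcting, though neither affects the conclusion. First, the $\iota$ comes from the contragredient $\Lambda_{\ac}$-action on Pontryagin duals, i.e.\ $\Sel_{\pm}(K_\infty^{\ac},W)[\omega_n^{\mp}]^{\vee}\cong X_{\pm}/\iota\omega_n^{\mp}X_{\pm}$, and it can change the ranks of the torsion terms when $p\equiv 3\bmod 4$, since $\chi\mapsto\chi^{-1}$ swaps $\Xi_k^{+}$ and $\Xi_k^{-}$ for $k\ge 1$, contrary to your aside. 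Second, the eventual constancy of $\rank_{\CO}(Y/\iota\omega_n^{\pm}Y)$ for torsion $Y$ should be argued via monotonicity ($\omega_n^{\pm}\mid\omega_{n+1}^{\pm}$) together with the bound by $\dim Y[1/p]$, because ``count plus bounded error'' alone does not give constancy.
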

\begin{proof} 
This is a consequence of Theorem \ref{rank}, Corollary \ref{variant-control}, and Lemma  \ref{sel-to-pm}.
For the `In particular' part, note that, when $\omega_n^{\pm}$ is regarded as a
polynomial in $\gamma$, the degree of $\omega_n^{\varepsilon_p(\Ind_{K/\Q}(\varphi))}$
equals $1+(p^n-1)/2$ and that of $\omega_n^{-\varepsilon_p(\Ind_{K/\Q}(\varphi))}$
equals $(p^{n}-1)/2$.
\end{proof}

\subsection{Asymptotic behaviour of Mordell--Weil rank}
This subsection establishes a formula for the anticyclotomic behaviour of the Mordell--Weil rank of a CM abelian variety  associated to $\varphi$. 

Let $K_{\varphi}$ be a number field which contains  $K(\mathrm{Im}(\varphi))$, 
where $\varphi$ is regarded as a character on the group of fractional ideals relatively prime to the conductor of $\varphi$,
and let $A_{\varphi}$ be a CM abelian variety over $K$ associated to $\varphi$ of dimension $[K_{\varphi}:\Q]/2$,
which is equipped with
$i:K_{\varphi}\hookrightarrow \End^0(A_{\varphi}):=\End(A_{\varphi})\otimes \Q$,
which gives a CM type of $K_{\varphi}$ due to the induced  action 
$K_{\varphi}$ on the $[K_{\varphi}:\Q]/2$-dimensional  $\C$-vector  space $\Omega^1_{A_{\varphi}/\C}$.
We note that its reflex CM type is given by $\sigma_1 \in \Z[\{K\hookrightarrow \C \}]$,
where $\sigma_1:K\hookrightarrow \C$ is the inclusion $K\subseteq\C,$
and we also have 
\[
L(A_{\varphi}/K,s)=\prod_{\sigma: K_{\varphi}\hookrightarrow \C}L(\varphi^{\sigma},s)=\prod_{\sigma: K_{\varphi}\hookrightarrow \C}L(\theta_{\varphi^{\sigma}},s),
\]
where
$\theta_{\varphi^{\sigma}} \in S^{\mathrm{new}}_{2}(\Gamma_0(D_KN_{K/\Q}(\ff)))$ denotes the theta series
attached to $\varphi^{\sigma}$ (cf.\ \cite[Thm.\ 3.4]{Ri}).  
We note that the above construction may be adapted to the case of $\varphi\chi$ as well if $\chi$ is a finite character of $\Gal(K_{\infty}^{\ac}/K).$
The abelian variety $A_{\varphi\chi}$ is not simple unless $K_{\varphi\chi}$ is generated by the image  of $\varphi\chi$.  
In the following,
we sometimes make $K_{\varphi\chi}$ vary, which does not change $\dim_{K_{\varphi\chi}}(A_{\varphi\chi}\otimes_{\Z}\Q).$

 The main result of this subsection is the following. 
 \begin{thm}\label{vGZK}
There exists a constant $c\in\Z$ such that  for any sufficiently large integer $n$, we have 
 $$\dim_{K_{\varphi}}\,(A_{\varphi}(K_n^{\rm ac})\otimes_{\Z}\Q)=\frac{p^n-1}{2}+c. $$ 
\end{thm}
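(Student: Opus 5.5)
The plan is to deduce Theorem~\ref{vGZK} from Theorem~\ref{selmer-rank-behaviour} by comparing the Mordell--Weil group with the Selmer group, character by character. The point to exploit is that the discrepancy, a Tate--Shafarevich contribution, is controlled at all but finitely many anticyclotomic characters by known rank-at-most-one results.

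First, decompose over characters. After extending scalars to a field containing the values of all $\chi\in\Xi_{\le n}$, we have
\[
A_{\varphi}(K_n^{\ac})\otimes_{\Z}\Q\otimes_{K_\varphi}\overline{\Q}\;=\;\bigoplus_{\chi\in\Xi_{\le n}}\bigl(A_{\varphi}(K_n^{\ac})\otimes\overline{\Q}\bigr)^{\chi},
\]
and the $\chi$-part is the Mordell--Weil group of $A_{\varphi\chi}$ over $K$, up to the normalisation $\dim_{K_{\varphi\chi}}$. The same decomposition applies to $\Sel(K_n^{\ac},W)$. Its $\chi$-part, for $\chi\in\Xi_k$, is the Selmer group of $\varphi\chi$, with corank controlled by the specialisations of $X_{\pm}$. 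So it suffices to show the following: for all but finitely many $\chi\in\Xi$, the $\chi$-part of the Mordell--Weil rank equals the $\chi$-part of the Selmer corank, and the latter equals $1$ if $\varepsilon(\varphi\chi)=-1$ and $0$ if $\varepsilon(\varphi\chi)=+1$.

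Second, compute the Selmer side. From the proof of Theorem~\ref{selmer-rank-behaviour} we have the following. The contributions $\rank_{\CO}(X_{\varepsilon,\Lambda_{\ac}\text{-tor}}/\iota\omega_n^{-\varepsilon})$ and $\rank_{\CO}(X_{-\varepsilon}/\iota\omega_n^{\varepsilon})$ stabilise, because a torsion $\Lambda_{\ac}$-module has only finitely many height-one primes in its support. The free part contributes exactly one for each $\chi$ with $\chi(\omega_n^{-\varepsilon})=0$, that is, each $\chi$ of sign opposite to the $\varepsilon$-side. Via Lemma~\ref{density-of-gxi}(1), these are exactly the $\chi$ with global root number $\varepsilon(\varphi\chi)=-1$. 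Hence for all $\chi$ outside a finite set $\Sigma$, the $\chi$-Selmer corank is $1$ when $\varepsilon(\varphi\chi)=-1$ and $0$ otherwise.

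Third, compare with Mordell--Weil. For $\chi\notin\Sigma$ with Selmer corank $0$, finiteness of the $\chi$-Selmer group forces the $\chi$-Mordell--Weil rank to be $0$. For $\chi\notin\Sigma$ with $\varepsilon(\varphi\chi)=-1$, we need rank exactly one together with finiteness of the $\chi$-part of Sha. Here I would invoke the Gross--Zagier formula for the theta series $\theta_{\varphi\chi}$ and Kolyvagin's Heegner point Euler system (or Rubin's elliptic unit results in rank one), applied to the weight-two newforms $\theta_{(\varphi\chi)^\sigma}$. The required input is $L'(\varphi\chi,1)\neq0$, which I would obtain via a $p$-converse argument. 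The Selmer corank is one, the Selmer group is generated modulo torsion by a class whose localisation at $p$ is non-zero (indeed nontrivial in $\rH^1_{\rf}$, via the local points of Theorem~\ref{c-generates-fpm} and the nonvanishing in Theorem~\ref{p-adic-L-with-global-class}-type results). A rank-one $p$-converse theorem then gives analytic rank one, and hence Mordell--Weil rank one with finite Sha by Gross--Zagier--Kolyvagin. Summing over $\chi\notin\Sigma$, and noting that the finitely many $\chi\in\Sigma$ contribute a bounded quantity which is eventually constant in $n$ (since $\Sigma\subset\Xi_{\le n_0}$), we get $\frac{p^n-1}{2}+c$.

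The main obstacle is the last step, namely establishing rank exactly one for the $\chi$-parts with $\varepsilon(\varphi\chi)=-1$, rather than only Selmer corank one. If a $p$-converse is unavailable in this ramified setting, I would first try the alternative route of generic nonvanishing of $L'(\varphi\chi,1)$ along the anticyclotomic tower. This is a derivative analogue of Rohrlich, as in Cornut--Vatsal/Mazur's conjecture via CM points on Shimura curves for the twists $\theta_{\varphi}\otimes\chi$. Combined with Gross--Zagier--Kolyvagin, it gives rank one for almost all such $\chi$, and the Selmer bound above supplies the matching upper bound.
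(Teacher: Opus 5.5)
Your reduction to characters and your treatment of the twists with $\varepsilon(\varphi\chi)=+1$ are fine. For such $\chi$ the Bloch--Kato condition at $p$ is the $(-\varepsilon)$-signed one, and $X_{-\varepsilon}$ is torsion. So for all but finitely many of them the $\chi$-Selmer group is finite and $A_{\varphi\chi}(K)$ has rank $0$.

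The genuine gap is the step you flag yourself. For $\varepsilon(\varphi\chi)=-1$ the signed theory gives only Selmer corank one, i.e.\ $\dim_{K_{\varphi\chi}}(A_{\varphi\chi}(K)\otimes\Q)\le 1$, and nothing in your argument produces a point. The $p$-converse route does not close this. You can get a Selmer class with non-zero logarithm, namely $\chi(z^{t,\ac}_{p^{\infty}\ff})$, for all but finitely many such $\chi$. This follows from Theorem~\ref{p-adic-L-with-global-class}, since the non-zero element $\mathscr{L}_{p,v_{\varepsilon},t}(\varphi)\in\Lambda_{\ac}$ vanishes at only finitely many finite-order characters. But an elliptic-unit class is not a point. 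Passing from ``Selmer corank one with non-zero localisation at $p$'' to analytic rank one is exactly a rank-one $p$-converse theorem, and none is available here. $A_{\varphi\chi}$ has additive reduction at $p$, the coefficient field $K_{\varphi\chi}\supseteq\Q(\zeta_{p^n})$ is wildly ramified at $p$, and the paper itself lists the ramified $p$-converse only as future work.

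Your fallback, generic non-vanishing of $L'(\varphi\chi,1)$ when $\varepsilon(\varphi\chi)=-1$, is precisely the input the paper's proof uses. However, Cornut--Vatsal cannot supply it. Their results concern the Rankin--Selberg $L$-function of $\theta_{\varphi}$ and $\chi$ over $K$, which factors as $L(\varphi\chi,s)L(\varphi\chi^{-1},s)$. By Lemma~\ref{density-of-gxi}(2) with $b=-1$, the two factors have the same sign when $p\equiv 1\bmod 4$. So the product has sign $+1$ and vanishes to order at least two whenever $\varepsilon(\varphi\chi)=-1$, and no statement about it can isolate $L'(\varphi\chi,1)$.

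The paper argues directly and analytically, in three steps:
\begin{itemize}
\item It takes $\ord_{s=1}L(\varphi\chi,s)=\frac{1-\varepsilon(\varphi\chi)}{2}$ for every $\chi$ of order $p^n$ with $n\gg 0$, citing Rohrlich together with Lemma~\ref{density-of-gxi}.
\item It applies Proposition~\ref{GZK} for both signs.
\item It pairs $\chi$ with $\chi^{b}$ for $\left(\frac{b}{p}\right)=-1$, so that each pair contributes exactly one to the rank.
\end{itemize}
Once that analytic input is granted, your detour through Theorem~\ref{selmer-rank-behaviour} is unnecessary. Without it, your argument yields only the upper bound $\dim_{K_{\varphi}}(A_{\varphi}(K_n^{\ac})\otimes\Q)\le\frac{p^n-1}{2}+O(1)$.
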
 

 Noting that  $A_{\varphi\chi}$ is isogenous to the base change to $K$ of the product of copies of the abelian variety over $\Q$ of $\GL_2$-type associated to $\theta_{\varphi\chi},$
 we have the following fundamental result towards the Birch and Swinnerton-Dyer conjecture due to Gross--Zagier and Kolyvagin (cf.~\cite[Prop.~3.5]{BKO24}).
 
 \begin{prop}\label{GZK}
 Suppose that $\mathrm{ord}_{s=1} L({ \varphi}\chi^{},s)\in\{0,1\}$. 
 Then $$\mathrm{dim}_{K_{\varphi\chi}} (A_{\varphi\chi}(K)\otimes_{\Z} \Q)=\mathrm{ord}_{s=1} L({ \varphi}\chi^{},s).$$ 
 \end{prop}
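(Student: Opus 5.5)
The plan is to reduce Proposition~\ref{GZK} to the classical Gross--Zagier--Kolyvagin theorem for modular abelian varieties of $\GL_2$-type over $\Q$, as in \cite[Prop.~3.5]{BKO24}. Since $\chi$ is anticyclotomic and $\varphi$ satisfies \eqref{self-dual-character}, the theta series $\theta_{\varphi\chi}$ is a newform of weight two with trivial character. Let $B_{\varphi\chi}/\Q$ be the associated abelian variety of $\GL_2$-type, with $L(B_{\varphi\chi}/\Q,s)=\prod_\sigma L(\theta_{\varphi\chi}^\sigma,s)$. Then $A_{\varphi\chi}$ is isogenous to a product of copies of $B_{\varphi\chi}\otimes_\Q K$, and the statement reduces to one for $B_{\varphi\chi}$ over $K$. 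Note that the order of vanishing $\ord_{s=1}L(\varphi\chi^\sigma,s)$ is independent of $\sigma$: the case of order zero is Shimura's algebraicity, and order one follows by the Gross--Zagier formula combined with Galois equivariance of Heegner points (or Kolyvagin's theorem and the fact that the rank is Galois stable).

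The first step is the decomposition over $K$. Since $\theta_{\varphi\chi}$ has CM by $K$, we have $L(\varphi\chi,s)=L(\theta_{\varphi\chi},s)$, and $B_{\varphi\chi}(K)\otimes\Q=B_{\varphi\chi}(\Q)\otimes\Q\oplus B_{\varphi\chi}^{(K)}(\Q)\otimes\Q$, where the second summand is the quadratic twist by $\omega_{K/\Q}$. But $\theta_{\varphi\chi}\otimes\omega_{K/\Q}=\theta_{\varphi\chi}$, so the twist is isogenous to $B_{\varphi\chi}$ itself. Consequently $\dim_{K_{\varphi\chi}}A_{\varphi\chi}(K)\otimes\Q$ equals twice the rank of $B_{\varphi\chi}(\Q)$ over its Hecke field, and this must be matched against $\ord_{s=1}L(\varphi\chi,s)$. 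The analytic side is compatible because $L(\varphi\chi,s)=L(\theta_{\varphi\chi},s)$, and the factor of two is absorbed in the normalisation of $K_{\varphi\chi}$ versus the Hecke field of $\theta$, which has index two since $K\subset K_{\varphi\chi}$.

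The second step applies the theorem of Gross--Zagier and Kolyvagin (with Kolyvagin--Logachev for $\GL_2$-type, and Bump--Friedberg--Hoffstein or Murty--Murty for the auxiliary nonvanishing twist). This gives that $\ord_{s=1}L(\theta_{\varphi\chi},s)\le1$ implies $\rank B_{\varphi\chi}(\Q)/[\text{Hecke field}:\Q]=\ord_{s=1}$.

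The main obstacle is the dimension bookkeeping: the precise isogeny between $A_{\varphi\chi}$ and a power of $B_{\varphi\chi}\otimes K$, and the identification of $\dim_{K_{\varphi\chi}}$ with the Hecke-field rank. I would handle this by computing the rational $\ell$-adic Tate modules as $K_{\varphi\chi}\otimes\Q_\ell$-modules of rank one, so that the ranks are read off from $L$-functions factor by factor. This is exactly the argument of \cite[Prop.~3.5]{BKO24}, which I would cite for these details.
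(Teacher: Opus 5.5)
Your proposal follows the paper's argument: the paper likewise notes that $A_{\varphi\chi}$ is isogenous to a power of the base change to $K$ of the $\GL_2$-type abelian variety over $\Q$ attached to $\theta_{\varphi\chi}$, and then invokes Gross--Zagier--Kolyvagin as in \cite[Prop.~3.5]{BKO24}. There is one slip in your bookkeeping: with $F$ the Hecke field, $\dim_{K_{\varphi\chi}}(A_{\varphi\chi}(K)\otimes\Q)$ equals $\dim_F(B_{\varphi\chi}(\Q)\otimes\Q)$ itself, not twice it, because the doubling $B_{\varphi\chi}(K)\otimes\Q\cong(B_{\varphi\chi}(\Q)\otimes\Q)^{\oplus 2}$ is cancelled by $[K_{\varphi\chi}:\Q]=2\dim A_{\varphi\chi}$ as against $[F:\Q]=\dim B_{\varphi\chi}$.
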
 

\begin{proof}[Proof of Theorem \ref{vGZK}]
The following is similar to the proof of \cite[Thm.\ 8.4]{BKNO}.

 For simplicity of notation, we assume that $\fp$ exactly divides the conductor of $\varphi$: by the same argument as in the proof of Lemma \ref{density-of-xi}, we can reduce to this case by replacing $\varphi$ with $\varphi\chi_0$ with $\chi_0$ being some finite character of $\Gal(K^{\ac}_{\infty}/K)$. 
 
Let $A=A_{\varphi}$, and let $B_n$ denote 
the Weil restriction $\Res_{K_n^{\rm ac}/K}(A_{/K_n^{\rm ac}})$ of $A_{/K_{n}^{\rm ac}}$. 
Then,
$B_n$ is equipped with
\[
K_{\varphi}[\Gal(K_n^{\ac}/K)]\hookrightarrow \End(B_{n/K})\otimes\Q
\]
and $\dim(B_n)=2^{-1}\dim_{\Q}(K_{\varphi}[\Gal(K_n^{\ac}/K)]).$
For $n\ge 1$,
enlarging $K_{\varphi}$ so that $K_{\varphi}$ contains $\Q(\zeta_{p^n}),$
  we have
\[
\begin{split}
K_{\varphi}[\Gal(K_{n}^{\rm ac}/K)] &= (K_{\varphi}\otimes_{\Q} \Q[\gamma_{n}]/\Phi_n(\gamma_{n})) \times K_{\varphi}[\Gal(K_{n-1}^{\rm ac}/K)]\\
&\cong \prod_{\chi: \mathrm{ord}(\chi)=p^n} K_{\varphi}(\chi) \quad  \times  \quad 
K_{\varphi}[\Gal(K_{n-1}^{\rm ac}/K)],
\end{split}
\]
where $\gamma_n$ is a generator of $\Gal(K_{n}^{\rm ac}/K)$, and $\Phi_n(X)=(X^{p^{n}}-1)/(X^{p^{n-1}}-1),$
$\chi$ ranges over all characters of $\Gal(K_n^{\ac}/K)$ of order $p^n$,
the isomorphism is induced by the characters $\chi:\Gal(K_{n}^{\rm ac}/K) \to (K_{\varphi})^{\times}$ 
and $K_{\varphi}(\chi)$ denotes the $K_{\varphi}[\Gal(K_{n}^{\rm ac}/K)]$-algebra $K_{\varphi}$ whose structure morphism is given by   
$\chi$.
Correspondingly,
taking $K_{\varphi\chi}$ to be $K_{\varphi}$
(as $K(\mathrm{Im}(\chi))=K(\zeta_{p^n})\subseteq K_{\varphi}$),
 we have
an isogeny
$$
B_n \quad \sim \quad  \prod_{\chi:\mathrm{ord}(\chi)=p^n} A_{\varphi\chi}  \  \times \ B_{n-1}
$$
 of abelian varieties over $K$ which is compatible with the action of $K_{\varphi}=K_{\varphi\chi}$.
 We then have 
\begin{equation*}
A(K_n^{\rm ac})\otimes_{\Z} \Q\cong \bigoplus_{\chi:\mathrm{ord}(\chi)=p^n} \left(A_{\varphi\chi}(K)\otimes_{\Z} \Q\right)  \ \oplus \ \left(A(K_{n-1}^{\rm ac})\otimes_{\Z} \Q\right).
\end{equation*}
 Hence, since the number of $\chi$ of order $p^n$ is $p^{n}-p^{n-1}$, 
  it suffices to show that for any sufficiently large integer $n$ 
 and a character $\chi$ of order $p^n$, we have
\begin{equation}\label{eq, generic rank}
\dim_{K_{\varphi\chi}} \left(A_{\varphi\chi}(K) \otimes_{\Z}\Q\right)+
\dim_{K_{\varphi\chi^{b}}} \left(A_{\varphi\chi^b}(K) \otimes_{\Z}\Q\right)
=1,
\end{equation}
where $b\in \Z_p^{\times}$ is an element such that $\left(\frac{b}{p}\right)=-1$.

By Rohrlich \cite{Ro} and Lemma \ref{density-of-gxi},
for sufficiently large $n$ and for a character $\chi$ on $\Gal(K_{\infty}^{\ac}/K)$ of order $p^n$,
putting $\varepsilon_{\chi}:=\varepsilon({\varphi}\chi)\in\{\pm 1\}$,
we have 
$$
\ord_{s=1}L({\varphi}\chi,s)=\frac{1-\varepsilon_{\chi}}{2},\quad \ord_{s=1}L({\varphi}\chi^{b},s)=\frac{1+\varepsilon_{\chi}}{2} \in \{0,1\},
$$
which implies that  $\ord_{s=1}L({\varphi}\chi,s)+\ord_{s=1}L({\varphi}\chi^{b},s)=1.$
Hence, by Proposition~\ref{GZK}
we obtain (\ref{eq, generic rank}).
 \end{proof}

\section{The $p$-adic $L$-function and Selmer elements}\label{s, pLo}
 The main result of this section is Theorem \ref{p-adic-L-with-global-class}: a link between the Rubin-type $p$-adic $L$-function at characters with $\varepsilon$-constant being $-1$, and elements in the associated Selmer group. 
\subsection{Selmer element} 
\subsubsection{Set-up} Let the setting be as in Section \ref{section-Selmer-group}, and 
suppose that $p$ is ramified in $K$.

For simplicity, we suppose that Assumption \ref{for-integrality} (1) holds.
Without assuming it, the following arguments work by tensoring with $\Q_p$. 

Put
$
\tilde{\bT}=\Ind_{K/\Q}(T^{\otimes -1}(1)\otimes_{\CO}\Lambda_{\ac}).
$
Since $\det_{\Lambda_{\ac}}(\tilde{\bT})$ is a free $\Lambda_{\ac}$-module of rank one on which $G_{\Q}$ acts  by the $p$-adic cyclotomic character,
there exists an alternating, perfect $G_{\Q}$-equivariant pairing
\begin{equation}\label{iwasawa-etale-pairing}
 \tilde{\bT}\times \tilde{\bT} \to \Lambda_{\ac}(1),
 \end{equation}
which we fix. 
It induces 
a symmetric, perfect $\Lambda_{\ac}$-linear pairing
\[
(\ , \ ): \rH^1(\Q_p, \tilde{\bT}) \times \rH^1(\Q_p, \tilde{\bT}) \to \Lambda_{\ac},
\]
with respect to which $\rH^1_{\pm}(\Q_p, \tilde{\bT})$ is a Lagrangian (cf.\ \cite[Thm.\ 1.3]{BKNO}).
Let $v_{\pm}$ be $\Lambda_{\ac}$-bases of $\rH^1_{\pm}(\Q_p, \tilde{\bT})$ such that 
\begin{equation}
(v_{+}, v_{-})=1 \in \Lambda_{\ac}.
\end{equation}

\subsubsection{Selmer element}

Adapting the notation of \S \ref{interpolation-section},
for a character $\chi=\varphi_{\ac}^{k}\eta_{\ac}^{-k}\chi^{\prime}$ of $\Gamma$, where $\chi^{\prime}$ is a finite order character of $\Gamma$ and $k\ge 0$,  put $\CO_{\chi}=\CO[\mathrm{Im}(\chi)]$, and denote by 
\[
\chi(z^{t, \ac}_{p^{\infty} \ff}) \in \rH^1(\Q, \tilde{\bT}\otimes_{\Lambda_{\ac},\chi } \CO_{\chi})=\rH^1(K, T_{\varphi\chi}^{\otimes -1}(1))
\]
the image of $z^{t,\ac}_{p^{\infty} \ff}$.
We similarly define $\chi(v_{\pm}) \in \rH^1(\Q_p, \tilde{\bT}\otimes_{\Lambda_{\ac},\chi } \CO_{\chi})=\rH^1(K_p, T_{\varphi\chi}^{\otimes -1}(1))$. 
\begin{lem}\label{lie-in-f}
If $\hat{\varepsilon}_p(\Ind_{K/\Q}(\varphi\chi))=-\varepsilon$,
then 
$$
\chi(z^{t, \ac}_{p^{\infty} \ff}) \in  \rH^1_{\rf}(\Q, \Ind_{K/\Q}(T_{\varphi\chi}^{\otimes -1}(1)))= \Sel(K, T_{\varphi\chi}^{\otimes -1}(1)). 
$$
\end{lem}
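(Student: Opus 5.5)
We must show that the specialisation $\chi(z^{t,\ac}_{p^{\infty}\ff})$ satisfies the Bloch--Kato condition at every place. Away from $p$ there is nothing to check. By Lemma~\ref{rel-gal} and \eqref{lie-in-rel}, $z^{t,\ac}_{p^\infty\ff}$ lies in $\scrS^{\ac}_{\rel}$, which is identified with $\rH^1(K_S/K, T^{\otimes -1}(1)\otimes\Lambda_{\ac})$. Its specialisation therefore lies in $\rH^1(K_S/K, T_{\varphi\chi}^{\otimes-1}(1))$. For $v\nmid p$ the local condition is the whole group, by \eqref{local-vanishing} and \eqref{vanishing-ramified}; this vanishing also holds for the twist by $\chi$, since $\chi$ is unramified away from $p$. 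Hence the only issue is the condition at $\fp$. We must show that $\loc_p(\chi(z))=\chi(\loc_p z)$ lies in $\rH^1_{\rf}(K_p, V_{\varphi\chi}^{\otimes-1}(1))$. Once this holds rationally, the integral statement follows by the definition of $\rH^1_{\rf}$ on lattices as an inverse image.

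For the condition at $\fp$, Proposition~\ref{prop, zeta-in-epsilon} gives $\loc_p(z)\in \rH^1_{\varepsilon}(K_p,T^{\otimes-1}(1)\otimes\Lambda_{\ac})$, so $\loc_p z = \mathscr{L}\cdot v_\varepsilon$. Hence $\chi(\loc_p z)\in \chi(\rH^1_\varepsilon)$, the base change of the $\varepsilon$-Lagrangian to the specialisation at $\chi$. The key step is to identify this base change with the Bloch--Kato subgroup whenever $\hat{\varepsilon}_p(\Ind_{K/\Q}(\varphi\chi))=-\varepsilon$. For finite order $\chi$ this is Theorem~\ref{lsd-ind}(3),(4). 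Keeping track of the sign convention of Remark~\ref{sign-convention}, $\chi\in\Xi^{-\varepsilon}$ means that $\rH^1_{\varepsilon}$ specialises to $\rH^1_{\rf}$. For general de Rham $\chi=\varphi_{\ac}^k\eta_{\ac}^{-k}\chi'$, I would invoke the general statement of the local sign decomposition in \cite[Thm.~1.3, \S1.3.3]{BKNO}. That result says that at every de Rham specialisation the summand selected by the completed $\varepsilon$-constant $\hat\varepsilon_p$ is the Bloch--Kato subgroup. Its validity for $\chi$ of nonzero weight is the same input that was used for the non-vanishing of $\exp^*(v_\varepsilon)$ in Theorem~\ref{interpolation-rubin}, via \cite[Cor.~3.2]{BKNO}.

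An alternative, more self-contained route at $p$ uses the dual exponential. Since the Hodge--Tate weights are $\{k+1,-k\}$, $\rH^1_{\rf}$ equals $\ker(\exp^*)$ on $\rH^1(K_p,V_{\varphi\chi}^{\otimes-1}(1))$, which is $2$-dimensional over $\CO_\chi[1/p]$. Since $\chi(\rH^1_{\varepsilon})$ is a line, it suffices to show that $\exp^*$ kills $\chi(v_\varepsilon)$. The dichotomy of \cite[Cor.~3.2]{BKNO} says $\exp^*(\chi(v_{\pm}))\neq0$ exactly when $\hat\varepsilon_p=\pm$. Because the Lagrangians are maximal isotropic and transverse, exactly one of $\chi(v_\pm)$ is killed by $\exp^*$. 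Given the sign hypothesis, this must be $\chi(v_\varepsilon)$.

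The main obstacle is matching the sign conventions between the global $\varepsilon$ of Definition~\ref{def, epsilon}, $\hat\varepsilon_p$, and the $\Xi^\pm$ labels of \cite{BKNO}, as flagged in Remark~\ref{sign-convention}. I would check this against the case used in Theorem~\ref{interpolation-rubin}. There, $\hat\varepsilon_p=\varepsilon$ gives $\exp^*(\chi(v_\varepsilon))\ne0$, so the opposite sign $\hat\varepsilon_p=-\varepsilon$ gives $\chi(v_\varepsilon)\in\rH^1_{\rf}$. The final equality with $\Sel(K,T_{\varphi\chi}^{\otimes-1}(1))$ follows from Shapiro's lemma, identifying the local conditions of $\Ind_{K/\Q}$ over $\Q$ with those over $K$.
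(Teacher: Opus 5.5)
Your proposal is correct and is essentially the paper's proof: Proposition~\ref{prop, zeta-in-epsilon} puts $\loc_p(z^{t,\ac}_{p^\infty\ff})$ in $\rH^1_{\varepsilon}$, and specialising and applying \cite[Thm.~1.3]{BKNO} gives $\rH^1_{\varepsilon}=\rH^1_{-\hat{\varepsilon}_p(\Ind_{K/\Q}(\varphi\chi))}=\rH^1_{\rf}$ at $\chi$, while \eqref{lie-in-rel} handles the places away from $p$. Your dual-exponential variant rests on the same input, but your sanity check does not follow as stated: the one-directional non-vanishing used in Theorem~\ref{interpolation-rubin} does not by itself yield the opposite-sign vanishing, so it is the full sign dichotomy of \cite[Thm.~1.3]{BKNO} that does the work.
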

\begin{proof}
By Proposition \ref{prop, zeta-in-epsilon} and Theorem \ref{lsd-ind}, 
we have $\loc_p(\chi(z^{t, \ac}_{p^{\infty} \ff})) \in  \rH^1_{\varepsilon}(\Q_p, \Ind_{K/\Q}(T_{\varphi\chi}^{\otimes -1}(1)))$.
By \cite[Thm.\ 1.3]{BKNO},
\[
\rH^1_{\varepsilon}(\Q_p, \Ind_{K/\Q}(T_{\varphi\chi}^{\otimes -1}(1)))=\rH^1_{-\hat{\varepsilon}_p(\Ind_{K/\Q}(\varphi\chi))}(\Q_p, \Ind_{K/\Q}(T_{\varphi\chi}^{\otimes -1}(1)))=\rH^1_{\rf}(\Q_p, \Ind_{K/\Q}(T_{\varphi\chi}^{\otimes -1}(1))).
\]
Hence, (\ref{lie-in-rel}) implies the lemma. 
\end{proof}
\subsubsection{Bloch--Kato logarithm and exponential}
Let 
\[
\log: \rH^1_{\rf}(\Q_p, \Ind_{K/\Q}(V_{\varphi\chi}^{\otimes -1}(1))) \to \frac{D_{\dR}(\Ind_{K/\Q}(V_{\varphi\chi}^{\otimes -1}(1)))}{D_{\dR}^{0}(\Ind_{K/\Q}(V_{\varphi\chi}^{\otimes -1}(1)))}
\]
 denote the Bloch--Kato logarithm map
and \[
\exp^*: \rH^1(\Q_p, \Ind_{K/\Q}(V_{\varphi\chi}^{\otimes -1}(1))) \to D_{\dR}^{0}(\Ind_{K/\Q}(V_{\varphi\chi}^{\otimes -1}(1)))
\]
the dual exponential map which may be defined as the dual of the exponential map (the inverse of $\log$)
with respect to the perfect pairings
\[
( \ , \ )_{\chi}: \rH^1(\Q_p, \Ind_{K/\Q}(V_{\varphi\chi}^{\otimes -1}(1) )) \times \rH^1(\Q_p, \Ind_{K/\Q}(V_{\varphi\chi}^{\otimes -1}(1)) ) \to \CO_{\chi}[1/p]
\]
\[
[ \ , \ ]: D_{\dR}(\Ind_{K/\Q}(V_{\varphi\chi}^{\otimes -1}(1)))\times D_{\dR}(\Ind_{K/\Q}(V_{\varphi\chi}^{\otimes -1}(1))) \to \CO_{\chi}[1/p]
\]
induced by the perfect pairing $\tilde{\bT}\otimes_{\Lambda_{\ac},\chi}\CO_{\chi} \times   \tilde{\bT}\otimes_{\Lambda_{\ac},\chi}\CO_{\chi} \to \CO_{\chi}(1),$
the base change of (\ref{iwasawa-etale-pairing}). 

Since $\Ind_{K/\Q}(V_{\varphi\chi}^{\otimes -1}(1))$ is symplectic self-dual, $D_{\dR}^{0}(\Ind_{K/\Q}(V_{\varphi\chi}^{\otimes -1}(1)))$ is a maximal isotropic line for $[\ ,\ ]$ (cf.~\cite{BKNO}); in particular $[\ ,\ ]$ induces a perfect pairing between $D_{\dR}^{0}(\Ind_{K/\Q}(V_{\varphi\chi}^{\otimes -1}(1)))$ and the quotient $D_{\dR}(\Ind_{K/\Q}(V_{\varphi\chi}^{\otimes -1}(1)))/D_{\dR}^{0}(\Ind_{K/\Q}(V_{\varphi\chi}^{\otimes -1}(1)))$.

Let $\omega \in D_{\dR}^{0}(\Ind_{K/\Q}(V_{\varphi\chi}^{\otimes -1}(1)))$ be a non-zero element. Since $\exp^*$ takes values in the line $\CO_{\chi}[1/p]\cdot \omega$, and since, by the above, $[\ z,\omega\ ]$ depends only on the class of $z\in D_{\dR}(\Ind_{K/\Q}(V_{\varphi\chi}^{\otimes -1}(1)))$ in $D_{\dR}(\Ind_{K/\Q}(V_{\varphi\chi}^{\otimes -1}(1)))/D_{\dR}^{0}(\Ind_{K/\Q}(V_{\varphi\chi}^{\otimes -1}(1)))$, $\omega$ normalises $\log$ and $\exp^*$ to
 \[
\log_{\omega}: \rH^1_{\rf}(\Q_p, \Ind_{K/\Q}(V_{\varphi\chi}^{\otimes -1}(1))) \to \CO_{\chi}[1/p], \quad \exp_{\omega}^*: \rH^1(\Q_p, \Ind_{K/\Q}(V_{\varphi\chi}^{\otimes -1}(1))) \to \CO_{\chi}[1/p],
\]
characterised by $[\log(x),\omega]=\log_{\omega}(x)$ and $\exp^*(y)=\exp_{\omega}^*(y)\cdot \omega$ for $x \in \rH^1_{\rf}(\Q_p, \Ind_{K/\Q}(V_{\varphi\chi}^{\otimes -1}(1)))$ and $y \in \rH^1(\Q_p, \Ind_{K/\Q}(V_{\varphi\chi}^{\otimes -1}(1)))$.
\subsection{Main result}

\begin{thm}\label{p-adic-L-with-global-class}
Let the notation be as above.
For $\omega \in  D_{\dR}^{0}(\Ind_{K/\Q}(V_{\varphi\chi}^{\otimes -1}(1))) \setminus\{0\}$,
if  $\hat{\varepsilon}_p(\Ind_{K/\Q}(\varphi\chi))=-\varepsilon$,
then we have  
\[
\chi(\mathscr{L}_{p,v_{\varepsilon},t}(\varphi))=\exp^*_{\omega}(\chi(v_{-\varepsilon})) \cdot \log_{\omega}(\chi(z^{t, \ac}_{p^{\infty} \ff})). 
\]
\end{thm}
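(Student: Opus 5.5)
The plan is to express the $p$-adic $L$-function through the $\Lambda_{\ac}$-adic pairing, specialise that expression at $\chi$, and then compute the resulting local Tate pairing using $\log$ and $\exp^*$.

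\emph{Step 1: the $\Lambda_{\ac}$-adic identity.} By Proposition~\ref{prop, zeta-in-epsilon} and Definition~\ref{def, RpL} we have $\loc_p(z^{t,\ac}_{p^\infty\ff})=\mathscr{L}_{p,v_\varepsilon,t}(\varphi)\,v_\varepsilon$. Pairing with $v_{-\varepsilon}$ and using $(v_\varepsilon,v_{-\varepsilon})=1$ gives
\[
\mathscr{L}_{p,v_{\varepsilon},t}(\varphi)=\bigl(\loc_p(z^{t,\ac}_{p^\infty\ff}),v_{-\varepsilon}\bigr).
\]
Here we use the symmetry of $(\ ,\ )$ to ignore the order of the arguments, and we identify $\rH^1(K_p,T^{\otimes-1}(1)\otimes\Lambda_{\ac})$ with $\rH^1(\Q_p,\tilde{\bT})$ via Shapiro's lemma. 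The pairing $(\ ,\ )$ is induced by the fixed pairing \eqref{iwasawa-etale-pairing}, and its base change along $\chi$ is $(\ ,\ )_\chi$. Therefore
\[
\chi(\mathscr{L}_{p,v_{\varepsilon},t}(\varphi))=\bigl(\loc_p\chi(z^{t,\ac}_{p^\infty\ff}),\chi(v_{-\varepsilon})\bigr)_\chi .
\]
Checking this specialisation compatibility is routine, apart from keeping track of the involution $\iota$ and the twist $\Tw^k_t$ when $k>0$. For $k>0$ I would use the identification of the specialisation at $\chi$ with $\Ind_{K/\Q}(V_{\varphi\chi}^{\otimes-1}(1))$ fixed in \S\ref{interpolation-section}.

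\emph{Step 2: locating the classes.} By Lemma~\ref{lie-in-f}, under the hypothesis $\hat\varepsilon_p(\Ind_{K/\Q}(\varphi\chi))=-\varepsilon$ we have $x:=\loc_p\chi(z^{t,\ac}_{p^\infty\ff})\in\rH^1_{\rf}$. Let $y:=\chi(v_{-\varepsilon})$ be arbitrary. The needed input is the standard Bloch--Kato adjunction: for $x\in\rH^1_{\rf}$ and any $y$,
\[
(x,y)_\chi=[\log x,\exp^*y],
\]
up to a universal sign fixed by the conventions. This holds because $\exp^*$ is defined as the adjoint of $\exp=\log^{-1}$ with respect to $(\ ,\ )_\chi$ and $[\ ,\ ]$, exactly as stated before the theorem. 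I would invoke that definition directly and take care with the ordering of the arguments, using the symmetry of $(\ ,\ )_\chi$.

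\emph{Step 3: normalising by $\omega$.} Write $\exp^*y=\exp^*_\omega(y)\,\omega$. Since $D^0_{\dR}$ is isotropic, $[\log x,\omega]$ depends only on $\log x$ modulo $D^0_{\dR}$, and it equals $\log_\omega(x)$ by definition. Hence
\[
(x,y)_\chi=\exp^*_\omega(y)\,[\log x,\omega]=\exp^*_\omega(\chi(v_{-\varepsilon}))\,\log_\omega(\chi(z^{t,\ac}_{p^\infty\ff})),
\]
and combining this with Step 1 gives the claimed formula.

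The main obstacle I expect is the sign and ordering bookkeeping in Steps 1 and 2. One must confirm that the specialisation of the $\Lambda_{\ac}$-adic symmetric pairing is exactly $(\ ,\ )_\chi$, with no involution $\iota$ entering, and that the adjunction gives $[\log x,\exp^* y]$ rather than its negative. To settle this I would fix the convention that $\exp^*$ is the transpose of $\exp$ in the second variable, which is consistent with how $\log_\omega$ is characterised before the theorem.
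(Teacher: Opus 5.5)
Your proposal is correct and is essentially the paper's proof. The paper likewise writes $\mathscr{L}_{p,v_{\varepsilon},t}(\varphi)=(\loc_p(z^{t,\ac}_{p^{\infty}\ff}),v_{-\varepsilon})$, specialises at $\chi$, uses Lemma~\ref{lie-in-f} to place the specialised unit in $\rH^1_{\rf}$, and concludes via $(x,y)_\chi=[\log x,\exp^* y]$ and the $\omega$-normalisation. Your sign and ordering concern is resolved as you propose: the paper defines $\exp^*$ as the dual of $\exp$ with respect to $(\ ,\ )_\chi$ and $[\ ,\ ]$, so this adjunction holds by definition.
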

\begin{proof}
By Definition~\ref{def, RpL}, we have
$$
(\loc_p(z^{t,\ac}_{p^{\infty} \ff}), v_{-\varepsilon} )=\mathscr{L}_{p,v_{\varepsilon},t}(\varphi). 
$$
Applying $\chi$, and noting that $\loc_p$ commutes with specialisation along $\chi$, we obtain
$$
(\loc_p(\chi(z^{t,\ac}_{p^{\infty} \ff})), \chi(v_{-\varepsilon}) )_{\chi}=\chi(\mathscr{L}_{p,v_{\varepsilon},t}(\varphi)). 
$$
By Lemma~\ref{lie-in-f}, $x:=\loc_p(\chi(z^{t,\ac}_{p^{\infty} \ff}))$ lies in $\rH^1_{\rf}(\Q_p, \Ind_{K/\Q}(V_{\varphi\chi}^{\otimes -1}(1)))$. By the compatibility of the local Tate pairing $(\ ,\ )_{\chi}$ with the Bloch--Kato logarithm and dual exponential maps (see e.g.\ \cite{K}), $(x,y)_{\chi}=[\log(x), \exp^*(y)]$ for $y \in \rH^1(\Q_p, \Ind_{K/\Q}(V_{\varphi\chi}^{\otimes -1}(1)))$. Taking $y=\chi(v_{-\varepsilon})$, we have
$$
[\log(\chi(z^{t, \ac}_{p^{\infty} \ff})), \exp^*(\chi(v_{-\varepsilon})) ]=\chi(\mathscr{L}_{p,v_{\varepsilon},t}(\varphi)), 
$$ 
and by the normalisation above the left-hand side equals $\exp^*_{\omega}(\chi(v_{-\varepsilon}))\cdot \log_{\omega}(\chi(z^{t, \ac}_{p^{\infty} \ff}))$, concluding the proof. 
\end{proof}

\end{document}